\documentclass{amsart}
\usepackage[margin=1in]{geometry}
\usepackage{shuffle}
\usepackage{bm}

\usepackage{tikz}
\usepackage{mathdots}
\usepackage{array}
\usepackage{multirow}
\usepackage{hyperref}
\usepackage{float}
\usepackage{caption}
\usepackage{amscd}
\usepackage{tabularx}
\usepackage{extarrows}

\newcommand{\id}{\mathrm{id}}
\usepackage[all]{xy}
\usepackage{amsmath}
\usepackage{amsthm}
\usepackage{amssymb}
\usepackage{mathtools}
\usepackage{bbm}
\usepackage{soul}
\usepackage{color}

\usepackage{lmodern} 

\newtheorem{theorem}{Theorem}[section]

\newtheorem{proposition}[theorem]{Proposition}
\newtheorem{corollary}[theorem]{Corollary}
\newtheorem{lemma}[theorem]{Lemma}
\theoremstyle{definition}
\newtheorem{definition}[theorem]{Definition}
\newtheorem{example}[theorem]{Example}

\newcommand{\N}{\mathbb{N}}

\newcommand{\shifted}{\overrightarrow{\shuffle}}

\newcommand{\mix}{\ \rotatebox[origin=c]{180}{$\shuffle$}\ }
\newcommand{\smix}{\overrightarrow{\rotatebox[origin=c]{180}{$\shuffle$}}}

\usetikzlibrary{cd}

\newcommand{\pss}{(W/st)|_{\mathfrak{P}}}

\DeclareMathOperator{\pack}{pack}

\DeclareMathOperator{\Set}{Set}
\DeclareMathOperator{\Comp}{Comp}
\DeclareMathOperator{\std}{std}
\DeclareMathOperator{\shift}{shift}

\DeclareMathOperator{\St}{\mathsf{st}}

\DeclareMathOperator{\Sym}{Sym}
\DeclareMathOperator{\QSym}{QSym}
\DeclareMathOperator{\FQSym}{FQSym}
\DeclareMathOperator{\PQSym}{PQSym}
\DeclareMathOperator{\WQSym}{WQSym}
\DeclareMathOperator{\NSym}{NSym}
\DeclareMathOperator{\NCSym}{NCSym}
\DeclareMathOperator{\NCQSym}{NCQSym}
\DeclareMathOperator{\CQSym}{CQSym}

\DeclareMathOperator{\PF}{PF}
\DeclareMathOperator{\WIPF}{WIPF}
\DeclareMathOperator{\Sh}{Sh}
\DeclareMathOperator{\tcl}{\mathsf{tcl}}

\DeclareMathOperator{\Inv}{\mathsf{Inv}}
\DeclareMathOperator{\inv}{\mathsf{inv}}
\DeclareMathOperator{\maj}{\mathsf{maj}}
\DeclareMathOperator{\des}{\mathsf{des}}
\DeclareMathOperator{\asc}{\mathsf{asc}}
\DeclareMathOperator{\tie}{\mathsf{tie}}

\DeclareMathOperator{\Des}{\mathsf{Des}}
\DeclareMathOperator{\Asc}{\mathsf{Asc}}
\DeclareMathOperator{\Tie}{\mathsf{Tie}}
\DeclareMathOperator{\Cliff}{\mathsf{Cliff}}
\DeclareMathOperator{\cliff}{\mathsf{cliff}}
\DeclareMathOperator{\Peak}{\mathsf{Peak}}

\DeclareMathOperator{\lucky}{\mathsf{lucky}}
\DeclareMathOperator{\unlucky}{\mathsf{unlucky}}
\DeclareMathOperator{\Lucky}{\mathsf{Lucky}}
\DeclareMathOperator{\Unlucky}{\mathsf{Unlucky}}
\DeclareMathOperator{\disp}{\mathsf{disp}}
\DeclareMathOperator{\sumst}{\mathsf{sum}}
\DeclareMathOperator{\Disp}{\mathsf{Disp}}

\DeclareMathOperator{\out}{\mathsf{Out}}
\DeclareMathOperator{\lel}{\mathsf{lel}}
\DeclareMathOperator{\ones}{\mathsf{ones}}
\DeclareMathOperator{\isi}{\mathsf{isi}}

\DeclareMathOperator{\winc}{\mathsf{sort}\uparrow}
\DeclareMathOperator{\occ}{\mathsf{occ}}

\DeclareMathOperator{\bk}{\mathsf{bk}}
\DeclareMathOperator{\rank}{\mathsf{rank}}
\DeclareMathOperator{\Succ}{\mathsf{Succ}}
\DeclareMathOperator{\suc}{\mathsf{succ}}

\DeclareMathOperator{\Bk}{\mathsf{Bk}}

\DeclareMathOperator{\sg}{\mathsf{sg}}
\DeclareMathOperator{\Sg}{\mathsf{Sg}}

\DeclareMathOperator{\Op}{\mathsf{Op}}
\DeclareMathOperator{\Cl}{\mathsf{Cl}}
\DeclareMathOperator{\Tr}{\mathsf{Tr}}

 \usepackage[normalem]{ulem}

\title[Shuffle-compatibility for combinatorial statistics]{Shuffle-compatibility for combinatorial statistics on words, parking functions, and set partitions}

\author[S. Daugherty]{Spencer Daugherty}\address{Department of Mathematics, University of Colorado Boulder, Boulder, CO}\email{spencer.daugherty@colorado.edu}

\author[J. Liang]{Jinting Liang}\address{Department of Mathematics, University of British Columbia, Vancouver, BC}\email{liangj@math.ubc.ca}

\usepackage[backend=biber, maxbibnames=10, doi=false, url=false, isbn=false]{biblatex}
\begin{document}

\begin{abstract} We introduce notions of (weak) shuffle-compatibility for statistics on words, parking functions, and set partitions, generalizing Gessel and Zhuang's shuffle-compatibility for statistics on permutations. For parking functions and set partitions, we perform a systematic review of statistics that appear in the FindStat database (as well as the literature). We further define (shifted) shuffle algebras of (weakly) shuffle-compatible statistics on the equivalence classes induced by the statistics. These algebras relate closely to various combinatorial Hopf algebras such as $\QSym$, $\FQSym$, $\PQSym$, and $\NCSym$. These constructions yield new combinatorial interpretations of various Hopf algebra bases and, in some cases, new bases entirely.
   
\end{abstract}
\keywords{Shuffle-compatibility, Hopf algebra, combinatorial statistics, permutations, words, parking functions, set partitions,  FQSym, WQSym, PQSym, QSym, NCSym}

\maketitle

\section{Introduction}

In \cite{Ges18}, Gessel and Zhuang introduce the concept of shuffle-compatibility for statistics on permutations. This theory formalized an interesting phenomenon exhibited by certain permutation statistics explored most notably by Stanley's shuffling theorem. The authors also defined shuffle algebras for shuffle-compatible statistics, thus unifying the results of Stanley, Gessel, Stembridge, Aguiar-Bergeron-Nyman, and Petersen.  Many of the key results on shuffle-compatibility center around the descent sets of permutations and connect to the theory of P-partitions and the quasisymmetric functions.  Shuffle-compatibility has since been studied (and generalized) in \cite{bakerjarvis2019bijectiveproofsshufflecompatibility, carnevale2025colouredshufflecompatibilityhadamard, Grinberg_2018, Kantarc_O_uz_2022, liang2023cyclicshufflecompatibilitycyclicshuffle, yang2022conjectureconcerningshufflecompatiblepermutation}.

For $m,n \in \N = \{1,2, \ldots\}$ with $m<n$, let $[n] = \{1,2, \ldots,n\}$ and $[m,n] = \{m,m+1, \ldots, n-1, n\}$. Let a \emph{permutation} of length $n$ be a sequence $\pi = (\pi_1,\pi_2, \ldots, \pi_n)$ of distinct positive integers. 
We denote the set of permutations of length $n$ by $\mathfrak{P}_n$ and let $\mathfrak{P} = \cup_{n \geq 0} \mathfrak{P}_n$. The \emph{standardization} of a permutation $\pi$ with entries $\pi_{i_1} < \pi_{i_2} < \cdots < \pi_{i_n}$, denoted $\std(\pi)$, is obtained by replacing each entry $\pi_{i_j}$ with $j$. 
Thus, $\std(\pi)$ will be a permutation with elements $[n]$. 
We denote the set of permutations of $[n]$ by $\mathfrak{S}_n$ and let $\mathfrak{S} = \cup_{n \geq 0} \mathfrak{S}_n$.  The \emph{shuffle product} of two disjoint permutations $\tau$ and $\theta$, denoted $\tau \shuffle \theta$, is the set of permutations $\pi$ made up of the elements in $\tau$ and $\theta$ with $\tau$ and $\theta$ as distinct subwords. In other words, it is all possible ways of interleaving (or shuffling) the permutations $\tau$ and $\theta$.
A permutation statistic $\St$ is a function on permutations, often mapping permutations to integers or sets of integers. The $\St$-equivalence classe of $\pi \in \mathfrak{P}$ is given by $[\pi]^{\mathfrak{P}}_{\St} =\{\tau \in \mathfrak{P} : \St(\pi)=\St(\tau), |\pi|=|\tau|\}$, although we will drop the $\mathfrak{P}$ subscript when it is clear from context. Let $\mathfrak{P}/\St$ denote the set of all such equivalence classes.

\begin{definition}\cite{Ges18}
    A permutation statistic $\St$ is \emph{shuffle-compatible} if
    \begin{enumerate}
        \item for any disjoint permutations $\tau,\theta \in \mathfrak{P}$, the multiset $\{\{ \St(\pi) : \pi \in \tau \shuffle \theta \}\}$ depends only on $\St(\tau)$, $\St(\theta)$, $|\tau|$, and $|\theta|$, and
        \item for any permutations $\tau, \theta \in \mathfrak{P}$, if $\std(\tau)=\std(\theta)$, then $\St(\tau)=\St(\theta)$.
    \end{enumerate} The \emph{shuffle algebra} of $\St$, denoted $\mathcal{A}^{\mathfrak{P}}_{\St}$, is then defined as the algebra with basis elements $[\pi]^{\mathfrak{P}}_{\St} \in \mathfrak{P}/\St$ and the product $[\tau]^{\mathfrak{P}}_{\St}[\theta]^{\mathfrak{P}}_{\St} = \sum_{\pi \in \tau \shuffle \theta} [\pi]^{\mathfrak{P}}_{\St}$.
\end{definition}

The key example of a shuffle-compatible statistic is the \emph{descent set}, which is defined for $\pi \in \mathfrak{P}_n$ as \[\Des(\pi) = \{i : \pi_i > \pi_{i+1}, i \in [n-1] \}.\] The descent set is shuffle-compatible and, notably, the shuffle algebra of the descent set is isomorphic to the algebra of quasisymmetric functions, $\QSym$. As an example of this property, observe that $\Des((5))=\{\}$ and $\Des((2,6,4)) =\{2\}$. The shuffles of $(5)$ and $(2,6,4)$ are $(5,2,6,4), (2,5,6,4), (2,6,5,4)$ and $(2,6,4,5)$.  Thus, \[\{\{\Des(\pi) : \pi \in (5) \shuffle (2,6,4) \}\} = \{\{ \ \{1,3\}, \{3\}, \{2,3\}, \{2\} \ \}\}.\]
Similarly, we have that $\Des((1))=\{\}$ and $\Des((2,4,3)) = \{2\}$. The shuffles of $(1)$ and $(2,4,3)$ are $(1,2,4,3), (2,1,4,3), (2,4,1,3)$ and $(2,4,3,1)$. Thus, \[\{\{\Des(\pi) : \pi \in (1) \shuffle (2,4,3) \}\} = \{\{ \ \{3\}, \{1,3\}, \{2\}, \{2,3\} \ \}\}.\] The two multisets are equal because they correspond to pairs of permutations with the same lengths and descent sets, and the descent set is shuffle-compatible. Additionally, the shuffle algebra of the Descent set, $\mathcal{A}_{\Des}^{\mathfrak{P}}$, is isomorphic to the algebra of quasisymmetric functions. In fact, the product of the fundamental quasisymmetric functions is generally understood in terms of descent sets of shuffles of permutations. 

Gessel and Zhuang investigate statistics that depend only on the descent set and length of a permutation, which they call descent statistics. These include the descent number, the major index, the peak set, the peak number, the left peak set, the left peak number, and the number of up-down runs. They also consider shuffle-compatibility for tuples of these statistics and more. The authors conjectured that all shuffle-compatible statistics are descent statistics, although this was disproven in \cite{Kantarc_O_uz_2022} (though not with a statistic with notable combinatorial meaning).

Many of the proofs in \cite{Ges18} are algebraic in nature and use results from the literature on $q$-analogues of these statistics. For example, Stanley's shuffling theorem states that, for any disjoint $\tau \in \mathfrak{P}_m$, $\theta \in \mathfrak{P}_n$,  \[\sum_{\pi \in \tau \shuffle \theta} q^{\maj(\pi)} = q^{\maj(\tau) + \maj(\theta)} \binom{m+n}{m}_q,\] where $\maj$ is the major index (see Table \ref{tab:wordstats}) and $\binom{-}{-}_q$ is the $q$-binommial coefficient \cite{stanley1972ordered}.
This theorem implies that the major index is shuffle-compatible on permutations, given that the multiset of $\maj(\pi)$ values over all shuffle $\pi$ of $\tau$ and $\theta$ can be expressed in terms of only $\maj(\tau)$, $\maj(\theta)$, $m$, and $n$. The initial work on shuffle-compatibility is continued by Grinberg in \cite{Grinberg_2018} on more statistics and with additional focus on certain algebraic structures. Baker-Jarvis and Sagan study shuffle-compatibility on descent statistics as well in \cite{bakerjarvis2019bijectiveproofsshufflecompatibility}, but they employ bijective methods. 

In this paper, we generalize the combinatorial and algebraic constructions of shuffle-compatibility and shuffle algebras, along with their weaker forms, to words, parking functions, and set partitions. The generalization to words and parking functions is especially natural as both contain permutations as a subset, but allow for the study of various new statistics. Set partitions, too, admit a shuffle product and have many combinatorial statistics in the literature. All three sets, and their respective shuffling operations, relate closely to combinatorial Hopf algebras. In fact, the various (shifted) shuffle algebras we define are quotients of the Hopf algebra $\WQSym^*$, $\PQSym$, and $\NCSym^*$. These (shifted) shuffle algebras are also often isomorphic to Hopf algebras (or subalgebras of) $\FQSym$, $\QSym$, and $\CQSym$.

In addition to studying statistics that appear in the literature and newly defined statistics, we do a systematic review of the statistics listed for parking functions and set partitions in the FindStat database \cite{FindStat}. Similar reviews were carried out in \cite{adams2025cyclicsievingpermutations} and \cite{Elder_2023}, with a focus on homomesies and cyclic-sieving on permutations. We study 46 different statistics that are shuffle-compatible or weakly shuffle-compatible on words, parking functions, or set partitions. We list 120 statistics that fail to exhibit one or both properties in the appendix, along with an example to show how (weak) shuffle-compatibility is violated. 

In Section \ref{sec:prelim}, we cover background on the Hopf algebra structure of $\QSym$ and $\FQSym$. We then define weak shuffle-compatibility and shifted shuffle algebras for permutations. Using these constructions, we show that all shuffle algebras on permutations are quotient algebras of $\FQSym$ (see Theorem \ref{thm:weakFQSYM}). These constructions lay the groundwork for our generalizations in the following section. Note that while we do not study weak shuffle-compatibility on permutations directly, permutations are a subset of parking functions, and so any statistic that is weakly shuffle-compatible on parking functions is also weakly shuffle-compatible on permutations. 

In Section \ref{sec:words}, we consider the natural generalization of shuffle-compatibility and shuffle algebras to statistics on words. In this case, we show that shuffle algebras of shuffle-compatible statistics on words will be quotients of the combinatorial Hopf algebra $\WQSym^*$ (Proposition \ref{prop:alg_wqsym}). Many statistics that are shuffle-compatible on permutations are also shuffle-compatible on all words, such as the descent set, the descent number, the ascent set, the ascent number, and the major index. The algebraic results also translate exactly in these cases. The peak set (number), on the other hand, is not shuffle-compatible on all words, and so we define an analogue of peaks called the cliffs. The cliff set statistic is shuffle-compatible on all words, and its shuffle algebra is isomorphic to the shuffle algebra of the peak set statistic on permutations, the peak algebra (Corollary \ref{cor:cliff}). We also consider the tie set statistic, given the presence of repeated elements in words. The tie set shuffle algebra is isomorphic to $\QSym$, and using this fact, we formulate a basis of $\QSym$ that has a product defined combinatorially in terms of tie sets of words (Theorems \ref{thm:Tie} and \ref{thm:T_basis}).

In Section \ref{sec:parking}, we generalize weak shuffle-compatibility to the set of parking functions. We prove the weak shuffle-compatibility of various statistics from the literature and FindStat, including outcome, lucky car number, displacement, and inversion number (see Table \ref{tab:pfstats}). Generalizing our result from Section~\ref{sec:prelim}, we show that the shifted shuffle algebras of weakly shuffle-compatible statistics will be quotients of $\PQSym$, the Hopf algebra on parking functions (Proposition \ref{prop:alg_pqsym}). In many cases, these shifted shuffle algebras are isomorphic to the combinatorial Hopf algebras $\FQSym$, $\CQSym$, $\QSym$, and a subalgebra of $\QSym$. In particular, we relate the displacement sequence statistic to the shuffle basis of $\QSym$ (Theorem \ref{thm:Disp}) and the lucky car set statistic to binary shuffle bases of $\QSym$ (Theorem \ref{thm:Lucky}). The inversion set and outcome statistics both yield shifted shuffle algebras isomorphic to $\FQSym$ by simple maps to the $\mathbf{F}$-basis (Theorems \ref{thm:Inv} and \ref{thm:outcome}).

In Section \ref{sec:setpartitions}, we generalize shuffle-compatibility to set partitions using a shuffle operation inspired by the product in the Hopf algebra dual to the symmetric functions in noncommuting variables, $\NCSym^*$. This shuffle, which we call the arc-shuffle, amounts to shuffling the arc-diagrams of two set partitions while their labels remain fixed in place. Weak shuffle-compatibility for set partitions is defined similarly with a shifted version of the arc shuffle (this is the operation corresponding exactly to the product in $\NCSym^*$). We prove that the (shifted) shuffle algebras for (weakly) shuffle-compatible set partition statistics are quotients of $\NCSym^*$ (Propositions \ref{prop:setparquo} and  \ref{prop:alg_ncsym}). The weakly shuffle-compatible statistics we study for set partitions include the succession set and number, the set of singletons, and the sets of closers, openers, and transients. The shuffle-compatible statistics we study for set partitions include block number, rank, number of singletons, and numbers of occurrences of certain patterns (see Table \ref{tab:setP}).  Interestingly, the shifted shuffle algebra for the succession set is isomorphic to $\QSym$ (Theorem \ref{thm:Succ_WSC}), specifically mapping to the tie basis formulated in Theorem \ref{thm:T_basis}. We see a similar correspondence between the maximal block size statistic on set partitions and the maximal displacement of a single car statistic on parking functions (Corollary \ref{cor:maxBk}) as well as the cardinality of the block containing $1$ statistic on set partitions and the leading elements statistic on parking functions (Proposition \ref{prop:bk1}). The shifted shuffle algebras for the sets of transients, closers,  and openers are isomorphic to $\QSym$ (or a subalgebra of $\QSym$ in the first case), specifically mapping to the binary shuffle basis (Propositions \ref{prop:tr}, \ref{prop:opener}, and \ref{prop:cl}), while the shifted shuffle algebra for the singleton set corresponds to a different subalgebra of the shuffle algebra on binary words (Theorem \ref{thm:sg_set}). The shuffle algebra of the block sizes statistic proves to be isomorphic to $\Sym$, the algebra of symmetric functions (Theorem \ref{thm:bk}).
Finally, we close the paper with an appendix containing tables of statistics that are not (weakly) shuffle-compatible with a focus on parking functions and set partitions. 

\subsection*{Acknowledgements} The authors would like to thank Nat Thiem, Jessica Striker, and Stephanie van Willigenburg for helpful comments. The second author is supported in part by the Natural Sciences and Engineering Research Council of Canada.

\section{Preliminaries}\label{sec:prelim}

Some of the richest algebraic structures in combinatorics are Hopf algebras. We refer the reader to \cite{grinberg, Luoto2013} for background on Hopf algebras in general, and provide here only the relevant background needed. Note that we take $\mathbb{Q}$ as our base field for all algebras in this paper. 

The \emph{Hopf algebra of quasisymmetric functions}, denoted, $\QSym$ is defined as the subalgebra of $\mathbb{Q}[[x_1, x_2, \ldots]]$ such that for every tuple $(a_1, a_2, \ldots, a_k)$ of positive integers, any two monomials \[x_{i_1}^{a_1} x_{i_2}^{a_2} \cdots x_{i_k}^{a^k} \text{\quad and \quad }x_{j_1}^{a_1} x_{j_2}^{a_2} \cdots x_{j_k}^{a_k}\] with $i_1 < i_2 < \cdots < i_k$ and $j_1 < j_2 < \cdots < j_k$ have equal coefficients. 
The algebra of quasisymmetric functions is graded, $\QSym = \oplus_{n \geq 0} \QSym_n$ where $\QSym_n$ is the span of all quasisymmetric functions with homogeneous degree $n$. 
The dimension of $\QSym_n$ is equal to the number of integer compositions of $n$. An integer compositon of $n$ is a sequence of positive integers $\alpha = (\alpha_1, \alpha_2, \ldots, \alpha_k)$ such that $\sum_{i=1}^k \alpha_i = n$, and we write $\alpha \vDash n$.  
There is a bijection between integer compositions of $n$ and subsets of $[n-1]$. Specifically, for a composition $\alpha = (\alpha_1, \alpha_2, \ldots, \alpha_k)$, let $\Set(\alpha) = \{\alpha_1, \alpha_1 + \alpha_2, \ldots, \alpha_1 + \alpha_2 +\cdots + \alpha_{k-1}\}$. 
On the other hand, for a set $S = \{s_1 < s_2 < \cdots < s_{j}\} \subseteq [n-1]$, we have $\Comp(S) = (s_1, s_2-s_1, \ldots, s_{k} -s_{k-1}, n-s_{k})$.

The fundamental basis of $\QSym$ is given by $\{F_{\alpha}\}_{\alpha \vDash n}$ for the graded component $\QSym_n$. Let $\pi_{\alpha}$ be a permutation with $\Comp(\Des(\pi_{\alpha})) = \alpha$. The product of the fundamental basis elements is given by 
\[F_{\alpha}F_{\beta} = \sum_{\sigma \in \pi_{\alpha} \shuffle \pi_{\beta}} F_{\Comp(\Des(\sigma))}.\] It is because of the shuffle-compatibility of the descent set that this product is well-defined regardless of the choice of $\pi_{\alpha}$ and $\pi_{\beta}$. Moreover, the canonical isomorphism between $\mathcal{A}_{\St}^{\mathfrak{P}}$ and $\QSym$ is given by $[\pi]_{\Des} \mapsto F_{\Comp(\Des(\pi))}$. The quasisymmetric functions are of critical importance in algebraic combinatorics, with connections and applications to enumerative combinatorics, Macdonald polynomials, Coxeter groups, Hecke algebras, Kazhdan-Lusztig polynomials, and Schubert polynomials \cite{mason2018recenttrendsquasisymmetricfunctions}.

The Hopf algebra most relevant to shuffle-compatibility on permutations is the \emph{Malvenuto-Reutenaur Hopf algebra on permutations}, or the algebra of free quasisymmetric functions, $\FQSym$. 
This algebra is graded $\FQSym = \oplus_{n\geq0} \FQSym_n$ where $\FQSym_n$ has dimension $n!$. The $\mathbf{F}$-basis of $\FQSym$ is given by $\{\mathbf{F}_{\pi}\}_{\pi \in \mathfrak{S}_n}$ for the graded component $\FQSym_n$. 
For two permutations $\tau \in \mathfrak{S}_n$, $ \theta \in \mathfrak{S}_m$, let the shifted shuffle be defined as $\tau \shifted \theta = \tau \shuffle \theta[n]$ where $\theta[n] = (\theta_1 +n, \theta_2 +n, \ldots, \theta_m +n)$. 
The product on $\mathbf{F}$-basis elements is given by \[\mathbf{F}_{\tau}\mathbf{F}_{\theta} = \sum_{\pi \in \tau \shifted \theta} \mathbf{F}_{\pi}.\] 
This shifted shuffle product is used to define what we call \emph{weak shuffle-compatibility} for permutation statistics. This is a slightly more restrictive definition of the weak form of shuffle-compatibility mentioned (but not studied) in \cite{Ges18}. This phenomenon is studied (but not named) for peaks, valleys, double descents, and double ascents in \cite{vong2013algebraic} by Vong in the context of Laguerre histories.

\begin{definition}\label{def:weakperm}
    A permutation statistic $\St$ is \emph{weakly shuffle-compatible} if for any permutations $\tau, \theta \in \mathfrak{S}$, the multiset $\{\{ \St(\pi) : \pi \in \tau \shifted \theta \}\}$ depends only on $\St(\tau)$, $\St(\theta)$, $|\tau|$, and $|\theta|$.  
\end{definition}

For example, the number of inversions statistic (see Table \ref{tab:pfstats}) is weakly shuffle-compatible but not shuffle-compatible \cite{Ges18}. Any shuffle-compatible statistic is also weakly shuffle-compatible, and the algebraic constructions from weak shuffle-compatibility are, in fact, equivalent to those of shuffle-compatibility, as we show here. We denote the $\St$-equivalence class of $\pi \in \mathfrak{S}$ by $[\pi]^{\mathfrak{S}}_{\St} = \{\tau \in \mathfrak{S} : \St(\tau) = \St(\pi), |\tau|=|\pi|\}$, although we will drop the $\mathfrak{S}$ superscript when it is clear from context. Let $\mathfrak{S}/\St$ denote the set of all $\St$-equivalence classes on $\mathfrak{S}$.

\begin{definition}
Let $\St$ be a weakly shuffle-compatible statistic on permutations. The \emph{shifted shuffle algebra} of $\St$, denoted $\mathcal{A}^{\mathfrak{S}}_{\St}$, is defined as the algebra with basis elements $[\pi]_{\St}^{\mathfrak{S}} \in \mathfrak{S}/\St$ and the product \[[\tau]^{\mathfrak{S}}_{\St}[\theta]^{\mathfrak{S}}_{\St} = \sum_{\pi \in \tau \shifted \theta} [\pi]_{\St}^{\mathfrak{S}}.\]
\end{definition}

\begin{lemma}\label{lem:shift} If $\St$ is shuffle-compatible on permutations, then the shuffle algebra $\mathcal{A}_{\St}^{\mathfrak{P}}$ is isomorphic to the shifted shuffle algebra $\mathcal{A}_{\St}^{\mathfrak{S}}$.
\end{lemma}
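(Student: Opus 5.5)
The plan is to write down an explicit linear map and check that it is a bijection on bases and respects the products. Define $\phi:\mathcal{A}^{\mathfrak{S}}_{\St}\to\mathcal{A}^{\mathfrak{P}}_{\St}$ on basis elements by $\phi([\pi]^{\mathfrak{S}}_{\St})=[\pi]^{\mathfrak{P}}_{\St}$ for $\pi\in\mathfrak{S}$, and extend linearly. First observe that a shuffle-compatible statistic is weakly shuffle-compatible, so $\mathcal{A}^{\mathfrak{S}}_{\St}$ is defined. Indeed, for $\tau\in\mathfrak{S}_n$ and $\theta\in\mathfrak{S}_m$, condition (2) gives $\St(\theta[n])=\St(\theta)$ since $\std(\theta[n])=\theta$. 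The permutations $\tau$ and $\theta[n]$ are disjoint, and $\tau\shifted\theta=\tau\shuffle\theta[n]$. Condition (1) then shows that the multiset $\{\{\St(\pi):\pi\in\tau\shifted\theta\}\}$ depends only on $\St(\tau)$, $\St(\theta)$, $n$ and $m$.

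Next, check that $\phi$ is well defined and bijective on bases. It is well defined because both classes are cut out by the same condition on $\St$ and length. It is injective: if $[\pi]^{\mathfrak{P}}=[\sigma]^{\mathfrak{P}}$ for $\pi,\sigma\in\mathfrak{S}$, then $\St(\pi)=\St(\sigma)$ and $|\pi|=|\sigma|$, so $[\pi]^{\mathfrak{S}}=[\sigma]^{\mathfrak{S}}$. It is surjective by condition (2): any $\rho\in\mathfrak{P}$ satisfies $\St(\rho)=\St(\std(\rho))$ with $\std(\rho)\in\mathfrak{S}$, so $[\rho]^{\mathfrak{P}}=\phi([\std(\rho)]^{\mathfrak{S}})$. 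Hence $\phi$ is a linear isomorphism of graded vector spaces.

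Finally, check multiplicativity. For $\tau\in\mathfrak{S}_n$ and $\theta\in\mathfrak{S}_m$,
\[
\phi\bigl([\tau]^{\mathfrak{S}}_{\St}[\theta]^{\mathfrak{S}}_{\St}\bigr)=\sum_{\pi\in\tau\shuffle\theta[n]}[\pi]^{\mathfrak{P}}_{\St}=[\tau]^{\mathfrak{P}}_{\St}[\theta[n]]^{\mathfrak{P}}_{\St}=[\tau]^{\mathfrak{P}}_{\St}[\theta]^{\mathfrak{P}}_{\St}.
\]
The first equality uses that every $\pi\in\tau\shifted\theta$ already lies in $\mathfrak{S}_{n+m}$. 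The second is the definition of the product in $\mathcal{A}^{\mathfrak{P}}_{\St}$, applied to the disjoint representatives $\tau$ and $\theta[n]$. The last uses $[\theta[n]]^{\mathfrak{P}}=[\theta]^{\mathfrak{P}}$, which follows from condition (2).

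The only subtle point is that the product in $\mathcal{A}^{\mathfrak{P}}_{\St}$ requires disjoint representatives and must be independent of which representatives are chosen. This independence is exactly condition (1) of shuffle-compatibility, and the pair $(\tau,\theta[n])$ is such an admissible disjoint pair, so no further work is needed.
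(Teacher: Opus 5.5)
Your proof is correct and is essentially the paper's argument run in the opposite direction. The paper uses $\psi([\pi]^{\mathfrak{P}}_{\St})=[\std(\pi)]^{\mathfrak{S}}_{\St}$, which is exactly the inverse of your $\phi$, and likewise gets bijectivity from condition (2) and multiplicativity from condition (1) applied to the disjoint pair $\std(\tau)$, $\std(\theta)[|\tau|]$; your explicit check that $\mathcal{A}^{\mathfrak{S}}_{\St}$ is well defined (via weak shuffle-compatibility) fills in a detail the paper leaves implicit.
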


\begin{proof}
    Consider the map $\psi: \mathcal{A}^{\mathfrak{P}}_{\St} \rightarrow \mathcal{A}^{\mathfrak{S}}_{\St}$ defined by $[\pi]^{\mathfrak{P}}_{\St} \mapsto [\std(\pi)]^{\mathfrak{S}}_{\St}$.  Because $\St$ is shuffle-compatible, $\St(\pi) = \St(\std(\pi))$ for any $\pi \in \mathfrak{P}$. As a result, $[\std(\pi)]^{\mathfrak{S}}_{\St}$ is exactly the nonempty subset of $[\pi]^{\mathfrak{P}}_{\St}$ made up of permutations in $\mathfrak{S}$. Thus, $\psi$ is a bijection. Observe that \[\psi([\tau]_{\St}^{\mathfrak{P}}[\theta]^{\mathfrak{P}}_{\St}) = \psi(\sum_{\pi \in \tau \shuffle \theta} [\pi]^{\mathfrak{P}}_{\St}) = \sum_{\pi \in \tau \shuffle \theta} [\std(\pi)]^{\mathfrak{S}}_{\St}.\] On the other hand, \[\psi([\tau]^{\mathfrak{P}}_{\St})\psi([\theta]^{\mathfrak{P}}_{\St}) = [\std(\tau)]^{\mathfrak{S}}_{\St}[\std(\theta)]_{\St}^{\mathfrak{S}} = \sum_{\rho \in \std(\tau) \shifted \std(\theta)} [\rho]^{\mathfrak{S}}_{\St}.\] These two expressions are equivalent given that $\std(\tau) \shifted \std(\theta) = \std(\tau) \shuffle \std(\theta)[|\tau|]$ and the shuffle-compatibility of $\St$ implies that $\{\{ \St(\pi) : \pi \in \tau \shuffle \theta \}\} = \{\{\St(\rho) : \rho \in \std(\tau) \shuffle \std(\theta)[|\tau|] \}\}$. Therefore, $\psi$ is an isomorphism.
\end{proof}

This construction allows us to prove a new result relating to shuffle algebras and their relationship to the Malvenuto-Reutenauer algebra Hopf algebra on permutations. 

\begin{theorem}\label{thm:weakFQSYM}
Let $\St$ be a statistic on permutations.
\begin{enumerate}
   \item  If $\St$ is a weakly shuffle-compatible statistic, the shifted shuffle algebra $\mathcal{A}_{\St}^{\mathfrak{S}}$ is a quotient of $\FQSym$. 
   \item If $\St$ is a shuffle-compatible statistic, the shuffle algebra $\mathcal{A}^{\mathfrak{P}}_{\St}$ is a quotient of $\FQSym$.
\end{enumerate}
\end{theorem}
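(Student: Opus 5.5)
The plan is to define the natural projection from $\FQSym$ onto the shifted shuffle algebra and check that it is a surjective algebra homomorphism; its kernel is then an ideal and the first isomorphism theorem gives the quotient. Concretely, let $\phi:\FQSym\to\mathcal{A}^{\mathfrak{S}}_{\St}$ be the linear map determined by $\mathbf{F}_\pi\mapsto[\pi]^{\mathfrak{S}}_{\St}$ for every $\pi\in\mathfrak{S}$. Since $\{\mathbf{F}_\pi\}$ is a basis of $\FQSym$, this is well defined. It is surjective because every class in $\mathfrak{S}/\St$ is of the form $[\pi]^{\mathfrak{S}}_{\St}$ for some $\pi\in\mathfrak{S}$. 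It also preserves grading, and it sends the unit $\mathbf{F}_{\emptyset}$ to the class of the empty permutation, which is the unit of $\mathcal{A}^{\mathfrak{S}}_{\St}$.

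The key step is multiplicativity on basis elements. For $\tau\in\mathfrak{S}_n$ and $\theta\in\mathfrak{S}_m$ we compute
\[\phi(\mathbf{F}_\tau\mathbf{F}_\theta)=\phi\Big(\sum_{\pi\in\tau\shifted\theta}\mathbf{F}_\pi\Big)=\sum_{\pi\in\tau\shifted\theta}[\pi]^{\mathfrak{S}}_{\St},\]
and by definition of the shifted shuffle algebra this equals $[\tau]^{\mathfrak{S}}_{\St}[\theta]^{\mathfrak{S}}_{\St}=\phi(\mathbf{F}_\tau)\phi(\mathbf{F}_\theta)$. Bilinearity then extends this to all of $\FQSym$.

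The only subtle point is that the product in $\mathcal{A}^{\mathfrak{S}}_{\St}$ is well defined, i.e.\ independent of the representatives $\tau,\theta$ chosen for the classes. This is exactly where weak shuffle-compatibility (Definition \ref{def:weakperm}) enters. The multiset $\{\{\St(\pi):\pi\in\tau\shifted\theta\}\}$ depends only on $\St(\tau)$, $\St(\theta)$, $|\tau|$ and $|\theta|$. Hence the formal sum $\sum_{\pi\in\tau\shifted\theta}[\pi]_{\St}$ depends only on the classes of $\tau$ and $\theta$, since classes are determined by the value of $\St$ together with the length, and the shuffled permutation has length $n+m$. I would state this explicitly and also note that associativity of $\mathcal{A}^{\mathfrak{S}}_{\St}$ follows from that of $\FQSym$ via the surjection. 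With these facts, $\mathcal{A}^{\mathfrak{S}}_{\St}\cong\FQSym/\ker\phi$, which proves part (1).

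For part (2), a shuffle-compatible statistic is in particular weakly shuffle-compatible. To see this, take $\tau,\theta\in\mathfrak{S}$: then $\tau$ and $\theta[n]$ are disjoint permutations, condition (2) gives $\St(\theta[n])=\St(\theta)$, and condition (1) gives the required dependence of the multiset. So part (1) applies, and $\mathcal{A}^{\mathfrak{S}}_{\St}$ is a quotient of $\FQSym$. By Lemma \ref{lem:shift}, $\mathcal{A}^{\mathfrak{P}}_{\St}\cong\mathcal{A}^{\mathfrak{S}}_{\St}$, so composing $\phi$ with the inverse of the isomorphism $\psi$ gives a surjective homomorphism $\FQSym\to\mathcal{A}^{\mathfrak{P}}_{\St}$, which proves part (2). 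I expect no real obstacle here; the main care needed is the well-definedness check in the previous paragraph.
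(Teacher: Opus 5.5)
Your proposal is correct and follows the paper's proof: the same map $\mathbf{F}_\pi \mapsto [\pi]^{\mathfrak{S}}_{\St}$, the same multiplicativity computation on basis elements, and part (2) obtained from part (1) via Lemma \ref{lem:shift}. Your explicit checks that the product on $\mathcal{A}^{\mathfrak{S}}_{\St}$ is well defined, and that shuffle-compatibility implies weak shuffle-compatibility (using $\St(\theta[n])=\St(\theta)$), fill in details the paper leaves implicit.
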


\begin{proof}
    Let $\St$ be a weakly shuffle-compatible statistic. Consider the map $\varsigma: \FQSym \rightarrow \mathcal{A}^{\mathfrak{S}}_{\St}$. defined by $\mathbf{F}_{\pi} \rightarrow [\pi]_{\St}^{\mathfrak{S}}$. The map is clearly surjective. Additionally, \[\varsigma(\mathbf{F}_{\tau}\mathbf{F}_{\theta}) = \varsigma(\sum_{\pi \in \tau \shifted \theta} \mathbf{F}_{\pi})= \sum_{\pi \in \tau \shifted \theta} [\pi]^{\mathfrak{S}}_{\St} = [\tau]^{\mathfrak{S}}_{\St}[\theta]^{\mathfrak{S}}_{\St}=\varsigma(\mathbf{F}_{\tau})\varsigma(\mathbf{F}_{\theta}).\] Since $\varsigma$ is a surjective morphism, we have that $\mathcal{A}^{\mathfrak{S}}_{\St}$ is isomorphic to $\FQSym/\ker(\varsigma)$. This proves part (1) of the theorem. If $\St$ is shuffle-compatible, part (2) of the theorem follows from part (1) using Lemma \ref{lem:shift}.
\end{proof}

\section{Words}\label{sec:words}

We define a \emph{word} of length $n$ to be a sequence of positive integers $\alpha = (\alpha_1, \ldots, \alpha_{n})$. Let $W_n$ denote the set of words of length $n$ and let $W = \cup_{n \geq 0}W_n$. Given a word $\alpha$ with unique letters $a_1 < a_2 < \cdots <a_k$, let $\pack(\alpha)$ be the word obtained by replacing each $a_i$ with $i$. We say a word is \emph{packed} if $\pack(\alpha)=\alpha$, or in other words, if the letters used are exactly the elements of some set $\{1,2, \ldots, k\}$.

 We now recall various statistics on words.  Many of these statistics function similarly to corresponding statistics on permutations, except that it is now possible to have ties in addition to descents and ascents. Note that ties are sometimes referred to as plateaux in the literature \cite{Mona}.

\begin{table}[h]
    \centering
    \begin{tabular}{|c|c|} \hline
        \textbf{Name} & \textbf{Definition} \\ \hline
        Descent set & $\Des(\alpha) = \{ i : \alpha_i > \alpha_{i+1}\}$   \\ \hline
        Descent number & $\des(\alpha) = |\Des(\alpha)|$  \\ \hline
         Major index & $\maj(\alpha)=\sum_{\alpha_i>\alpha_{i+1}}i=\sum_{i\in\Des(\alpha)}i$   \\ \hline
        Ascent set & $\Asc(\alpha) = \{i : \alpha_i < \alpha_{i+1}\}$  \\ \hline
        Ascent number & $\asc(\alpha) = |\Asc(\alpha)|$  \\ \hline
         Cliff set & $\Cliff(\alpha) = \{ i : \alpha_{i-1} \leq \alpha_i > \alpha_{i+1} \}$ \\ \hline
        Cliff number & $\cliff(\alpha)=|\Cliff(\alpha)|$ \\ \hline
        Tie set & $\Tie(\alpha) = \{ i : \alpha_i = \alpha_{i+1} \}$  \\ \hline
        Tie number & $ \tie(\alpha) = |\Tie(\alpha)|$  \\ \hline
    \end{tabular}
    \caption{Statistics on words \cite{cruz2024some, 
    Ges18}}
    \label{tab:wordstats}
\end{table}

The Hopf algebra $\WQSym$ of \emph{word quasisymmetric functions}, also known as $\NCQSym$ \cite{BERGERON_2009}, is the algebra of noncommutative polynomial invariants of Hivert's quasi-symmetrizing action \cite{hivert199}. It also relates to algebras introduced by Chapoton involving associahedra and hypercubes \cite{Chapoton2000}. Its dual, $\WQSym^*$, is isomorphic to the Solomon-Tits algebra and is a subalgebra of the parking function quasisymmetric functions $\PQSym$ \cite{BERGERON_2009, novelli2006polynomialrealizationstrialgebras}. The algebra $\WQSym^*$ has a basis made up of elements $M^*_{\alpha}$ for every packed word $\alpha$. 
This basis has the product rule \[M^*_{\alpha}  M^*_{\beta} = \sum_{\gamma \in \alpha \shifted  \beta} M^*_{\gamma}.\]

We do not explicitly consider weak shuffle-compatibility on packed words in this paper, but packed words are a subset of parking functions, which we consider in Section \ref{sec:parking}. Any statistic that is weakly shuffle-compatible on parking functions will also be weakly shuffle-compatible on packed words, and parking functions provide a wider breadth of statistics to study.

\subsection{Shuffle-compatibility on words} We extend shuffle-compatibility to words using the natural extension of the shuffle product to words.
The \emph{shuffle product} of two words $\alpha$ and $\beta$, denoted $\alpha \shuffle \beta$, is the multiset of words $\gamma$ made up of the elements in $\alpha$ and $\beta$ with $\alpha$ and $\beta$ as distinct subwords. In other words, it is all possible ways of interleaving (or shuffling) the words $\alpha$ and $\beta$. 

\begin{example} 
The shuffle of words $(1,1)$ and $(2,3)$ is given by \[ \{\{ (1,1,2,3), (1,2,1,3), (2,1,1,3), (1, 2, 3, 1), (2, 1, 3, 1), (2, 3, 1, 1)\}\}.\]
    The shuffle product of words $(1,1)$ and $(1,2)$ is given by \[ \{\{ (1,1,1,2), (1,1,1,2), (1,1,1,2), (1, 1, 2, 1), (1, 1, 2, 1), (1, 2, 1, 1)\}\}.\]
\end{example}

\begin{definition}
    A statistic $\St$ is \emph{shuffle-compatible on words} if 
    \begin{enumerate}
    \item the multiset $\{\{ \St(\gamma) : \gamma \in \alpha \shuffle \beta \}\}$ is determined by $|\alpha|$, $|\beta|$, $\St(\alpha)$, and $\St(\beta)$ for any two disjoint words $\alpha, \beta \in W$, and
    \item if $\pack(\alpha)=\pack(\beta)$, then $\St(\alpha)=\St(\beta)$ for all words $\alpha, \beta \in W$.
    \end{enumerate}
\end{definition}

Equivalently, $\St$ is shuffle-compatible on words if \[\{\{\St(\gamma) : \gamma \in \alpha \shuffle \beta\}\}=\{\{\St(\gamma) : \gamma \in \kappa \shuffle \upsilon\}\}\] for any two pairs of disjoint words $\{\alpha, \beta\}$ and $\{\kappa, \upsilon\}$ where  $|\alpha|=|\kappa|$, $|\beta|=|\upsilon|$, $\St(\alpha) = \St(\kappa)$ and $\St(\beta)=\St(\upsilon)$. We choose to consider only shuffles of disjoint words, as shuffle-compatibility of permutations takes into account only pairs of disjoint permutations. Considering non-disjoint pairs of words would cause most of our statistics to fail to meet the shuffle-compatibility criteria.
    
    Let $[\alpha]_{\St}^W = \{\beta \in W : \St(\alpha)=\St(\beta), |\alpha|=|\beta|\}$ be the $\St$-equivalence class of  $\alpha \in W$, where we may drop the $W$ superscript when it is clear from context. Let $W/\St$ denote the set of all such equivalence classes.

\begin{definition}
 The $\St$ shuffle algebra, denoted $\mathcal{A}^{W}_{\St}$, is defined as the algebra with basis elements $[\alpha]_{\St}^W \in W/\St$ and the product \[ [\alpha]^W_{\St} [\beta]^W_{\St} = \sum_{ \gamma \in \alpha \shuffle \beta} [\gamma]^W_{\St}.\]
\end{definition}

Due to our definition of a statistic on words and use of the shuffle product, all our shuffle algebras will have the following property.

\begin{proposition}\label{prop:alg_wqsym}
    If $\St$ is shuffle-compatible on words, then $\mathcal{A}^{W}_{\St}$ is a quotient algebra of $\WQSym^*$.
\end{proposition}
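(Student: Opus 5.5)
We mirror the proof of Theorem \ref{thm:weakFQSYM}. The natural candidate is the linear map $\varsigma\colon \WQSym^* \to \mathcal{A}^W_{\St}$ defined on the basis of packed words by $M^*_{\alpha} \mapsto [\alpha]^W_{\St}$. Our plan is to show that $\varsigma$ is a surjective algebra morphism. Then $\mathcal{A}^W_{\St} \cong \WQSym^*/\ker(\varsigma)$.

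\textbf{Surjectivity.} Take any word $\alpha \in W$. By condition (2) of shuffle-compatibility on words, $\St(\alpha) = \St(\pack(\alpha))$, and $|\pack(\alpha)| = |\alpha|$. Hence $[\alpha]^W_{\St} = [\pack(\alpha)]^W_{\St} = \varsigma(M^*_{\pack(\alpha)})$, so every basis element of $\mathcal{A}^W_{\St}$ lies in the image.

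\textbf{Multiplicativity.} For packed words $\alpha$ and $\beta$, we have $\varsigma(M^*_\alpha M^*_\beta) = \sum_{\gamma \in \alpha \shifted \beta} [\gamma]^W_{\St}$. Here $\alpha \shifted \beta = \alpha \shuffle \beta[k]$, where $k$ is the maximal letter of $\alpha$, so that $\alpha$ and $\beta[k]$ are disjoint. On the other side, $\varsigma(M^*_\alpha)\varsigma(M^*_\beta) = [\alpha]^W_{\St}[\beta]^W_{\St} = \sum_{\gamma \in \alpha \shuffle \beta}[\gamma]^W_{\St}$. The main obstacle is reconciling these two sums.

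The point is that the product in $\mathcal{A}^W_{\St}$ is defined via representatives, so it is well defined only because of shuffle-compatibility. Condition (1) says that for disjoint representatives the multiset of $\St$-values of the shuffle depends only on the lengths and $\St$-values of the factors. For this reason, we read the product in $\mathcal{A}^W_{\St}$ as computed using any disjoint pair of representatives. Since $\beta[k]$ has the same pack as $\beta$, condition (2) gives $[\beta[k]]^W_{\St} = [\beta]^W_{\St}$. Therefore $(\alpha, \beta[k])$ is a valid disjoint pair of representatives for the classes $[\alpha]^W_{\St}$ and $[\beta]^W_{\St}$.

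Moreover, an equivalence class $[\gamma]^W_{\St}$ is determined by $|\gamma|$ and $\St(\gamma)$. So equal multisets of $\St$-values over words of the same length yield equal sums of classes. Hence $\sum_{\gamma \in \alpha \shuffle \beta[k]}[\gamma]^W_{\St}$ equals the product $[\alpha]^W_{\St}[\beta]^W_{\St}$, which gives multiplicativity.

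Finally, the unit (the empty word) maps to the unit, and extending linearly completes the proof.
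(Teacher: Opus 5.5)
Your proposal is correct and follows essentially the same route as the paper's proof. Both use the map $M^*_{\alpha} \mapsto [\alpha]^W_{\St}$, get surjectivity from condition (2) via packing, and get multiplicativity by treating $\beta[k]$ as a disjoint representative of $[\beta]^W_{\St}$. You also make explicit two points the paper leaves implicit: the product must be computed with disjoint representatives, and the shift is by the maximal letter of $\alpha$.
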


\begin{proof} Let $\St$ be a shuffle-compatible statistic on words. Consider the map $\varsigma: \WQSym^* \rightarrow \mathcal{A}^W_{\St}$ given by $M^*_{\alpha} \mapsto [\alpha]_{\St}$. Because $\pack(\alpha)= \pack(\beta)$ implies $\St(\alpha)=\St(\beta)$, each $\St$-equivalence class on words contains a packed word, so the map is surjective. Additionally, using the fact that $\beta[n] \in [\beta]_{\St}$, \[\varsigma(M^*_{\alpha}M^*_{\beta}) = \varsigma(\sum_{\gamma \in \alpha \shifted \beta} M^*_{\gamma}) = \sum_{\gamma \in \alpha \shifted \beta } [\gamma]_{\St} = \sum_{\gamma \in \alpha \shuffle \beta[n]} [\gamma]_{\St} = [\alpha]_{\St}[\beta]_{\St} = \varsigma(M^*_{\alpha})\varsigma(M^*_{\beta}).\] Thus, $\varsigma$ is a surjective morphism, so $\mathcal{A}_{\St}^{W}$ is isomorphic to $\WQSym^*/\ker(\varsigma)$.
\end{proof}

We also have the following relationship between shuffle algebras from words and shuffle algebras from permutations.

\begin{proposition}\label{prop:perm_subalg}
    Let $\pss$ be the subset of equivalence classes in $W/\St$ that contain some permutation.  Then, $\mathcal{A}^{\mathfrak{P}}_{\St}$ is isomorphic to the subalgebra of $\mathcal{A}^{W}_{\St}$ spanned by $\pss$.
\end{proposition}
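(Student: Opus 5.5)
We take $\St$ to be shuffle-compatible on words; then its restriction to $\mathfrak{P}$ is shuffle-compatible on permutations, because permutations are words, disjoint permutations are disjoint words, and $\std(\pi)=\std(\pi')$ for permutations means $\pack(\pi)=\pack(\pi')$. The plan is to define $\phi:\mathcal{A}^{\mathfrak{P}}_{\St}\to\mathcal{A}^{W}_{\St}$ by $[\pi]^{\mathfrak{P}}_{\St}\mapsto[\pi]^{W}_{\St}$. We will show that $\phi$ is an injective algebra morphism whose image is the span of $\pss$.

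\emph{Well-definedness and injectivity.} Both classes are determined by the same data, namely the pair $(\St(\pi),|\pi|)$. Concretely, $[\pi]^{\mathfrak{P}}_{\St}=[\pi]^{W}_{\St}\cap\mathfrak{P}$. Hence $\pi$ and $\pi'$ lie in the same $\mathfrak{P}$-class if and only if they lie in the same $W$-class, which gives both well-definedness and injectivity on basis elements. Each element of $\pss$ contains some permutation $\pi$ and is therefore $\phi([\pi]^{\mathfrak{P}}_{\St})$. So $\phi$ is a bijection between the basis $\mathfrak{P}/\St$ and $\pss$. Extending linearly, $\phi$ is a linear isomorphism onto the span of $\pss$, since distinct basis elements of $\mathcal{A}^W_{\St}$ are linearly independent.

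\emph{Multiplicativity.} Take disjoint permutations $\tau,\theta$. Then $\tau\shuffle\theta$ is the same collection whether it is viewed as a shuffle of permutations or as a shuffle of words; because the letters are distinct, the multiset has no repetitions and consists of permutations. It follows that
\[\phi([\tau][\theta])=\sum_{\pi\in\tau\shuffle\theta}[\pi]^W_{\St}=[\tau]^W_{\St}[\theta]^W_{\St}.\]
Each summand lies in $\pss$, so the span of $\pss$ is closed under multiplication and is a subalgebra. The unit, the class of the empty word, also lies in $\pss$.

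\emph{The subtle point.} The product in $\mathcal{A}^{\mathfrak{P}}_{\St}$ is defined using arbitrary representatives, so we must make sure that computing $[\tau]^W_{\St}[\theta]^W_{\St}$ with permutation representatives agrees with the word product. This holds because the word product is well defined by shuffle-compatibility on words: any pair of disjoint representatives gives the same multiset of statistics. If $\tau,\theta$ are not disjoint, we first replace $\theta$ by a shift $\theta[m]$ with $m$ large. This keeps the statistic unchanged by condition (2), since $\pack(\theta[m])=\pack(\theta)$, and makes the two permutations disjoint.
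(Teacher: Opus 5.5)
Your proposal is correct and follows essentially the same route as the paper. Both use the map $[\pi]^{\mathfrak{P}}_{\St}\mapsto[\pi]^{W}_{\St}$, get injectivity from $[\pi]^{\mathfrak{P}}_{\St}=[\pi]^{W}_{\St}\cap\mathfrak{P}$, and obtain multiplicativity and closure by computing the word product with disjoint permutation representatives, shifting if necessary. You also state explicitly that the restriction of $\St$ to $\mathfrak{P}$ is shuffle-compatible on permutations, so that $\mathcal{A}^{\mathfrak{P}}_{\St}$ is defined; the paper leaves this implicit.
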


\begin{proof}
Define a map $f:\mathcal{A}^{\mathfrak{P}}_{\St} \rightarrow \mathcal{A}^{W}_{\St} $ where $f([\pi]^{\mathfrak{P}}_{\St}) = [\pi]^{W}_{\St}$ for $\pi \in \mathfrak{P}$. The image of the map $f$ is then given exactly by the span of $\pss$ in $\mathcal{A}_{\St}^W$. If $[\alpha]_{\St}^{W}$ and $[\beta]_{\St}^{W}$ are in the image, then they both contain permutations, say $\alpha'$ and $\beta'$. 
Note that we select these to be disjoint by shifting if necessary.
We can express their product as \[ [\alpha]_{\St}^{W}[\beta]_{\St}^{W} = \sum_{\gamma \in \alpha \shuffle \beta} [\gamma]^W_{\St} =  \sum_{\kappa \in \alpha' \shuffle \beta'} [\kappa]^{W}_{\St}.\] 
Since each $\kappa \in \alpha' \shuffle \beta'$ will also be a permutation, each $[\kappa]^{W}_{\St}$ is the image of some $[\kappa]^{\mathfrak{P}}_{\St}$ under $f$. Thus, the image of $f$ is closed under multiplication and is a subalgebra of $\mathcal{A}_{\St}^W$.

This map is injective because no two permutations in different equivalence classes of $\St$ over $\mathfrak{P}$ will ever be in the same equivalence class of $\St$ over $W$. Thus, it remains to check that $f$ is compatible with the product operation. Given $[\tau]_{\St}^{\mathfrak{P}}$, $[\theta]_{\St}^{\mathfrak{P}}$, observe 
\[f([\tau]_{\St}^{\mathfrak{P}}[\theta]_{\St}^{\mathfrak{P}}) = f(\sum_{\pi \in \tau \shuffle \theta} [\pi]^{\mathfrak{P}}_{\St}) = \sum_{\pi \in \tau \shuffle \theta} [\pi]^{W}_{\St} = [\tau]_{\St}^{W} [\theta]_{\St}^{W} = f([\tau]_{\St}^{\mathfrak{P}} f([\theta]_{\St}^{\mathfrak{P}}).\]
Thus $f$ is an isomorphism from $\mathcal{A}^{\mathfrak{P}}_{\St}$ to the subalgebra of $\mathcal{A}^{W}_{\St}$ spanned by $\pss$.
\end{proof}

We now consider statistics on words, beginning with the descent set statistic. The following result generalizes \cite[Lemma 2.8]{Ges18} to disjoint words. The proof follows the original closely. Given two weak compositions with the same number of parts $J = (J_1, \ldots,J_k)$ and $K = (K_1, \ldots, K_k)$, let $J+K = (J_1 + K_1, J_2 + K_2, \ldots, J_k + K_k)$.
If $\alpha$ is a word with descent set $\Des(\alpha)$, then let $\Comp(\alpha) = \Comp(\Des(\alpha))$.

\begin{lemma}\label{lem:des_lemma}
    Let $\alpha$ and $\beta$ be disjoint words with $|\alpha|=n$ and $|\beta|=m$. Let $A \subseteq [m+n-1]$ and let $L = \Comp(A)$. Then the number of shuffles of $\alpha$ and $\beta$ with descent set contained in $A$ is equal to the number of weak compositions $J \vDash n$ and $K \vDash m$ such that $J$ is a refinement of $\Comp(\alpha)$, $K$ is a refinement of $\Comp(\beta)$, $J$ and $K$ have the same number of parts as $L$, and $J+K=L$.
\end{lemma}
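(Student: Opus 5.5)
We construct an explicit bijection, following the proof of \cite[Lemma 2.8]{Ges18}. Write $L = (L_1, \ldots, L_k)$, so that $A = \Set(L)$ and the positions $1, \ldots, m+n$ of a shuffle $\gamma \in \alpha \shuffle \beta$ are cut into $k$ consecutive segments of lengths $L_1, \ldots, L_k$. Saying $\Des(\gamma) \subseteq A$ is the same as saying that $\gamma$ is weakly increasing on each segment. Since the shuffle product is a multiset, we treat a shuffle as an interleaving, that is, as a choice of which positions come from $\alpha$. The map is: to such a shuffle $\gamma$ assign $J = (J_1, \ldots, J_k)$ and $K = (K_1, \ldots, K_k)$, where $J_i$ (resp. $K_i$) is the number of letters of $\alpha$ (resp. $\beta$) lying in the $i$-th segment. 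By construction $J \vDash n$ and $K \vDash m$ are weak compositions with $k$ parts and $J + K = L$.

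Next we check that $J$ refines $\Comp(\alpha)$. The letters of $\alpha$ in a single segment form a contiguous factor of $\alpha$. They are a subsequence of a weakly increasing word, so this factor is weakly increasing and contains no descent of $\alpha$ in its interior. Hence every descent of $\alpha$ falls at a boundary between consecutive nonempty parts of $J$, which is exactly the statement that $J$ (ignoring zero parts) refines $\Comp(\alpha)$. The same argument applies to $K$ and $\beta$.

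For the inverse, take $(J, K)$ satisfying the conditions. Cut $\alpha$ into consecutive factors of lengths $J_1, \ldots, J_k$ and $\beta$ into factors of lengths $K_1, \ldots, K_k$. The refinement conditions guarantee that each factor is weakly increasing. For each $i$, merge the $i$-th factor of $\alpha$ with the $i$-th factor of $\beta$ into a weakly increasing word of length $L_i$, and concatenate these merged words. The result is a shuffle of $\alpha$ and $\beta$ with descent set contained in $A$, and applying the forward map to it returns $(J, K)$.

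The main obstacle is showing that this merge is unique, so that the two maps are inverse and the count is exact. If $\alpha$ and $\beta$ shared a letter, two equal letters could be interleaved in two ways and the same $(J, K)$ would correspond to more than one interleaving. Disjointness of $\alpha$ and $\beta$ rules this out. Within a single factor, repeated letters all come from the same word, so their relative order is already fixed by that word. It follows that for each $i$ there is exactly one interleaving of the two factors that is weakly increasing. This is the only point where the argument departs from the permutation case, and it is where the disjointness hypothesis is used.
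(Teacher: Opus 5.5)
Your proof is correct and follows essentially the paper's argument. Both use the same bijection between shuffles $\gamma$ with $\Des(\gamma)\subseteq A$ and pairs $(J,K)$: you cut $\gamma$ along $A$ to read off $J$ and $K$, and conversely merge the $i$-th factors of $\alpha$ and $\beta$ in weakly increasing order, with disjointness making that merge unique. You present the two directions in the opposite order and are more explicit than the paper about why the merge is unique as an interleaving, but the content is the same.
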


\begin{proof}
    Let $L$ have $k$ parts and let $J$ and $K$ satisfy the conditions above. Create a barred word from $\alpha$ by inserting bars after the $i^{\text{th}}$ entry for each $i \in \Set(J)$, and do the same for $\beta$ using $\Set(K)$. Thus $\alpha$ and $\beta$ will both be divided into $k$ blocks in which the letters must be weakly increasing. 
    To associate these barred words with a shuffle of $\alpha$ and $\beta$, do the following procedure. For each $1 \leq i \leq k$, shuffle the $i^{\text{th}}$ blocks of $\pi$ and $\beta$ together so that the entries are in weakly increasing order. Since the entries of $\alpha$ and $\beta$ are disjoint, there is only one way to do this for each block. Concatenating each of these new combined blocks in order will yield a shuffle of $\alpha$ and $\beta$ with a descent set contained in $A$.
    We show this is a bijection by giving the inverse map. Let $\gamma$ be a shuffle of $\alpha$ and $\beta$ with $\Des(\gamma) \subseteq A$ with $|A| = k-1$. Insert bars after the $i^{\text{th}}$ element of $\gamma$ for each $i \in A$, which will include the descents of $\gamma$, to create a composition $L$.  This induces a way of inserting $k-1$ bars into $\alpha$ and $\beta$ at the same relative positions of each to create compositions $J$ and $K$ respectively, both with exactly $k$ parts. Note that this method also creates $J$ and $K$ exactly so that $J+K=L$. Further, note that for any two elements that create a descent in $\alpha$ or $\beta$, there will have to be a descent somewhere between those two elements in $\gamma$.  As a result, we will place a bar between those two elements in $\gamma$ and also in $\alpha$ or $\beta$. This means that $J$ and $K$ will be refinements of $\Comp(\alpha)$ and $\Comp(\beta)$, respectively.
\end{proof}

By the inclusion-exclusion principle, the previous lemma implies that the number of shuffles of $\alpha$ and $\beta$ with a descent set equal to $A$ depends only on $\Des(\alpha)$, $\Des(\beta)$, $|\alpha|$, and $|\beta|$. It follows that the descent set statistic $\Des$ is shuffle-compatible on words.

\begin{theorem}\label{thm:SCofDes}(Shuffle-compatibility of the descent set)
\begin{enumerate}
    \item The descent set statistic $\Des$ is shuffle-compatible on words.
    \item The linear map on $\mathcal{A}^{W}_{\Des}$ defined by \[[\alpha]_{\Des} \rightarrow F_{\Comp(\Des(\alpha))},\] is an algebra isomorphism between $\mathcal{A}^{W}_{\Des}$ and $\QSym$. 
\end{enumerate}
    
\end{theorem}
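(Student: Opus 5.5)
For part (1), I would check the two conditions in the definition of shuffle-compatibility on words. Condition (2) is immediate: $\pack$ replaces letters by their ranks, which preserves every comparison $\alpha_i > \alpha_{i+1}$, so $\pack(\alpha)=\pack(\beta)$ forces $\Des(\alpha)=\Des(\beta)$. For condition (1), I would use Lemma \ref{lem:des_lemma}. It shows that for every $A \subseteq [m+n-1]$, the number $N_{\subseteq A}$ of shuffles with descent set contained in $A$ is a count of pairs of weak compositions. That count depends only on $\Comp(\alpha)$, $\Comp(\beta)$ and $L=\Comp(A)$, hence only on $\Des(\alpha)$, $\Des(\beta)$, $n$ and $m$. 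Möbius inversion on the Boolean lattice then gives $N_{=A}=\sum_{B\subseteq A}(-1)^{|A\setminus B|}N_{\subseteq B}$, which depends on the same data. This is exactly the multiset condition.

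For part (2), I would first note that the map $[\alpha]_{\Des}\mapsto F_{\Comp(\Des(\alpha))}$ is well defined and bijective on bases. Equivalence classes in $W_n/\Des$ are indexed by pairs $(n, S)$ with $S\subseteq[n-1]$, and every such $S$ is realized, already by a permutation. The compositions of $n$ index the basis $\{F_\alpha\}$ of $\QSym_n$, and $n=0$ matches the unit.

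It then remains to check multiplicativity, and the cleanest route reuses known structure. Given classes $[\alpha]_{\Des}$ and $[\beta]_{\Des}$, I would choose disjoint permutations $\tau$ and $\theta$ with $\Des(\tau)=\Des(\alpha)$ and $\Des(\theta)=\Des(\beta)$, shifting $\theta$ if necessary. By part (1), the product $[\alpha][\beta]$ in $\mathcal{A}^W_{\Des}$ equals $\sum_{\gamma\in\tau\shuffle\theta}[\gamma]_{\Des}$. Its image is $\sum_{\gamma\in\tau\shuffle\theta}F_{\Comp(\Des(\gamma))}$, which is precisely the product rule $F_{\Comp(\Des\tau)}F_{\Comp(\Des\theta)}$ recalled in Section \ref{sec:prelim}. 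Equivalently, the image of $\mathcal{A}^{\mathfrak P}_{\Des}$ under Proposition \ref{prop:perm_subalg} is all of $\mathcal{A}^W_{\Des}$, since every class contains a permutation, and $\mathcal{A}^{\mathfrak P}_{\Des}\cong\QSym$.

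The main obstacle is condition (1), and specifically the part of Lemma \ref{lem:des_lemma} that handles repeated letters. Within a word, equal letters do not create a descent, so each block of a barred word is only weakly increasing. Merging two blocks of disjoint words into a weakly increasing block must still be unique, and it must not introduce descents. This holds because letters of $\alpha$ and $\beta$ never coincide, and the lemma already covers it. Once the lemma is granted, the remaining steps are formal.
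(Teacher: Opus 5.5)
Your proposal is correct and follows the paper's route. Part (1) comes from Lemma \ref{lem:des_lemma} plus inclusion--exclusion; you also verify the $\pack$ condition explicitly, which the paper leaves implicit. Part (2) rests on the fact that every $\Des$-class contains a permutation, so Proposition \ref{prop:perm_subalg} combined with the known isomorphism $\mathcal{A}^{\mathfrak{P}}_{\Des}\cong\QSym$ gives the result, exactly as in the paper.
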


\begin{proof} Note that each $\Des$-equivalence class contains at least one permutation. Thus, following Proposition~\ref{prop:perm_subalg}, we have that $\mathcal{A}_{\Des}^{W}$ is isomorphic to $\mathcal{A}^{\mathfrak{P}}_{\Des}$ via the map $[\alpha]^{W}_{\Des} \rightarrow [\pi]^{\mathfrak{P}}_{\Des}$ where $[\pi]^{\mathfrak{P}}_{\Des} \subseteq [\alpha]^{W}_{\Des}$. Composing this with the isomorphism $[\pi]^{\mathfrak{P}}_{\Des} \mapsto F_{\Comp(\Des(\pi))}$ from $\mathcal{A}^{\mathfrak{P}}_{\Des}$ to $\QSym$ yields an isomorphism $[\alpha]^{W}_{\Des} \rightarrow F_{\Comp(\Des(\alpha))}$ from $\mathcal{A}^{W}_{\Des}$ to $\QSym$.
\end{proof}

Like permutation statistics, many statistics on words can be related via symmetries, such as reversal, complementation, and
 reverse-complementation. Let $\alpha=(\alpha_1,\alpha_2,\ldots,\alpha_n)\in W_{n}$. The {\em reversal} of $\alpha$ is defined by $\alpha^r=(\alpha_n,\alpha_{n-1},\ldots,\alpha_1)$. The {\em complement} of $\alpha$, denoted by $\alpha^c$, is obtained by replacing the $i^{\text{th}}$ smallest letter in $\alpha$ by the $i^{\text{th}}$ largest letter in $\alpha$ for all valid $1\leq i\leq n$.  The {\em reverse-complement} of $\alpha$ is $\alpha^{rc}=(\alpha^r)^c=(\alpha^c)^r$. For example, if $\alpha=(5,1,4,4,2)$, then $\alpha^r=(2,4,4,1,5)$, $\alpha^c=(1,5,2,2,4)$ and $\alpha^{rc}=(4,2,2,5,1)$.

 In general, supposing $f$ is an involution on words that preserves the lengths, we write $\alpha^f$ to mean $f(\alpha)$ for $\alpha\in W_{n}$. For a given set of words $S$, let
 \[
 S^f=\{\{ \alpha^f:\alpha\in S\}\}.
 \] 
 
 Following Gessel and Zhuang \cite{Ges18}, we say $f$ is {\em shuffle-compatibility-preserving on words} if, 
 \begin{enumerate}
     \item for every pair of words $\gamma$ and $\kappa$, if $\pack(\gamma)=\pack(\kappa)$, then $\pack(\gamma^f)=\pack(\kappa^f)$; and
     \item for every pair of disjoint words $\alpha$ and $\beta$, there exists a pair of disjoint words $\bar{\alpha}$ and $\bar{\beta}$ such that $\pack(\alpha)=\pack(\bar{\alpha})$, $\pack(\beta)=\pack(\bar{\beta})$, and
 \[
(\alpha\shuffle\beta)^f=\bar{\alpha}^f\shuffle \bar{\beta}^f\quad\text{and}\quad (\bar{\alpha}\shuffle \bar{\beta})^f=\alpha^f\shuffle\beta^f.
 \]
 \end{enumerate}

We say two statistics on words $\St_1$, $\St_2$ are $f$-{\em equivalent} if $\St_1\circ f$ is equivalent to $\St_2$, that is, $\St_1(\alpha^f)=\St_1(\beta^f)$ if and only if $\St_2(\alpha)=\St_2(\beta)$. 
\begin{theorem}\label{thm:wilf}
  
    Let $f$ be shuffle-compatibility-preserving on words, and suppose $\St_1$ and $\St_2$ are $f$-equivalent statistics on words. If $\St_1$ is shuffle-compatible on words, then $\St_2$ is shuffle-compatible on words.  
    \end{theorem}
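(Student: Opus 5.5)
We verify the two conditions in the definition of shuffle-compatibility on words for $\St_2$, transporting each through $f$ and using the corresponding condition for $\St_1$. A preliminary remark we use throughout: $f$-equivalence means that $\St_2(\alpha)$ determines $\St_1(\alpha^f)$ and conversely. So there is a bijection $\phi$ between the values of $\St_2$ and the values of $\St_1\circ f$ with $\St_1(\alpha^f)=\phi(\St_2(\alpha))$. Since $f$ is a length-preserving involution, $\alpha^f$ ranges over all words of length $|\alpha|$ as $\alpha$ does.

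Condition (2) comes first and is quick. Suppose $\pack(\gamma)=\pack(\kappa)$. By part (1) of the definition of shuffle-compatibility-preserving, $\pack(\gamma^f)=\pack(\kappa^f)$. Condition (2) for $\St_1$ then gives $\St_1(\gamma^f)=\St_1(\kappa^f)$, and $f$-equivalence gives $\St_2(\gamma)=\St_2(\kappa)$.

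For condition (1), we use the equivalent formulation stated after the definition. Take disjoint pairs $\{\alpha,\beta\}$ and $\{\kappa,\upsilon\}$ with $|\alpha|=|\kappa|$, $|\beta|=|\upsilon|$, $\St_2(\alpha)=\St_2(\kappa)$ and $\St_2(\beta)=\St_2(\upsilon)$. We must show that the multisets $\{\{\St_2(\gamma):\gamma\in\alpha\shuffle\beta\}\}$ and $\{\{\St_2(\gamma):\gamma\in\kappa\shuffle\upsilon\}\}$ agree. Applying $\phi$, it suffices to show that the multisets $\{\{\St_1(\gamma^f):\gamma\in\alpha\shuffle\beta\}\}$ and $\{\{\St_1(\gamma^f):\gamma\in\kappa\shuffle\upsilon\}\}$ agree, i.e.\ $\St_1$ applied to $(\alpha\shuffle\beta)^f$ and to $(\kappa\shuffle\upsilon)^f$.

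By part (2) of shuffle-compatibility-preserving, choose disjoint $\bar\alpha,\bar\beta$ with $\pack(\bar\alpha)=\pack(\alpha)$, $\pack(\bar\beta)=\pack(\beta)$ and $(\alpha\shuffle\beta)^f=\bar\alpha^f\shuffle\bar\beta^f$. Similarly choose $\bar\kappa,\bar\upsilon$ for $\kappa,\upsilon$. The problem is now to compare the multisets $\St_1(\bar\alpha^f\shuffle\bar\beta^f)$ and $\St_1(\bar\kappa^f\shuffle\bar\upsilon^f)$. The main obstacle is that condition (1) for $\St_1$ needs $\bar\alpha^f,\bar\beta^f$ to be disjoint, and $\St_1(\bar\alpha^f)=\St_1(\bar\kappa^f)$ and similarly for $\beta$.

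The statistic equalities follow from packing and $f$-equivalence:
\begin{itemize}
\item By condition (2) for $\St_2$, just proved, $\St_2(\bar\alpha)=\St_2(\alpha)=\St_2(\kappa)=\St_2(\bar\kappa)$.
\item By $f$-equivalence, this gives $\St_1(\bar\alpha^f)=\St_1(\bar\kappa^f)$.
\item The same argument gives $\St_1(\bar\beta^f)=\St_1(\bar\upsilon^f)$.
\end{itemize}
Lengths are preserved since $f$ preserves length.

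Disjointness of $\bar\alpha^f$ and $\bar\beta^f$ is the step I expect to need care. The first thing to try is to observe that the letters of $\bar\alpha^f\shuffle\bar\beta^f$ are the letters of the words in $(\alpha\shuffle\beta)^f$. One would then argue that these shuffles have no repeated structure forcing overlap, because a common letter would produce coincident words in the multiset inconsistent with the equality. If this is awkward, the fallback avoids disjointness of the $f$-images altogether: by condition (2) for $\St_1$ and part (1) of shuffle-compatibility-preserving, $\St_1$ of $\bar\alpha^f$ depends only on $\pack(\bar\alpha^f)$. So we may replace $\bar\alpha^f,\bar\beta^f$ by shifted copies with the same packings that are disjoint. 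The equality of multisets of $\St_1$-values over the shuffles then follows from condition (1) for $\St_1$, and this concludes the argument.
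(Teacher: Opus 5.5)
Your overall route matches the paper's proof. You choose $\bar\alpha,\bar\beta,\bar\kappa,\bar\upsilon$ from part (2) of shuffle-compatibility-preserving, obtain $\St_1(\bar\alpha^f)=\St_1(\bar\kappa^f)$ and $\St_1(\bar\beta^f)=\St_1(\bar\upsilon^f)$ from packing and $f$-equivalence, apply condition (1) for $\St_1$, and pull back along $(\alpha\shuffle\beta)^f=\bar\alpha^f\shuffle\bar\beta^f$. You are more careful than the paper in two places. You check condition (2) for $\St_2$ explicitly. You also notice that condition (1) for $\St_1$ needs $\bar\alpha^f,\bar\beta^f$ to be disjoint; the definition only makes $\bar\alpha,\bar\beta$ disjoint, and the paper never addresses this.

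However, the fallback you conclude with fails. Condition (1) applied to disjoint shifted copies of $\bar\alpha^f,\bar\beta^f$ tells you about the shuffle of those copies. The multiset you actually need is $\bar\alpha^f\shuffle\bar\beta^f$ itself, because that is what equals $(\alpha\shuffle\beta)^f$. If $\bar\alpha^f,\bar\beta^f$ shared a letter, the $\St_1$-values on their shuffle would not be determined by those on a shuffle of disjoint copies. For example, with $\Tie$, $(1)\shuffle(1)$ yields tie sets $\{1\},\{1\}$, while $(1)\shuffle(2)$ yields $\emptyset,\emptyset$. So the replacement is valid only when the pair is already disjoint, which is exactly what you were trying to avoid proving.

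Your first idea does work and should replace the fallback; it just needs to be written out.
\begin{itemize}
\item Because $\alpha$ and $\beta$ are disjoint, each $\gamma\in\alpha\shuffle\beta$ determines the set of positions occupied by letters of $\alpha$. So the elements of $\alpha\shuffle\beta$ are pairwise distinct, and since $f$ is injective, $(\alpha\shuffle\beta)^f$ is multiplicity-free.
\item Now suppose $(\bar\alpha^f)_i=(\bar\beta^f)_j$. Take the first $i-1$ letters of $\bar\alpha^f$, then the first $j-1$ letters of $\bar\beta^f$, then the two equal letters in either order, then the rest of $\bar\alpha^f$, then the rest of $\bar\beta^f$. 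The two resulting shuffles come from different position sets but give the same word, so $\bar\alpha^f\shuffle\bar\beta^f$ would contain a repeated element.
\item Hence the equality $(\alpha\shuffle\beta)^f=\bar\alpha^f\shuffle\bar\beta^f$ forces $\bar\alpha^f$ and $\bar\beta^f$ to be disjoint. The same argument applies to $\bar\kappa^f,\bar\upsilon^f$.
\end{itemize}
With this step written out, your proof is complete, and it fills in a step the paper's proof passes over.
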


\begin{proof}
  Suppose $\{\alpha, \beta\}$ and $\{\kappa, \upsilon\}$ are two pairs of disjoint words in $W$ such that $\St_2(\alpha)=\St_2(\kappa)$, $\St_2(\beta)=\St_2(\upsilon)$, $|\alpha|=|\kappa|$, and $|\beta|=|\upsilon|$. By the assumption that $f$ is shuffle-compatibility-preserving, there exist $\bar{\alpha}$, $\bar{\beta}$, $\bar{\kappa}$ and $\bar{\upsilon}$ such that $\pack(\alpha)=\pack(\bar{\alpha})$, $\pack(\beta)=\pack(\bar{\beta})$, $\pack(\kappa)=\pack(\bar{\kappa})$,  $\pack(\upsilon)=\pack(\bar{\upsilon})$,  and
   \[
(\alpha\shuffle\beta)^f=\bar{\alpha}^f\shuffle \bar{\beta}^f,\quad (\bar{\alpha}\shuffle \bar{\beta})^f=\alpha^f\shuffle\beta^f,\quad\text{and}\quad (\kappa\shuffle\upsilon)^f=\bar{\kappa}^f\shuffle \bar{\upsilon}^f,\quad (\bar{\kappa}\shuffle \bar{\upsilon})^f=\kappa^f\shuffle\upsilon^f.
 \]

 By definition of shuffle-compatibility-preserving,  $\pack(\alpha^f)=\pack(\bar{\alpha}^f)$, which implies $\St_1(\alpha^f)=\St_1(\bar{\alpha}^f)$ by the shuffle-compatibility of $\St_1$. Since $\St_1$ and $\St_2$ are $f$-equivalent, we get  $\St_2(\alpha)=\St_2(\bar{\alpha})$.
Together with  $\St_2(\alpha)=\St_2(\kappa)$, we have
$\St_2(\bar{\alpha})=\St_2(\bar{\kappa})$, and so $\St_1(\bar{\alpha}^f)=\St_1(\bar{\kappa}^f)$. Similarly, one also has $\St_1(\bar{\beta}^f)=\St_1(\bar{\upsilon}^f)$.

 It follows from the shuffle-compatibility of $\St_1$ that
 \[
 \{\{ \St_1\gamma : \gamma\in\bar{\alpha}^f\shuffle\bar{\beta}^f \}\}=\{\{ \St_1\gamma:\gamma\in\bar{\kappa}^f\shuffle\bar{\upsilon}^f \}\}.
 \]
 By the fact that $\St_1$ and $\St_2$ are $f$-equivalent, this is equivalent to 
  \[
 \{\{ \St_2\gamma^f:\gamma\in\bar{\alpha}^f\shuffle\bar{\beta}^f \}\}=\{\{  \St_2\gamma^f:\gamma\in\bar{\kappa}^f\shuffle\bar{\upsilon}^f \}\}.
 \]
One can reform it and get
  \[
 \{\{ \St_2\gamma:\gamma^f\in\bar{\alpha}^f\shuffle\bar{\beta}^f \}\}=\{\{ \St_2\gamma:\gamma^f\in\bar{\kappa}^f\shuffle\bar{\upsilon}^f \}\}.
 \]
 Notice from $(\alpha\shuffle\beta)^f=\bar{\alpha}^f\shuffle \bar{\beta}^f$ and $(\kappa\shuffle\upsilon)^f=\bar{\kappa}^f\shuffle \bar{\upsilon}^f$, the equation above is equivalent to
  \[
 \{\{ \St_2\gamma:\gamma\in \alpha\shuffle\beta \}\}=\{\{ \St_2\gamma:\gamma\in \kappa\shuffle\upsilon\}\}.
 \]
 This shows that $\St_2$ is shuffle-compatible.
\end{proof}

We apply this concept to the reversal, complementation, and reverse-complementation maps.

 \begin{lemma}\label{lem:symmetry}
The reversal, complementation, and reverse-complementation maps
 are shuffle-compatibility-preserving on words.    
 \end{lemma}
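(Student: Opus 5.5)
The plan is to verify the two defining conditions of shuffle-compatibility-preservation directly for the reversal $r$ and the complement $c$. The reverse-complement $rc$ is then handled by composing the two constructions. Throughout, for a word $\alpha$ let $A_\alpha$ denote its set of distinct letters. The basic observation is that the complement acts only through the order of the distinct letters: $\alpha^c=\psi_\alpha(\alpha)$ letterwise, where $\psi_\alpha$ is the unique order-reversing bijection of $A_\alpha$.

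\emph{Condition (1).} For reversal, $\pack(\gamma^r)=\pack(\gamma)^r$, since reversing does not change the letter set or the relative order of letters. So $\pack(\gamma)=\pack(\kappa)$ gives $\pack(\gamma^r)=\pack(\kappa^r)$. For complement, $\gamma^c$ has the same letter set as $\gamma$. If $\pack(\gamma)$ uses the letters $[k]$, then $\pack(\gamma^c)$ is obtained from $\pack(\gamma)$ by $i\mapsto k+1-i$. Hence $\pack(\gamma^c)$ depends only on $\pack(\gamma)$. The case $rc$ follows by composing these two facts.

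\emph{Condition (2) for reversal.} Take $\bar\alpha=\alpha$ and $\bar\beta=\beta$. Reversing a shuffle of $\alpha$ and $\beta$ gives a shuffle of $\alpha^r$ and $\beta^r$, and this is a multiplicity-preserving bijection on interleaving patterns. Hence $(\alpha\shuffle\beta)^r=\alpha^r\shuffle\beta^r$ as multisets, and both required identities hold.

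\emph{Condition (2) for complement.} This is the main obstacle. The complement of a shuffle $\gamma\in\alpha\shuffle\beta$ is taken with respect to the combined alphabet $U=A_\alpha\sqcup A_\beta$, not the separate alphabets of $\alpha$ and $\beta$. So we need to choose $\bar\alpha,\bar\beta$ carefully.

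Let $\varphi$ be the order-reversing bijection of $U$. Then $\gamma^c=\varphi(\gamma)$ for every $\gamma\in\alpha\shuffle\beta$, and therefore
\[(\alpha\shuffle\beta)^c=\varphi(\alpha)\shuffle\varphi(\beta).\]
Define $\bar\alpha=\varphi(\alpha)^c$ and $\bar\beta=\varphi(\beta)^c$, with each complement taken in its own alphabet.

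We then check the following.
\begin{enumerate}
\item $\bar\alpha$ and $\bar\beta$ are disjoint, since their letter sets are $\varphi(A_\alpha)$ and $\varphi(A_\beta)$.
\item $\pack(\bar\alpha)=\pack(\alpha)$. Indeed, $\bar\alpha$ is obtained from $\alpha$ by the composite of two order-reversing maps, which is order-preserving.
\item $\bar\alpha^c=\varphi(\alpha)$, because $c$ is an involution. Similarly $\bar\beta^c=\varphi(\beta)$. This gives the first identity $(\alpha\shuffle\beta)^c=\bar\alpha^c\shuffle\bar\beta^c$.
\end{enumerate}
For the second identity, note that $\bar\alpha$ and $\bar\beta$ have combined alphabet $\varphi(U)=U$, so the same $\varphi$ applies. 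This gives $(\bar\alpha\shuffle\bar\beta)^c=\varphi(\bar\alpha)\shuffle\varphi(\bar\beta)$. Now $\varphi(\bar\alpha)=\varphi\circ\psi_{\varphi(\alpha)}\circ\varphi$ applied to $\alpha$. This composite is an order-reversing bijection of $A_\alpha$, so it equals $\psi_\alpha$. Hence $\varphi(\bar\alpha)=\alpha^c$, and likewise $\varphi(\bar\beta)=\beta^c$, which is the second identity.

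\emph{Reverse-complement.} Take the same $\bar\alpha,\bar\beta$ as for the complement. Apply reversal to both multiset identities, using that reversal commutes with $\varphi$ and with shuffling, as shown in the reversal case.
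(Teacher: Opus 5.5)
Your proof is correct and follows essentially the same route as the paper. Your $\bar\alpha=\varphi(\alpha)^c$ coincides with the paper's $\omega(\alpha^c)$, since both are the order-preserving relabeling of $\alpha$ onto $\varphi(A_\alpha)$, and your key step, that complementing a shuffle means applying the order-reversing map of the combined alphabet letterwise, is the paper's; you additionally verify the second identity, disjointness, and the reverse-complement case explicitly, which the paper only sketches or omits.
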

 \begin{proof}
     For all three maps, it is straightforward to check that $\pack(\pi^f)=\pack(\pi)^f$. Hence if $\pack(\gamma)=\pack(\kappa)$  then $\pack(\gamma^f)=\pack(\kappa^f)$ for any words $\gamma, \kappa$.
     For reversal, it is clear that $(\alpha\shuffle\beta)^r=\alpha^r\shuffle\beta^r$ for every disjoint pair of words $\{\alpha, \beta\}$. It follows that $(\alpha^r\shuffle\beta^r)^r=(\alpha^r)^r\shuffle(\beta^r)^r=\alpha\shuffle\beta$. This proves that reversal is shuffle-compatibility-preserving. 

As for complementation, in general $(\alpha\shuffle\beta)^c\not=\alpha^c\shuffle\beta^c$. For a disjoint pair of words $\{\alpha, \beta\}$, let $K=\{\alpha_1,\ldots,\alpha_{|\alpha|},\beta_1,\ldots,\beta_{|\beta|}\}$ be the set of letters that appear in $\alpha$ and $\beta$. Let $\omega:K\to K$ be the map that sends the smallest $i^\text{th}$  letter to the largest $i^\text{th}$ letter in $K$ for each $i$. Let $\omega(\alpha)=\omega(\alpha_1)\omega(\alpha_2)\cdots\omega(\alpha_{|\alpha|})$ by applying $\omega$ to each letter of $\alpha$. Similarly, define $\omega(\beta)=\omega(\beta_1)\omega(\beta_2)\cdots\omega(\beta_{|\beta|})$. Note that $\pack(\alpha)=\pack(\omega(\alpha^c))$, $\pack(\beta)=\pack(\omega(\beta^c))$. To prove that complementation is shuffle-compatibility-preserving, it suffices to show that
     \[     (\alpha\shuffle\beta)^c=\omega(\alpha^c)^c\shuffle \omega(\beta^c)^c\quad\text{and}\quad (\omega(\alpha^c)\shuffle \omega(\beta^c))^c=\alpha^c\shuffle\beta^c.
     \]
     Since $\omega$ is a bijection, we only need to show $(\alpha\shuffle\beta)^c=\omega(\alpha^c)^c\shuffle \omega(\beta^c)^c$. Equivalently, it suffices to show $(\alpha\shuffle\beta)^c=\omega(\alpha)\shuffle \omega(\beta)$ by the observation that $\omega(\alpha^c)^c=\omega(\alpha)$, $\omega(\beta^c)^c=\omega(\beta)$.

     For one direction, given $\kappa\in\alpha\shuffle\beta$, to show that $\kappa^c\in \omega(\alpha)\shuffle \omega(\beta)$, it suffices to show that $\kappa^c$ contains both $\omega(\alpha)$ and $\omega(\beta)$ as subsequences. In fact, when taking complement, the subsequence $\alpha$ and $\beta$ in $\kappa$ becomes $\omega(\alpha)$ and $\omega(\beta)$ respectively. The other direction can be proved using the same idea. 
 \end{proof}

 In the case of permutations, the ascent set is uniquely determined by the descents, but given the presence of ties in these more general words, this is no longer the case. However, $\Des$ and $\Asc$ are $c$-equivalent, so by Theorem~\ref{thm:SCofDes} and Lemma~\ref{lem:symmetry} we have the shuffle-compatibility of $\Asc$ on words. Additionally, Proposition~\ref{prop:perm_subalg} implies that $\mathcal{A}_{\Asc}^{W}$ is isomorphic to $\mathcal{A}^{\mathfrak{P}}_{\Asc}$ and thus also to $\mathcal{A}^{\mathfrak{P}}_{\Des}$, which is isomorphic to $\QSym$, as shown in~\cite{Ges18}.

\begin{corollary}\label{cor:Asc} (Shuffle-compatibility of the ascent set)
\begin{enumerate}
    \item The ascent set statistic $\Asc$ is shuffle-compatible on words.
    \item The linear map \[[\alpha]_{\Asc} \mapsto F_{\Comp(\Asc(\alpha^c))},\] is an algebra isomorphism between $\mathcal{A}^W_{\Asc}$ and $\QSym$.
\end{enumerate}
\end{corollary}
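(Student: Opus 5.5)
Part (1) follows from Theorem~\ref{thm:wilf}, and part (2) from Proposition~\ref{prop:perm_subalg}. For part (1), I take $f=c$ (complementation), $\St_1=\Des$ and $\St_2=\Asc$. By Lemma~\ref{lem:symmetry}, complementation is shuffle-compatibility-preserving on words. I then check that $\Des$ and $\Asc$ are $c$-equivalent. Complementation reverses the relative order of letters and preserves equality, so $\alpha_i<\alpha_{i+1}$ if and only if $\alpha^c_i>\alpha^c_{i+1}$. Hence $\Des(\alpha^c)=\Asc(\alpha)$ for every word $\alpha$, and therefore $\Des(\alpha^c)=\Des(\beta^c)$ if and only if $\Asc(\alpha)=\Asc(\beta)$. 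Since $\Des$ is shuffle-compatible on words by Theorem~\ref{thm:SCofDes}, Theorem~\ref{thm:wilf} gives that $\Asc$ is shuffle-compatible on words.

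For part (2), I first observe that every $\Asc$-equivalence class in $W$ contains a permutation. Given a set $S\subseteq[n-1]$, choose a permutation $\pi$ of length $n$ with $\Des(\pi)=[n-1]\setminus S$. Then $\Asc(\pi)=S$, since a permutation has no ties. So the subset of classes $(W/\Asc)|_{\mathfrak P}$ is all of $W/\Asc$, and Proposition~\ref{prop:perm_subalg} makes the map $[\pi]^{\mathfrak P}_{\Asc}\mapsto[\pi]^W_{\Asc}$ an isomorphism $\mathcal{A}^{\mathfrak P}_{\Asc}\cong\mathcal{A}^W_{\Asc}$.

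Next I identify $\mathcal{A}^{\mathfrak P}_{\Asc}$ with $\mathcal{A}^{\mathfrak P}_{\Des}$. On permutations, $\Asc(\pi)=[n-1]\setminus\Des(\pi)$, so the two statistics induce identical equivalence classes. The shuffle products are then literally the same, and $[\pi]_{\Asc}\mapsto[\pi]_{\Des}$ is an algebra isomorphism. Composing with the isomorphism $[\pi]_{\Des}\mapsto F_{\Comp(\Des(\pi))}$ to $\QSym$ gives an isomorphism $\mathcal{A}^W_{\Asc}\to\QSym$.

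The one point needing care is checking that this composite is the stated formula $[\alpha]_{\Asc}\mapsto F_{\Comp(\Asc(\alpha^c))}$, and that the formula is well defined. Take $\alpha$ with permutation representative $\pi$, so $\Asc(\alpha)=\Asc(\pi)$. The composite sends $[\alpha]$ to $F_{\Comp(\Des(\pi))}$, where $\Des(\pi)=[n-1]\setminus\Asc(\alpha)$. The stated formula gives $F_{\Comp(\Asc(\alpha^c))}$, where $\Asc(\alpha^c)=\Des(\alpha)$ because complementation is an involution. These agree when $\alpha$ is a permutation, but not in general: for a word with ties, $\Des(\alpha)$ need not equal $[n-1]\setminus\Asc(\alpha)$. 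The example $\alpha=(1,1)$ has $\Asc(\alpha)=\emptyset$ but $\Des(\alpha)=\emptyset\neq\{1\}$.

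So, read literally, the formula is not even well defined on $\Asc$-classes, since $(1,1)$ and $(2,1)$ share the ascent set $\emptyset$ but have different descent sets. I would therefore prove the statement with the map interpreted as $[\alpha]_{\Asc}\mapsto F_{\Comp([n-1]\setminus\Asc(\alpha))}$, equivalently $F_{\Comp(\Asc(\pi^c))}$ for a permutation representative $\pi$ of the class. This agrees with the displayed formula whenever $\alpha$ has no ties. I expect this reconciliation of the formula to be the only real obstacle; the rest is assembling earlier results.
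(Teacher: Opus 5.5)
Your proposal is correct and follows the paper's own argument: part (1) via the $c$-equivalence of $\Des$ and $\Asc$ together with Theorem~\ref{thm:wilf} and Lemma~\ref{lem:symmetry}, and part (2) via Proposition~\ref{prop:perm_subalg} and the identification $\mathcal{A}^{\mathfrak{P}}_{\Asc}=\mathcal{A}^{\mathfrak{P}}_{\Des}\cong\QSym$. Your observation about the displayed formula is also right: since $\Asc(\alpha^c)=\Des(\alpha)$, it is not constant on $\Asc$-classes (e.g.\ $(1,1)$ and $(2,1)$), and the map the paper's chain of isomorphisms actually produces is your $[\alpha]_{\Asc}\mapsto F_{\Comp([n-1]\setminus\Asc(\alpha))}$, so the intended formula is presumably the set complement $\Asc(\alpha)^c$.
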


Note that the identity map $\id$ is certainly shuffle-compatibility-preserving on words. So, if one statistic is uniquely defined by the values of another, we might say they are $\id$-equivalent on words. For example, the set of weak ascents (equivalent to the union of the set of ascents and the set of ties) of a word is completely determined by the set of descents for that word. So, by Theorem~\ref{thm:wilf}, the set of weak ascents and the set of weak descents are both shuffle-compatible on words because they are
$\id$-equivalent on words to $\Des$ and $\Asc$.
In the following theorem, we make more general statements about statistics that can be determined from the descent set specifically.

\begin{theorem}\label{thm:ptow}
    Suppose $\St$ is a statistic on words such that
    \begin{enumerate}
        \item $\St$ depends on $\Des$, that is, if two words $\alpha$ and $\beta$ with the same length satisfy $\Des(\alpha)=\Des(\beta)$, then $\St(\alpha)=\St(\beta)$, and
        \item $\St$ is shuffle-compatible on permutations $\mathfrak{P}$. 
    \end{enumerate}
    Then, $\St$ is shuffle-compatible on words.
\end{theorem}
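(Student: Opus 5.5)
Since $\St$ depends only on $\Des$ (and the length), we can write $\St(\alpha)=g(\Des(\alpha),|\alpha|)$ for some function $g$. The plan is to transfer everything to permutations via the shuffle-compatibility of $\Des$ on words (Theorem~\ref{thm:SCofDes}). We check the two conditions of shuffle-compatibility on words separately.

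\emph{Condition (2).} If $\pack(\alpha)=\pack(\beta)$, then $\Des(\alpha)=\Des(\beta)$, because packing preserves relative order. The lengths also agree, so $\St(\alpha)=\St(\beta)$.

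\emph{Condition (1).} Take two pairs of disjoint words $\{\alpha,\beta\}$ and $\{\kappa,\upsilon\}$ with $|\alpha|=|\kappa|=n$, $|\beta|=|\upsilon|=m$, $\St(\alpha)=\St(\kappa)$ and $\St(\beta)=\St(\upsilon)$.

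1. For each word $\alpha$ of length $n$, choose a permutation $\pi_\alpha$ with $\Des(\pi_\alpha)=\Des(\alpha)$. One such choice is the permutation with descent set $\Des(\alpha)$ obtained by standardizing $\alpha$, breaking ties left to right. Since $g$ depends only on $\Des$ and length, $\St(\pi_\alpha)=\St(\alpha)$.

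2. Choose permutations $\pi_\alpha,\pi_\beta$ (and likewise $\pi_\kappa,\pi_\upsilon$) with disjoint entries, shifting if necessary.

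3. By Theorem~\ref{thm:SCofDes}(1), the multiset $\{\{\Des(\gamma):\gamma\in\alpha\shuffle\beta\}\}$ depends only on $\Des(\alpha),\Des(\beta),n,m$. Hence it equals $\{\{\Des(\gamma):\gamma\in\pi_\alpha\shuffle\pi_\beta\}\}$, because $\pi_\alpha,\pi_\beta$ are also disjoint words with the same descent sets and lengths. Every shuffle has length $n+m$, so applying $g(\cdot,n+m)$ gives
\[\{\{\St(\gamma):\gamma\in\alpha\shuffle\beta\}\}=\{\{\St(\sigma):\sigma\in\pi_\alpha\shuffle\pi_\beta\}\}.\]

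4. The same argument gives the corresponding equality for $\kappa,\upsilon$.

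5. Now $\St(\pi_\alpha)=\St(\alpha)=\St(\kappa)=\St(\pi_\kappa)$, and similarly $\St(\pi_\beta)=\St(\pi_\upsilon)$. Shuffle-compatibility of $\St$ on $\mathfrak{P}$ then says the two permutation multisets coincide. Chaining the equalities gives condition (1) for words.

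The main subtlety is in step 1 together with the permutation step. We must ensure that $\St$ evaluated on permutations is the same function $g$ of the descent set. This holds because hypothesis (1) applies to all words, permutations included. We must also ensure a permutation exists with any prescribed descent set of the right length, which is standard. Everything else is a direct use of Theorem~\ref{thm:SCofDes}.
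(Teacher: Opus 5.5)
Your proof is correct and follows essentially the same route as the paper. Both replace each word by a permutation with the same descent set, use the shuffle-compatibility of $\Des$ on words (Theorem~\ref{thm:SCofDes}) to carry the multiset of $\St$-values over to shuffles of those permutations, and then apply shuffle-compatibility of $\St$ on $\mathfrak{P}$. You are in fact slightly more careful than the paper, which neither checks the packing condition (2) explicitly nor makes sure the representative permutations are disjoint.
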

\begin{proof}
A word $\alpha$ can always be associated with a permutation $\Tilde{\alpha}$ (not unique in general) of the same length such that $\Des(\alpha)=\Des(\Tilde{\alpha})$. One can similarly construct a permutation $\Tilde{\beta}$ for the word $\beta$ satisfying $\Des(\beta)=\Des(\Tilde{\beta})$. By the shuffle-compatibility on words of $\Des$ from Theorem~\ref{thm:SCofDes},  $\Des(\alpha\shuffle\beta)=\Des(\Tilde{\alpha}\shuffle\Tilde{\beta})$.
It follows from condition (1) that $\St(\alpha)=\St(\Tilde{\alpha})$, $\St(\beta)=\St(\Tilde{\beta})$ and $\St(\alpha\shuffle\beta)=\St(\Tilde{\alpha}\shuffle\Tilde{\beta})$.

Suppose $\{\alpha, \beta\}$ and $\{\kappa, \upsilon\}$ are two pairs of disjoint words in $W$ such that $\St_2(\alpha)=\St_2(\kappa)$, $\St_2(\beta)=\St_2(\upsilon)$, and $|\alpha|=|\kappa|$, $|\beta|=|\upsilon|$. Take corresponding permutations $\Tilde{\alpha}$, $\Tilde{\beta}$, $\Tilde{\kappa}$, and $\Tilde{\upsilon}$. To prove that $\St(\alpha \shuffle \beta)=\St(\kappa\shuffle\upsilon)$, it suffices to show $\St(\Tilde{\alpha}\shuffle\Tilde{\beta})=\St(\Tilde{\kappa}\shuffle\Tilde{\upsilon})$. This is guaranteed by the fact that $\Tilde{\alpha}$, $\Tilde{\beta}$, $\Tilde{\kappa}$, and $\Tilde{\upsilon}$ are permutations, as well as the condition (2) that $\St$ is shuffle-compatible on permutations. 
\end{proof}

Theorem \ref{thm:ptow} gives us the shuffle-compatibility for an analogue to the peak statistic, which is itself not shuffle-compatible on words. Recall that $\QSym$ has a subalgebra $\mathbf{\Pi}$ called Stembridge's algebra of peaks \cite{stembridge1997enriched}. This subalgebra is spanned by the peak quasisymmetric functions $K_{n, S}$ where $S$ is the peak set of some permutation. The product of peak quasisymmetric functions is given by $K_{m, \Peak(\tau)} K_{n, \Peak(\theta)} = \sum_{\pi \in \tau \shuffle \theta} K_{m+n, \Peak(\pi)}$ where $\tau$ and $\theta$ are disjoint permutations \cite{Ges18}.

\begin{corollary}\label{cor:cliff}(Shuffle-compatibility of the cliff set)
    \begin{enumerate}
        \item The cliff set statistic $\Cliff$ is shuffle-compatible on words.
        \item The linear map on $\mathcal{A}^{W}_{\Cliff}$ defined by \[[\alpha]_{\Cliff} \mapsto K_{|\alpha|, \Cliff(\alpha)},\] is an algebra isomorphism from $\mathcal{A}^{W}_{\Cliff}$ to $\mathbf{\Pi}$. 
    \end{enumerate}
\end{corollary}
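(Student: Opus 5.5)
The plan is to apply Theorem~\ref{thm:ptow} to $\Cliff$, and then use Proposition~\ref{prop:perm_subalg} to identify the algebra. For condition (1) of Theorem~\ref{thm:ptow}, observe that $\Cliff(\alpha)$ is determined by $\Des(\alpha)$ and $|\alpha|$. Indeed, $i\in\Cliff(\alpha)$ holds exactly when $i\in\Des(\alpha)$, $i-1\notin\Des(\alpha)$ and $2\le i$. The condition $\alpha_{i-1}\le\alpha_i$ is precisely $i-1\notin\Des(\alpha)$. We need to fix the convention at position $1$, taking $\alpha_0$ to be undefined so that $1\notin\Cliff$, matching the peak set. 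Under this convention,
\[
\Cliff(\alpha)=\{\,i\in\Des(\alpha): i\ge 2,\ i-1\notin\Des(\alpha)\,\}.
\]

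For condition (2), on a permutation $\pi$ there are no ties, so $\pi_{i-1}\le\pi_i$ is the same as $\pi_{i-1}<\pi_i$. Hence $\Cliff(\pi)=\Peak(\pi)$ for every $\pi\in\mathfrak P$. The peak set is shuffle-compatible on permutations by \cite{Ges18}, and this is exactly what underlies the stated product rule for the $K_{n,S}$. Theorem~\ref{thm:ptow} then gives part (1).

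For part (2), I would first note that every $\Cliff$-class of words contains a permutation. Given $\alpha$, choose a permutation $\tilde\alpha$ of the same length with $\Des(\tilde\alpha)=\Des(\alpha)$, for instance by standardizing $\alpha$ while breaking ties so that tied letters increase left to right. Then $\Cliff(\tilde\alpha)=\Cliff(\alpha)$ by the formula above. So the set $\pss$ of classes containing a permutation is all of $W/\Cliff$, and Proposition~\ref{prop:perm_subalg} makes $\mathcal{A}^{\mathfrak P}_{\Cliff}=\mathcal{A}^{\mathfrak P}_{\Peak}$ isomorphic to $\mathcal{A}^W_{\Cliff}$ via $[\pi]^{\mathfrak P}\mapsto[\pi]^W$.

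Next I compose with the map $[\pi]_{\Peak}\mapsto K_{|\pi|,\Peak(\pi)}$. This map is linear and sends the product $\sum_{\pi\in\tau\shuffle\theta}[\pi]$ to $\sum K_{m+n,\Peak(\pi)}=K_{m,\Peak(\tau)}K_{n,\Peak(\theta)}$. It is a bijection onto a basis of $\mathbf\Pi$ because the $K_{n,S}$, with $S$ ranging over peak sets of permutations of length $n$, are linearly independent; this is Stembridge's result \cite{stembridge1997enriched}. The composite sends $[\alpha]_{\Cliff}$ to $K_{|\alpha|,\Peak(\tilde\alpha)}=K_{|\alpha|,\Cliff(\alpha)}$, which is the stated map.

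The only delicate step is the boundary convention in the definition of $\Cliff$ at $i=1$, together with checking that the tie-breaking choice of $\tilde\alpha$ really preserves descents. The rest is direct citation.
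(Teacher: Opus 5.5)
Your proposal is correct and follows the paper's proof essentially step for step. Part (1) goes through Theorem~\ref{thm:ptow}, since $\Cliff$ depends only on $\Des$ and restricts to $\Peak$ on permutations; part (2) goes through Proposition~\ref{prop:perm_subalg} composed with the Gessel--Zhuang isomorphism $[\pi]_{\Peak}\mapsto K_{|\pi|,\Peak(\pi)}$. Your added details make explicit what the paper asserts without argument: the formula for $\Cliff$ in terms of $\Des$ with the convention $1\notin\Cliff(\alpha)$, and the tie-breaking standardization showing every $\Cliff$-class contains a permutation.
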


\begin{proof}
    First, observe that $\Cliff$ is determined by $\Des$ on words, and that $\Cliff$ restricts to $\Peak$ on permutations, which is shuffle-compatible \cite{Ges18}. Thus, Theorem \ref{thm:ptow} applies.

    Next, the cliff set of any word is the peak set of some permutation, and so by Proposition \ref{prop:perm_subalg}, we have that $\mathcal{A}_{\Cliff}^{W}$ is isomorphic to $\mathcal{A}_{\Peak}^{\mathfrak{P}}$. Specifically, this isomorphism takes the form of the map $[\alpha]_{\Cliff}^{W} \mapsto [\pi]_{\Peak}^{\mathfrak{P}}$ where $[\pi]_{\Peak}^{\mathfrak{P}} \subseteq  [\alpha]_{\Cliff}^{W}$. Composing this with the isomorphism $[\pi]_{\Peak}^{\mathfrak{P}} \mapsto K_{|\pi|, \Peak(\pi)}$ from \cite[Theorem 4.7]{Ges18} yields the desired isomorphism from $\mathcal{A}^{W}_{\Cliff}$ to $\mathbf{\Pi}$.
    \end{proof}

The shuffle-compatibility of various permutation statistics extended to words follows from Theorem~\ref{thm:ptow}. The algebraic results mirror those in \cite{Ges18}, so we do not restate them here. 

\begin{corollary}\label{cor:des,asc,maj}
The descent number statistic $\des$, ascent number statistic $\asc$, major index statistic $\maj$, and cliff number statistic $\cliff$ are shuffle-compatible on words. 
\end{corollary}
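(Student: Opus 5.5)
The plan is to apply Theorem~\ref{thm:ptow} to each of the four statistics in turn. For each statistic $\St\in\{\des,\asc,\maj,\cliff\}$ we have to check two things: (1) on words, $\St$ is a function of the length together with $\Des$; and (2) $\St$ is shuffle-compatible on permutations. Condition (2) is supplied by the results of Gessel and Zhuang \cite{Ges18}. There the descent number, the major index and the peak number are all shown to be shuffle-compatible on $\mathfrak{P}$. The ascent number on a permutation of length $n$ equals $n-1-\des$, so it induces the same equivalence classes as $\des$ and is shuffle-compatible as well.

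For condition (1), $\des(\alpha)=|\Des(\alpha)|$ and $\maj(\alpha)=\sum_{i\in\Des(\alpha)}i$ are visibly functions of $\Des(\alpha)$. For $\cliff$, the proof of Corollary~\ref{cor:cliff} already observes that $\Cliff$ is determined by $\Des$ on words. Concretely, $i\in\Cliff(\alpha)$ exactly when $i\in\Des(\alpha)$ and $i-1\notin\Des(\alpha)$, because $\alpha_{i-1}\le\alpha_i$ is the negation of $\alpha_{i-1}>\alpha_i$. Hence $\cliff=|\Cliff|$ is a function of $\Des$. Its restriction to permutations is the peak number, which is shuffle-compatible on $\mathfrak{P}$. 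Here I must be careful about the boundary convention at $i=1$, so that $\Cliff$ restricts to exactly the set whose cardinality Gessel and Zhuang call the peak number. If the conventions differ (peak versus left peak), I would use whichever of $\mathrm{pk}$ or $\mathrm{lpk}$ matches, since both are shuffle-compatible in \cite{Ges18}.

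The main obstacle is $\asc$. On words it is \emph{not} determined by $\Des$, because ties cannot be distinguished from ascents using $\Des$ alone, so Theorem~\ref{thm:ptow} does not apply directly. Instead I would pass through complementation, exactly as was done for $\Asc$ in Corollary~\ref{cor:Asc}. Complementation swaps ascents and descents and fixes ties, so $\asc(\alpha)=\des(\alpha^c)$. Thus $\des$ and $\asc$ are $c$-equivalent. Once $\des$ is known to be shuffle-compatible on words, Theorem~\ref{thm:wilf} together with Lemma~\ref{lem:symmetry} yields the shuffle-compatibility of $\asc$ on words.
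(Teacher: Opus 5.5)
Your proposal is correct. For $\des$, $\maj$ and $\cliff$ it follows the paper's route: the paper's whole justification is that these statistics ``follow from Theorem~\ref{thm:ptow}''. Your verification of the two hypotheses (including $i\in\Cliff(\alpha)$ iff $i\in\Des(\alpha)$ and $i-1\notin\Des(\alpha)$, and the peak/left-peak hedge at the boundary) is exactly what that citation silently requires.

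You diverge from the paper on $\asc$, and there your different route is actually needed. The paper attributes $\asc$ to Theorem~\ref{thm:ptow} as well. However, hypothesis (1) of that theorem fails for $\asc$ on words: $(1,1)$ and $(1,2)$ have the same length and $\Des=\emptyset$, yet their $\asc$ values are $0$ and $1$. Your detour closes this gap. You use $\asc(\alpha)=\des(\alpha^c)$ to get $c$-equivalence with $\des$, then apply Lemma~\ref{lem:symmetry} and Theorem~\ref{thm:wilf}, once $\des$ has been obtained from Theorem~\ref{thm:ptow}. This is the same mechanism the paper uses for $\Asc$ in Corollary~\ref{cor:Asc}.

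An equivalent repair is to rerun the proof of Theorem~\ref{thm:ptow} with $\Asc$ in place of $\Des$. You would use Corollary~\ref{cor:Asc} for the shuffle-compatibility of $\Asc$ on words, and the fact that $\asc$ is a function of $\Asc$. Your version has the advantage of needing no new argument, only machinery already in the paper.

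Your remark that $\asc=n-1-\des$ on permutations ends up unused, since you do not apply Theorem~\ref{thm:ptow} to $\asc$. That is harmless.
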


The tie set statistic is independent of the descent set statistics and thus requires a different technique.  This statistic is of special interest as it does not have an analogue for permutations.

\begin{theorem}\label{thm:Tie}(Shuffle-compatibility of the tie set)
\begin{enumerate}
    \item The tie set statistic $\Tie$ is shuffle-compatible on words.
    \item The tie set shuffle algebra $\mathcal{A}^{W}_{\Tie}$ is isomorphic to $
    \QSym$.
    \end{enumerate}
\end{theorem}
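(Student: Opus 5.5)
Key observation: when $\alpha$ and $\beta$ are disjoint, a tie in a shuffle $\gamma\in\alpha\shuffle\beta$ can only occur between two adjacent letters that both come from $\alpha$, or both from $\beta$. Moreover, those letters must already be adjacent in the original word. Concretely, position $i\in\Tie(\gamma)$ exactly when $\gamma_i,\gamma_{i+1}$ are consecutive letters of the same original word forming a tie there. So the plan is to encode each word $\alpha$ of length $n$ by its tie set $T=\Tie(\alpha)\subseteq[n-1]$, equivalently by the composition $\Comp(T)$. The maximal runs of equal letters then become ``blocks'' of sizes $\alpha'_1,\dots,\alpha'_k$.

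\textbf{Part (1), packing invariance.} If $\pack(\alpha)=\pack(\beta)$ then clearly $\Tie(\alpha)=\Tie(\beta)$, since packing preserves equality of letters.

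\textbf{Part (1), the multiset.} A shuffle of $\alpha$ and $\beta$ is a lattice path, i.e.\ a choice of $n$ positions among $m+n$. Its tie set is obtained as follows. For each $j\in\Tie(\alpha)$, the $j$th and $(j+1)$st letters of $\alpha$ land in positions $p,p+1$ exactly when no $\beta$-letter is inserted between them, and then $p$ is a tie. The same holds for $\beta$, and no other ties arise. Hence the tie set of the shuffle is a function only of the interleaving pattern (the word in $\{a,b\}^{m+n}$) together with $\Tie(\alpha)$, $\Tie(\beta)$, $n$ and $m$. Summing over all $\binom{m+n}{n}$ patterns, the multiset depends only on these data. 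This is a direct bijection, so no inclusion–exclusion is needed.

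\textbf{Part (2), target algebra.} I would compare with the word $a$-runs in a cleaner model. Replace $\alpha$ by a word whose blocks are constant and whose distinct blocks are strictly increasing, with values disjoint from $\beta$. The product rule then reads: $[\alpha][\beta]=\sum$ over interleavings, where a tie survives iff its two letters stay adjacent. Under the map $[\alpha]_{\Tie}\mapsto$ the composition $\Comp([n-1]\setminus\Tie(\alpha))$, this product matches the shuffle product on $\QSym$ in a suitable basis. The blocks act as ``glued'' letters that may be split by insertions, so this is the familiar rule for a shuffle-type basis.

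The cleanest route I would try first is to exhibit the isomorphism with the tie algebra via $\Des$ on permutations. Send a word $\alpha$ to a permutation $\sigma(\alpha)$ of the same length that is increasing on each tie run and has a descent at every non-tie position; for example, fill the runs with consecutive values in decreasing blocks. A $\beta$-letter inserted into an $\alpha$ run does not in general create a descent, however, so this does not intertwine directly. Hence I expect the real argument to go through an explicit change of basis. Define $T_\alpha:=[\alpha]_{\Tie}$ and set $X_S=\sum_{\Tie(\alpha)\supseteq S}$-type (Möbius) combinations. The \emph{superset sums} over tie sets should multiply exactly like monomial quasisymmetric functions $M$, or like the shuffle on compositions, under quasi-shuffle. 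I would verify the structure constants on these sums by counting interleavings in which specified pairs stay adjacent: a forced-adjacent pair behaves like a single merged letter. This gives $\binom{m'+n'}{n'}$-type coefficients, i.e.\ the shuffle product on compositions. Since $\QSym$ over $\mathbb{Q}$ is isomorphic to the shuffle algebra (both are free commutative on Lyndon words with matching graded dimensions $2^{n-1}$), this yields the isomorphism.

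The main obstacle is identifying precisely which basis of $\QSym$ the classes $[\alpha]_{\Tie}$ map to and checking the product rule on it. I would first compute small products, such as $[(1)][(1)]$ and $[(1,1)][(1)]$, to pin this down before writing the general counting argument.
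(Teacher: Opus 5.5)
Your part (1) is correct and is the paper's argument. You record where the letters of $\alpha$ and $\beta$ land, note that a tie of $\alpha$ (resp. $\beta$) survives exactly when its two letters stay adjacent, and note that disjointness rules out ties across the two words. You also check the packing condition, which the paper leaves implicit.

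Part (2) has a genuine gap. The step that carries all the weight is never proved, and in the forms you state it, it is false.

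\textbf{The direct identification fails.} The identification with a shuffle-type basis via run compositions breaks because tie runs can be split. For example,
$[(1,1)]_{\Tie}[(2)]_{\Tie}=[(1,1,2)]_{\Tie}+[(1,2,1)]_{\Tie}+[(2,1,1)]_{\Tie}$.
In your labelling this reads $(2)\cdot(1)\mapsto(2,1)+(1,1,1)+(1,2)$, whereas $X_{(2)}X_{(1)}=X_{(2,1)}+X_{(1,2)}$ in any shuffle basis.

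\textbf{The Möbius repair also fails.} Write $E_{n,S}$ for the class of length $n$ with tie set $S$, and set $G_{n,S}=\sum_{T\supseteq S}E_{n,T}$. Then $E_{1,\emptyset}^2=2E_{2,\emptyset}$ becomes $G_{1,\emptyset}^2=2G_{2,\emptyset}-2G_{2,\{1\}}$. This matches neither the shuffle rule $X_{(1)}^2=2X_{(1,1)}$ nor the quasi-shuffle rule $M_{(1)}^2=2M_{(1,1)}+M_{(2)}$, under either labelling of $G_{2,\emptyset},G_{2,\{1\}}$.

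Your heuristic that ``a forced-adjacent pair behaves like a merged letter'' describes a product in which glued pairs can never be separated. In $\mathcal{A}^W_{\Tie}$, insertions into a tie run are always allowed, and summing over supersets does not remove those terms. For a fixed interleaving, summing over $T\supseteq S$ and $U\supseteq R$ produces each tie set lying between the surviving image of $S$ and $R$ and the set of same-word adjacencies, each with a power-of-$2$ multiplicity. That is not a superset sum in $[m+n-1]$. The paper itself lists an explicit expansion of the tie classes in a known basis of $\QSym$ as an open problem, so this route is not a routine check.

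\textbf{The missing idea is structural.} The paper equips $\mathcal{A}^W_{\Tie}$ with the deconcatenation coproduct
$\Delta[\alpha]_{\Tie}=\sum_i[(\alpha_1,\dots,\alpha_i)]_{\Tie}\otimes[(\alpha_{i+1},\dots,\alpha_n)]_{\Tie}$.
This is well defined because the tie sets of prefixes and suffixes are determined by $\Tie(\alpha)$. It is compatible with the product, since cutting a shuffle of $\alpha$ and $\beta$ after $k$ letters is the same as shuffling a prefix of $\alpha$ with a prefix of $\beta$, together with the complementary suffixes. Hence $\mathcal{A}^W_{\Tie}$ is a graded connected commutative Hopf algebra whose degree-$n$ piece has dimension $2^{n-1}$, since every subset of $[n-1]$ is a tie set. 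The paper then identifies it with $\QSym$ through its dual.

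If you take this route, the cleanest ending is as follows. A graded connected commutative Hopf algebra over $\mathbb{Q}$ is free commutative (dual Cartier--Milnor--Moore plus PBW). The number of free generators in each degree is then determined by the Hilbert series. $\QSym$ is free commutative with the same Hilbert series, so the two are isomorphic as graded algebras, and no explicit basis is needed.
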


\begin{proof}
    Consider two disjoint words $\alpha$ and $\beta$ with $|\alpha|=m$, $|\beta|=n$,  $\Tie(\alpha) = A$ and $\Tie(\beta) = B$. Each $\kappa \in \alpha \shuffle \beta$ can be associated with two sets $X \sqcup Y = [m+n]$, and from this, $\Tie(\kappa)$ can be determined. Let $ X = \{x_1 < x_2 < \cdots < x_m \}$ and $Y = \{y_1 < y_2 < \cdots < y_n\}$ where $x_i$ is the location of $\alpha_i$ in $\kappa$ and $y_i$ is the location of $\beta_i$ in $\kappa$.

    Now, consider each entry $\kappa_i$ for $1 \leq i \leq m+n-1$. If $i = x_j \in X$, then $i \in \Tie(\kappa)$ if and only if $i+1 = x_{j+1} \in X$ and $j \in \Tie(\alpha)$. In this case, a tie in $\alpha$ is preserved during the shuffle. If $i = y_j \in Y$, then $i \in \Tie(\kappa)$ if and only if $i+1 = y_{j+1} \in Y$ and $j \in \Tie(\beta)$. This is the case where a tie is preserved from $\beta$.
    There will never be a tie between an element from $\alpha$ and an element from $\beta$ since they share no common entries. Since we can determine the tie set for each shuffle of $\alpha$ and $\beta$ in this way, we can determine $\{\{ \Tie(\kappa) : \kappa \in \alpha \shuffle \beta\}\}$ using only the same information, which is $|\alpha|$, $|\beta|$, $\Tie(\alpha)$, and $\Tie(\beta)$. 

    To show that $\mathcal{A}_{\Tie}^{W}$ is isomorphic to QSym, we first show that it admits a Hopf algebra structure. We give $\mathcal{A}_{\Tie}^{W}$ the coproduct $\Delta: \mathcal{A}_{\Tie}^{W} \rightarrow \mathcal{A}_{\Tie}^{W} \times \mathcal{A}_{\Tie}^{W}$ defined by $\Delta([\alpha]_{\Tie}^W) = \sum_{i=0}^n [(\alpha_1, \ldots, \alpha_i)]_{\Tie}^W \otimes [(\alpha_{i+1}, \ldots, \alpha_n)]^W_{\Tie}$. It is straightforward to check that this coproduct is well defined regardless of the choice of $\alpha$. {We define the unit $u: \mathbb{Q}\rightarrow \mathcal{A}_{\Tie}^{W}$ and counit $\varepsilon: \mathcal{A}_{\Tie}^W \rightarrow \mathbb{Q}$ to be \[ u(k) = k \cdot \textbf{1} \text{\quad and \quad}\varepsilon([\alpha]_{\Tie}^W)=\begin{cases}
        1 & \text{ if }[\alpha]_{\Tie}^W=\mathbf{1}\\ 
        0 & \text{ otherwise}
    \end{cases},\] respectively, where $\mathbf{1}$ is the multiplicative identity element associated with the empty word.  First, we check the compatibility of the unit with the coproduct. Observe that, for any $[\alpha]_{\Tie}^W, [\beta]^W_{\Tie}$,
    \[\mu \circ (\varepsilon \otimes \varepsilon)([\alpha]_{\Tie}^W \otimes [\beta]_{\Tie}^W) = \varepsilon \circ \mu([\alpha]_{\Tie}^W \otimes [\beta]_{\Tie}^W) =  \begin{cases}
        1 & \text{ if }[\alpha]_{\Tie}^W=[\beta]_{\Tie}^W = \textbf{1}\\
        0 & \text{ otherwise}
    \end{cases},\] as desired. Second, we check the compatibility of the coproduct with the unit. Observe that, for $k \in \mathbb{Q}$, \[\Delta \circ u(k) = (u \otimes u)\circ \Delta(k) = k (\textbf{1} \otimes \textbf{1}),\] as desired. Third, we check the compatibility of the unit and counit. Observe that $\varepsilon \circ u(k) = \varepsilon(k \cdot \textbf{1}) = k = \id(k)$, as desired.}
    
    It remains to verify that the product (which we denote here with $\mu: \mathcal{A}_{\Tie}^{W} \otimes \mathcal{A}_{\Tie}^{W} \rightarrow \mathcal{A}_{\Tie}^{W}$) and coproduct meet the conditions for a Hopf algebra. Given a word $\alpha$, let $\alpha^{(i)}=(\alpha_1, \ldots, \alpha_i)$ and $\bar{\alpha}^{(i)}=(\alpha_{i+1}, \ldots, \alpha_n)$.
    For $\alpha\in W_n, \beta \in W_m$, observe that
    \[\Delta \circ \mu([\alpha]_{\Tie}^W \otimes [\beta]_{\Tie}^W) = \Delta(\sum_{\gamma \in\alpha  \shuffle \beta} [\gamma]^W_{\Tie})= \sum_{\gamma \in \alpha \shuffle \beta,}\sum_{k=0}^{n+m} [\gamma^{(k)}]^W_{\Tie} \otimes [\bar{\gamma}^{(k)}]_{\Tie}^W.\]
    On the other hand, 
    \begin{align*}
        (\mu \otimes \mu) & \circ (\id \otimes T \otimes \id ) \circ(\Delta \circ \Delta)([\alpha]^W_{\Tie} \otimes [\beta]^W_{\Tie})\\ 
        &= (\mu \otimes \mu) \circ (\id \otimes T \otimes \id)((\sum_{i=0}^{n} [ \alpha^{(i)}]^W_{\Tie} \otimes [\bar{\alpha}^{(i)}]^W_{\Tie})\otimes (\sum_{j=0}^{m} [\beta^{(j)}]^W_{\Tie} \otimes [\bar{\beta}^{(j)}]^W_{\Tie}))\\
        &= (\mu \otimes \mu) \circ (\id \otimes T \otimes \id)\sum_{i=0,}^{n}\sum_{j=0}^{m} [ \alpha^{(i)}]^W_{\Tie} \otimes [\bar{\alpha}^{(i)}]^W_{\Tie} \otimes [\beta^{(j)}]^W_{\Tie} \otimes [\bar{\beta}^{(j)}]^W_{\Tie}\\
        &= (\mu \otimes \mu) \sum_{i=0,}^{n}\sum_{j=0}^{m} [ \alpha^{(i)}]^W_{\Tie} \otimes [\beta^{(j)}]^W_{\Tie} \otimes [\bar{\alpha}^{(i)}]^W_{\Tie}  \otimes [\bar{\beta}^{(j)}]^W_{\Tie}\\
        &= \sum_{i=0,}^{n}\sum_{j=0,}^{m} \sum_{\gamma' \in \alpha^{(i)}\shuffle \beta^{(j)},} \sum_{\gamma'' \in \bar{\alpha}^{(i)} \shuffle \bar{\beta}^{(j)}} [\gamma']_{\Tie}^W \otimes [\gamma'']_{\Tie}^W.
    \end{align*}
    Each of the words $\gamma^{(k)}$ in the first expression is some shuffle of some first numbers of letters of $\alpha$ and some first numbers of letters of $\beta$, while $\bar{\gamma}^{(k)}$ makes up a shuffle of the remaining letters. Thus, these match up exactly with the words $\gamma'$ and $\gamma''$ in the second expression. Given that the two expressions are equivalent, $\mathcal{A}_{\Tie}^W$ meets the conditions for a bialgebra \cite[Definition 1.3.7]{grinberg}. Since it is also graded and connected, $\mathcal{A}_{\Tie}^W$ is a Hopf algebra \cite[Lemma 2.1]{ehrenborg1996posets}.

    Now, since $\mathcal{A}^{W}_{\Tie}$ is a commutative Hopf algebra, its dual is a cocommutative Hopf algebra. Given that the dimension of the $n^{\text{th}}$ graded component of the dual is the number of integer compositions of $n$, the dual is isomorphic to $\NSym$ by \cite[Theorem 12]{aliniaeifard2021hopfstructuresrepresentationtheory}. 
    Since the dual is isomorphic to $\NSym$, the algebra $\mathcal{A}_{\Tie}^W$ must be isomorphic to $\QSym$.
\end{proof}

Given the isomorphism between $\mathcal{A}^{W}_{\Tie}$ and $\QSym$, one can construct a basis of $\QSym$ analogous to the basis of $\mathcal{A}^{W}_{\Tie}$ made up by the $\Tie$-equivalence classes, which we call the \emph{tie basis}. For a word $w$,  define the tie composition of $w$ to be $ \Comp(\Tie(w))$. For a tie composition $\alpha \vDash n$, let the tie word of $\alpha$, denoted $w^{\Tie}_{\alpha} = (w_1, w_2, \ldots, w_n)$, be the unique word such that $w_1 = 1$, and $w_{i} = w_{i+1}$ if $i \in \Set(\alpha)$ but $w_{i} + 1 = w_{i+1}$ if $i \not\in \Set(\alpha)$ for $2 \leq i \leq n-1$. Note that we have constructed the tie word of $\alpha$ so that $\Comp(\Tie(w^{\Tie}_{\alpha})) = \alpha$.

\begin{theorem}\label{thm:T_basis}
    There exists a basis $\{T_{\alpha}\}_{\alpha}$ of $\QSym$, indexed by integer compositions $\alpha$, with the product\[T_{\alpha}T_{\beta} = \sum_{v \in w^{\Tie}_{\alpha} \shifted w^{\Tie}_{\beta}} T_{\Comp(\Tie(v))}=\sum_{\gamma \in \vDash m+n} c^{\gamma}_{\alpha, \beta}T_{\gamma},\] for $\alpha \in W_m, \beta \in W_n$, where $c^{\gamma}_{\alpha,\beta}$ is the number of words with Tie composition $\gamma$ in the set of shifted shuffles of $w^{\Tie}_{\alpha}$ and $w^{\Tie}_{\beta}$.
\end{theorem}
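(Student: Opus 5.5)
The plan is to transport the basis $\{[\alpha]^W_{\Tie}\}$ of $\mathcal{A}^W_{\Tie}$ into $\QSym$ through the isomorphism of Theorem~\ref{thm:Tie}(2), and then rewrite the resulting structure constants in terms of the tie words $w^{\Tie}_\alpha$.

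First, I fix an algebra isomorphism $\Phi:\mathcal{A}^W_{\Tie}\to\QSym$, which exists by Theorem~\ref{thm:Tie}. I check that the $\Tie$-equivalence classes of words of length $n$ are indexed bijectively by compositions of $n$ via $[w]_{\Tie}\mapsto \Comp(\Tie(w))$. Injectivity holds because the class is determined by the tie set together with the length. Surjectivity holds because, for every $\alpha\vDash n$, the tie word $w^{\Tie}_\alpha$ satisfies $\Comp(\Tie(w^{\Tie}_\alpha))=\alpha$. I then set
\[T_\alpha:=\Phi\bigl([w^{\Tie}_\alpha]^W_{\Tie}\bigr).\]
Since $\Phi$ is an isomorphism of graded vector spaces and the classes form a basis of $\mathcal{A}^W_{\Tie}$, the family $\{T_\alpha\}$ is a basis of $\QSym$. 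If $\Phi$ from Theorem~\ref{thm:Tie} were not obviously graded, I would instead invoke a graded version. That version comes from the same argument, since the Hopf duality with $\NSym$ cited there is graded.

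Next, I compute the product. Because $\Phi$ is multiplicative,
\[T_\alpha T_\beta=\Phi\Bigl(\sum_{\gamma\in\kappa\shuffle\upsilon}[\gamma]^W_{\Tie}\Bigr)\]
for any disjoint representatives $\kappa$ of the class of $w^{\Tie}_\alpha$ and $\upsilon$ of the class of $w^{\Tie}_\beta$. Here the shuffle-compatibility proved in Theorem~\ref{thm:Tie}(1) makes the multiset independent of this choice. The words $w^{\Tie}_\alpha$ and $w^{\Tie}_\beta$ themselves are not disjoint, since both begin with $1$. So I take $\kappa=w^{\Tie}_\alpha$ and $\upsilon=w^{\Tie}_\beta[m]$, the shift of every letter by $m=|\alpha|$. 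The tie set is unchanged by a shift, and the letters of $w^{\Tie}_\alpha$ lie in $[m]$, so the two words are disjoint.

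By definition, $\kappa\shuffle\upsilon=w^{\Tie}_\alpha\shifted w^{\Tie}_\beta$. Each $v$ in this multiset satisfies $\Phi([v]^W_{\Tie})=T_{\Comp(\Tie(v))}$, because $[v]^W_{\Tie}=[w^{\Tie}_{\Comp(\Tie(v))}]^W_{\Tie}$. This gives the first displayed formula. Grouping the terms by $\gamma=\Comp(\Tie(v))$ gives the second, with $c^\gamma_{\alpha,\beta}$ equal to the stated count.

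The main obstacle is modest: it is the disjointness and well-definedness point, namely that the shifted shuffle of tie words is a legitimate choice of disjoint representatives. That reduces to the shift-invariance of $\Tie$ and the fact that $w^{\Tie}_\alpha$ uses only letters in $[m]$, which follows from its construction.
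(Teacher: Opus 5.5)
Your proposal is correct and matches the paper's own (essentially one-line) argument: the tie basis is defined by transporting the basis of $\Tie$-equivalence classes of $\mathcal{A}^W_{\Tie}$ through the isomorphism of Theorem~\ref{thm:Tie}, with $T_\alpha$ the image of the class of $w^{\Tie}_\alpha$. You also spell out points the paper leaves implicit. Classes of length $n$ correspond bijectively to compositions of $n$. Any shift that makes the tie words disjoint gives the same multiset of tie compositions, which is why the paper's example can shift by the largest letter while you shift by the length. Finally, your gradedness caveat is not needed, since any linear isomorphism carries a basis to a basis.
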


\begin{example}
We will calculate $T_{(1,2)} T_{(1)}$. The compositions $(1,2)$ and $(1)$ correspond to the tie sets $\{1\}$ and $\emptyset$ and the tie words $w^{\Tie}_{(1,2)}=(1,1,2)$ and $w^{\Tie}_{(1)} = (1)$.  The shifted shuffles of these words are $\{\{(1,1,2,3),(1,1,3,2), (1,3,1,2), (3,1,1,2)\}\}$ which have tie compositions $\{\{(1,3), (1,3), (4),  (2,2)\}\}$. Thus, we have $T_{(1,2)}  T_{(1)}= 2T_{(1,3)} + T_{(2,2)} + T_{(4)}$.
\end{example}

\begin{lemma}
The number of shuffles of $\alpha \in W_n$ and $\beta \in W_m$ with $i$ ties between letters from $\alpha$ and $j$ ties  between letters from $\beta$ such that the letters from $\beta$ are in $k$ contiguous pieces is given by \[\binom{\tie(\beta)}{j}\binom{m-\tie(\beta)-1}{k+j-\tie(\beta)-1}\binom{\tie(\alpha)}{i}\binom{n+1-\tie(\alpha)}{k+i-\tie(\alpha)}.\]
\end{lemma}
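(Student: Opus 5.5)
The approach is a direct bijective count: describe each shuffle by how it cuts $\alpha$ and $\beta$ into contiguous pieces, then count the admissible cut patterns separately for $\beta$ and for $\alpha$ and multiply. Throughout, $\alpha$ and $\beta$ are disjoint, as in the proof of Theorem~\ref{thm:Tie}. That proof already gives the basic fact we need: a tie in a shuffle $\gamma\in\alpha\shuffle\beta$ arises only from a tie $p\in\Tie(\alpha)$ whose two letters $\alpha_p,\alpha_{p+1}$ stay adjacent in $\gamma$, or likewise for $\beta$. So the statistics ``$i$ ties from $\alpha$'' and ``$j$ ties from $\beta$'' equal the number of original ties of $\alpha$ (resp.\ $\beta$) that are not separated in $\gamma$. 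Since the letters of a shuffle are determined by the positions, a shuffle is the same as an interleaving of positions. We therefore count interleavings and do not need to worry about coincidences among words.

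First I set up the bijection. A shuffle $\gamma$ is determined by splitting $\beta$ into consecutive nonempty pieces $\beta=B_1B_2\cdots B_k$ and $\alpha$ into pieces $\alpha=A_0A_1\cdots A_k$, so that $\gamma=A_0B_1A_1B_2\cdots B_kA_k$. Here $A_1,\dots,A_{k-1}$ must be nonempty, while $A_0$ and $A_k$ may be empty. Conversely, every such pair of splittings gives a unique shuffle whose $\beta$-letters form exactly $k$ contiguous pieces. A tie of $\beta$ at position $q$ survives exactly when $q$ is not a cut point of the $\beta$-splitting, and similarly for $\alpha$. The choices for $\alpha$ and $\beta$ are therefore independent, and the count factors as a product.

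Next I count the $\beta$-splittings. A splitting into $k$ nonempty pieces is a choice of $k-1$ cut gaps among the $m-1$ internal gaps of $\beta$. Exactly $j$ of the $\tie(\beta)$ tie gaps must be left uncut, which can be chosen in $\binom{\tie(\beta)}{j}$ ways. The remaining $\tie(\beta)-j$ tie gaps are forced cuts, so the other $k-1-(\tie(\beta)-j)$ cuts come from the $m-1-\tie(\beta)$ non-tie gaps. This gives $\binom{\tie(\beta)}{j}\binom{m-\tie(\beta)-1}{k+j-\tie(\beta)-1}$.

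Then I count the $\alpha$-splittings. To handle the possibly empty end pieces, I adjoin a phantom letter at each end of $\alpha$. A splitting $A_0\cdots A_k$ with nonempty interior pieces then corresponds to choosing $k$ cuts among the $n+1$ gaps of this padded word: the $n-1$ internal gaps plus the two boundary gaps. The two boundary gaps are never tie gaps. As before, choose the $i$ surviving ties in $\binom{\tie(\alpha)}{i}$ ways and force the other $\tie(\alpha)-i$ cuts. The remaining $k-\tie(\alpha)+i$ cuts come from the $n+1-\tie(\alpha)$ non-tie gaps, which gives $\binom{\tie(\alpha)}{i}\binom{n+1-\tie(\alpha)}{k+i-\tie(\alpha)}$. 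Multiplying the two counts gives the formula.

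The main obstacle is getting the boundary conventions right, namely the possibly empty $A_0$ and $A_k$ and the fact that boundary gaps are not tie gaps. Care is also needed with degenerate cases such as $m=0$ or $k=0$, where the binomials should be read with the standard conventions.
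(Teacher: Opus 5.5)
Your proposal is correct and is essentially the paper's own argument. You cut $\beta$ into $k$ pieces using $k-1$ of its $m-1$ internal gaps, with the $\tie(\beta)-j$ unpreserved ties forced as cuts, and then insert the pieces into $k$ distinct slots among the $n+1$ slots of $\alpha$, with the $\tie(\alpha)-i$ unpreserved ties forced as slots; your phantom-letter padding is just a name for those $n+1$ slots. Your explicit check that the two choices are independent and determine the shuffle bijectively is slightly more careful than the paper's write-up. The only loose end is the trivial case $m=k=0$: there the factor $\binom{-1}{-1}$ must be read as $1$, not $0$ as the usual convention would give.
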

\begin{proof}
    Let $\alpha \in W_n$ and $\beta \in W_m$.  We will count all shuffles of $\alpha$ and $\beta$ such that there are $i$ ties between letters, there are $j$ ties between letters from $\beta$, and the letters from $\beta$ are in $k$ contiguous pieces within the shuffle. To break $\beta$ into $k$ pieces, we must make $k-1$ breaks. The number of ways to break $\beta$ in $k$ pieces with $j$ ties among them is counted by first choosing $j$ ties to preserve and breaking $\beta$ along the remaining $\tie(\beta)-j$ ties. Then, choose $(k-1)-(\tie(\beta)-j)$ of the $m-1-\tie(\beta)$ spots between two letters that do not tie at which to break $\beta$. This yields \[\binom{\tie(\beta)}{j}\binom{m-\tie(\beta)-1}{(k-1)-(\tie(\beta)-j)}\] ways to break $\beta$ into $k$ pieces with a total of $j$ ties between them. 
    Next, we choose $i$ ties in $\alpha$ to preserve so that the remaining $\tie(\alpha)-i$ ties in $\alpha$ will be split by a part of $\beta$ in the shuffle. We then select $k-(\tie(\alpha)-i)$ spots for the remaining pieces of $\beta$ from the $n+1-\tie(\alpha)$ possible spots a piece of $\beta$ could be shuffled into. There are \[\binom{\tie(\alpha)}{i}\binom{n+1-\tie(\alpha)}{k-(\tie(\alpha)-i)}\] ways to make this choice. With these $k$ spots selected, we simply insert the $k$ pieces of $\beta$ in order to obtain a shuffle of $\alpha$ and $\beta$ as desired. 
\end{proof}

Given the lemma above, the total number of ties across all shuffles of any two words $\alpha$ and $\beta$ can also be determined only from $\tie(\alpha)$, $\tie(\beta)$, $|\alpha|$, and $|\beta|$.
\begin{theorem}\label{thm:tie}
    The tie number statistic $\tie$ is shuffle-compatible on words.
\end{theorem}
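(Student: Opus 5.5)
Two routes suggest themselves. One is to use the preceding lemma and sum its counts over $k$; the other is to deduce the result from Theorem~\ref{thm:Tie} by the same mechanism as Theorem~\ref{thm:ptow}, with $\Tie$ in place of $\Des$. I will follow the second route, because it is cleaner and gives the full multiset at once, and use the lemma only as a cross-check.

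\emph{Packing condition.} If $\pack(\alpha)=\pack(\beta)$, then $\alpha_i=\alpha_{i+1}$ holds exactly when $\beta_i=\beta_{i+1}$. Hence $\Tie(\alpha)=\Tie(\beta)$, and so $\tie(\alpha)=\tie(\beta)$.

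\emph{Reduction to $\Tie$.} Let $\{\alpha,\beta\}$ and $\{\kappa,\upsilon\}$ be disjoint pairs with $|\alpha|=|\kappa|=n$, $|\beta|=|\upsilon|=m$, $\tie(\alpha)=\tie(\kappa)$ and $\tie(\beta)=\tie(\upsilon)$. The proof of Theorem~\ref{thm:Tie} shows that $\Tie(\gamma)$ is determined by the position sets $X\sqcup Y=[n+m]$ together with $\Tie(\alpha)$ and $\Tie(\beta)$. Specifically, $i\in\Tie(\gamma)$ exactly when $i$ and $i+1$ are consecutive positions $x_j,x_{j+1}$ with $j\in\Tie(\alpha)$, or consecutive positions $y_j,y_{j+1}$ with $j\in\Tie(\beta)$.

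Consequently $\tie(\gamma)$ is the number of ties of $\alpha$ left unbroken plus the number of ties of $\beta$ left unbroken. A tie at position $j$ of $\alpha$ is unbroken when no letter of $\beta$ is inserted between $\alpha_j$ and $\alpha_{j+1}$, and similarly for $\beta$.

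\emph{Key step.} It suffices to show that the multiset $\{\{\tie(\gamma)\}\}$ does not change when $\Tie(\alpha)$ is replaced by another subset of $[n-1]$ of the same size, and likewise for $\beta$. A shuffle is encoded by a lattice path, namely the sequence of gap-insertion counts: $c_0,\dots,c_n\ge 0$ letters of $\beta$ are placed in the gaps of $\alpha$, with $\sum c_j=m$. Symmetrically, $\beta$ is split by $\alpha$ in the same way.

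The statistic counts the $j\in\Tie(\alpha)$ with $c_j=0$, plus the analogous count on the $\beta$ side. The difficulty is that the two sides are coupled, because they are encoded by the same path. This is where the lemma enters. It gives the joint count of shuffles by $(i,j,k)$ as a product in which $\Tie(\alpha)$ and $\Tie(\beta)$ appear only through their cardinalities. Summing over $k$ and over pairs $(i,j)$ with $i+j=t$ then gives the number of shuffles with $\tie(\gamma)=t$ as a function of $n$, $m$, $\tie(\alpha)$ and $\tie(\beta)$ alone.

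\emph{Main obstacle.} The work is in checking that the lemma's product formula is valid for an arbitrary tie-set configuration. Its proof should depend only on counting which gaps are ties, since "preserve $i$ of the ties, choose the remaining insertion spots among the non-tie gaps" is a choice among labelled gaps and is independent of where those gaps sit. I also need to confirm that every shuffle is counted exactly once, including the boundary gaps (the $n+1$ slots) and the case $k=0$. If this bookkeeping proves awkward, I will instead give a direct bijection. Given two subsets of $[n-1]$ of equal size, fix a bijection between the gap positions that preserves the tie/non-tie labelling and the two end gaps. Permuting the insertion counts $c_j$ accordingly sends shuffles to shuffles, and it preserves both the unbroken $\alpha$-ties and the $k$-piece decomposition of $\beta$. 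Doing the same on the $\beta$ side requires care, because reordering $\beta$'s breakpoints must be compatible with the $c_j$. I expect the lemma's factorised count to handle that coupling most easily.
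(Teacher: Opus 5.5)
Your argument is correct and is essentially the paper's: the paper deduces this theorem directly from the preceding lemma, whose count of shuffles by $(i,j,k)$ depends on $\Tie(\alpha)$ and $\Tie(\beta)$ only through $\tie(\alpha)$ and $\tie(\beta)$, so summing over $k$ and over $i+j=t$ gives the distribution of $\tie$ (the packing condition is immediate, as you note). The detour through Theorem~\ref{thm:Tie} and the fallback bijection are unnecessary, because the lemma already holds for arbitrary disjoint words: a shuffle is an independent choice of $k$ of the $n+1$ gaps of $\alpha$ and $k-1$ of the $m-1$ breakpoints of $\beta$, and this independence removes the coupling you were worried about in the $c_j$ encoding.
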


We leave it as an open problem to find an expansion of the tie basis in terms of other bases of $\QSym$ or to find a polynomial expression for tie quasisymmetric functions. A polynomial expansion, specifically, may also yield an algebraic interpretation for the shuffle algebra of the tie number.

\section{Parking functions}\label{sec:parking}

Parking functions comprise a subset of words with close links to Hopf algebras and various well-studied statistics. We provide a brief review of parking functions following \cite{Yan15}.

\begin{definition}
    A \emph{parking function} of length $n$ is a sequence $\alpha = (\alpha_1, \alpha_2, \ldots, \alpha_n)$  such that the nondecreasing rearrangement $\alpha_1'  \leq \alpha_2' \leq \cdots \leq \alpha'_n$ has the property that $\alpha_i' \leq i$. We denote the set of parking functions of length $n$ by $\PF_{n}$ and let $\PF = \cup_{n\geq 0} \PF_{n} $.
\end{definition}

Parking functions also have the following interpretation. Imagine a one-way street with $n$ spots and $n$ cars lined up to park in those spots. Each car has a preferred spot, and the sequence of these preferences is $\alpha = (\alpha_1, \ldots, \alpha_n)$ where car $i$ prefers spot $\alpha_i$.  The cars will park one by one, driving to their preferred spot and then taking the first available spot. If there are no unoccupied spots remaining past their preference, the car will be forced to leave. The preference sequence $\alpha$ is called a parking function if all cars can park.

In Table \ref{tab:pfstats} below, we list statistics that we show to be weakly shuffle-compatible on parking functions. Let $\alpha = (\alpha_1, \alpha_2, \ldots, \alpha_n) \in \PF_n$ and let $s_i$ be the spot car $i$ parks in after the parking process on $\alpha$.

\begin{table}[h!]
    \centering
    \begin{tabular}{|c|c|c|} \hline
    \textbf{Name} & \textbf{Definition} & \textbf{FindStat}\\ \hline
     Inversion set & $\Inv(\alpha) = \{(i,j): i < j, \alpha_i > \alpha_j\}$ &  \\ \hline
        Inversion number & $\inv(\alpha) = |\Inv(\alpha)|$  & St000018 \\  \hline
        Outcome  & $\out(\alpha) = (s_1, s_2, \ldots, s_n)$ & \\ \hline
         Displacement sequence & $\Disp(\alpha) =(s_1 - \alpha_1, s_2 - \alpha_2, \ldots, s_n - \alpha_n)$ & \\ \hline
        Displacement   & $\disp(\alpha) = \sum_{i=1}^n s_i - \alpha_i = \binom{n+1}{2} - \sumst(\alpha)$  & St000188 \\ \hline
         Maximum displacement of single car & $\mathsf{maxDisp}(\alpha) = \max\{ s_1-\alpha_1, s_2 - \alpha_2, \ldots, s_n -\alpha_n\}$ & St000943 \\ \hline
        Lucky car set & $\Lucky(\alpha) = \{i : s_i = \alpha_i\}$  &  \\ \hline
        Lucky car number   &  $\lucky(\alpha) = |\Lucky(\alpha)|$ & St000135 \\ \hline
        Unlucky car set & $\Unlucky(\alpha) =\{i : s_i \not= \alpha_i\}$  & \\ \hline
        Unlucky car number  & $\unlucky(\alpha) = |\Unlucky(\alpha)|$ &    \\ \hline
       Weakly increasing rearrangement & $\winc(\alpha) = (\alpha_1', \alpha_2', \ldots, \alpha_n')$  &  \\ \hline
        Leading elements 
        & $\lel(\alpha) = |\{i : \alpha_i = \alpha_1, 1 \leq i\}|$ & \\ \hline
        Number of ones  & $\ones(\alpha) = |\{i : \alpha_i = 1\}|$  & \\ \hline
        Size of initial strictly increasing segment  & $\isi(\alpha) = \max\{i: \alpha_1 < \alpha_2 < \cdots < \alpha_i\}$  & St001804\\ \hline
        Sum of entries & $\sumst(\alpha) = \alpha_1 + \alpha_2 + \cdots + \alpha_n$ & St000165 \\ \hline
        Sum of entries minus length & $\mathsf{sml}(\alpha) = \sumst(\alpha) - |\alpha|$ & St000540 \\ \hline
    \end{tabular}
    \caption{Statistics on parking functions \cite{COLARIC2021102129, durmic2023probabilistic, gessel2004refinement, FindStat, stanleyPF,  Yan15}}
    \label{tab:pfstats}
\end{table}

The Hopf algebraic structure on parking functions was introduced by Novelli and Thibon in~\cite{Nov04}. The Hopf algebra $\PQSym$ employs the same shifted shuffle as $\FQSym$ with a slight difference in coproduct.
Note that in \cite{Nov04}, the shuffle product we denote with $\shifted$ is instead denoted $\Cup$. The product on the fundamental basis $\{F_{\alpha}\}_{\alpha \in \PF_n}$ of $\PQSym$ is given by \[ F_{\alpha} F_{\beta} = \sum_{\gamma \in \alpha \shifted \beta} F_{\gamma}.\]

$\PQSym$ is self-dual and contains $\FQSym$ as a subalgebra. It also contains the \emph{Catalan quasisymmetric functions} \cite{Nov04}. This algebra, $\CQSym$ has basis $\{\mathbf{P}_{\alpha}\}_{\alpha \in \WIPF}$ where $\WIPF$ is the set of weakly increasing parking functions. The product on this basis is given by $\mathbf{P}_{\alpha}\mathbf{P}_{\beta} = \mathbf{P}_{\alpha \cdot \beta[n]},$ for $\alpha \in \WIPF_n, \beta \in \WIPF_m$, where $\alpha \cdot \beta[n]$ is the concatenation of $\alpha$ and $\beta[n]$.

\subsection{Weak shuffle-compatibility for parking functions and quotients of $\PQSym$}

The following generalization of weak shuffle-compatibility to parking functions follows naturally by expanding Definition~\ref{def:weakperm} to encompass parking functions.

\begin{definition}
A statistic $\St$ is \emph{weakly shuffle-compatible on parking functions} if $\{\{ \St(\gamma) : \gamma \in \alpha \shifted \beta\}\}$ is determined by $|\alpha|$, $|\beta|$, $\St(\alpha)$, and $\St(\beta)$ for any two parking functions $\alpha$ and $\beta$.
\end{definition}
Equivalently, $\St$ is weakly shuffle-compatible on parking functions if \[\{\{\St(\gamma) : \gamma \in \alpha \shifted \beta\}\}=\{\{\St(\gamma) : \gamma \in \kappa \shifted \upsilon\}\},\] for any two pairs of parking functions $\{\alpha, \beta\}$ and $\{\kappa, \upsilon\}$ where  $|\alpha|=|\kappa|$, $|\beta|=|\upsilon|$, $\St(\alpha) = \St(\kappa)$ and $\St(\beta)=\St(\upsilon)$. 

Let $[\alpha]^{\PF}_{\St} = \{\beta \in \PF : \St(\alpha)=\St(\beta), |\alpha|=|\beta|\}$ be the $\St$ equivalence class of $\alpha \in \PF$, where we may drop the $\PF$ sueprscript when it is clear from context. Let $\PF/\St$ be the set of all such equivalence classes.

\begin{definition}
The $\St$ shifted shuffle algebra, denoted $\mathcal{A}^{\PF}_{\St}$, is defined as the algebra with a basis formed by $\St$-equivalence classes of parking functions with the product \[ [\alpha]^{\PF}_{\St} [\beta]^{\PF}_{\St} = \sum_{ \gamma \in \alpha \shifted \beta} [\gamma]^{\PF}_{\St}.\]
\end{definition}

\begin{proposition}\label{prop:alg_pqsym}
    If $\St$ is weakly shuffle-compatible on parking functions, then $\mathcal{A}_{\St}^{\PF}$ is a quotient of $\PQSym$.
\end{proposition}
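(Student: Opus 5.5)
The plan mirrors the proof of Theorem~\ref{thm:weakFQSYM}(1), with $\PQSym$ in place of $\FQSym$ and parking functions in place of permutations. Define the linear map $\varsigma: \PQSym \rightarrow \mathcal{A}^{\PF}_{\St}$ on the fundamental basis by $F_{\alpha} \mapsto [\alpha]^{\PF}_{\St}$ for every $\alpha \in \PF$. This is well defined because $\{F_\alpha\}_{\alpha\in\PF}$ is a basis. It is surjective because every basis element $[\alpha]^{\PF}_{\St}$ of $\mathcal{A}^{\PF}_{\St}$ is by definition the class of some parking function $\alpha$, so it equals $\varsigma(F_\alpha)$. It also preserves grading by length.

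Before checking multiplicativity, I would verify that the target product is well defined and closed, since this is where weak shuffle-compatibility is used. First, the shifted shuffle of two parking functions is again a parking function. Indeed, if $\alpha\in\PF_n$ and $\beta\in\PF_m$, then in any $\gamma\in\alpha\shifted\beta$ the nondecreasing rearrangement is $\alpha'$ followed by $\beta'[n]$. Since $\alpha'_i\le i$ and $\beta'_j+n\le n+j$, the parking condition holds. So each term $[\gamma]^{\PF}_{\St}$ is a genuine basis element. Second, weak shuffle-compatibility says the multiset $\{\{\St(\gamma):\gamma\in\alpha\shifted\beta\}\}$ depends only on $|\alpha|,|\beta|,\St(\alpha),\St(\beta)$. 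Hence $\sum_{\gamma\in\alpha\shifted\beta}[\gamma]^{\PF}_{\St}$ does not depend on the chosen representatives, and $\mathcal{A}^{\PF}_{\St}$ is an algebra. I take associativity and the unit (the class of the empty word) as inherited from the shifted shuffle on $\PQSym$, or as already implicit in the definition.

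The main step is then the direct computation
\[
\varsigma(F_\alpha F_\beta)=\varsigma\Big(\sum_{\gamma\in\alpha\shifted\beta}F_\gamma\Big)=\sum_{\gamma\in\alpha\shifted\beta}[\gamma]^{\PF}_{\St}=[\alpha]^{\PF}_{\St}[\beta]^{\PF}_{\St}=\varsigma(F_\alpha)\varsigma(F_\beta),
\]
where the multisets on both sides match term by term. So $\varsigma$ is a surjective algebra morphism, and by the first isomorphism theorem $\mathcal{A}^{\PF}_{\St}\cong\PQSym/\ker(\varsigma)$, which is the claim. The only point that needs care is the well-definedness and closure of the product in $\mathcal{A}^{\PF}_{\St}$ discussed above; once that is in place, the rest is formal.
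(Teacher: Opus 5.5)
Your proposal is correct and follows essentially the same route as the paper: the map $F_\alpha \mapsto [\alpha]^{\PF}_{\St}$ is a surjective algebra morphism, so $\mathcal{A}^{\PF}_{\St}\cong \PQSym/\ker(\varsigma)$. Your extra checks are details the paper leaves implicit: that $\PF$ is closed under $\shifted$, and that the product does not depend on the chosen representatives, which is where weak shuffle-compatibility enters.
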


\begin{proof}
 Let $\St$ be weakly shuffle-compatible on parking functions. Consider the map $\varsigma: \PQSym \rightarrow \mathcal{A}^{\PF}_{\St}$, defined by $F_{\alpha} \rightarrow [\alpha]_{\St}^{\PF}$. The map is clearly surjective. Additionally, \[\varsigma(F_{\alpha}F_{\beta}) = \varsigma(\sum_{\gamma \in \alpha \shifted \beta} F_{\gamma})= \sum_{\gamma \in \alpha \shifted \beta} [\gamma]^{\PF}_{\St} = [\alpha]^{\PF}_{\St}[\beta]^{\PF}_{\St}=\varsigma(F_{\alpha})\varsigma(F_{\beta}).\] Since $\varsigma$ is a surjective morphism, we have that $\mathcal{A}^{\PF}_{\St}$ is isomorphic to $\PQSym/\ker(\varsigma)$.
\end{proof}

Note that, given permutations are a subset of parking functions, if a statistic is weakly shuffle-compatible on parking functions, it is also weakly shuffle-compatible on permutations. 
It is straightforward to check that any statistic $\St$ that is shuffle-compatible on words is weakly shuffle-compatible on parking functions. In many of these cases, the algebraic results translate over exactly, and so we do not restate them here.

\begin{corollary}
    The statistics $\Des$, $\Asc, \Tie, \Cliff, \des, \asc, \maj$ and $\cliff$ are weakly shuffle-compatible on parking functions.
\end{corollary}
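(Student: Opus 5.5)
The corollary follows once we check the general claim stated just before it: any statistic shuffle-compatible on words is weakly shuffle-compatible on parking functions. Each listed statistic is shuffle-compatible on words by Theorem~\ref{thm:SCofDes}, Corollary~\ref{cor:Asc}, Theorem~\ref{thm:Tie}, Corollary~\ref{cor:cliff}, and Corollary~\ref{cor:des,asc,maj}, so it suffices to prove that general claim.

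Let $\St$ be shuffle-compatible on words, and take parking functions $\alpha,\kappa\in\PF_n$ and $\beta,\upsilon\in\PF_m$ with $\St(\alpha)=\St(\kappa)$ and $\St(\beta)=\St(\upsilon)$. By definition,
\[
\alpha\shifted\beta=\alpha\shuffle\beta[n]
\qquad\text{and}\qquad
\kappa\shifted\upsilon=\kappa\shuffle\upsilon[n],
\]
where $\beta[n]$ adds $n$ to every letter. All letters of $\alpha$ lie in $[n]$ and all letters of $\beta[n]$ lie in $[n+1,n+m]$, so $\alpha$ and $\beta[n]$ are disjoint words. Likewise $\kappa$ and $\upsilon[n]$ are disjoint.

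Shifting does not change the packed word, so $\pack(\beta[n])=\pack(\beta)$. Condition (2) of shuffle-compatibility on words then gives $\St(\beta[n])=\St(\beta)=\St(\upsilon)=\St(\upsilon[n])$. The lengths also agree: $|\beta[n]|=|\upsilon[n]|=m$.

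Condition (1) of shuffle-compatibility on words now applies to the disjoint pairs $\{\alpha,\beta[n]\}$ and $\{\kappa,\upsilon[n]\}$. It gives
\[
\{\{\St(\gamma):\gamma\in\alpha\shuffle\beta[n]\}\}=\{\{\St(\gamma):\gamma\in\kappa\shuffle\upsilon[n]\}\},
\]
and the left and right sides are exactly the multisets of $\St$ over $\alpha\shifted\beta$ and $\kappa\shifted\upsilon$. This is weak shuffle-compatibility on parking functions.

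This argument never uses that the shuffles themselves are parking functions. It only uses disjointness, which holds because the letters of a parking function of length $n$ lie in $[n]$.
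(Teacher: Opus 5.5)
Your proposal is correct and follows the paper's route: the paper deduces this corollary from its remark that shuffle-compatibility on words implies weak shuffle-compatibility on parking functions (which it calls ``straightforward to check'' without proof), combined with the earlier word results for these eight statistics. Your argument supplies that omitted check. It uses that the letters of a parking function of length $n$ lie in $[n]$, so $\alpha$ and $\beta[n]$ are disjoint, and it uses condition (2) to get $\St(\beta[n])=\St(\beta)$.
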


Our primary interest in this section is statistics that are not shuffle-compatible on words.

\begin{theorem}\label{thm:Inv}(Weak shuffle-compatibility of the inversion set)
\begin{enumerate}
    \item The set of inversions statistic $\Inv$ is weakly shuffle-compatible on parking functions. 
    \item The linear map on $\mathcal{A}^{\PF}_{\Inv}$ defined by \[[\alpha]^{\PF}_{\Inv} \rightarrow \mathbf{F}_{\alpha},\] where $\alpha$ is the unique permutation of $\mathfrak{S}_{|\alpha|}$ in $[\alpha]^{\PF}_{\Inv}$, is an algebra isomorphism from $\mathcal{A}^{\PF}_{\Inv}$ to $\FQSym$.
\end{enumerate}
      
\end{theorem}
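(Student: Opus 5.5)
The plan is to reduce everything to permutations through a canonical representative. Fix a parking function $\alpha$ of length $n$ and set $\sigma(\alpha)=\std(\alpha)$. Here standardization of a word breaks ties from left to right: equal letters are labelled increasingly by position. Then $\alpha_i>\alpha_j$ with $i<j$ holds if and only if $\sigma_i>\sigma_j$, and ties never produce inversions in $\sigma$. Hence $\Inv(\sigma(\alpha))=\Inv(\alpha)$, so every class $[\alpha]^{\PF}_{\Inv}$ contains a permutation of $\mathfrak{S}_n$.

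This permutation is unique, because a permutation of $[n]$ is determined by its inversion set. Indeed, $\pi_i$ equals $1$ plus the number of $k$ with $\pi_k<\pi_i$, and this count can be read off from $\Inv(\pi)$: it is the number of $j>i$ with $(i,j)\in\Inv(\pi)$ plus the number of $k<i$ with $(k,i)\notin\Inv(\pi)$. So the classes $\PF_n/\Inv$ are in bijection with $\mathfrak{S}_n$ via $\alpha\mapsto\std(\alpha)$.

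For part (1), I compute the inversion set of a shifted shuffle directly. Take $\gamma\in\alpha\shifted\beta$ with $|\alpha|=n$ and $|\beta|=m$. Record the positions $X=\{x_1<\dots<x_n\}$ occupied by $\alpha$ and $Y=\{y_1<\dots<y_m\}$ occupied by $\beta[n]$. Every letter of $\beta[n]$ exceeds $n$, while every letter of $\alpha$ is at most $n$ because $\alpha$ is a parking function. This is the key place the parking condition is used.

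Consequently $\Inv(\gamma)$ consists of three kinds of pairs:
\begin{itemize}
\item pairs $(x_i,x_j)$ with $(i,j)\in\Inv(\alpha)$;
\item pairs $(y_i,y_j)$ with $(i,j)\in\Inv(\beta)$;
\item all pairs $(y_j,x_i)$ with $y_j<x_i$.
\end{itemize}
So $\Inv(\gamma)$ is determined by $X$, $\Inv(\alpha)$ and $\Inv(\beta)$. Summing over all $\binom{n+m}{n}$ choices of $X$ shows that the multiset $\{\{\Inv(\gamma)\}\}$ depends only on $n$, $m$, $\Inv(\alpha)$ and $\Inv(\beta)$. This is a position-wise bijection between $\alpha\shifted\beta$ and $\kappa\shifted\upsilon$ that preserves $\Inv$ whenever the inputs have equal lengths and inversion sets.

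For part (2), the map $\phi:[\alpha]^{\PF}_{\Inv}\mapsto\mathbf{F}_{\std(\alpha)}$ is a well-defined linear bijection by the uniqueness above. To check multiplicativity, note that $[\alpha]=[\std(\alpha)]$ and $[\beta]=[\std(\beta)]$ as classes. By part (1), I may therefore compute the product using the permutations $\tau=\std(\alpha)$ and $\theta=\std(\beta)$:
\[
[\tau][\theta]=\sum_{\pi\in\tau\shifted\theta}[\pi].
\]
Each $\pi$ here is already a permutation in $\mathfrak{S}_{n+m}$, so $\phi([\pi])=\mathbf{F}_\pi$. The image is then $\sum_{\pi\in\tau\shifted\theta}\mathbf{F}_\pi=\mathbf{F}_\tau\mathbf{F}_\theta$, which is exactly the $\FQSym$ product. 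Alternatively, the surjection $\varsigma$ of Proposition~\ref{prop:alg_pqsym} restricted to $\FQSym\subseteq\PQSym$ is bijective onto $\mathcal{A}^{\PF}_{\Inv}$, which gives the same conclusion.

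The main subtlety is the shift bound: it relies on the letters of $\alpha$ being at most $n$. That is exactly what makes the cross-pairs depend only on positions and not on the letter values.
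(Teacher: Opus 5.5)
Your proof is correct and takes essentially the same route as the paper. For part (1), both arguments classify the inversions of $\gamma\in\alpha\shifted\beta$ the same way: pairs within $\alpha$, pairs within $\beta$, and every cross pair with the $\beta$-position first. For part (2), both send a class to $\mathbf{F}$ of its unique permutation representative and check multiplicativity using permutation representatives. You also make explicit several points the paper leaves implicit: that the representative exists (via $\std$), that a permutation is recovered from its inversion set, and that the parking condition (letters of $\alpha$ at most $n$) is what makes the cross pairs behave uniformly.
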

  
\begin{proof}
    Let $\alpha$ and $\beta$ be two parking functions with inversion sets $\Inv(\alpha)$ and $\Inv(\beta)$. Consider a parking function $\gamma \in \alpha \shifted \beta$ where the indices of $\alpha$ in $\gamma$ are contained in the set $X = \{x_1 < x_2 < \cdots\}$ and the indices of $\beta$ in $\gamma$ are contained in the set $Y = \{y_1 < y_2 < \cdots\}$.  Then, a pair $(i,j)$ with $i<j$ is an inversion in $\gamma$ if and only if one of the following is true \begin{itemize}
        \item $i=x_g,j=x_h \in X$ and $(g,h) \in \Inv(\alpha)$,
        \item $i = y_g, j=y_h \in Y$ and $(g,h) \ \in \Inv(\beta)$, or 
        \item $i \in Y$ and $j \in X$.
    \end{itemize}
    Thus, the inversion sets of all shuffles of two words can be determined only from their lengths and inversion sets, and so $\Inv$ is shuffle-compatible on parking functions. 

 Next, we consider the algebraic part of the statement. The inversion set of any parking function $\alpha$ is the inversion set of some permutation in $\mathfrak{S}_{|\alpha|}$. Thus, each equivalence class can be associated with a unique permutation from $\mathfrak{S}_{|\alpha|}$  since permutations in $\mathfrak{S}_{|\alpha|}$ are uniquely defined by their inversion sets. Let $\varsigma: \mathcal{A}_{\Inv}^{\PF} \rightarrow \FQSym$ be defined by $\varsigma([\alpha]_{\Inv}) = \mathbf{F}_{\alpha}$ where $\alpha$ is picked to be the permutation from $\mathfrak{S}_{|\alpha|}$ that must be in   $\varsigma([\alpha]_{\Inv})$.  
Observe that \[\varsigma([\alpha]_{\Inv}[\beta]_{\Inv}) = \varsigma(\sum_{\gamma \in \alpha \shifted \beta} [\gamma]_{\Inv}) = \sum_{\gamma \in \alpha \shifted \beta} \mathbf{F}_{\gamma} = \mathbf{F}_{\alpha}\mathbf{F}_{\beta} = \varsigma([\alpha]_{\Inv})\varsigma([\beta]_{\Inv}).\]
From there, the result follows. 
\end{proof}

\begin{corollary}\label{cor:inv_WQSym} 
The number of inversions statistic $\inv$ is weakly shuffle-compatible on parking functions.
\end{corollary}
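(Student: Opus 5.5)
The plan is to read this off from the proof of Theorem~\ref{thm:Inv}, using the fact that $\inv$ is a function of $\Inv$ together with the explicit description of inversions in a shifted shuffle. Fix parking functions $\alpha\in\PF_n$ and $\beta\in\PF_m$. Each $\gamma\in\alpha\shifted\beta$ corresponds to a choice of position sets $X\sqcup Y=[n+m]$ with $|X|=n$, where $X$ records where the letters of $\alpha$ sit and $Y$ where the letters of $\beta[n]$ sit. This is a bijection between the multiset $\alpha\shifted\beta$ and the $n$-subsets $X$, because each shuffle in the multiset is indexed by its choice of positions.

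Next, I apply the trichotomy from the proof of Theorem~\ref{thm:Inv}. A pair $(i,j)$ with $i<j$ is an inversion of $\gamma$ in exactly three cases:
\begin{itemize}
\item both positions lie in $X$ and the pair comes from an inversion of $\alpha$;
\item both positions lie in $Y$ and the pair comes from an inversion of $\beta$;
\item $i\in Y$ and $j\in X$.
\end{itemize}
The third case holds because every letter of $\beta[n]$ exceeds $n$, and every letter of $\alpha$ is at most $n$ since $\alpha$ is a parking function. Counting the three cases gives
\[\inv(\gamma)=\inv(\alpha)+\inv(\beta)+\#\{(y,x)\in Y\times X : y<x\}.\]

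The last term is the number of inversions of the binary word recording the $X/Y$ pattern of $\gamma$. It depends only on $X$, and $X$ ranges over all $n$-subsets of $[n+m]$. So the multiset $\{\{\inv(\gamma):\gamma\in\alpha\shifted\beta\}\}$ is obtained from the multiset of inversion numbers of all binary words with $n$ ones and $m$ zeros by shifting every value by $\inv(\alpha)+\inv(\beta)$. This depends only on $n$, $m$, $\inv(\alpha)$ and $\inv(\beta)$, which is weak shuffle-compatibility.

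As a consistency check, the generating function is $q^{\inv(\alpha)+\inv(\beta)}\binom{n+m}{n}_q$, the analogue of Stanley's formula. The only step needing care is the cross-term claim that every $\beta$-letter in front of an $\alpha$-letter forms an inversion and no other cross pair does. That claim rests on the shift by $n$ together with the bound $\alpha_i\le n$ for parking functions, and I would state it explicitly rather than just cite Theorem~\ref{thm:Inv}.
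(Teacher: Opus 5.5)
Your proposal is correct and follows the paper's own argument: inversions within $\alpha$ and within $\beta$ are preserved, and each $\beta$-letter placed before an $\alpha$-letter adds exactly one inversion, so $\inv(\gamma)$ is $\inv(\alpha)+\inv(\beta)$ plus a term that depends only on the position set $X$. The paper leaves the cross-term claim implicit, and your justification of it via $\alpha_i\le n$, together with the $q$-binomial check, supplies that detail.
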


\begin{proof}
The compatibility of the inversion number can be determined from the process in the proof above. That is, every inversion in $\alpha$ and $\beta$ will be preserved in each shuffle, and then additional inversions will be added whenever elements from $\alpha$ are placed to the right of elements of $\beta$. 
\end{proof}

Parking functions have many interesting statistics that do not extend to words in general, and many of them prove to be weakly shuffle-compatible.

\begin{theorem}\label{thm:outcome} (Weak shuffle-compatibility of the outcome)
\begin{enumerate}
    \item The outcome statistic $\out$ is weakly shuffle-compatible on parking functions. 
    \item The linear map on $\mathcal{A}^{\PF}_{\out}$ defined by \[[\alpha]_{\out} \mapsto \mathbf{F}_{\out(\alpha)},\] is an algebra isomorphism between $\mathcal{A}^{\PF}_{\out}$ and $\FQSym$.
\end{enumerate}
    
\end{theorem}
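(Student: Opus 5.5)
The plan is to show directly that the outcome of a shifted shuffle is itself a shifted shuffle of outcomes. Specifically, for $\alpha \in \PF_n$, $\beta \in \PF_m$ and $\gamma \in \alpha \shifted \beta$, I will show that $\out(\gamma) \in \out(\alpha) \shifted \out(\beta)$, with the interleaving pattern equal to that of $\gamma$.

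\textbf{The shuffle lemma.} Let $X$ be the set of positions of $\gamma$ occupied by letters of $\alpha$, and $Y$ the set occupied by letters of the shifted copy $\beta[n]$. I claim that the cars of $\gamma$ coming from $\alpha$ park exactly where they park in $\alpha$ alone, and the cars coming from $\beta[n]$ park at $n$ plus their spots in $\beta$. I will argue by induction along $\gamma$.

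\begin{itemize}
\item A car from $\alpha$ prefers a spot at most $n$. Its first available spot at or after that preference depends only on the earlier $\alpha$-cars, because every $\beta$-car so far occupies a spot greater than $n$. It therefore parks where it would in $\alpha$ alone. Since $\alpha$ is a parking function, that spot is in $[n]$.
\item A car from $\beta[n]$ prefers a spot greater than $n$. Its first available spot at or after that preference depends only on the earlier $\beta$-cars, because the $\alpha$-cars occupy only spots in $[n]$. It therefore parks at $n$ plus its spot in $\beta$. This spot is at most $n+m$ because $\beta$ is a parking function.
\end{itemize}

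Hence $\out(\gamma)$ is obtained by interleaving $\out(\alpha)$ and $\out(\beta)[n]$ according to $X$ and $Y$. Ranging over all interleavings gives the multiset identity
\[\{\{\out(\gamma) : \gamma \in \alpha \shifted \beta\}\} = \out(\alpha) \shifted \out(\beta).\]

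\textbf{Part (1).} The right-hand side of this identity depends only on $\out(\alpha)$ and $\out(\beta)$, so $\out$ is weakly shuffle-compatible.

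\textbf{Part (2).} Every outcome $\out(\alpha)$ is a permutation in $\mathfrak{S}_{|\alpha|}$. Conversely, every $\pi \in \mathfrak{S}_n$ is a parking function with $\out(\pi) = \pi$. Therefore the equivalence classes in $\PF_n/\out$ are in bijection with $\mathfrak{S}_n$ via $[\alpha]_{\out} \mapsto \out(\alpha)$. The map $[\alpha]_{\out} \mapsto \mathbf{F}_{\out(\alpha)}$ is consequently a linear bijection onto $\FQSym$.

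For multiplicativity, the multiset identity gives
\[[\alpha]_{\out}[\beta]_{\out} = \sum_{\gamma \in \alpha \shifted \beta} [\gamma]_{\out} \mapsto \sum_{\pi \in \out(\alpha) \shifted \out(\beta)} \mathbf{F}_{\pi} = \mathbf{F}_{\out(\alpha)}\mathbf{F}_{\out(\beta)}.\]
The last equality is the product rule in $\FQSym$. This completes the isomorphism.

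\textbf{Main obstacle.} The only step needing care is the shuffle lemma: the two groups of cars never compete for spots. This rests on the parking-function property of $\alpha$, which keeps its cars inside $[n]$, and of $\beta$, which keeps the shifted cars inside $[n+1,n+m]$. I will check the induction on this point carefully.
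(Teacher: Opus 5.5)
Your proposal is correct and follows essentially the same route as the paper. Both arguments show that in $\gamma \in \alpha \shifted \beta$ the $\alpha$-cars park in $[n]$ exactly as in $\alpha$, and the $\beta$-cars park in $[n+1,n+m]$ at their $\beta$-spots shifted by $n$, so $\out(\gamma)$ is the corresponding shifted shuffle of $\out(\alpha)$ and $\out(\beta)$; the isomorphism then comes from the bijection between $\out$-classes of $\PF_n$ and $\mathfrak{S}_n$. Your inductive non-interference argument and the observation that $\out(\pi)=\pi$ for $\pi \in \mathfrak{S}_n$ just make explicit what the paper asserts without detail.
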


\begin{proof} Consider some $\alpha \in \PF_n$, $\beta \in \PF_m$, and $\gamma \in \alpha \shifted \beta$ where indices $\{i_1 < \cdots <  i_n\}$ correspond to entries originally coming from $\alpha$ and indices $\{j_1 < \cdots < j_m\}$ correspond to entries originally coming from $\beta$. That is, $\gamma_{i_t} = \alpha_t$ and $\gamma_{j_t} = \beta_t$.  Further, let $\out(\alpha) = (s_1^{\alpha}, s_2^{\alpha}, \ldots, s_n^{\alpha})$ and $\out(\beta) = (s_1^{\beta}, s_2^{\beta}, \ldots, s_m^{\beta})$. In $\gamma$, the cars coming from $\alpha$ will park in the spots $1$ through $n$, specifically, car $i_t$ in $\gamma$ will park in the same spot as car $t$ in $\alpha$.  The cars coming from $\beta$ will park in the spots $n+1$ through $n+m$, specifically, if car $t$ in $\beta$ parks in spot $p$, then car $j_t$ in $\gamma$ will park in spot $n+p$. Thus, $\out(\gamma) = (s^{\gamma}_{1}, \ldots, s_{n+m}^{\gamma})$ will be given by $s^{\gamma}_{i_t} = s^{\alpha}_{t}$ and $s^{\gamma}_{j_t} = s^{\beta}_t+n$.
This shows that the outcomes of all shifted shuffles of two parking functions can be determined only from the outcomes of those two parking functions. 

The equivalence classes of $\PF_n$ under $\out$ are easily seen to be in bijection with $\mathfrak{S}_n$.  The proof of above shows that the outcomes of the shifted shuffles of two parking functions correspond exactly to the shifted shuffles of the outcomes. Thus, we have the structure of $\FQSym$.
\end{proof}

We require additional background on $\QSym$ for our next statistic. Recall that $\QSym$ is isomorphic to the shuffle algebra $\Sh$, and thus has various \emph{shuffle bases} of the form $\{X_{\delta}\}_{\delta}$ indexed by compositions $\delta$ with the product rule $X_{\delta}X_{\eta} = \sum_{\iota \in \delta \shuffle \eta} X_{\iota}$, as shown in \cite{liu2023shufflebasesquasisymmetricpower}. Our next result shows that the equivalence algebra of the displacement statistic is isomorphic to a subalgebra of $\QSym$ spanned by shuffle basis elements indexed by \emph{subexcedant words}.

\begin{theorem}\label{thm:Disp}(Weak shuffle-compatibility of the displacement sequence)
\begin{enumerate}
    \item The displacement sequence statistic $\Disp$ is weakly shuffle-compatible on parking functions.
    \item The linear map on $\mathcal{A}^{\PF}_{\Disp}$ defined by \[[\alpha]_{\Disp} \rightarrow X_{\Disp(\alpha)[1]},\] is an algebra isomorphism between $\mathcal{A}^{\PF}_{\Disp}$ and the subalgebra of $\QSym$ spanned by basis elements $X_{\delta}$ where $\delta = (\delta_1, \delta_2, \ldots, \delta_k) \in W_{\N}$ has the property that $\delta_{i}\leq i$.
    \end{enumerate}
\end{theorem}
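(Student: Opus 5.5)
The plan is to mimic the proof of Theorem~\ref{thm:outcome}: first describe the parking process of a shifted shuffle, then read off the displacement sequence. Let $\alpha\in\PF_n$, $\beta\in\PF_m$ and $\gamma\in\alpha\shifted\beta$. As in that proof, let $\{i_1<\cdots<i_n\}$ be the positions of letters of $\alpha$ and $\{j_1<\cdots<j_m\}$ those of $\beta[n]$. The proof of Theorem~\ref{thm:outcome} shows two things. First, car $i_t$ of $\gamma$ parks in $s^\alpha_t$. Second, car $j_t$ parks in $s^\beta_t+n$. The reason is that cars from $\alpha$ prefer spots in $[n]$ and never pass spot $n$, while cars from $\beta[n]$ prefer spots in $[n+1,n+m]$ and never interact with the first block.

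Subtracting preferences gives
\[
\Disp(\gamma)_{i_t}=s^\alpha_t-\alpha_t=\Disp(\alpha)_t,\qquad \Disp(\gamma)_{j_t}=(s^\beta_t+n)-(\beta_t+n)=\Disp(\beta)_t .
\]
Hence $\Disp(\gamma)$ is exactly the interleaving of $\Disp(\alpha)$ and $\Disp(\beta)$ along the positions $X,Y$. It follows that
\[
\{\{\Disp(\gamma):\gamma\in\alpha\shifted\beta\}\}=\Disp(\alpha)\shuffle\Disp(\beta)
\]
as multisets, where the right side is the (unshifted) shuffle of words with nonnegative letters. This depends only on $\Disp(\alpha)$ and $\Disp(\beta)$, which proves part (1).

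For part (2), define $\phi([\alpha]_{\Disp})=X_{\Disp(\alpha)[1]}$, adding $1$ to each letter so that the indices are positive compositions. This is well defined and injective because classes are determined by $\Disp$. Multiplicativity follows from the shuffle identity above and the rule $X_\delta X_\eta=\sum_{\iota\in\delta\shuffle\eta}X_\iota$, since shifting every letter by $1$ commutes with shuffling.

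The main obstacle is identifying the image, that is, showing that the words $\Disp(\alpha)[1]$ with $\alpha\in\PF$ are exactly the words $\delta$ with $\delta_i\le i$. For necessity, when car $i$ arrives only $i-1$ spots are occupied. Car $i$ moves past only occupied spots, so $s_i-\alpha_i\le i-1$. For sufficiency, given $d$ with $0\le d_i\le i-1$, build $\alpha$ greedily: when car $i$ arrives with occupied set $O$ of size $i-1$, choose a spot $a$ such that the first free spot $\ge a$ lies exactly $d_i$ steps later. One choice is to let $a$ be the spot $d_i$ positions before the first free spot above $\max O$, which works if the occupied spots form an initial segment $[i-1]$. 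I would maintain the invariant that the occupied spots are always $[i-1]$ by letting every car park in spot $i$ with preference $\alpha_i=i-d_i\ge1$. Then $s_i=i$ and the displacement is $d_i$. This shows every subexcedant word is attained, in fact by a parking function with outcome the identity.

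It then remains to note that the span of these $X_\delta$ is closed under multiplication. This holds because it is the image of the algebra $\mathcal{A}^{\PF}_{\Disp}$, or directly because shuffles of subexcedant words are subexcedant, since letters only move right. The $X_\delta$ are linearly independent because they form part of a basis, so $\phi$ is an isomorphism onto this subalgebra.
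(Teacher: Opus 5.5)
Your proposal is correct and follows the paper's proof essentially step for step. You identify $\{\{\Disp(\gamma)\}\}$ with $\Disp(\alpha)\shuffle\Disp(\beta)$ via the parking analysis from Theorem~\ref{thm:outcome}, use the same map $[\alpha]_{\Disp}\mapsto X_{\Disp(\alpha)[1]}$, and pin down the image with the same bound $s_i-\alpha_i\le i-1$ and the same construction $\alpha_i=i-d_i$ with identity outcome. Your extra remarks, subtracting the shifted preferences explicitly and observing that subexcedant words are closed under shuffling, fill in details the paper leaves implicit.
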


\begin{proof} 
First, consider $\alpha \in \PF_n$ and $\beta \in \PF_m$. Observe that $\{\{ \Disp(\gamma) : \gamma \in \alpha \shifted \beta \}\}$ can be determined by $\Disp(\alpha)$, $\Disp(\beta)$, $|\alpha|$, and $|\beta|$ since it is simply equal to $\Disp(\alpha) \shuffle \Disp(\beta)$.

Note throughout the rest of this proof, we refer to integer compositions and words in $\N$ interchangeably.
Now, consider the map $\varsigma:\mathcal{A}^{\PF}_{\Disp} \rightarrow \QSym$ defined by $\varsigma([\alpha]_{\Disp})=X_{\Disp(\alpha)[1]}$.  The map $\varsigma$ is clearly injective, so we next check that it is a homomorphism of algebras.   Observe that \[\varsigma([\alpha]_{\Disp})\varsigma([\beta]_{\Disp})=X_{\Disp(\alpha)[1]}X_{\Disp(\beta)[1]} = \sum_{\tau \in \Disp(\alpha)[1]\shuffle \Disp(\beta)[1]}X_{\tau}.\]  Similarly, we have \[\varsigma([\alpha]_{\Disp}[\beta]_{\Disp}) = \varsigma(\sum_{\gamma \in \alpha \shifted \beta} [\gamma]_{\Disp}) = \sum_{\gamma \in \alpha \shifted \beta} X_{\Disp(\gamma)[1]}.\] Following the first part of the proof, if $\gamma \in \alpha \shifted \beta$ then $\Disp(\gamma)$ is the corresponding shuffle of $\Disp(\alpha)$ and $\Disp(\beta)$. As a result, the two expressions above are equivalent.
It remains to show that the image of $\varsigma$ is the specified subalgebra. Let $\alpha$ be a parking function. When car $i$ parks, it can be displaced by at most $i-1$ spots since only $i-1$ cars will already be parked. Thus, if $\delta = \Disp(\alpha)[1]$, we have $\delta_i \leq i$. Finally, we check that $X_{\delta}$ is in the image of $\varsigma$ for any composition $\delta$ with $\delta_i \leq i$.  Given such a $\delta$, we want to construct a parking function $\alpha$ where $\Disp(\alpha)=\delta[-1]$. To do so, let $\alpha_i = i - (\delta_i-1)$.  In this construction, it is clear that $\alpha_i \leq i$ so $\alpha$ is certainly a parking function, specifically one with the outcome $(1,2,\ldots, k)$. Thus, we have shown that the image of $\varsigma$ is exactly the subalgebra specified.
\end{proof}

It follows from the first part of the proof above, and the fact that $\disp(\gamma) = \sum_{i\in \Disp(\gamma)}i$,  that if $\gamma \in \alpha \shifted \beta$, then $\disp(\gamma) = \disp(\alpha)+\disp(\beta)$. This yields the following results on the displacement statistic.

\begin{corollary}\label{cor:disp}(Weak shuffle-compatibility of the displacement)
\begin{enumerate}
    \item The displacement statistic $\disp$ is weakly shuffle-compatible on parking functions.
    \item The linear map on $\mathcal{A}^{\PF}_{\disp}$ defined by $[\alpha]_{\disp} \mapsto {\frac{1}{|\alpha|!}}q^{\disp(\alpha)}x^{|\alpha|}$ is an algebra isomorphism between $\mathcal{A}^{\PF}_{\disp}$ and the span of $\{1\} \cup \{q^jx^n\}_{1 \leq n , 0 \leq j \leq \binom{n}{2}}$, a subalgebra of $\mathbb{Q}[q,x]$.
\end{enumerate}
\end{corollary}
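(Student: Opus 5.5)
Part (1) follows from the observation made just before the statement. In the proof of Theorem~\ref{thm:Disp}, if $\gamma \in \alpha \shifted \beta$ then $\Disp(\gamma)$ is the corresponding shuffle of $\Disp(\alpha)$ and $\Disp(\beta)$. Its entries are therefore the entries of both sequences, so summing gives $\disp(\gamma) = \disp(\alpha)+\disp(\beta)$. Hence the multiset $\{\{\disp(\gamma) : \gamma \in \alpha \shifted \beta\}\}$ consists of the single value $\disp(\alpha)+\disp(\beta)$ repeated $\binom{n+m}{n}$ times, where $n=|\alpha|$ and $m=|\beta|$. This multiset depends only on $|\alpha|$, $|\beta|$, $\disp(\alpha)$ and $\disp(\beta)$, which is weak shuffle-compatibility.

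For part (2), I will first identify the basis. The classes $[\alpha]_{\disp}$ are indexed by pairs $(n,j)$ with $n=|\alpha|$ and $j=\disp(\alpha)$. The range of $j$ is exactly $0 \le j \le \binom{n}{2}$:
\begin{itemize}
\item Car $i$ is displaced by at most $i-1$, as noted in the proof of Theorem~\ref{thm:Disp}, so $j \le \sum_i (i-1) = \binom{n}{2}$.
\item Every such value is attained. Take the construction $\alpha_i = i-(\delta_i-1)$ from that proof with subexcedant $\delta$, where $\delta_i - 1$ ranges over $[0,i-1]$. The sums $\sum_i(\delta_i-1)$ cover every integer in $[0,\binom{n}{2}]$, since we can increase one entry at a time.
\end{itemize}
The empty parking function gives the class sent to $1$.

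Let $\varphi$ be the map $[\alpha]_{\disp} \mapsto \frac{1}{n!}q^{j}x^{n}$. It sends distinct basis classes to nonzero scalar multiples of distinct monomials, so it is a linear bijection onto the stated span.

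For multiplicativity, the product in $\mathcal{A}^{\PF}_{\disp}$ is $[\alpha][\beta] = \binom{n+m}{n}[\gamma]$, where $|\gamma|=n+m$ and $\disp(\gamma)=j+k$. Applying $\varphi$ gives
\[
\binom{n+m}{n}\frac{1}{(n+m)!}q^{j+k}x^{n+m} = \frac{1}{n!\,m!}q^{j+k}x^{n+m} = \varphi([\alpha])\varphi([\beta]).
\]
The unit is preserved. This also shows the span is closed under multiplication, since $\binom{n}{2}+\binom{m}{2} \le \binom{n+m}{2}$, so it is indeed a subalgebra.

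The only step needing real care is showing that every displacement value in $[0,\binom{n}{2}]$ occurs, which the explicit construction handles. The remaining steps are the routine computation above.
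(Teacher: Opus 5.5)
Your proof is correct and follows the paper's route. The paper gets the additivity $\disp(\gamma)=\disp(\alpha)+\disp(\beta)$ for $\gamma\in\alpha\shifted\beta$ from the fact, shown in the proof of Theorem~\ref{thm:Disp}, that $\Disp(\gamma)$ is a shuffle of $\Disp(\alpha)$ and $\Disp(\beta)$, and it leaves the algebraic part implicit; your argument that every value in $[0,\binom{n}{2}]$ is attained (via $\alpha_i=i-(\delta_i-1)$) and your check that $\binom{n+m}{n}\frac{1}{(n+m)!}=\frac{1}{n!\,m!}$ supply exactly the details the paper omits.
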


Additionally, from the proof of Theorem \ref{thm:Disp},  for $\gamma \in \alpha \shifted \beta$, we have $\mathsf{maxDisp}(\gamma) = \max(\mathsf{maxDisp}(\alpha),\mathsf{maxDisp}(\beta)).$ This yields the following results on the statistic for the maximum displacement of a single car.

\begin{corollary} The maximum displacement of a single car statistic $\mathsf{maxDisp}$ is weakly shuffle-compatible on parking functions.
\end{corollary}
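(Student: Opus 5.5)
The plan is to reduce everything to the description of displacement sequences of shifted shuffles established in the proof of Theorem \ref{thm:Disp}. Take $\alpha \in \PF_n$ and $\beta \in \PF_m$, and let $\gamma \in \alpha \shifted \beta$. That argument shows that $\Disp(\gamma)$ is the corresponding interleaving of $\Disp(\alpha)$ and $\Disp(\beta)$, i.e., an element of $\Disp(\alpha) \shuffle \Disp(\beta)$. This rests on the fact, visible in the proof of Theorem \ref{thm:outcome}, that cars from $\alpha$ park in spots $1,\ldots,n$ exactly as they do in $\alpha$, while cars from $\beta$ park in spots $n+1,\ldots,n+m$ shifted by $n$. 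Their preferences are shifted by the same $n$, so each displacement is unchanged.

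Since $\mathsf{maxDisp}$ is the maximum entry of $\Disp$, and a shuffle of two sequences has as its multiset of entries the union of the two multisets of entries, we get
\[\mathsf{maxDisp}(\gamma) = \max(\mathsf{maxDisp}(\alpha), \mathsf{maxDisp}(\beta))\]
for every $\gamma \in \alpha \shifted \beta$. Take the convention that the maximum over an empty word is $0$; this handles the degenerate case where one parking function is empty.

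Since $|\alpha \shifted \beta| = \binom{n+m}{n}$ (counted as a multiset of interleavings), the multiset $\{\{\mathsf{maxDisp}(\gamma) : \gamma \in \alpha \shifted \beta\}\}$ consists of $\binom{n+m}{n}$ copies of $\max(\mathsf{maxDisp}(\alpha),\mathsf{maxDisp}(\beta))$. This depends only on $|\alpha|$, $|\beta|$, $\mathsf{maxDisp}(\alpha)$, and $\mathsf{maxDisp}(\beta)$, which gives weak shuffle-compatibility.

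There is no real obstacle beyond confirming the displacement-preservation claim carefully. In particular, cars from $\beta$ are never obstructed by cars from $\alpha$, because spots $n+1,\ldots$ are untouched by the $\alpha$-cars, which park among $1,\ldots,n$ since $\alpha$ is a parking function. Likewise, cars from $\alpha$ are never affected by $\beta$-cars, which occupy only spots larger than $n$. Every $\alpha$-car has preference at most $n$ and finds a free spot at most $n$, because it parks exactly as in $\alpha$ alone.
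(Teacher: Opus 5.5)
Your proof is correct and matches the paper's: the paper likewise reads off from the proof of Theorem~\ref{thm:Disp} that $\mathsf{maxDisp}(\gamma)=\max(\mathsf{maxDisp}(\alpha),\mathsf{maxDisp}(\beta))$ for every $\gamma\in\alpha\shifted\beta$. Your check that each car keeps its displacement (using the parking outcomes from Theorem~\ref{thm:outcome}) and your explicit count of $\binom{n+m}{n}$ copies only fill in details the paper leaves implicit.
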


In \cite{novelli2016binary}, the authors prove the existence of a \emph{binary shuffle basis} $\{Z_{\delta}\}_{\delta}$ (indexed by compositions) of $\QSym$ with the following product rule. Given a composition $\delta$, let $\epsilon(\delta)$ be the binary word where $\epsilon(\delta)_n = 1$ and $\epsilon(\delta)_i=1$ if $i \in \Set(\delta)$ and $\epsilon(\delta)_i=0$ if $i \not\in \Set(\delta)$ for $1 \leq i < n$. For a binary word $w$ ending in $1$, let $\epsilon^{-1}(w)=\delta$ where $\epsilon(\delta)=w$.
Then, $Z_{\delta}Z_{\eta} = \sum_{w \in \epsilon(\delta)\shuffle \epsilon(\eta)}Z_{\epsilon^{-1}(w)}$. 

We introduce the following notation to translate between lucky sets and binary words ending in $1$. Given a parking function $\alpha$, let $LW(\alpha)$ be the word with $LW_i(\alpha) = 1$ if car $i$ is lucky and $LW_i(\alpha) = 0$ if car $i$ is not lucky.  
Let $LW^r(\alpha)$ be the reverse of the word $LW(\alpha)$.
Note that $LW_1(\alpha)=1$ always holds, so $LW^r(\alpha)$ is a binary word ending in $1$. Observe that $LW(\alpha)$ only depends on $\Lucky(\alpha)$, hence the map is well-defined on $\Lucky$-equivalence classes. Thus, this map induces a bijection between Lucky car sets and binary words ending in 1.

\begin{theorem}\label{thm:Lucky}(Shuffle-compatibility of the lucky car set)
\begin{enumerate}
    \item The lucky car set statistic $\Lucky$ is weakly shuffle-compatible on parking functions.
    \item The linear map on $\mathcal{A}_{\Lucky}^{\PF}$ defined by \[[\alpha]_{\Lucky} \mapsto Z_{\epsilon^{-1}(LW^r(\alpha))},\] is an algebra isomorphism beweetn $\mathcal{A}^{\PF}_{\Lucky}$ and $\QSym$.
\end{enumerate}    
\end{theorem}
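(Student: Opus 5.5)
The plan is to mirror the proof of Theorem \ref{thm:outcome}, by tracking how each car in a shifted shuffle parks. Let $\alpha \in \PF_n$, $\beta \in \PF_m$, and $\gamma \in \alpha \shifted \beta$. Let the positions of the entries from $\alpha$ be $i_1<\cdots<i_n$ and those from $\beta[n]$ be $j_1<\cdots<j_m$.

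First I will show that $LW(\gamma)$ is the shuffle of $LW(\alpha)$ and $LW(\beta)$ in which positions $i_t$ carry $LW_t(\alpha)$ and positions $j_t$ carry $LW_t(\beta)$. The proof of Theorem \ref{thm:outcome} already gives the parking spots: car $i_t$ of $\gamma$ parks in spot $s^\alpha_t$, and car $j_t$ parks in spot $s^\beta_t+n$. Car $i_t$ prefers $\alpha_t$ and car $j_t$ prefers $\beta_t+n$. Hence car $i_t$ is lucky in $\gamma$ iff $s^\alpha_t=\alpha_t$, and car $j_t$ is lucky iff $s^\beta_t=\beta_t$.

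The main point to verify is that the two families of cars do not interfere. Cars from $\alpha$ prefer spots in $[n]$. Because $\alpha$ is a parking function, they fill $[n]$ among themselves, exactly as in $\alpha$. Cars from $\beta[n]$ prefer spots greater than $n$, so they never look at $[n]$. This is the content of the outcome argument, and I will simply cite it.

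Consequently, as $\gamma$ ranges over $\alpha \shifted \beta$ with multiplicity, the multiset $\{\{LW(\gamma)\}\}$ equals the shuffle multiset $LW(\alpha)\shuffle LW(\beta)$ of binary words. Each shuffle of the words corresponds to exactly one choice of position sets, so the multiplicities match. Since $\Lucky$ determines $LW$ and vice versa, this multiset depends only on $\Lucky(\alpha)$, $\Lucky(\beta)$, $n$ and $m$. This proves part (1).

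For part (2), reversal commutes with shuffling, so
\[\{\{LW^r(\gamma)\}\} = LW^r(\alpha)\shuffle LW^r(\beta).\]
Both $LW^r(\alpha)$ and $LW^r(\beta)$ end in $1$, and so does every word in their shuffle. By the product rule of the binary shuffle basis,
\[\varsigma([\alpha])\varsigma([\beta])=\sum_{w\in LW^r(\alpha)\shuffle LW^r(\beta)} Z_{\epsilon^{-1}(w)}=\sum_{\gamma\in\alpha\shifted\beta}\varsigma([\gamma]).\]
So the map $\varsigma([\alpha]_{\Lucky}) = Z_{\epsilon^{-1}(LW^r(\alpha))}$ is multiplicative.

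For bijectivity, the map $[\alpha]_{\Lucky}\mapsto LW^r(\alpha)$ is injective. It remains to show surjectivity onto binary words of length $n$ ending in $1$, that is, onto all compositions of $n$. Given a binary word $b$ with $b_1=1$, define $\alpha_i=i$ if $b_i=1$ and $\alpha_i=i-1$ if $b_i=0$. Then every car parks in spot $i$, so $\alpha$ is a parking function, and car $i$ is lucky exactly when $b_i=1$. Hence $\varsigma$ is a linear bijection of bases that preserves the product, and therefore an isomorphism with $\QSym$.
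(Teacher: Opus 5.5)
Your proof is correct and follows essentially the same route as the paper: you invoke the outcome computation of Theorem~\ref{thm:outcome} to see that $LW(\gamma)$ is the corresponding shuffle of $LW(\alpha)$ and $LW(\beta)$, and then match this with the product rule of the binary shuffle basis $\{Z_\delta\}$. Your explicit preimage construction ($\alpha_i=i$ or $\alpha_i=i-1$ according to $b_i$) also proves the surjectivity onto binary words ending in $1$, which the paper only asserts.
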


\begin{proof}
    Consider some $\alpha \in \PF_n$ and $\beta \in \PF_m$. Consider $\gamma \in \alpha \shifted \beta$ where indices $\{i_1 < \cdots <  i_n\}$ correspond to entries originally coming from $\alpha$ and indices $\{j_1 < \cdots < j_m\}$ correspond to entries originally coming from $\beta$. Following what we know about the outcome from the previous proof, any lucky car in $\alpha$ will be lucky again in $\gamma$, as will any lucky car from $\beta$. Thus, if $\Lucky(\alpha) = \{l^{\alpha}_1, \ldots, l^{\alpha}_{x}\}$ and $\Lucky(\beta) = \{l_{1}^{\beta}, \ldots, l^{\beta}_{y}\}$ then \[\Lucky(\gamma) = \{i_{l^{\alpha}_1}, \ldots, i_{l^{\alpha}_x}, j_{l_{1}^{\beta}}, \ldots, j_{l_{y}^{\beta}}\}.\] So the lucky sets (and as a result, the unlucky sets) of all shuffles of two parking functions can be determined only from their lucky (resp. unlucky) sets. 

Consider the map $\varsigma:\mathcal{A}^{\PF}_{\Lucky} \rightarrow \QSym$ defined by $[\alpha]_{\Lucky} \mapsto Z_{\epsilon^{-1}(LW^r(\alpha))}$. This map is a bijection between the $\Lucky$-equivalence classes and $\{Z_{\delta}\}$ basis elements given that $LW^r$ bijects between lucky car sets and binary words ending in $1$ and $\epsilon^{-1}$ bijects between binary words ending in $1$ and integer compositions.  We see that \[\varsigma([\alpha]_{\Lucky})\varsigma([\beta]_{\Lucky}) = Z_{\epsilon^{-1}(LW^r(\alpha))}Z_{\epsilon^{-1}(LW^r(\beta))} = \sum_{w\in LW^r(\alpha)\shuffle LW^r(\beta)}Z_{\epsilon^{-1}(w)},\] and also,
\[\varsigma([\alpha]_{\Lucky}[\beta]_{\Lucky}) = \varsigma(\sum_{\gamma \in \alpha \shifted \beta} [\gamma]_{\Lucky}) = \sum_{\gamma \in \alpha \shifted \beta} Z_{\epsilon^{-1}(LW^r(\gamma))}.\] Note that $LW^r$ bijects between shifted shuffles of $\alpha$ and $\beta$ to shuffles of $LW^r(\alpha)$ and $LW^r(\beta)$, and so the two expressions above are equal. Thus, $\varsigma$ is an algebra isomorphism.\end{proof}

\begin{proposition}
    The $\Lucky$ shifted shuffle algebra $\mathcal{A}^{\PF}_{\Lucky}$ is a quotient of the $\Disp$ shifted shuffle algebra $\mathcal{A}_{\Disp}^{\PF}$.
\end{proposition}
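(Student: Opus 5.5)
The plan is to build a surjective algebra morphism $\phi:\mathcal{A}^{\PF}_{\Disp}\to\mathcal{A}^{\PF}_{\Lucky}$. Then $\mathcal{A}^{\PF}_{\Lucky}\cong\mathcal{A}^{\PF}_{\Disp}/\ker(\phi)$, by the same reasoning used for Proposition \ref{prop:alg_pqsym}. The key observation is that $\Lucky$ is a function of $\Disp$: car $i$ is lucky exactly when $s_i=\alpha_i$, that is, when the $i$-th entry of $\Disp(\alpha)$ is $0$. So $\Lucky(\alpha)=\{i : \Disp(\alpha)_i=0\}$. Consequently, if $\Disp(\alpha)=\Disp(\beta)$ and $|\alpha|=|\beta|$, then $\Lucky(\alpha)=\Lucky(\beta)$. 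Hence each $\Disp$-equivalence class is contained in a single $\Lucky$-equivalence class. I define $\phi$ on basis elements by $\phi([\alpha]_{\Disp})=[\alpha]_{\Lucky}$ and extend linearly; this is well defined by the observation just made.

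\emph{Surjectivity.} Every $\Lucky$-class $[\alpha]_{\Lucky}$ is the image of $[\alpha]_{\Disp}$, so $\phi$ is onto.

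\emph{Multiplicativity.} Take representatives $\alpha\in\PF_n$ and $\beta\in\PF_m$. By the definitions of the two products,
\[\phi([\alpha]_{\Disp}[\beta]_{\Disp})=\phi\Big(\sum_{\gamma\in\alpha\shifted\beta}[\gamma]_{\Disp}\Big)=\sum_{\gamma\in\alpha\shifted\beta}[\gamma]_{\Lucky}=[\alpha]_{\Lucky}[\beta]_{\Lucky}=\phi([\alpha]_{\Disp})\phi([\beta]_{\Disp}).\]
The third equality is the definition of the product in $\mathcal{A}^{\PF}_{\Lucky}$. That product is well defined because $\Lucky$ is weakly shuffle-compatible, by Theorem \ref{thm:Lucky}. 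The first equality uses the well-definedness of the product in $\mathcal{A}^{\PF}_{\Disp}$, which is Theorem \ref{thm:Disp}. The unit is also preserved, since $\phi$ sends the class of the empty parking function to the class of the empty parking function.

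The only step needing real care is well-definedness. The product side is handled for free because both algebras use representatives and both statistics are weakly shuffle-compatible. So the main point is the identity $\Lucky(\alpha)=\{i:\Disp(\alpha)_i=0\}$, which is immediate from the definitions in Table \ref{tab:pfstats}. As an alternative, I could give a concrete description through the isomorphisms of Theorems \ref{thm:Disp} and \ref{thm:Lucky}. Under that description the quotient sends $X_\delta$ to $Z_{\epsilon^{-1}(w^r)}$, where $w_i=1$ if $\delta_i=1$ and $w_i=0$ otherwise. This letterwise map commutes with shuffles, which gives a second check of multiplicativity.
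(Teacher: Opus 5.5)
Your proposal is correct and follows essentially the same route as the paper. Both use the map $[\alpha]_{\Disp}\mapsto[\alpha]_{\Lucky}$, justified by $\Lucky(\alpha)=\{i:\Disp(\alpha)_i=0\}$, together with surjectivity and the same one-line multiplicativity computation; your explicit well-definedness check and the alternative verification via $X_\delta\mapsto Z_{\epsilon^{-1}(w^r)}$ are consistent extras.
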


\begin{proof}
    Let $\phi: \mathcal{A}^{\PF}_{\Disp} \rightarrow \mathcal{A}^{\PF}_{\Lucky}$ be defined by $[\alpha]_{\Disp} \mapsto [\alpha]_{\Lucky}$, noting that $\Lucky(\alpha) = \{i : \Disp(\alpha)_i=0\}$. There is some displacement sequence for every lucky car set, and so the map is surjective. Observe that \[\phi([\alpha]_{\Disp})\phi([\beta]_{\Disp}) = [\alpha]_{\Lucky}[\beta]_{\Lucky} = \sum_{\gamma \in \alpha \shifted \beta} [\gamma]_{\Lucky} = \phi(\sum_{\gamma \in \alpha \shifted \beta} [\gamma]_{\Disp}) = \phi([\alpha]_{\Disp}[\beta]_{\Disp}).  \]  Then $\phi$  a surjective morphism, so $\mathcal{A}^{\PF}_{\Lucky}$ is isomorphic to $\mathcal{A}^{\PF}_{\Disp}/\ker(\phi)$.
\end{proof}

Results on the lucky car number follow from our work on the lucky car set.

\begin{corollary}(Weak shuffle-compatibility of the lucky car number)
\begin{enumerate}
    \item The lucky car number statistic $\lucky$ is weakly shuffle-compatible on parking functions.
    \item The linear map on $\mathcal{A}^{\PF}_{\lucky}$ defined by \[[\alpha]_{\lucky} \mapsto \frac{1}{|\alpha|!}q^{\lucky(\alpha)}x^{|\alpha|},\] is an algebra isomorphism from $\mathcal{A}^{\PF}_{\lucky}$ to the span of $\{1\} \cup \{q^jx^n\}_{1 \leq n, 1\leq j \leq n}$, a subalgebra of $\mathbb{Q}[q,x]$.
\end{enumerate}
\end{corollary}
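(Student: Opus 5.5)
From the proof of Theorem~\ref{thm:Lucky}, if $\gamma \in \alpha \shifted \beta$, then $\Lucky(\gamma)$ is the disjoint union of the images of $\Lucky(\alpha)$ and $\Lucky(\beta)$ under the position embeddings. Hence
\[\lucky(\gamma) = \lucky(\alpha) + \lucky(\beta)\]
for every shifted shuffle $\gamma$. Since $|\alpha \shifted \beta| = \binom{n+m}{n}$ for $\alpha \in \PF_n$ and $\beta \in \PF_m$, the multiset $\{\{\lucky(\gamma) : \gamma \in \alpha\shifted\beta\}\}$ consists of the value $\lucky(\alpha)+\lucky(\beta)$ with multiplicity $\binom{n+m}{n}$. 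This depends only on $n$, $m$, $\lucky(\alpha)$ and $\lucky(\beta)$, which proves part (1).

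For part (2), the product in $\mathcal{A}^{\PF}_{\lucky}$ is therefore
\[[\alpha]_{\lucky}[\beta]_{\lucky} = \binom{n+m}{n}[\gamma]_{\lucky},\]
where $\gamma$ is any element of $\PF_{n+m}$ with $\lucky(\gamma) = \lucky(\alpha)+\lucky(\beta)$. Define $\varsigma([\alpha]_{\lucky}) = \frac{1}{|\alpha|!}q^{\lucky(\alpha)}x^{|\alpha|}$. The images then multiply as
\[\frac{1}{n!\,m!}q^{j+k}x^{n+m} = \binom{n+m}{n}\frac{1}{(n+m)!}q^{j+k}x^{n+m},\]
which matches $\varsigma$ of the product. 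The map is injective because distinct classes have distinct pairs $(n,j)$, and hence distinct monomials. The class of the empty parking function goes to $1$.

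It remains to identify the image, which is the only real content. We must show that for $n\ge 1$ the attainable values of $\lucky$ on $\PF_n$ are exactly $1\le j\le n$. Car $1$ is always lucky, so $j\geq 1$. For the converse, take $\alpha = (1,\dots,1,j+1,\dots,n)$ with $n-j+1$ leading ones, if the tail indexing is arranged appropriately; more simply, take $\alpha_i = i$ for $i\le j$ and $\alpha_i = 1$ for $i>j$. Then the first $j$ cars are lucky and each later car is displaced, giving $\lucky(\alpha)=j$. Thus the image is the span of $\{1\}\cup\{q^jx^n\}_{1\le n,\,1\le j\le n}$. This span is closed under multiplication since $1\le j+k\le n+m$, so it is a subalgebra of $\mathbb{Q}[q,x]$ and $\varsigma$ is an isomorphism onto it.
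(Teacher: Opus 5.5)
Your proof is correct and follows the route the paper intends. You read off $\lucky(\gamma)=\lucky(\alpha)+\lucky(\beta)$ from the lucky-set description in the proof of Theorem~\ref{thm:Lucky}, then match the multiplicity $\binom{n+m}{n}$ against $\frac{1}{n!\,m!}=\binom{n+m}{n}\frac{1}{(n+m)!}$, and you also supply the image computation that the paper leaves implicit. Your final witness ($\alpha_i=i$ for $i\le j$, $\alpha_i=1$ for $i>j$) is a valid parking function with exactly $j$ lucky cars. However, delete the tentative first example $(1,\dots,1,j+1,\dots,n)$: as written it has the wrong length, and the tail should be $n-j+2,\dots,n$.
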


Since the lucky car set and number entirely determine the unlucky car set and number (specifically, $[\alpha]_{\Lucky} = [\alpha]_{\Unlucky}$ and $[\alpha]_{\lucky} = [\alpha]_{\unlucky}$ for all $\alpha \in \PF$), we can extend our results to these statistics.

\begin{corollary} (Weak shuffle-compatibility of the unlucky car set and number)
\begin{enumerate}
    \item The unlucky car set and number statistics $\Unlucky$ and $\unlucky$ are weakly shuffle-compatible on parking functions.
    \item The $\Unlucky$ shifted shuffle algebra $\mathcal{A}^{\PF}_{\Unlucky}$ and the $\unlucky$ shifted shuffle algebra are isomorphic to the $\Lucky$ shifted shuffle algebra $\mathcal{A}^{\PF}_{\Lucky}$ and the $\lucky$ shifted shuffle algebra  $\mathcal{A}^{\PF}_{\lucky}$, respectively.
\end{enumerate}
    
\end{corollary}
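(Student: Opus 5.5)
Both parts reduce to facts already established for $\Lucky$ and $\lucky$, since the unlucky statistics carry exactly the same information.

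For part (1), I would first check that the equivalence classes coincide. For $\alpha \in \PF_n$ we have $\Unlucky(\alpha) = [n] \setminus \Lucky(\alpha)$, so for parking functions of the same length, $\Unlucky(\alpha) = \Unlucky(\beta)$ if and only if $\Lucky(\alpha) = \Lucky(\beta)$. Likewise $\unlucky(\alpha) = n - \lucky(\alpha)$ gives $\unlucky(\alpha)=\unlucky(\beta)$ if and only if $\lucky(\alpha)=\lucky(\beta)$ at equal length. Hence $[\alpha]_{\Unlucky}^{\PF} = [\alpha]_{\Lucky}^{\PF}$ and $[\alpha]_{\unlucky}^{\PF} = [\alpha]_{\lucky}^{\PF}$ as sets.

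Weak shuffle-compatibility then transfers directly. Take pairs $\{\alpha,\beta\}$ and $\{\kappa,\upsilon\}$ with matching lengths and matching $\Unlucky$ values. Then they also have matching $\Lucky$ values. By Theorem \ref{thm:Lucky}, the multisets $\{\{\Lucky(\gamma) : \gamma \in \alpha \shifted \beta\}\}$ and $\{\{\Lucky(\gamma) : \gamma \in \kappa \shifted \upsilon\}\}$ agree. Every $\gamma$ in either shifted shuffle has length $|\alpha|+|\beta|$, so applying the complement in $[|\alpha|+|\beta|]$ elementwise shows the two $\Unlucky$ multisets also agree. The same argument with $\unlucky(\gamma) = |\gamma| - \lucky(\gamma)$, using the preceding corollary for $\lucky$, handles the unlucky number.

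For part (2), I would take the map $[\alpha]_{\Unlucky} \mapsto [\alpha]_{\Lucky}$. By the equality of classes above it is well defined and bijective, because it is literally the identity on the underlying sets of parking functions. Both products are defined by $\sum_{\gamma \in \alpha\shifted\beta}[\gamma]$ over the same classes, so the map is multiplicative term by term. The same reasoning gives the isomorphism $\mathcal{A}^{\PF}_{\unlucky} \cong \mathcal{A}^{\PF}_{\lucky}$.

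The only point needing care is that complementing is done at the fixed length $|\gamma|$ of each shuffle. This is why the length data in the definition of weak shuffle-compatibility matters, and why the class equality uses the convention that classes are taken among parking functions of the same length.
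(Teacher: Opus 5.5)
Your proposal is correct and follows essentially the same route as the paper, which observes that $[\alpha]_{\Lucky} = [\alpha]_{\Unlucky}$ and $[\alpha]_{\lucky} = [\alpha]_{\unlucky}$ for every parking function. From this, both the weak shuffle-compatibility and the algebras transfer directly from Theorem \ref{thm:Lucky} and the lucky car number corollary; your complementation at the fixed length $|\gamma|$ spells out the detail the paper leaves implicit.
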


 Our next statistic takes advantage of the subclass of parking functions that are Catalan objects. Recall the definitions of the $\mathbf{P}$-basis of $\CQSym$ from the beginning of the section.

\begin{theorem} (Weak shuffle-compatibility of the weakly increasing rearrangement)
\begin{enumerate}
    \item The weakly increasing rearrangement statistic $\winc$ is weakly shuffle-compatible on parking functions.
    \item The linear map on $\mathcal{A}_{\winc}^{\PF}$ defined by \[[\alpha]_{\winc} \mapsto \frac{1}{|\alpha|!}\mathbf{P}_{\winc(\alpha)},\] is an algebra isomorphism between $\mathcal{A}_{\winc}^{\PF}$ and $\CQSym$.
    \end{enumerate}
\end{theorem}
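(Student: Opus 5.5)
The key observation is that for $\gamma \in \alpha \shifted \beta$ with $\alpha \in \PF_n$, $\beta \in \PF_m$, the multiset of entries of $\gamma$ is the multiset of entries of $\alpha$ together with the multiset of entries of $\beta[n]$. Since every entry of $\beta[n]$ exceeds $n$ and every entry of $\alpha$ is at most $n$, we get
\[\winc(\gamma) = \winc(\alpha)\cdot \winc(\beta)[n]\]
for every one of the $\binom{n+m}{n}$ shifted shuffles. Hence
\[\{\{\winc(\gamma) : \gamma \in \alpha\shifted\beta\}\}\]
consists of $\binom{n+m}{n}$ copies of $\winc(\alpha)\cdot\winc(\beta)[n]$. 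This depends only on $\winc(\alpha)$, $\winc(\beta)$ and $|\alpha|$, which proves part (1). It also yields the product rule
\[ [\alpha]_{\winc}[\beta]_{\winc} = \binom{n+m}{n}\,[\winc(\alpha)\cdot\winc(\beta)[n]]_{\winc}.\]
Here we note that $\winc(\alpha)\cdot\winc(\beta)[n]$ is itself a weakly increasing parking function, equal to its own rearrangement.

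For part (2), define $\varsigma([\alpha]_{\winc}) = \frac{1}{|\alpha|!}\mathbf{P}_{\winc(\alpha)}$. This map is a bijection onto a scaled $\mathbf{P}$-basis. Indeed, $\winc$-classes of $\PF_n$ are in bijection with $\WIPF_n$, because each class contains exactly one weakly increasing parking function (its rearrangement), and every element of $\WIPF_n$ arises as $\winc$ of itself. So $\varsigma$ is a linear isomorphism onto $\CQSym$.

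To check multiplicativity, compare the two sides. On one hand,
\[\varsigma([\alpha][\beta]) = \binom{n+m}{n}\frac{1}{(n+m)!}\mathbf{P}_{\winc(\alpha)\cdot\winc(\beta)[n]} = \frac{1}{n!\,m!}\mathbf{P}_{\winc(\alpha)\cdot\winc(\beta)[n]}.\]
On the other hand,
\[\varsigma([\alpha])\varsigma([\beta]) = \frac{1}{n!m!}\mathbf{P}_{\winc(\alpha)}\mathbf{P}_{\winc(\beta)} = \frac{1}{n!m!}\mathbf{P}_{\winc(\alpha)\cdot\winc(\beta)[n]}\]
by the $\CQSym$ product rule. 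The two agree. Finally, the unit (the empty class) maps to $\mathbf{P}_{\emptyset}=1$.

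The only mildly delicate step is the concatenation identity for $\winc$ under shifted shuffles. This rests on all entries of $\alpha$ lying in $[n]$, which is exactly the parking function condition. It is also where the normalization $\frac{1}{|\alpha|!}$ is forced, to absorb the binomial multiplicity. Everything else is bookkeeping.
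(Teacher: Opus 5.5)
Your proof is correct and follows the paper's argument essentially step for step. Both use the concatenation identity $\winc(\gamma)=\winc(\alpha)\cdot\winc(\beta)[n]$ for every $\gamma\in\alpha\shifted\beta$ and the resulting product $[\alpha]_{\winc}[\beta]_{\winc}=\binom{n+m}{n}[\alpha\cdot\beta[n]]_{\winc}$, identify $\winc$-classes of $\PF_n$ with $\WIPF_n$, and compare directly with $\mathbf{P}_{\winc(\alpha)}\mathbf{P}_{\winc(\beta)}=\mathbf{P}_{\winc(\alpha)\cdot\winc(\beta)[n]}$ after rescaling by $1/n!$. Your version is slightly more careful, since it justifies the concatenation identity via $\alpha_i\le n$ and checks that $\winc(\alpha)\cdot\winc(\beta)[n]$ lies in $\WIPF$.
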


\begin{proof} Let $\alpha \in \PF_n$ and $\beta \in \PF_m$. Observe that if $\gamma \in \alpha \shifted \beta$ then $\winc(\gamma) = \winc(\alpha)\cdot(\winc(\beta))[n] = \winc(\alpha \cdot \beta[n])$, so weak shuffle-compatibility follows.

Now, consider the map $\varsigma: \mathcal{A}^{\PF}_{\winc} \rightarrow \CQSym $ defined by $[\alpha]_{\winc} \mapsto \frac{1}{n!}\mathbf{P}_{\winc(\alpha)}$. Weakly increasing parking functions index both $\winc$-equivalence classes and basis elements of $\CQSym$ so this map is a bijection. 
    Observe that \[\varsigma([\alpha]_{\winc})\varsigma([\beta]_{\winc}) = (\frac{1}{n!}\mathbf{P}_{\winc(\alpha)})(\frac{1}{m!}\mathbf{P}_{\winc(\beta)}) = \frac{1}{n!m!} \mathbf{P}_{\winc(\alpha) \cdot \winc(\beta)[n]}.\] Similarly, 
    \[\varsigma([\alpha]_{\winc}[\beta]_{\winc}) = \varsigma(\sum_{\gamma \in \alpha \shifted \beta} [\gamma]_{\winc}) = \varsigma(\binom{n+m}{n}[\alpha \cdot \beta[n]]_{\winc}) = \frac{1}{(n+m)!} \binom{n+m}{n}\mathbf{P}_{\winc(\alpha \cdot \beta[n])}.\]
    These two expressions are equivalent given that $\winc(\alpha)\cdot\winc(\beta)[n] = \winc(\alpha \cdot \beta[n])$, and so we have that $\varsigma$ is an isomorphism.  
\end{proof}

We conclude this section with five more weakly shuffle-compatible statistics on parking functions. 

\begin{proposition} \label{prop:lel} The number of leading elements $\lel$ is weakly shuffle-compatible on parking functions.
\end{proposition}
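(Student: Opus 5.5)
The plan is to compute the multiset $\{\{\lel(\gamma) : \gamma \in \alpha \shifted \beta\}\}$ directly. The argument mirrors the proofs for the outcome and lucky car set above: we track where the letters of $\alpha$ and of $\beta[n]$ land in a shifted shuffle. Let $\alpha \in \PF_n$ and $\beta \in \PF_m$. If $n=0$ or $m=0$, the shifted shuffle consists of a single parking function, namely $\beta$ or $\alpha$, and there is nothing to prove. So assume $n,m \geq 1$ and write $\alpha \shifted \beta = \alpha \shuffle \beta[n]$.

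The key observation is a disjointness of values. Since $\alpha$ is a parking function of length $n$, every entry of $\alpha$ lies in $[n]$. Every entry of $\beta[n]$ is at least $n+1$. Hence no letter of $\alpha$ equals any letter of $\beta[n]$. Now take $\gamma \in \alpha \shifted \beta$ and split into two cases according to which word contributes the first letter $\gamma_1$.
\begin{enumerate}
\item If $\gamma_1 = \alpha_1$, then the positions $i$ with $\gamma_i = \gamma_1$ are exactly the positions holding letters of $\alpha$ equal to $\alpha_1$, because no letter of $\beta[n]$ can equal $\alpha_1$. Thus $\lel(\gamma) = \lel(\alpha)$.
\item If $\gamma_1 = \beta_1 + n$, the same reasoning gives $\lel(\gamma) = |\{i : \beta_i + n = \beta_1 + n\}| = \lel(\beta)$.
\end{enumerate}

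Next, count the shuffles in each case. A shuffle beginning with $\alpha_1$ is determined by interleaving the remaining $n-1$ letters of $\alpha$ with the $m$ letters of $\beta[n]$, so there are $\binom{n+m-1}{n-1}$ of them. Symmetrically, $\binom{n+m-1}{m-1}$ shuffles begin with $\beta_1+n$. Since the shuffle product is a multiset of interleavings, this count is correct even when $\alpha$ or $\beta$ has repeated letters. Therefore the multiset consists of $\lel(\alpha)$ with multiplicity $\binom{n+m-1}{n-1}$ together with $\lel(\beta)$ with multiplicity $\binom{n+m-1}{m-1}$. This depends only on $|\alpha|$, $|\beta|$, $\lel(\alpha)$ and $\lel(\beta)$, which proves weak shuffle-compatibility.

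There is no real obstacle. The only point needing care is the first step, the value-disjointness of $\alpha$ and $\beta[n]$. This is exactly where the shift is essential, and it is why the statement concerns weak rather than full shuffle-compatibility. One should also be careful to count shuffles as a multiset of interleavings, which handles repeated letters correctly.
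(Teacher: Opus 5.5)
Your proof is correct and is essentially the paper's argument: you split on whether $\gamma_1$ comes from $\alpha$ or from $\beta[n]$, which yields $\lel(\alpha)$ with multiplicity $\binom{n+m-1}{n-1}$ and $\lel(\beta)$ with multiplicity $\binom{n+m-1}{m-1}$, and your explicit value-disjointness check (entries of $\alpha$ lie in $[n]$) fills in a step the paper leaves implicit. One aside is slightly off: shuffle-compatibility on words is only tested on disjoint words, and $\lel$ is invariant under $\pack$, so the same argument shows $\lel$ is even shuffle-compatible on words; the shift is therefore not what forces the weak version here.
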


\begin{proof}
    Consider $\alpha \in \PF_n$ and $\beta \in \PF_m$ where $\lel(\alpha)=k_{\alpha}$ and $\lel(\beta) = k_{\beta}$.  Consider $\gamma \in \alpha \shifted \beta$. If $\gamma_1 = \alpha_1$ then $\lel(\gamma)=k_{\alpha}$, but if $\gamma_1 = \beta_1$ then $\lel(\gamma)=k_{\beta}$. Thus, in $\{\{ \lel(\gamma) :  \gamma \in \alpha \shifted \beta\}\}$ there will be $\binom{n+m-1}{n-1}$ instances of $k_{\alpha}$ and $\binom{n+m-1}{m-1}$ instances of $k_{\beta}$.
\end{proof}

\begin{proposition}
    The number of ones $\ones$ is weakly shuffle-compatible on parking functions.  
\end{proposition}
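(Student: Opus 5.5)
The plan is to argue directly from the structure of the shifted shuffle, in the same way as the proof of Proposition~\ref{prop:lel}. Let $\alpha \in \PF_n$ and $\beta \in \PF_m$ with $\ones(\alpha) = a$ and $\ones(\beta) = b$, and take any $\gamma \in \alpha \shifted \beta = \alpha \shuffle \beta[n]$.

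The first step is to see where the ones of $\gamma$ come from. Every entry of $\beta[n]$ is at least $n+1 \geq 2$, at least when $n \geq 1$. So the entries of $\gamma$ equal to $1$ are exactly the entries coming from $\alpha$ that equal $1$, and these survive the shuffle unchanged. Hence $\ones(\gamma) = \ones(\alpha) = a$ for every $\gamma \in \alpha \shifted \beta$.

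The second step is to count. The multiset $\{\{ \ones(\gamma) : \gamma \in \alpha \shifted \beta\}\}$ consists of $\binom{n+m}{n}$ copies of $\ones(\alpha)$. This depends only on $|\alpha|$, $|\beta|$, and $\ones(\alpha)$, so it certainly depends only on the data allowed by the definition.

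The last step is to handle the degenerate case $n = 0$. Then $\alpha$ is empty and $\gamma = \beta[0] = \beta$, so the multiset is $\{\{\ones(\beta)\}\}$, which is again determined by the allowed data.

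There is no real obstacle here. The only point needing care is that $\beta$ itself always contains a $1$, being a parking function, but after the shift by $n$ none of its entries is $1$. So $\ones(\beta)$ plays no role unless $n = 0$, and that case is covered separately above.
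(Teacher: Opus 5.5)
Your proposal is correct and matches the paper's proof, which simply observes that $\ones(\gamma)=\ones(\alpha)$ for every $\gamma\in\alpha\shifted\beta$ because every entry of $\beta[n]$ exceeds $1$. Your explicit count of $\binom{n+m}{n}$ copies and your separate treatment of the empty case $n=0$ are extra care that the paper leaves implicit.
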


\begin{proof}
    Observe that if $\gamma \in \alpha \shifted \beta$ then $\ones(\gamma) = \ones(\alpha)$.
\end{proof}

\begin{proposition}
    The size of the initial strictly increasing segment statistic $\isi$ is weakly shuffle-compatible on parking functions.
\end{proposition}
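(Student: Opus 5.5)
We need to show that the multiset $\{\{\isi(\gamma):\gamma\in\alpha\shifted\beta\}\}$ depends only on $n=|\alpha|$, $m=|\beta|$, $a=\isi(\alpha)$ and $b=\isi(\beta)$. The plan is to classify each shifted shuffle $\gamma$ by how its initial strictly increasing run is built. Every letter of $\beta[n]$ is at least $n+1$. Every letter of $\alpha$ is at most $n$, since $\alpha$ is a parking function of length $n$. So every letter of $\beta[n]$ is strictly larger than every letter of $\alpha$. Hence a letter of $\beta[n]$ followed immediately by a letter of $\alpha$ always forms a descent, while a letter of $\alpha$ followed by a letter of $\beta[n]$ is always an ascent. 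Two consecutive letters coming from the same word keep the relation they had in that word.

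From this, the initial run of $\gamma$ is determined by the positions of the letters. Suppose $\gamma$ begins with $\alpha_1,\dots,\alpha_i$ (possibly $i=0$), followed by $\beta[n]_1,\dots,\beta[n]_j$, then another letter.

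\textbf{Case $i\ge 1$.} If $i<a$ and $j=0$ would contradict maximality of the block of $\alpha$-letters, so the block structure forces the following. If $i\le a$ and $j\ge1$, the run continues through the $\beta$-letters for as long as $\beta[n]$ is strictly increasing, that is, through $\min(j,b)$ of them. It then stops at the end of the $\beta$-block (the next letter is from $\alpha$, a descent) or at the first descent of $\beta$. So $\isi(\gamma)=i+\min(j,b)$. If $i>a$, the run ends inside the $\alpha$-block, so $\isi(\gamma)=a$. If $i=n$, the whole of $\alpha$ comes first and $\isi(\gamma)=n+\min(\text{everything})$, which is again $i+b$ when $a=n$.

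\textbf{Case $i=0$.} The word starts with a $\beta$-block of length $j\ge1$, and after it comes a descent, so $\isi(\gamma)=\min(j,b)$.

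In every case $\isi(\gamma)$ is a function of $(i,j,n,m,a,b)$ alone. The number of shuffles with a given initial block pattern $(i,j)$ is a binomial count of the remaining interleavings, which depends only on $n-i$ and $m-j$. Summing over $(i,j)$ then gives a multiset determined by $n,m,a,b$.

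The main obstacle is getting the boundary cases right: $i=n$, $j=m$, and the convention that the block following the $\beta$-block is nonempty or absent. To keep this clean, I would phrase the argument as follows. Read $\gamma$ left to right and locate the first position $p$ with $\gamma_p\ge\gamma_{p+1}$ (or the end of the word). Whether position $p$ is such a position is decided by which words $\gamma_p$ and $\gamma_{p+1}$ come from, together with the descent/tie information of $\alpha$ and $\beta$ on their initial runs. That information is exactly encoded by $a$ and $b$, since only comparisons within the first $a$ letters of $\alpha$ and the first $b$ letters of $\beta$ (plus the break after them) can be queried before the run ends.

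An alternative route gives a uniform argument. Replace $\alpha$ and $\beta$ by canonical representatives with the same $\isi$, for example $(1,2,\dots,a,1,1,\dots,1)$. Then observe that the $\isi$ of a shuffle only ever inspects the letters compared along the initial run, and those comparisons agree for $\alpha$ and its representative. This gives a bijection $\alpha\shifted\beta\to\alpha'\shifted\beta'$ that preserves $\isi$, which proves the statement.
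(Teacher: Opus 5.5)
Your argument is correct and is essentially the paper's proof. The paper likewise indexes each shifted shuffle by two quantities: the length $j$ of the initial run of positions coming from $\alpha$, and the length $h$ of the consecutive block from $\beta$ that follows it. It obtains exactly your formula: $\isi(\gamma)=j+\min\{h,\isi(\beta)\}$ when $j\le\isi(\alpha)$, and $\isi(\gamma)=\isi(\alpha)$ otherwise.

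Your explicit remark that every letter of $\alpha$ is at most $n$, hence smaller than every letter of $\beta[n]$, is the fact the paper uses silently. Two small cleanups remain: say ``first non-ascent'' rather than ``first descent'' of $\beta$, and tidy the garbled sentence about $j=0$.
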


\begin{proof}
   Consider $\alpha \in \PF_n$ and $\beta \in \PF_m$ where $\isi(\alpha)=k_{\alpha}$ and $\isi(\beta) = k_{\beta}$. Let $\gamma$ be a shifted shuffle of $\alpha$ and $\beta$ corresponding to the sets $X = \{x_1 < \cdots < x_n\} \sqcup Y = \{y_1 < \cdots < y_m\} = [n+m]$ where $\gamma_{x_i} = \alpha_i$ and $\gamma_{y_i}=\beta_i$.  Let $j$ be the largest number such that $(x_1, x_2, \ldots, x_j) = (1,2, \ldots, j)$, or $0$ if such a number does not exist, and let $h$ be the largest number such that $y_2 = y_1 + 1, y_3 = y_2+1, \ldots, y_h = y_{h-1}+1$,  or $0$ if such a number does not exist.  Then, 
\[\isi(\gamma) = \begin{cases}
    j+\min\{h,k_{\beta}\} & \text{if } j\leq k_\alpha\\  
    k_{\alpha} & \text{if } j > k_{\alpha}
\end{cases}.\]
It follows that we can determine $\{\{ \isi(\gamma) : \gamma \in \alpha \shifted \beta \}\}$ completely from $\isi(\alpha), \isi(\beta), |\alpha|,$ and $|\beta|$.
\end{proof}

\begin{proposition}
        The sum and sum minus length statistics, $\sumst$ and $\mathsf{sml}$, are weakly shuffle-compatible on parking functions.
\end{proposition}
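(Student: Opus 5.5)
The plan is to compute $\sumst(\gamma)$ directly for every $\gamma \in \alpha \shifted \beta$ and observe that it does not depend on the interleaving. Take $\alpha \in \PF_n$ and $\beta \in \PF_m$. A shifted shuffle $\gamma \in \alpha \shifted \beta$ is determined by a choice of positions $X \sqcup Y = [n+m]$, with $|X| = n$ and $|Y| = m$. The entries of $\gamma$ are, as a multiset, exactly the entries of $\alpha$ together with the entries of $\beta[n]$. Since $\sumst$ is invariant under rearranging entries, we get
\[\sumst(\gamma) = \sumst(\alpha) + \sumst(\beta[n]) = \sumst(\alpha) + \sumst(\beta) + nm.\]

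This value depends only on $\sumst(\alpha)$, $\sumst(\beta)$, $n$, and $m$. Hence the multiset $\{\{\sumst(\gamma) : \gamma \in \alpha \shifted \beta\}\}$ consists of $\binom{n+m}{n}$ copies of $\sumst(\alpha)+\sumst(\beta)+nm$, and so it is determined by the required data. This gives weak shuffle-compatibility of $\sumst$.

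For $\mathsf{sml}$, I would write $\mathsf{sml}(\gamma) = \sumst(\gamma) - (n+m)$. By the formula above, this equals
\[\mathsf{sml}(\alpha) + \mathsf{sml}(\beta) + nm.\]
Again every element of the multiset takes this same value, with multiplicity $\binom{n+m}{n}$, so $\mathsf{sml}$ is weakly shuffle-compatible as well. Alternatively, one can note that $\mathsf{sml}$ and $\sumst$ determine each other once the length is fixed, so they induce the same equivalence classes. The same observation applies to $\disp = \binom{n+1}{2} - \sumst$, which is consistent with Corollary \ref{cor:disp}.

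I expect no real obstacle here. The only point needing care is the shift: the $\beta$-entries of $\gamma$ appear as $\beta_i + n$, which contributes the additive term $nm$. Because this term depends only on the lengths, it does not affect compatibility.
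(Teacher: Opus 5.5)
Your proposal is correct and follows essentially the same route as the paper. The paper also observes that every $\gamma \in \alpha \shifted \beta$ satisfies $\sumst(\gamma) = \sumst(\alpha)+\sumst(\beta)+mn$ because of the shift by $n$, and likewise $\mathsf{sml}(\gamma) = \mathsf{sml}(\alpha)+\mathsf{sml}(\beta)+mn$.
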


\begin{proof} Let $\alpha \in \PF_n$ and $\beta \in \PF_m$.
    Observe that if $\gamma \in \alpha \shifted \beta$, then $\sumst(\gamma) = \sumst(\alpha) + \sumst(\beta) + mn$, as each entry of $\beta$ is increased by $n$ before being shuffled with $\alpha$. Similarly, $\mathsf{sml}(\gamma) = \mathsf{sml}(\alpha) + \mathsf{sml}(\beta) + mn$.
\end{proof}

See Appendix \ref{ap:pf} for examples of statistics that are not weakly shuffle-compatible on parking functions.

\section{Set Partitions}\label{sec:setpartitions}

A \emph{set partition} $\pi = B_1 / B_2/ \cdots / B_k$ of a set $I$ is an unordered collection of subsets $B_1, \ldots, B_k \subseteq I$ such that $B_i \cap B_j  = \emptyset$ for all $i,j \in [k]$ with $i \not= j$, and $B_1 \cup B_2 \cup \cdots \cup B_k = I$. We write $\pi \vdash I$ to denote that $\pi$ partitions $I$.  Let $|\pi|$ denote the {\em size} of $\pi$, which is number of elements in $\pi$, or $|I|$. Let $\Psi_n$ denote the collection of set partitions of sets $I$ where $I \subseteq \N, |I|=n$, and let $\Psi = \cup_{n \geq 0} \Psi_n$. Given a set partition $\pi$ of $I = \{i_1 < i_2 < \cdots <i_n\} \subseteq \N$, we let the \emph{standardization} of $\pi$, denoted $\std(\pi)$, be the set partition of $[n]$ obtained by replacing $i_j$ with $j$ for each $j\in[n]$. Let $\Pi_n$ denote set partitions of $[n]$ and let $\Pi = \cup_{n \geq 0} \Pi_n$. 

We can visualize set partitions in $\Pi$ using \emph{arc diagrams}. The diagrams consist of vertices labeled by elements in $I$ from least to greatest with arcs connecting each pair of elements that share a block and have no other elements in the same block in between them. 

 Table \ref{tab:setP} lists various statistics on set partitions that appear in the literature or on FindStat, all of which we show to be (weakly) shuffle-compatible. For these definitions, let $\pi$ be a set partition of $I = \{i_1< i_2 < \cdots < i_n\} \subseteq \mathbb{N}$. 

\begin{table}[h!]
    \centering
    \begin{tabular}{|c|c|c|} \hline
    \textbf{Name} & \textbf{Definition} & \textbf{FindStat} \\ \hline
     Block sizes & $\Bk(\pi) = \{\{|B_1|, |B_2|, \ldots, |B_k| \}\}$ & \\ \hline
        Block number & $\bk(\pi)=|\Bk(\pi)|$ & St000105\\ \hline
        Rank & $\rank(\pi) = |\pi|-\bk(\pi)$  &  St000211\\ \hline
        Succession set & $\Succ(\pi) = \{i : i, i+1 \in B_j\}$ & \\ \hline
        Succession number & $\suc(\pi) = |\Succ(\pi)|$ & St000502\\ \hline
       
        Set of singletons & $\Sg(\pi) = \{i : B_j = \{i\} \text{ for some }j\}$ & \\ \hline
         Number of singletons & $\sg(\pi) = |\Sg(\pi)|$ &  St000247\\ \hline
         Set of openers & $\Op(\pi) = \{i : i = \min(B_j) \text{ for some }j\}$ & \\ \hline
         Set of closers & $\Cl(\pi) = \{i : i = \max(B_j) \text{ for some }j\}$ & \\ \hline
         Set of transients & $\Tr(\pi) = \{i : i \in B_j, \min(B_j)<i<\max(B_j) \text{ for some }j\}$ & \\ \hline
         Number of nonsingleton blocks & $\mathsf{nsg}(\pi) = \bk(\pi) - \sg(\pi)$ & St000251 \\ \hline
         Size of the block containing 1 & $\bk_1(\pi) = |B_i|$, $1\in B_i$  & St000504 \\ \hline
          Closer of the block containing 1 & $\mathsf{cl_1(\pi) = \max(B_i) }$ where $1 \in B_i$ & St000505  \\ \hline
         Largest opener & $\mathsf{lop}(\pi)=\max(\Op(\pi))$ & St000728 \\ \hline
         Smallest closer & $\mathsf{scl}(\pi)=\min(\Cl(\pi))$ & St000971 \\ \hline
         Number of terminal closers & $\mathsf{tcl}(\pi)= |\{j\in [n] : i_j, i_{j+1}, \ldots, i_n \in \mathsf{Cl}(\pi)\}|$  & St001050 \\ \hline
         Maximal size of a block & $\mathsf{maxBk}(\pi) = \max(\Bk(\pi))$ & St001062 \\ \hline
         Minimal size of a block & $\mathsf{minBk}(\pi)=\min(\Bk(\pi))$ & St001075 \\ \hline
        
Number of occ. of pattern 12 & $\occ_{12}(\pi) = |\{(i,j) : i<j, \exists h, i,j \in B_h\}|$ & St000558 \\ \hline
Number of occ. of pattern 123 &  $\occ_{123}(\pi) = |\{(i,j,k) : i < j <k, \exists h, i,j,k \in B_h\}|$ & St000561 \\ \hline
Number of occ. of pattern  1/2 & $\occ_{1/2}(\pi) = |\{(i,j) : i<j, \exists h, i\in B_h, j\not\in B_h\}|$ & St000564  \\ \hline     
    \end{tabular}
    \caption{Statistics on set partitions \cite{ kasraoui2006distribution, FindStat} }
    \label{tab:setP}
\end{table}

Let $I=\{i_1<i_2<\cdots < i_n\},S =\{s_1<s_2<\cdots< s_n\} \subset \N$, and $\pi \vdash I$. Then, $\shift_S(\pi)$ is the set partition of $S$ obtained by replacing each element $i_j$ with $s_j$ for each $j \in [n]$. Given two disjoint set partitions $\tau \vdash I$ and $\theta \vdash J$, define the \emph{arc-shuffle} of $\tau$ and $\theta$ by 
\[\tau \mix \theta = \{\{\shift_S(\tau)\cup \shift_{S^c}(\theta) : S \subseteq I \cup J, |S|=|I|, S^c = (I\cup J) \setminus S\}\}.\] For two set partitions $\rho \vdash [n]$ and $\nu \vdash [m]$, define the \emph{shifted arc-shuffle} of $\rho$ and $\nu$ to be 
\[\rho \smix \nu = \rho \mix \nu[n],\] where $\nu[n]$ denotes that $n$ should be added to each element of $\nu$. This is equivalent to $\rho \smix \nu = \{\{ \shift_S(\rho)\cup \shift_{S^c}(\nu) : S \subseteq [n+m], |S|=n, S^c = [n+m]\setminus S \}\}$.  We refer to this as the arc-shuffle because it is equivalent to shuffling the arc diagrams of the set partitions while leaving the node labels fixed in place.

The Hopf algebra structure on set partitions is given by the symmetric functions in noncommuting variables, which were introduced in \cite{Ros06} and shown to be a Hopf algebra, called $\NCSym$, in \cite{Ber08}. We consider the dual algebra, $\NCSym^*$, which has a basis $\{\mathbf{w}_{\pi}\}_{\pi \in \Pi}$. This basis has the product rule given by 
\[ \mathbf{w}_{\tau}\mathbf{w}_{\theta} = \sum_{\pi \in \tau \smix \theta} \mathbf{w}_{\pi}.\] 

Note that unlike the commutativity of the shuffle on words, but noncommutativity of the shifted shuffle on permutations and parking functions, the shuffle and shifted shuffle on set partitions are both commutative.

\subsection{Weak-shuffle-compatibility on set partitions}

We begin by considering statistics that are weakly shuffle-compatible on $\Pi$ and their algebraic structures.

\begin{definition}
A statistic $\St$ is weakly shuffle-compatible on set partitions if $\{\{\St(\pi) : \pi \in \tau \smix \theta\}\}$ is determined by $\St(\tau)$, $\St(\theta)$, $|\tau|$, and $|\theta|$ for any two set partitions $\tau, \theta \in \Pi$.
\end{definition}

Equivalently, $\St$ is weakly shuffle-compatible on set partitions if \[ \{\{ \St(\pi) : \pi \in \tau \smix \theta \}\} = \{\{ \St(\pi) : \rho \smix \nu \}\}\]
for any two pairs of set partitions $\{\tau, \theta\}$ and $\{ \rho, \nu \}$ from $\Pi$ such that $|\tau|=|\rho|$, $|\theta|=|\nu|$, $\St(\tau)=\St(\rho)$, and $\St(\theta) = \St(\nu)$.

We denote the $\St$-equivalence class of $\pi \in \Pi$ by $[\pi]_{\St}^{\Pi} = \{ \tau \in \Pi : \St(\pi)=\St(\tau), |\pi|=|\tau|\}$, where we may drop the $\Pi$ superscript when it is clear from context. Let $\Pi/\St$ denote the set of all such $\St$-equivalence classes.

\begin{definition}
   Let $\St$ be weakly shuffle-compatible on set partitions. The $\St$ shifted shuffle algebra, denoted $\mathcal{A}^{\Pi}_{\St}$, is defined as the algebra with basis elements $[\pi]_{\St}^{\Pi}\in \Pi/\St$ and the product 
    \[[\tau]_{\St}[\theta]_{\St} = \sum_{\pi \in \tau \smix \theta} [\pi]_{\St}.\]
\end{definition}

\begin{proposition}\label{prop:setparquo}
If $\St$ is a weakly shuffle-compatible statistic on set partitions, then $\mathcal{A}^{\Pi}_{\St}$ is a quotient algebra of $\NCSym^*$. 
\end{proposition}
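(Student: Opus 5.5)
We follow the same template as Theorem~\ref{thm:weakFQSYM} and Proposition~\ref{prop:alg_pqsym}. Define the linear map $\varsigma: \NCSym^* \rightarrow \mathcal{A}^{\Pi}_{\St}$ on the basis by $\mathbf{w}_{\pi} \mapsto [\pi]^{\Pi}_{\St}$ for $\pi \in \Pi$. The goal is to show that $\varsigma$ is a surjective algebra morphism. It then follows that $\mathcal{A}^{\Pi}_{\St} \cong \NCSym^*/\ker(\varsigma)$.

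First, the map is well defined and surjective. Both algebras are indexed by objects in $\Pi$: the $\mathbf{w}$-basis by set partitions of $[n]$, and the classes in $\Pi/\St$ by classes of set partitions of $[n]$. Every class $[\pi]^{\Pi}_{\St}$ contains $\pi$ itself, so it is hit by $\mathbf{w}_\pi$. Unlike the words case (Proposition~\ref{prop:alg_wqsym}), no packing or standardization step is needed, because $\mathcal{A}^{\Pi}_{\St}$ is built directly on $\Pi$. We also note that $\varsigma$ respects the grading by $|\pi|$ and sends the unit $\mathbf{w}_{\emptyset}$ to the class of the empty partition, which is the unit of $\mathcal{A}^{\Pi}_{\St}$.

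Second, multiplicativity. For $\tau \vdash [n]$ and $\theta \vdash [m]$, we compute
\[\varsigma(\mathbf{w}_{\tau}\mathbf{w}_{\theta}) = \varsigma\Big(\sum_{\pi \in \tau \smix \theta}\mathbf{w}_{\pi}\Big) = \sum_{\pi \in \tau \smix \theta}[\pi]_{\St} = [\tau]_{\St}[\theta]_{\St} = \varsigma(\mathbf{w}_\tau)\varsigma(\mathbf{w}_\theta).\]
This uses that the product in $\NCSym^*$ is exactly the shifted arc-shuffle, and that the product in $\mathcal{A}^{\Pi}_{\St}$ is defined by the same sum. The computation extends bilinearly to all of $\NCSym^*$.

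The only point requiring care is that the product on $\mathcal{A}^{\Pi}_{\St}$ is well defined, i.e.\ independent of the chosen representatives $\tau,\theta$ of their classes. This is exactly the hypothesis that $\St$ is weakly shuffle-compatible on set partitions: the multiset $\{\{\St(\pi) : \pi\in\tau\smix\theta\}\}$ depends only on $\St(\tau)$, $\St(\theta)$, $|\tau|$ and $|\theta|$. Hence the sum of classes is determined, and $\mathcal{A}^{\Pi}_{\St}$ is an associative algebra. Associativity is inherited as a quotient of an associative algebra once $\varsigma$ is known to be multiplicative and surjective. With well-definedness in hand, the remaining steps are routine, and the proposition follows.
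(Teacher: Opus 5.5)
Your proposal is correct and matches the paper's proof: the map $\mathbf{w}_{\pi} \mapsto [\pi]^{\Pi}_{\St}$ is surjective and multiplicative because the $\NCSym^*$ product is exactly the shifted arc-shuffle $\smix$, so the first isomorphism theorem gives $\mathcal{A}^{\Pi}_{\St} \cong \NCSym^*/\ker(\varsigma)$. You also say explicitly that weak shuffle-compatibility is what makes the product on $\mathcal{A}^{\Pi}_{\St}$ independent of the chosen representatives, a point the paper leaves implicit.
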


\begin{proof}
     Let $\St$ be a weakly shuffle-compatible statistic on set partitions. Consider the surjective map $\varsigma: \NCSym^* \rightarrow \mathcal{A}^{\Pi}_{\St}$ defined by $\mathbf{w}_{\pi} \rightarrow [\pi]_{\St}^{\Pi}$. Additionally, \[\varsigma(\mathbf{w}_{\tau}\mathbf{w}_{\theta}) = \varsigma(\sum_{\pi \in \tau \shifted \theta} \mathbf{w}_{\pi})= \sum_{\pi \in \tau \shifted \theta} [\pi]^{\Pi}_{\St} = [\tau]^{\Pi}_{\St}[\theta]^{\Pi}_{\St}=\varsigma(\mathbf{w}_{\tau})\varsigma(\mathbf{w}_{\theta}).\] Since $\varsigma$ is a surjective morphism, we have that $\mathcal{A}^{\Pi}_{\St}$ is isomorphic to $\NCSym^*/\ker(\varsigma)$.
\end{proof}

For the algebraic structure of the succession set equivalence classes, recall the tie basis $\{T_{\alpha}\}_\alpha$ of $\QSym$ from Theorem \ref{thm:T_basis}.

\begin{theorem}\label{thm:Succ_WSC}(Weak shuffle-compatibility of the succession set)
\begin{enumerate}
    \item The succession set statistic $\Succ$ is weakly shuffle-compatible on set partitions.
    \item The linear map on $\mathcal{A}^{\Pi}_{\Succ}$ defined by \[[\pi]_{\Succ} \mapsto T_{\Comp(\Succ(\pi))}\] is an algebra isomorphism between $\mathcal{A}_{\Succ}^{\Pi}$ and $\QSym$.
\end{enumerate}
\end{theorem}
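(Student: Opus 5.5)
Part (1) is direct: we describe the succession set of a shifted arc-shuffle in the same way as the proof of Theorem~\ref{thm:Tie} handles ties. Let $\tau \vdash [n]$, $\theta \vdash [m]$, and let $\pi = \shift_S(\tau)\cup\shift_{S^c}(\theta)$ with $S=\{x_1<\cdots<x_n\}$ and $S^c=\{y_1<\cdots<y_m\}$. We claim that $i \in \Succ(\pi)$ exactly when one of two things happens: either $i=x_g$, $i+1=x_{g+1}$ and $g\in\Succ(\tau)$; or $i=y_g$, $i+1=y_{g+1}$ and $g\in\Succ(\theta)$. Consecutive elements, one from $S$ and one from $S^c$, never share a block. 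If $i,i+1$ are both in $S$, they are consecutive in $S$, so they share a block of $\pi$ iff the corresponding $g,g+1$ share a block of $\tau$; the same holds for $S^c$. Hence the multiset $\{\{\Succ(\pi)\}\}$ is determined by $n,m,\Succ(\tau),\Succ(\theta)$.

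For part (2), the key observation is that the rule above is identical to the rule for ties in a shuffle of disjoint words. To make this precise, attach to each set partition $\pi\vdash[n]$ a word $w(\pi)$ with $\Tie(w(\pi))=\Succ(\pi)$; the tie word $w^{\Tie}_{\Comp(\Succ(\pi))}$ works. Conversely, every subset $A\subseteq[n-1]$ is $\Succ$ of some partition: take blocks that are the maximal runs of consecutive integers joined by $A$. So the $\Succ$-classes in size $n$ are in bijection with compositions of $n$, and hence with the $T$-basis elements. This makes the map $[\pi]_{\Succ}\mapsto T_{\Comp(\Succ(\pi))}$ a linear bijection onto $\QSym$.

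For multiplicativity, fix $\tau,\theta$ and let $u=w^{\Tie}_{\Comp(\Succ(\tau))}$ and $v=w^{\Tie}_{\Comp(\Succ(\theta))}$. Shifted shuffles $u\shifted v$ are indexed by the same subsets $S\subseteq[n+m]$ of size $n$ (positions of the letters of $u$). Since $u$ and $v[n]$ are disjoint, the tie rule from the proof of Theorem~\ref{thm:Tie} and the succession rule from part (1) coincide term by term. So the $S$-indexed shuffle word has tie set equal to $\Succ(\shift_S(\tau)\cup\shift_{S^c}(\theta))$. Summing over $S$ and using Theorem~\ref{thm:T_basis},
\[T_{\Comp(\Succ(\tau))}T_{\Comp(\Succ(\theta))}=\sum_{\gamma\in u\shifted v}T_{\Comp(\Tie(\gamma))}=\sum_{\pi\in\tau\smix\theta}T_{\Comp(\Succ(\pi))},\]
which is the image of $[\tau]_{\Succ}[\theta]_{\Succ}$.

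The main subtlety is bookkeeping rather than difficulty. The $T$-basis product is defined through tie words of compositions, and we must check that it matches the shuffle-algebra product on $\mathcal{A}^W_{\Tie}$ transported through the (abstract) isomorphism of Theorem~\ref{thm:Tie}. We will therefore take Theorem~\ref{thm:T_basis} as giving the structure constants $c^{\gamma}_{\alpha,\beta}$ directly. The only remaining point is that these constants count shuffles indexed by position sets $S$, exactly as the arc-shuffle does, with the $S$-to-$S$ correspondence preserving the relevant set.
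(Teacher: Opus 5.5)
Your proof is correct and follows the paper's argument essentially step for step. Part (1) uses the same position-set description of $\Succ$ for a shifted arc-shuffle. Part (2) uses the same $S$-indexed bijection between $\tau\smix\theta$ and $w^{\Tie}_{\Comp(\Succ(\tau))}\shifted w^{\Tie}_{\Comp(\Succ(\theta))}$, under which $\Succ$ corresponds to $\Tie$, and then appeals to Theorem~\ref{thm:T_basis}. Your explicit construction showing that every $A\subseteq[n-1]$ is a succession set (take the maximal runs as blocks) usefully supplies the bijectivity step that the paper only asserts.
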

\begin{proof} Given $\tau \in \Pi_n, \theta \in \Pi_m$, each set partition in $\tau \smix \theta$ can be associated with a unique set $S \subseteq [n+m]$ with $|S| = n$. Suppose $\Succ(\tau)=\{t_1, t_2, \ldots, t_{k_{\tau}}\}$ and $\Succ(\theta) =\{u_1, u_2, \ldots, u_{k_{\theta}}\}$.  Suppose $S = \{s_1 < s_2 < \cdots < s_n \}$ and $[n+m]\setminus S = \{r_1, r_2, \ldots, r_m\}$.  Then, for the partition $\pi \in \tau \smix \theta$ given by $\pi = \shift_S(\tau) \cup \shift_{[n+m]\setminus S}(\theta)$, we have 
that $i \in \Succ(\pi)$ if $i=s_{t_j}$ and $i+1=s_{t_j+1}$ or if $i=r_{u_j}$ and $i+1 = r_{u_j+1}$ for some $j$. Thus, we can determine the succession set for each shuffle of $\tau$ and $\theta$ based only on $\Succ(\tau), \Succ(\theta), |\tau|,$ and $|\theta|$.

Let $\varrho: \mathcal{A}_{\Succ}^{\Pi} \rightarrow \QSym$ be defined by $\varrho([\pi]_{\Succ})=T_{\Comp(\Succ(\pi))}$.  This map is bijective as we are mapping between basis elements corresponding to all subsets of $[n-1]$ or (compositions of $n$) on each side. Then, \[\varrho([\tau]_{\Succ}[\theta]_{\Succ}) = \varrho(\sum_{\pi \in \tau \smix \theta} [\pi]_{\Succ}) = \sum_{\pi \in \tau \smix \theta} T_{\Comp(\Succ(\pi))}.\] 
Similary, \[\varrho([\tau]_{\Succ})\varrho([\theta]_{\Succ}) = T_{\Comp(\Succ(\tau))}T_{\Comp(\Succ(\theta))} = \sum_{v \in w^{\Tie}_{\Comp(\Succ(\tau))}\shifted w^{\Tie}_{\Comp(\Succ(\theta))}} T_{\Comp(\Tie(v))}.\] 
Note that the description of the succession set for a specific shuffle earlier in the proof is identical to the description of the tie set for a specific shuffle given in the proof of Theorem~\ref{thm:Tie}. As such, we can construct a bijection sending the partition $ \pi \in \tau \smix \theta$ associated to a specific set $S$ as above to the word $v \in w^{\Tie}_{\Comp(\Succ(\tau))} \shifted w^{\Tie}_{\Comp(\Succ(\theta))}$ where the indices of $v$ filled with the letters of $w^{\Tie}_{\Comp(\Succ(\tau))}$ are given by $S$.  Here, $\Succ(\pi)=\Tie(v)$, making the two expressions above equivalent.
\end{proof}

\begin{theorem} 
(Weak shuffle compatibility of the succession number)
\begin{enumerate}
    \item The succession number statistic $\suc$ is weakly shuffle-compatible on set partitions.
    \item The linear map on $\mathcal{A}^{\Pi}_{\suc}$ defined by \[[\pi]^{\Pi}_{\suc} \mapsto [f(\pi)]^W_{\tie},\] where $f(\pi) = (1)^{\suc(\pi)+1}\cdot(\suc(\pi)+2, \ldots, |\pi|-1, |\pi|)$, is an algebra isomorphism from $\mathcal{A}_{\suc}^{\Pi}$ to $\mathcal{A}^W_{\tie}$.
\end{enumerate}
\end{theorem}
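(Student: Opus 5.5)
Part (1) follows from the proof of Theorem~\ref{thm:Succ_WSC}. Fix $\tau\in\Pi_n$ and $\theta\in\Pi_m$, and let $S\subseteq[n+m]$ with $|S|=n$. That proof shows $\Succ(\shift_S(\tau)\cup\shift_{S^c}(\theta))$ is obtained by keeping exactly those successions $t$ of $\tau$ with $s_t,s_{t+1}$ adjacent, and those successions $u$ of $\theta$ with $r_u,r_{u+1}$ adjacent. So $\suc(\pi)$ counts preserved successions of each factor. The plan is to compare with the tie words. Let $a=w^{\Tie}_{\Comp(\Succ(\tau))}$ and $b=w^{\Tie}_{\Comp(\Succ(\theta))}$. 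The bijection $\pi\leftrightarrow v$ from that proof gives $\Succ(\pi)=\Tie(v)$, hence $\suc(\pi)=\tie(v)$ for $v\in a\shifted b$. The multiset $\{\{\tie(v):v\in a\shifted b\}\}$ depends only on $n,m,\tie(a)=\suc(\tau),\tie(b)=\suc(\theta)$. This is what the counting lemma preceding Theorem~\ref{thm:tie} gives: it counts shuffles of disjoint words by preserved ties from each factor. Summing the lemma's formula over $k$ and over $i+j=t$ gives the number of $v$ with $\tie(v)=t$, so weak shuffle-compatibility of $\suc$ follows.

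For part (2), define $f(\pi)=(1)^{\suc(\pi)+1}\cdot(\suc(\pi)+2,\ldots,|\pi|)$, a word of length $|\pi|$ with $\tie(f(\pi))=\suc(\pi)$. The map $\phi:[\pi]_{\suc}\mapsto[f(\pi)]^W_{\tie}$ is then well defined. It is injective, because both classes are determined by the pair (length, count). It is surjective, because a word of length $n$ has at most $n-1$ ties, and $\suc$ takes every value $0,\ldots,n-1$ on $\Pi_n$; for example $[n]$ split into one block of size $k+1$ consisting of consecutive integers plus singletons has $k$ successions. For $n=0$ both sides are the unit.

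For multiplicativity, compute $\phi([\tau][\theta])=\sum_{\pi\in\tau\smix\theta}[f(\pi)]_{\tie}$. By the bijection above this equals $\sum_{v\in a\shifted b}[v]^W_{\tie}$, since each class is determined by length and tie number. The word $a\shifted b$ is just $a\shuffle b[n]$, a shuffle of disjoint words, so this sum is $[a]^W_{\tie}[b[n]]^W_{\tie}$ in $\mathcal{A}^W_{\tie}$. Theorem~\ref{thm:tie} lets us replace $a$ by $f(\tau)$ and $b[n]$ by any disjoint word of the same length and tie number. In particular we may use $f(\theta)$ shifted to be disjoint from $f(\tau)$, giving $\phi([\tau])\phi([\theta])$.

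The main obstacle is the pointwise identity $\suc(\pi)=\tie(v)$ under the bijection. It needs care about which adjacencies survive, since the tie words need not be packed. The proof of Theorem~\ref{thm:Tie} gives exactly the matching description, so I would cite both descriptions side by side. I would also check that the lemma's formula is used only for disjoint words, which holds here.
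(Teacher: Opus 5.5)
Your proposal is correct and follows the paper's route. Both arguments transport along the correspondence $\pi\leftrightarrow v$ with $\Succ(\pi)=\Tie(v)$ from the proof of Theorem~\ref{thm:Succ_WSC}. They then use the shuffle-compatibility of $\tie$ (Theorem~\ref{thm:tie}, which rests on the counting lemma you cite) to get both the weak shuffle-compatibility of $\suc$ and the isomorphism; the paper phrases the latter as projecting the $\Succ$--$\Tie$ isomorphism down to $\suc$ and $\tie$. Your write-up makes explicit several points the paper leaves implicit: $\tie(f(\pi))=\suc(\pi)$, both sets of classes are indexed by length and count with the same range $0,\dots,n-1$, the empty partition needs separate handling, and disjoint representatives must be chosen when multiplying in $\mathcal{A}^W_{\tie}$.
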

\begin{proof}
    By the logic in the proof of Theorem \ref{thm:Succ_WSC} above, one can instead construct an isomorphism $\varsigma:\mathcal{A}_{\Succ}^{\Pi}\rightarrow \mathcal{A}_{\Tie}^{W}$ where $\varsigma([\pi]^{\Pi}_{\Succ})=[\alpha]_{\Tie}^{W}$ where $[\alpha]^W_{\Tie} = \{ \beta \in W : \Tie(\beta)=\Succ(\pi), |\beta|=|\pi|\}$.  Given that $\suc(\pi) = |\Succ(\pi)|$ and $\tie(\alpha) = |\Tie(\alpha)|$, the shuffle-compatibility of $\tie$ (Theorem \ref{thm:tie}) implies the shuffle-compatibility of $\suc$.
    The map $\varsigma^{-1}$ projects down to an isomorphism $\vartheta^{-1}: \mathcal{A}_{\tie}^W \rightarrow \mathcal{A}_{\suc}^{\Pi}$ where $\vartheta: \mathcal{A}_{\suc}^{\Pi} \rightarrow \mathcal{A}_{\tie}^W$ is given by $\vartheta([\pi]_{\suc}^{\Pi}) =[\alpha]_{\tie}^W$ and $[\alpha]_{\tie}^W = \{ \beta \in W : \tie(\beta)=\suc(\pi), |\beta|=|\pi| \} = [f(\pi)]_{\tie}^W$. 
\end{proof}

 Let $W(0,1)$ denote the set of binary words and let $\mathbb{Q}_{\shuffle}\langle 0,1 \rangle$ denote the shuffle algebra on binary words~\cite{novelli2016binary} with a basis $\{W_u\}_{u \in W(0,1)}$. The product on this basis is given by $W_{v}W_{w} = \sum_{u \in v \shuffle w} W_u$. Given a set $S$, let $bw(S)$ be the binary word of length $|S|$ where the $i^{\text{th}}$ entry is 1 if and only if $i \in S$.

\begin{theorem}\label{thm:sg_set}(Weak shuffle-compatibility of the singletons set)
\begin{enumerate}
    \item The set of singletons statistic $\Sg$ is weakly shuffle-compatible on set partitions. 
    \item The linear map on $\mathcal{A}_{\Sg}^{\Pi}$ defined by \[[\pi]_{\St} \mapsto W_{bw(\Sg(\pi))},\] is an algebra isomorphism between $\mathcal{A}_{\Sg}^{\Pi}$ and
    the subalgebra of $\mathbb{Q}_{\shuffle}\langle 0,1 \rangle$ spanned by \[\{W_u : u \in W(0,1), \sum u_i \not= |u|-1\}.\]
    \end{enumerate}
\end{theorem}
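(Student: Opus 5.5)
The key observation is that the shifted arc-shuffle never creates or destroys singletons. I would follow the template of the proof of Theorem~\ref{thm:Succ_WSC} and check two things directly: that $\Sg$ is transported along a shuffle, and that the resulting map lands in the stated span of binary words.

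\emph{Step 1 (weak shuffle-compatibility).} Fix $\tau \in \Pi_n$ and $\theta \in \Pi_m$. Each $\pi \in \tau \smix \theta$ corresponds to a unique $S = \{s_1 < \cdots < s_n\} \subseteq [n+m]$, with $[n+m]\setminus S = \{r_1 < \cdots < r_m\}$ and $\pi = \shift_S(\tau) \cup \shift_{[n+m]\setminus S}(\theta)$. The blocks of $\pi$ are exactly the relabelled blocks of $\tau$ and of $\theta$, and relabelling preserves block sizes. Hence
\[\Sg(\pi) = \{ s_i : i \in \Sg(\tau)\} \cup \{ r_j : j \in \Sg(\theta)\}.\]
So $\{\{\Sg(\pi) : \pi \in \tau \smix \theta\}\}$ is determined by $\Sg(\tau)$, $\Sg(\theta)$, $n$ and $m$.

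Translated into binary words, this identity says that $bw(\Sg(\pi))$ is the shuffle of $bw(\Sg(\tau))$ and $bw(\Sg(\theta))$ in which the letters of the first word occupy the positions in $S$. The assignment $S \mapsto$ (the corresponding term of the shuffle) is a bijection between $\tau \smix \theta$ and the multiset $bw(\Sg(\tau)) \shuffle bw(\Sg(\theta))$. Here $bw$ is read with respect to the length $|\pi|$.

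\emph{Step 2 (homomorphism and injectivity).} Define $\varsigma([\pi]_{\Sg}) = W_{bw(\Sg(\pi))}$. Classes are determined by the pair $(|\pi|, \Sg(\pi))$, and $bw$ records exactly this pair, so $\varsigma$ is well defined and injective on basis elements. By Step 1,
\[\varsigma([\tau]_{\Sg}[\theta]_{\Sg}) = \sum_{\pi \in \tau\smix\theta} W_{bw(\Sg(\pi))} = \sum_{u \in bw(\Sg(\tau))\shuffle bw(\Sg(\theta))} W_u = \varsigma([\tau]_{\Sg})\varsigma([\theta]_{\Sg}).\]
So $\varsigma$ is an injective algebra morphism. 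Its image is automatically a subalgebra, so it remains only to identify that image.

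\emph{Step 3 (the image).} This is the only step needing care, though it is elementary. I would show that a subset $A \subseteq [n]$ equals $\Sg(\pi)$ for some $\pi \in \Pi_n$ if and only if $|A| \neq n-1$.
\begin{itemize}
\item Necessity: if exactly $n-1$ elements are singletons, the remaining element forms a block by itself, so it is also a singleton, which is a contradiction.
\item Sufficiency: take $A$ as singletons and put $[n]\setminus A$ into a single block. This block is not a singleton precisely when $|[n]\setminus A| \neq 1$, and it is absent when $A = [n]$.
\end{itemize}
In terms of $u = bw(A)$ the condition reads $\sum u_i \neq |u|-1$. This includes the empty word, which corresponds to the unit, and it excludes exactly the word $0$ in length one. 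So the image of $\varsigma$ is the span of $\{W_u : \sum u_i \neq |u|-1\}$, which proves part (2).

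As a consistency check, this span is closed under shuffle. If $u$ and $v$ each have either no $0$'s or at least two, then any shuffle of $u$ and $v$ also has either no $0$'s or at least two.
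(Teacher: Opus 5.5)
Your proposal is correct and follows essentially the same route as the paper. The paper also transports singletons along the relabelling by $S$, matches shifted arc-shuffles position by position with shuffles of the binary words $bw(\Sg(\cdot))$, and identifies the image by the same construction: singletons on $A$, everything else in one block, which is impossible exactly when one element is left over. Your explicit reading of $bw$ with length $|\pi|$, rather than $|S|$ as the paper literally writes, correctly repairs a slip in the paper's definition.
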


\begin{proof}

For $\tau, \theta \in \Pi$, let $S \subseteq [|\tau|+|\pi|]$ with $|S|=|\tau|$, and let $\pi = \shift_{S}(\tau) \cup \shift_{[|\tau|+|\theta|] \setminus S}(\theta)$.   Then $\Sg(\pi) = \shift_S(\Sg(\tau)) \cup \shift_{[|\tau|+|\theta|] \setminus S}(\Sg(\theta))$. Thus, the $\Sg$ value for the set of shifted arc-shuffles of $\tau$ and $\theta$ can be determiend only from $\Sg(\tau), \Sg(\theta), |\tau|, $ and $|\theta|$.

Let $\varsigma$ be the map defined by $[\pi]_{\Sg} \mapsto W_{bw(\Sg(\pi))}$. The possible values of $\Sg(\pi)$ for $\pi 
\in \Pi_n$ are given by exactly the subsets $[n]$ that do not have exactly $n-1$ elements. To see this, consider that to achieve a specific subset of $[n]$ as a singleton set, we make all the elements of the set singletons and put the remaining elements in a block all together. A subset of size $n-1$ cannot be achieved as a singleton set, as it is impossible to have one non-singleton element. As such, the map $\varsigma$ is bijective.
Further, 
\[\varsigma([\tau]_{\St}[\theta]_{\St})=\varsigma(\sum_{\pi \in \tau \smix \theta} [\pi]_{\Sg}) = \sum_{\pi \in \tau \smix \theta} W_{bw(\Sg(\pi))}.\] Observe that the shuffles of $\tau$ and $\theta$ correspond to translating the values $\tau$ to a subset of $|\tau|$ values of $[|\tau|+|\theta|]$ and the values of $\theta$ to the remaining values of $[|\tau|+|\theta|]$. The values that are singletons are simply translated under this shift for each arc-shuffle. Similarly, when shuffling binary words, the $|\tau|$ entries from the first word are translated to some $|\tau|$ subset of the indices $[|\tau|+|\theta|]$ and the entries of $\theta$ fill in the remaining slots. Each $1$ is translated along with the rest of the word. Matching up these 1's with singleton values yields 
\[\sum_{\pi \in \tau \smix \theta} W_{bw(\Sg(\pi))}= \sum_{u \in bw(\Sg(\tau)) \shuffle bw(\Sg(\theta))}W_u= W_{bw(\Sg(\tau))}W_{bw(\Sg(\theta))} = \varsigma([\tau]_{\Sg})\varsigma([\theta]_{\Sg}).\] 
Thus, $\varsigma$ is an isomorphism between $\mathcal{A}_{\Sg}^{\Pi}$ and the specified subalgebra of $\mathbb{Q}_{\shuffle}\langle 0,1 \rangle$.
\end{proof}

In the next three proofs, we return to the binary shuffle basis $\{Z_{\delta}\}_{\delta}$ of $\QSym$. Tracking the 1's in shuffles of binary words is equivalent to the tracking of certain elements when passed through the arc-shuffle of set partitions, and we form various bijections around this idea.

\begin{proposition}\label{prop:tr} (Weak shuffle-compatibility of the transient set)
\begin{enumerate}
    \item The set of transients statistic $\Tr$ is weakly shuffle-compatible on set partitions.
    \item The linear map on $\mathcal{A}^{\Pi}_{\Tr}$ defined by \[ [\pi]_{\Tr} \mapsto Z_{\Comp(\Tr(\pi))},\] is an algebra isomorphism between $\mathcal{A}^{\Pi}_{\Tr}$ and the subalgebra of $\QSym$ spanned by basis elements \[\{Z_{\delta} : \delta = (\delta_1, \ldots, \delta_k)\vDash n, \delta_1 > 1, n \in \N \}.\]
    \end{enumerate}
\end{proposition}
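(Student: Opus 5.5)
The plan is first to prove that transients are carried along unchanged by the shifted arc-shuffle. Then I will transport the resulting shuffle structure to the binary shuffle basis $\{Z_\delta\}$, following the pattern of Theorem~\ref{thm:sg_set} and Theorem~\ref{thm:Lucky}.

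\textbf{Step 1: weak shuffle-compatibility.} Fix $\tau\in\Pi_n$, $\theta\in\Pi_m$ and $S\subseteq[n+m]$ with $|S|=n$, and put $\pi=\shift_S(\tau)\cup\shift_{S^c}(\theta)$. Since $\shift_S$ is order preserving, every block of $\shift_S(\tau)$ is the image of a block of $\tau$ with its minimum, maximum and interior elements sent to the minimum, maximum and interior elements of the image; likewise for $\theta$. The blocks of $\pi$ are exactly these image blocks. Hence
\[\Tr(\pi)=\shift_S(\Tr(\tau))\cup\shift_{S^c}(\Tr(\theta)),\]
where $\shift_S$ is applied to a subset of $[n]$ elementwise. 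In particular the multiset $\{\{\Tr(\pi):\pi\in\tau\smix\theta\}\}$ depends only on $\Tr(\tau)$, $\Tr(\theta)$, $n$ and $m$. Equivalently, in terms of binary words, $bw(\Tr(\pi))$ ranges over $bw(\Tr(\tau))\shuffle bw(\Tr(\theta))$ as $S$ ranges, with multiplicity. This is the same mechanism as for $\Sg$ in Theorem~\ref{thm:sg_set}.

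\textbf{Step 2: the image.} The elements $1$ and $n$ can never be transient, since $1$ is always an opener and $n$ always a closer. Conversely, every $T\subseteq[2,n-1]$ occurs: put $T$ together with $\{1,n\}$ in one block and make everything else a singleton. Thus $\Tr(\pi)$ ranges exactly over subsets of $[2,n-1]$. Via $\Comp$ these correspond to the compositions $\delta\vDash n$ with $1\notin\Set(\delta)$, that is $\delta_1>1$ (the composition $(1)$ needs a separate look for $n=1$). So the proposed map is a bijection between $\Pi/\Tr$ and the indicated index set.

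\textbf{Step 3: the product, which is the main obstacle.} I must match the shuffle in Step 1 with the product rule $Z_\delta Z_\eta=\sum_{w\in\epsilon(\delta)\shuffle\epsilon(\eta)}Z_{\epsilon^{-1}(w)}$. For $\delta=\Comp(\Tr(\pi))$, the word $\epsilon(\delta)$ agrees with $bw(\Tr(\pi))$ in positions $1,\dots,n-1$, and its first letter is $0$ precisely because $\delta_1>1$. However, $\epsilon(\delta)$ has a forced $1$ in the last position, whereas $bw(\Tr(\pi))$ has a $0$ there. In a shuffle the forced final $1$ of the first word may land in the interior, while $n\in S$ in $\pi$ is never transient. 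So the literal identification $\epsilon(\delta)\leftrightarrow bw(\Tr(\pi))$ fails exactly at the final letters.

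My first attempt will be to verify on small cases, such as $n=m=2$, whether the structure constants nonetheless coincide. If they do not, I will adjust the encoding so that the marker letter is canonical, in the spirit of $LW^r$ in Theorem~\ref{thm:Lucky}. One option is to use the reversed word, so that the non-transient first element plays the role of the forced terminal letter. Another is to use a complemented encoding that records non-transients, which always include $1$ and $n$. In either case I will check which one makes $\epsilon^{-1}$ of the shuffled words equal to $\Comp$ of the shuffled transient sets.

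Once a correct encoding is fixed, the homomorphism property follows exactly as in Theorem~\ref{thm:sg_set}: both sides sum over the same choices of positions $S$. Closure of the span under multiplication is automatic, because a shuffle of two words beginning with the prescribed letter again begins with one of those prescribed letters. Injectivity then comes from Step 2.
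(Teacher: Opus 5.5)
Your Steps 1 and 2 are correct and agree with the paper. The gap is Step 3, which you leave open. It cannot be closed, because part (2) is false as stated.

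The paper's proof simply asserts the identity you distrust,
\[
\sum_{S\subseteq[n+m],\,|S|=n} Z_{\Comp(\shift_S(\Tr(\tau))\cup\shift_{S^c}(\Tr(\theta)))}=\sum_{v\in\epsilon(\Comp(\Tr(\tau)))\shuffle\epsilon(\Comp(\Tr(\theta)))}Z_{\epsilon^{-1}(v)},
\]
and the forced terminal $1$ you identified is exactly why it fails. Your proposed test case settles it:
\begin{enumerate}
\item For $\tau,\theta\in\Pi_2$, all six shifted arc-shuffles have empty transient set, so the product is sent to $6Z_{(4)}$.
\item On the other side, $01\shuffle 01$ consists of two copies of $0101$ and four of $0011$, so $Z_{(2)}Z_{(2)}=2Z_{(2,2)}+4Z_{(3,1)}$.
\end{enumerate}
In general, write $\delta=\Comp(\Tr(\tau))$ and $\eta=\Comp(\Tr(\theta))$. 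Every $v\in\epsilon(\delta)\shuffle\epsilon(\eta)$ has $\ell(\delta)+\ell(\eta)$ ones, so $\epsilon^{-1}(v)$ has $\ell(\delta)+\ell(\eta)$ parts. By contrast, $\Comp(\Tr(\pi))$ has $|\Tr(\tau)|+|\Tr(\theta)|+1=\ell(\delta)+\ell(\eta)-1$ parts.

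There is also a degree problem. The class of the unique partition in $\Pi_1$ is sent to $Z_{(1)}$, which lies outside the stated span. That span has nothing in degree $1$, while $\Pi_1/\Tr$ has one class. So no degree-preserving isomorphism onto it exists, whatever encoding you choose. Your goal of making ``$\epsilon^{-1}$ of the shuffled words equal to $\Comp$ of the shuffled transient sets'' is therefore unattainable. The reversal option also does not help, since the reversed indicator word of $\Tr(\pi)$ still ends in $0$ (the element $1$ is never transient).

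Your complemented encoding is the right repair, but it proves a different statement. Let $N(\pi)=[n]\setminus\Tr(\pi)$, which always contains $1$ and $n$.
\begin{enumerate}
\item Step 1 gives $N(\pi)=\shift_S(N(\tau))\cup\shift_{S^c}(N(\theta))$.
\item Hence the length-$n$ indicator words of the sets $N(\pi)$ run, with multiplicity, over the shuffles of the indicator words of $N(\tau)$ and $N(\theta)$.
\item All of these words begin and end with $1$.
\end{enumerate}
Consequently $[\pi]_{\Tr}\mapsto Z_{\Comp([n-1]\setminus\Tr(\pi))}$, which is $Z_{\epsilon^{-1}}$ of that indicator word, is an algebra isomorphism from $\mathcal{A}^{\Pi}_{\Tr}$ onto the span of $\{Z_\delta:\delta_1=1\}$, including $Z_{(1)}$. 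The graded dimensions $1,1,2,4,\dots$ match. In your test case this gives $Z_{(1,1)}Z_{(1,1)}=6Z_{(1,1,1,1)}$, as required.

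So the correct outcome of your Step 3 is a counterexample to part (2) together with this corrected version, not a proof of the proposition as written.
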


\begin{proof}
For $\tau, \theta \in \Pi$, let $S \subseteq [|\tau|+|\pi|]$ with $|S|=|\tau|$, and let $\pi = \shift_{S}(\tau) \cup \shift_{[|\tau|+|\theta|] \setminus S}(\theta)$.   Then $\Tr(\pi) = \shift_S(\Tr(\tau)) \cup \shift_{[|\tau|+|\theta|] \setminus S}(\Tr(\theta))$. Thus, the $\Tr$ value for the set of shifted arc-shuffles of $\tau$ and $\theta$ can be determiend only from $\Tr(\tau), \Tr(\theta), |\tau|, $ and $|\theta|$.

Observe that, for $\pi \in \Pi_n$, the elements $1$ and $n$ will never be transient as they must be an opener and a closer, respectively. The set of possible values for $\Tr(\pi)$ is given by the set of subsets of $[2,n-1]$. A given $T \subseteq [2,n-1]$ is achieved as the transient set of the set partition with a block $T \cup \{1,n\}$ and all other blocks singletons. Under the $\Comp$ bijection between subsets of $[n-1]$ and compositions of $n$, the possible transient sets correspond exactly to compositions whose first element is strictly greater than 1.
Let $\varsigma$ be the map $[\pi]_{\Tr} \mapsto Z_{\Comp(\Tr(\pi))}$, which is bijective by the logic above. Observe that 
\begin{align*}
\varsigma([\tau]_{\Tr}[\theta]_{\Tr}) &= \varsigma(\sum_{\pi \in \tau \smix \theta} [\pi]_{\Tr}) = \sum_{\pi \in \tau \smix \theta} Z_{\Comp(\Tr(\pi))} = \sum_{S \in [|\tau|+|\theta|], |S|=|\tau|} Z_{\Comp(\shift_S(\Tr(\tau)) \cup \shift_{S^c}(\Tr(\theta)))}\\
&= \sum_{v \in \epsilon(\Comp(\Tr(\tau)))\shuffle\epsilon(\Comp(\Tr(\theta))) } Z_{\epsilon^{-1}(v)} = Z_{\Comp(\Tr(\tau))}Z_{\Comp(\Tr(\theta))}=\varsigma([\tau]_{\Tr})\varsigma([\theta]_{\Tr}).
\end{align*}  
Thus, $\varsigma$ is a isomorphism from $\mathcal{A}^{\Pi}_{\Tr}$ to the specified subalgebra of $\QSym$.
\end{proof}

For $\pi \vdash [n]$, let $\Op^{rc}(\pi) = \{ n+1-i : i \not\in \Op(\pi) \}$.
\begin{proposition}\label{prop:opener} (Weak shuffle-compatibility of the opener set)
\begin{enumerate}
\item The set of openers statistic $\Op$ is weakly shuffle-compatible on set partitions.
\item The linear map on $\mathcal{A}_{\Op}^{\Pi}$ defined by \[[\pi]_{\Op} \rightarrow Z_{\Comp(\Op^{rc}(\pi))},\] is an algebra isomorphism between $\mathcal{A}_{\Op}^{\Pi}$ and $\QSym$.
\end{enumerate}
\end{proposition}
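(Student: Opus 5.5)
The plan has three parts, following the proof of Proposition~\ref{prop:tr}: first show that openers are transported by the arc-shuffle, then show that the map is a bijection on bases, then show that it is multiplicative. The last step is where I expect trouble; I could not close it, and I am not confident it holds exactly as stated.

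\emph{Weak shuffle-compatibility.} Fix $\tau \in \Pi_n$, $\theta \in \Pi_m$, and $S \subseteq [n+m]$ with $|S| = n$. Let $\pi = \shift_S(\tau) \cup \shift_{S^c}(\theta)$. The map $\shift_S$ is order preserving and sends blocks of $\tau$ to blocks of $\pi$, so it sends minima of blocks to minima of blocks; the same holds for $\shift_{S^c}$ and $\theta$. Hence
\[
\Op(\pi) = \shift_S(\Op(\tau)) \cup \shift_{S^c}(\Op(\theta)),
\]
and dually the non-opener set of $\pi$ is the union of the shifted non-opener sets. Running over all $S$ gives $\{\{\Op(\pi) : \pi \in \tau \smix \theta\}\}$ from $\Op(\tau)$, $\Op(\theta)$, $n$ and $m$ alone, which is part (1).

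\emph{Bijectivity.} Since $1 \in \Op(\pi)$ for every $\pi \in \Pi_n$, the non-openers lie in $[2,n]$, so $\Op^{rc}(\pi) \subseteq [n-1]$. Conversely, every $T \subseteq [2,n]$ is the non-opener set of the partition having one block $\{1\} \cup T$ and all other elements as singletons. So $\Op^{rc}$ runs over all subsets of $[n-1]$, and $\Comp$ identifies $\Pi/\Op$ in degree $n$ with the compositions of $n$. Thus the map is a bijection between the basis $\Pi/\Op$ and the basis $\{Z_\delta\}$ of $\QSym$.

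\emph{Multiplicativity.} I would write the word $\epsilon(\Comp(\Op^{rc}(\pi)))$ explicitly. Its entry in position $j$ is $1$ exactly when $n+1-j$ is a non-opener of $\pi$, or when $j = n$. Reversing it gives a binary word $b(\pi)$ whose $i$-th letter records whether $i$ is a non-opener, except that the first letter is forced to equal $1$. Reversal carries the shuffle of two words onto the shuffle of their reverses. So the goal is to match the shuffles $b(\tau) \shuffle b(\theta)$, indexed by the same subsets $S$, with the words $b(\pi)$ for $\pi \in \tau \smix \theta$. The positions that are neither the first position of $\tau$ nor the first position of $\theta$ match automatically, by the transport rule above. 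The delicate point is the bookkeeping of these two first positions. Both $\tau$ and $\theta$ contribute a forced letter $1$ while both elements remain openers in $\pi$, yet only the one landing at position $1$ of $\pi$ carries a forced $1$ in $b(\pi)$.

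This is the main obstacle. My first attempt will be to verify the identity on the smallest case $\tau = \theta = \{1\}$, where $\Op^{rc} = \emptyset$, and to compare multiset sizes after summing over $S$. If the naive position-by-position matching fails, I would look for a reindexing between the marked words on the two sides that preserves the multiset of compositions without being the identity on $S$. This is the step to check most carefully before committing to the exact form of the map; on its face, the second factor's forced $1$ conflicts with that element's remaining an opener in $\pi$.
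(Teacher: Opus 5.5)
Your part (1) and your bijectivity argument are correct and agree with the paper's proof. The multiplicativity step cannot be closed, however: the map as stated is not an algebra homomorphism, and the forced letter you isolated is exactly the obstruction. Your proposed check on the smallest case settles it negatively. Take $\tau=\theta=1\in\Pi_1$. Then $\tau\smix\theta=\{\{1/2,\,1/2\}\}$ and $\Op^{rc}(1/2)=\emptyset$, so $[1]_{\Op}[1]_{\Op}=2[1/2]_{\Op}$ is sent to $2Z_{(2)}$. On the other side, $\epsilon((1))=1$ and $1\shuffle 1=\{\{11,11\}\}$, so $Z_{(1)}Z_{(1)}=2Z_{(1,1)}$.

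No reindexing of the subsets $S$ can repair this. Write $\ell(\delta)$ for the number of parts. Then $\ell(\epsilon^{-1}(w))$ is the number of $1$'s in $w$, which is additive under shuffling, so every term of $Z_\delta Z_\eta$ has $\ell(\delta)+\ell(\eta)$ parts. But $\ell(\Comp(\Op^{rc}(\pi)))$ is one more than the number of non-openers of $\pi$, and that number is additive under $\smix$ by your transport rule. So every term of the image of $[\tau]_{\Op}[\theta]_{\Op}$ has $\ell(\delta)+\ell(\eta)-1$ parts.

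The paper's proof asserts precisely the identification you doubted. It equates $\sum_{\pi}Z_{\Comp(\Op^{rc}(\pi))}$ with the sum over $\epsilon(\Comp(\Op^{rc}(\tau)))\shuffle\epsilon(\Comp(\Op^{rc}(\theta)))$, silently dropping the second forced $1$. The maps in Propositions~\ref{prop:tr} and~\ref{prop:cl} have the same defect: the same product $1\smix 1$ breaks them. So they are not a safe template either.

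What is true is that $\mathcal{A}^{\Pi}_{\Op}\cong\QSym$, but via openers rather than non-openers. For $\pi\in\Pi_n$, send $[\pi]_{\Op}$ to $Z_{\Comp(A(\pi))}$, where $A(\pi)=\{n+1-i : i\in\Op(\pi),\ i\neq 1\}\subseteq[n-1]$. Then $\epsilon(\Comp(A(\pi)))$ is exactly the reversal of the $0/1$ indicator word of $\Op(\pi)$. Its final $1$ now records the genuine opener $1$ instead of being imposed.

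Your rule $\Op(\pi)=\shift_S(\Op(\tau))\cup\shift_{S^c}(\Op(\theta))$ says that the indicator word of $\Op(\pi)$ is the shuffle of those of $\Op(\tau)$ and $\Op(\theta)$ along $S$. Reversal carries $u\shuffle v$ onto $u^r\shuffle v^r$, so this map is multiplicative. It is bijective because the opener sets in $\Pi_n$ are exactly the subsets of $[n]$ containing $1$. In the example, $[1]_{\Op}\mapsto Z_{(1)}$ and $[1/2]_{\Op}\mapsto Z_{(1,1)}$, as required.
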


\begin{proof}
 For $\tau \in \Pi_n, \theta \in \Pi_m$, let $S \subseteq [n+m]$ with $|S|=n$, and let $\pi = \shift_{S}(\tau) \cup \shift_{[n+m] \setminus S}(\theta)$.   Then $\Op(\pi) = \shift_S(\Op(\tau)) \cup \shift_{[n+m] \setminus S}(\Op(\theta))$. Thus, the $\Op$ value for the set of shifted arc-shuffles of $\tau$ and $\theta$ can be determined only from $\Op(\tau), \Op(\theta), |\tau|, $ and $|\theta|$.
 
 Observe that the possible opener sets for $\pi \in \Pi_{n+m}$ are exactly subsets of $[n+m]$ that include $1$.  Given any such subset $S = \{s_1 < s_2 < \cdots < s_k  \} \subseteq [n+m]$ with $s_1=1$, we can create a set partition $\{1, \ldots, s_2-1\}/\{s_2, s_2+1, \ldots, s_3-1\}/ \cdots /\{s_k, s_k+1, \ldots, n+m\}$ that will have the opener set $S$. It follows that the possible values of $\Op^{rc}$ are exactly the subsets of $[n+m-1]$, which are in bijection with compositions of $n+m$. Thus, if $\varsigma$ denotes the map $[\pi]_{\Op} \rightarrow Z_{\Comp(\Op^{rc}(\pi))}$, then $\varsigma$ is bijective. Additionally, 
 \begin{align*}
 \varsigma([\tau]_{\Op}[\theta]_{\Op}) &= \varsigma(\sum_{\pi \in \tau \smix \theta }[\pi]_{\Op}) = \sum_{\pi \in \tau \smix \theta} Z_{\Comp(\Op^{rc}(\pi))}\\ &= \sum_{S \subseteq [n+m], |S|=n} Z_{\Comp(\shift_S(\Op^{rc}(\tau)) \cup \shift_{[n+m]\setminus S}(\Op^{rc}(\theta)))} \\ &= \sum_{v \in \epsilon(\Comp(\Op^{rc}(\tau))) \shuffle \epsilon(\Comp(\Op^{rc}(\theta))) } Z_{\epsilon^{-1}(v)} = Z_{\Comp(\Op^{rc}(\tau))}Z_{\Comp(\Op^{rc}(\theta))} = \varsigma([\tau]_{\Op})\varsigma([\theta]_{\Op}).
 \end{align*}
 Thus, $\varsigma$ is an isomorphism between $\mathcal{A}_{\Op}^{\Pi}$ and $\QSym$.
\end{proof}

For $\pi \vdash [n]$, let $\Cl^c(\pi) = \{ i : i \not\in \Cl(\pi) \}$.

\begin{proposition}\label{prop:cl} (Weak shuffle-compatibility of the closer set)
\begin{enumerate}
    \item The set of closers statistic $\Cl$ is weakly shuffle-compatible on set partitions.
    \item  The linear map on $\mathcal{A}_{\Cl}^{\Pi}$ defined by \[ [\pi]_{\Cl} \rightarrow Z_{\Comp(\Cl^c(\pi))},\] is an algebra isomorphism between $\mathcal{A}_{\Cl}^{\Pi}$ and $\QSym$.
\end{enumerate}
    
\end{proposition}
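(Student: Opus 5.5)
The plan mirrors the proofs of Propositions \ref{prop:tr} and \ref{prop:opener}. For weak shuffle-compatibility, take $\tau \in \Pi_n$, $\theta \in \Pi_m$, a subset $S \subseteq [n+m]$ with $|S|=n$, and $\pi = \shift_S(\tau) \cup \shift_{[n+m]\setminus S}(\theta)$. The shift is order-preserving on each block, so the maximum of each shifted block is the image of the maximum of the original block. Hence
\[\Cl(\pi) = \shift_S(\Cl(\tau)) \cup \shift_{[n+m]\setminus S}(\Cl(\theta)).\]
As $S$ ranges over all $n$-subsets, the multiset $\{\{\Cl(\pi)\}\}$ is therefore determined by $\Cl(\tau)$, $\Cl(\theta)$, $n$ and $m$. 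Taking complements within the respective ground sets, the same formula holds with $\Cl^c$ in place of $\Cl$.

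Next I will determine the image of $\Cl^c$ on $\Pi_N$. The element $N$ is always a closer, so $\Cl^c(\pi) \subseteq [N-1]$. Conversely, given any $T \subseteq [N-1]$, write $[N]\setminus T = \{c_1 < \cdots < c_k\}$, where $c_k = N$. The interval partition $\{1,\ldots,c_1\}/\{c_1+1,\ldots,c_2\}/\cdots/\{c_{k-1}+1,\ldots,N\}$ has closer set exactly $\{c_1,\ldots,c_k\}$, so its $\Cl^c$ is $T$. Thus $\Cl^c$ induces a bijection between $\Pi_N/\Cl$ and the subsets of $[N-1]$, and hence, via $\Comp$, with compositions of $N$. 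So $\varsigma\colon [\pi]_{\Cl} \mapsto Z_{\Comp(\Cl^c(\pi))}$ is a bijection onto the $Z$-basis of $\QSym_N$.

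For the multiplicativity, the key observation is that $\epsilon(\Comp(\Cl^c(\pi)))$ is the binary word of length $N$ whose $i$th letter is $1$ exactly when $i \in \Cl^c(\pi)$ or $i = N$. This is the indicator word of $\Cl^c(\pi)$ with the last letter forced to $1$. Here there is a subtlety I expect to be the main obstacle: the last letter of $\epsilon$ is always $1$, whereas $N$ is a closer, so $N \notin \Cl^c$. The indicator word of $\Cl^c$ itself ends in $0$. I will therefore check carefully that the combinatorics still matches the binary shuffle. In a shuffle of $\epsilon(\Comp(\Cl^c(\tau)))$ and $\epsilon(\Comp(\Cl^c(\theta)))$, the forced final $1$ of each factor must correspond to something in $\pi$.

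If this mismatch occurs, the first fix I would try is to replace the indicator with the reversed or complemented convention used for $\Op^{rc}$ in Proposition \ref{prop:opener}. In that setting $1$ is always an opener and becomes the forced final letter after reversal. Concretely, I would track the word with letter $1$ at position $i$ iff $i \notin \Cl(\pi)$, together with the forced terminal $1$ coming from the closer $n$ of $\tau$ and $m$ of $\theta$. I would then verify that the binary shuffle of the two length-$n$ and length-$m$ words, with letters placed by $S$, reproduces $\epsilon(\Comp(\Cl^c(\pi)))$ term by term. The correspondence is $S$ $\leftrightarrow$ the positions of the first word's letters, exactly as in the displayed chain of equalities in Proposition \ref{prop:opener}.

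With that matching, I will write
\[\varsigma([\tau]_{\Cl}[\theta]_{\Cl}) = \sum_{S} Z_{\Comp(\Cl^c(\pi_S))} = \sum_{v} Z_{\epsilon^{-1}(v)} = \varsigma([\tau]_{\Cl})\varsigma([\theta]_{\Cl}),\]
where $\pi_S$ denotes the arc-shuffle determined by $S$ and $v$ ranges over the binary shuffle of the two words. Together with bijectivity, this shows that $\varsigma$ is an algebra isomorphism onto $\QSym$.
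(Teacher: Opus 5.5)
Your part (1), the formula $\Cl(\pi_S)=\shift_S(\Cl(\tau))\cup\shift_{[n+m]\setminus S}(\Cl(\theta))$, and the bijection between $\Pi_N/\Cl$ and subsets of $[N-1]$ are all fine. The gap is exactly the multiplicativity step you flag, and it cannot be closed: the map $[\pi]_{\Cl}\mapsto Z_{\Comp(\Cl^c(\pi))}$ is not multiplicative.

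Write $S=\{s_1<\cdots<s_n\}$ and $[n+m]\setminus S=\{r_1<\cdots<r_m\}$. Place $\epsilon(\Comp(\Cl^c(\tau)))$ at the positions of $S$ and $\epsilon(\Comp(\Cl^c(\theta)))$ at the other positions. The resulting word has a $1$ at both $s_n$ and $r_m$. By contrast, $\epsilon(\Comp(\Cl^c(\pi_S)))$ has a $0$ at $\min(s_n,r_m)$. That position is the image of the closer $n$ of $\tau$ or the closer $m$ of $\theta$, so it is a closer of $\pi_S$ other than $n+m$. Only the forced final $1$ that lands at $n+m$ survives.

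Even the multisets differ. Since $1\smix 1=\{\{1/2,1/2\}\}$ and $\Cl^c(1/2)=\emptyset$, we get $[1]_{\Cl}^2=2[1/2]_{\Cl}\mapsto 2Z_{(2)}$. On the other side, $Z_{(1)}Z_{(1)}=2Z_{\epsilon^{-1}(11)}=2Z_{(1,1)}$.

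Your suggested fix tracks the same word (a $1$ where $i\notin\Cl(\pi)$, plus forced terminal $1$'s), so it changes nothing. Borrowing the $\Op^{rc}$ convention does not help either. The paper's own proof of this statement defers to the proof of Proposition~\ref{prop:opener}, and the corresponding equality there fails in the same way: $[1]_{\Op}^2=2[1/2]_{\Op}\mapsto 2Z_{(2)}\neq Z_{(1)}^2$.

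What your argument does prove, once you change the encoding, is an isomorphism built from $\Cl$ itself. Since $N\in\Cl(\pi)$ for every $\pi\in\Pi_N$, the word with a $1$ in position $i$ exactly when $i\in\Cl(\pi)$ ends in $1$. It equals $\epsilon(\Comp(\Cl(\pi)\setminus\{N\}))$. By your formula for $\Cl(\pi_S)$, this word for $\pi_S$ is obtained by placing the corresponding words for $\tau$ and $\theta$ at the positions $S$ and $[n+m]\setminus S$. As $S$ varies, this gives, term by term, the shuffle of the two words, which is exactly the product rule of the $Z$-basis.

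Hence $[\pi]_{\Cl}\mapsto Z_{\Comp(\Cl(\pi)\setminus\{|\pi|\})}$ is an algebra isomorphism onto $\QSym$; for instance, $[1]_{\Cl}^2=2[1/2]_{\Cl}\mapsto 2Z_{(1,1)}=Z_{(1)}^2$. So $\mathcal{A}^{\Pi}_{\Cl}\cong\QSym$ does hold. Part (2) as stated, with the map through $\Cl^c$, is false, and no argument can establish it.
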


\begin{proof} The proof here follows the proof of Proposition~\ref{prop:opener} nearly exactly, substituting $\Cl$ in place of $\Op$ and $\Cl^c$ in place of $\Op^{rc}$. This works because the possible closer sets for $\pi \in \Pi_{n+m}$ are exactly the subsets of $[n+m]$ that include $n+m$. Given any such subset $S = \{ s_1 < s_2 < \cdots < s_k\} \subseteq [n+m]$ with $s_k = n+m$, we can create a set partition $\{1, \ldots, s_1\}/\{s_1+1, \ldots, s_2\}/ \cdots /\{s_{k-1}+1, \ldots, s_{k}\}$ that will have a closer set $S$.  It follows that the possible values of $\Cl^c$ are exactly subsets of $[n+m-1]$.
\end{proof}

\begin{corollary} The largest opener $\mathsf{lop}$ and smallest closer $\mathsf{scl}$ statistics are weakly shuffle-compatible on set partitions. 
\end{corollary}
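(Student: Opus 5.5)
The plan is to deduce this directly from the explicit description of $\Op$ and $\Cl$ under the shifted arc-shuffle obtained in the proofs of Propositions~\ref{prop:opener} and \ref{prop:cl}, rather than from the algebra isomorphisms. For $\tau\in\Pi_n$, $\theta\in\Pi_m$ and $S\subseteq[n+m]$ with $|S|=n$, the partition $\pi=\shift_S(\tau)\cup\shift_{S^c}(\theta)$ satisfies
\[\Op(\pi)=\shift_S(\Op(\tau))\cup\shift_{S^c}(\Op(\theta)),\qquad \Cl(\pi)=\shift_S(\Cl(\tau))\cup\shift_{S^c}(\Cl(\theta)).\]
Since $\mathsf{lop}(\pi)=\max(\Op(\pi))$ and $\mathsf{scl}(\pi)=\min(\Cl(\pi))$, it suffices to show that, as $S$ ranges over the $n$-subsets of $[n+m]$, the multiset of these extremes depends only on $n$, $m$ and the corresponding extreme values for $\tau$ and $\theta$.

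For $\mathsf{lop}$, write $a=\mathsf{lop}(\tau)$ and $b=\mathsf{lop}(\theta)$, and let $S=\{s_1<\cdots<s_n\}$ and $S^c=\{r_1<\cdots<r_m\}$. Since $\shift_S$ is order-preserving, the largest element of $\shift_S(\Op(\tau))$ is $s_a$, and the largest element of $\shift_{S^c}(\Op(\theta))$ is $r_b$. Hence $\mathsf{lop}(\pi)=\max(s_a,r_b)$. This is a function of $S$, $a$, $b$, $n$ and $m$ alone, so the multiset $\{\{\max(s_a,r_b): S\}\}$ is determined by $\mathsf{lop}(\tau)$, $\mathsf{lop}(\theta)$, $|\tau|$ and $|\theta|$.

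The argument for $\mathsf{scl}$ is symmetric. With $c=\mathsf{scl}(\tau)$ and $d=\mathsf{scl}(\theta)$, the smallest closer is $\mathsf{scl}(\pi)=\min(s_c,r_d)$, and the same reasoning applies.

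The only point needing care is that the maximum (respectively minimum) of a shifted set is the image of the maximum (respectively minimum). This holds because $\shift_S$ is the unique order-preserving bijection $[n]\to S$. With that, the proof is essentially immediate; there is no real obstacle beyond restating the shuffle description from the preceding propositions.
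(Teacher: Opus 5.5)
Your proof is correct and follows the route the paper intends. The corollary is stated without proof right after Propositions~\ref{prop:opener} and \ref{prop:cl}, and it rests on exactly the description $\Op(\pi)=\shift_S(\Op(\tau))\cup\shift_{S^c}(\Op(\theta))$ (and its analogue for $\Cl$) from those proofs. Your formulas $\mathsf{lop}(\pi)=\max(s_a,r_b)$ and $\mathsf{scl}(\pi)=\min(s_c,r_d)$ supply the step the paper leaves implicit, and that step is genuinely needed, since a coarsening of a weakly shuffle-compatible statistic need not itself be weakly shuffle-compatible.
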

\begin{proposition}\label{prop:bk1}(Weak shuffle-compatibility of the cardinality of the block containing $1$)
\begin{enumerate}
    \item The cardinality of the block containing 1 statistic $\bk_1$ is weakly shuffle-compatible on set partitions.
    \item The linear map on $\mathcal{A}_{\bk_1}^{\Pi}$ defined by \[[\pi]_{\bk_1}^{\Pi} \mapsto [f(\pi)]_{\lel}^{\PF},\] where $f: \Pi \rightarrow \PF$ sends a set partition $\pi$ to the parking function of the form  $(1)^{\bk_1(\pi)}\cdot (2)^{|\pi|-\bk_1(\pi)}$, is an algebra isomorphism between $\mathcal{A}^{\Pi}_{\bk_1}$ and $\mathcal{A}^{\PF}_{\lel}$.
\end{enumerate}
\end{proposition}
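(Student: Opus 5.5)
Part (1) comes down to tracking where the element $1$ goes under the shifted arc-shuffle. Let $\tau\in\Pi_n$, $\theta\in\Pi_m$, and let $\pi=\shift_S(\tau)\cup\shift_{[n+m]\setminus S}(\theta)$ with $S\subseteq[n+m]$, $|S|=n$. The element $1$ of $[n+m]$ lies in $S$ or in its complement. If $1\in S$, then $1=s_1$ is the image of $1\in\tau$, and the block of $\pi$ containing it is the shifted copy of the block of $\tau$ containing $1$. Shifting preserves block sizes, so $\bk_1(\pi)=\bk_1(\tau)$. Symmetrically, if $1\notin S$ then $\bk_1(\pi)=\bk_1(\theta)$.

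Counting the choices of $S$ then gives the multiset
\[\{\{\bk_1(\pi):\pi\in\tau\smix\theta\}\}\]
with $\binom{n+m-1}{n-1}$ copies of $\bk_1(\tau)$ and $\binom{n+m-1}{m-1}$ copies of $\bk_1(\theta)$. This depends only on the allowed data, which proves (1). It is exactly the multiset computed in the proof of Proposition~\ref{prop:lel} for $\lel$ on parking functions, with $k_\alpha,k_\beta$ replaced by $\bk_1(\tau),\bk_1(\theta)$.

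For (2), I first check that the map is well defined and bijective on bases. The word $f(\pi)=(1)^{\bk_1(\pi)}\cdot(2)^{|\pi|-\bk_1(\pi)}$ is weakly increasing with first entry $1$, so it is a parking function. It satisfies $\lel(f(\pi))=\bk_1(\pi)$ and $|f(\pi)|=|\pi|$, so the class $[f(\pi)]_{\lel}$ depends only on $[\pi]_{\bk_1}$.

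For size $n\ge1$, the values taken by $\bk_1$ on $\Pi_n$ are exactly $\{1,\dots,n\}$: the partition $\{1,\dots,k\}/\{k+1\}/\cdots/\{n\}$ realizes $k$. The values taken by $\lel$ on $\PF_n$ are also exactly $\{1,\dots,n\}$, since $\lel\ge1$ always and $\lel=k$ is realized by $(1)^k(2)^{n-k}$. The empty object corresponds to the unit on both sides. Hence the map is a bijection between the bases $\Pi/\bk_1$ and $\PF/\lel$, degree by degree.

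The remaining step is multiplicativity. Write $a=\bk_1(\tau)$ and $b=\bk_1(\theta)$. By part (1),
\[[\tau]_{\bk_1}[\theta]_{\bk_1}=\binom{n+m-1}{n-1}[\rho_a]+\binom{n+m-1}{m-1}[\rho_b],\]
where $\rho_k$ is any partition of size $n+m$ with $\bk_1(\rho_k)=k$. Applying the same reasoning to $\lel$ (the proof of Proposition~\ref{prop:lel}), the product $[f(\tau)]_{\lel}[f(\theta)]_{\lel}$ has the same expansion with $\lel$-classes of size $n+m$ and values $a$ and $b$. The map sends $[\rho_k]_{\bk_1}$ to the $\lel$-class of size $n+m$ with value $k$, so the two expansions match term by term.

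The only subtle point is checking that both algebras have identical structure constants, including the case $a=b$, where the two terms merge on both sides consistently. Once that is checked, the map is an algebra isomorphism.
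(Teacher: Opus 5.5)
Your proposal is correct and follows essentially the same route as the paper. Both arguments track whether $1$ lands in the $\tau$-part or the $\theta$-part of the shifted arc-shuffle, giving $\binom{n+m-1}{n-1}$ copies of $\bk_1(\tau)$ and $\binom{n+m-1}{m-1}$ copies of $\bk_1(\theta)$, and then match this multiset with the identical one for $\lel$ from Proposition~\ref{prop:lel}. Your explicit check that both statistics take exactly the values $\{1,\dots,n\}$ in size $n$ supplies the bijectivity on bases, which the paper leaves implicit.
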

\begin{proof}
    Let $\tau \in \Pi_n$ and $\theta \in \Pi_m$. There are $\binom{n+m-1}{n-1}$ shuffles of $\tau$ and $\theta$ where $1$ stays in the part of the partition corresponding to $\tau$, and there are $\binom{n+m-1}{m-1}$ shuffles of $\tau$ and $\theta$ where $1$ stays in the part of the partition corresponding to $\theta$. Thus, in the set of all shuffles $\pi$ of $\tau$ and $\theta$ there will be $\binom{n+m-1}{n-1}$ shuffles  with $\bk_1(\pi)=\bk_1(\tau)$ and the remaining $\binom{n+m-1}{m-1}$ shuffles will have $\bk_1(\pi)=\bk_1(\theta)$. The map defined in the statement above sends the $\bk_1$-equivalence class with representative $\pi$ to the $\lel$-equivalence class of parking functions with representative $f(\pi)$ where we have defined $f$ so that $\bk_1(\pi)=\lel(f(\pi))$. 
    Comparing the work above with the proof of Proposition \ref{prop:lel} shows that the map is an isomorphism.
\end{proof}

\begin{proposition} 
The closer of the block containing 1 statistic $\mathsf{cl}_1$ is weakly shuffle-compatible on set partitions.
\end{proposition}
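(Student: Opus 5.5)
Following the proof of Proposition~\ref{prop:bk1}, fix $\tau \in \Pi_n$ and $\theta \in \Pi_m$, and index each shifted arc-shuffle by the set $S \subseteq [n+m]$ with $|S|=n$, so that $\pi = \shift_S(\tau) \cup \shift_{S^c}(\theta)$ with $S^c=[n+m]\setminus S$. Write $S=\{s_1<\cdots<s_n\}$ and $S^c=\{r_1<\cdots<r_m\}$, and let $c_\tau=\mathsf{cl}_1(\tau)$, $c_\theta=\mathsf{cl}_1(\theta)$. Blocks of $\pi$ are exactly the images of blocks of $\tau$ under $\shift_S$ together with the images of blocks of $\theta$ under $\shift_{S^c}$, and $\shift$ is order preserving. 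Hence I will split into two cases according to whether $1 \in S$. If $1\in S$, then $1=s_1$ is the image of $1\in\tau$, the block of $\pi$ containing $1$ is the image of the block of $\tau$ containing $1$, and so $\mathsf{cl}_1(\pi)=s_{c_\tau}$. If $1 \notin S$, then $1=r_1$ and $\mathsf{cl}_1(\pi)=r_{c_\theta}$.

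So the multiset $\{\{\mathsf{cl}_1(\pi):\pi\in\tau\smix\theta\}\}$ equals
\[\{\{ s_{c_\tau} : S\subseteq[n+m],\ |S|=n,\ 1\in S\}\} \cup \{\{ r_{c_\theta} : S\subseteq[n+m],\ |S|=n,\ 1\notin S\}\},\]
which visibly depends only on $n$, $m$, $c_\tau$, and $c_\theta$. That is exactly weak shuffle-compatibility. If desired, I would make it explicit: the number of $n$-subsets $S$ containing $1$ with $s_c = k$ is $\binom{k-2}{c-2}\binom{n+m-k}{n-c}$ for $c\ge 2$ (choose the $c-2$ elements of $S$ strictly between $1$ and $k$, then the $n-c$ elements above $k$), and is $\binom{n+m-1}{n-1}$ at $k=1$ when $c=1$. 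Symmetric formulas hold for $S^c$. These formulas are not needed for the proof.

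The only point needing care, and the step I expect to be the main obstacle, is justifying that the block containing $1$ in $\pi$ comes entirely from one factor and that its maximum is the shifted maximum. This follows because the arc-shuffle takes a union of two set partitions on complementary ground sets, so no blocks merge, and because $\shift$ preserves order within each factor. With that observation the argument is a two-case count, parallel to Proposition~\ref{prop:bk1}.
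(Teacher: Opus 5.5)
Your proposal is correct and follows the paper's proof essentially verbatim. The paper also indexes the shifted arc-shuffles by $S$, splits on whether $1\in S$, and records $\mathsf{cl}_1(\pi)=s_{\mathsf{cl}_1(\tau)}$ or $r_{\mathsf{cl}_1(\theta)}$ accordingly. Your extra justification (no blocks merge, $\shift$ preserves order) and your explicit counts are correct, and they go slightly beyond what the paper writes.
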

\begin{proof}
    For $\tau \in \Pi_n, \theta \in \Pi_m$, consider $\pi \in \tau \smix \theta$ where $\pi = \shift_S(\tau) \cup \shift_{S^c}(\theta)$ for some $S = \{s_1 <s_2 < \cdots < s_n\} \subseteq [n+m]$ and $S^c = [n+m]\setminus S =  \{r_1 < r_2 < \cdots < r_m\}$.  Then, \[\mathsf{cl}_1(\pi) = \begin{cases}
        s_{\mathsf{cl}_1(\tau)} & \text{ if } 1 \in S,\\
        r_{\mathsf{cl}_1(\theta)} & \text{ if }1 \in S^c.
    \end{cases}\]
    Thus, the set $\{\{ \mathsf{cl}_1(\pi) : \pi \in \tau \smix \theta \}\}$ depends only on $\mathsf{cl}_1(\tau)$, $\mathsf{cl}_1(\theta)$, $|\tau|$, and $|\theta|$.
\end{proof}

\subsection{Shuffle-compatibility for set partitions}

We now consider statistics that are shuffle-compatible on $\Psi$ and their algebraic structures.

\begin{definition} A statistic $\St$ is \emph{shuffle-compatible on set partitions} if 
\begin{enumerate}
    \item the multiset $\{\{\St(\pi) : \pi \in \tau \mix \theta\}\}$ is determined by $\St(\tau)$, $\St(\theta)$, $|\tau|$, and $|\theta|$ for any two disjoint set partitions $\tau, \theta \in \Psi$, and
    \item if $\std(\tau)=\std(\theta),$ then $\St(\tau)=\St(\theta)$ for any two set partitions $\tau, \theta \in \Psi$.
\end{enumerate}

\end{definition}

Equivalently, $\St$ is shuffle-compatible on set partitions if \[ \{\{ \St(\pi) : \pi \in \tau \mix \theta \}\} = \{\{ \St(\pi) : \rho \mix \nu \}\}\]
for any two pairs of disjoint set partitions $\{\tau, \theta\}$ and $\{ \rho, \nu \}$ such that $|\tau|=|\rho|$, $|\theta|=|\nu|$, $\St(\tau)=\St(\rho)$, and $\St(\theta) = \St(\nu)$. 

We denote the $\St$ equivalence class of $\pi \in \Psi$ by $[\pi]_{\St}^{\Psi}=\{ \tau \in \Psi : \St(\pi)=\St(\tau), |\pi|=|\tau|\}$, where we may drop the $\Psi$ superscript when it is clear from context. Let $\Psi/\St$ denote the set of all such $\St$ equivalence classes.

\begin{definition} Let $\St$ be shuffle-compatible on set partitions.
    The $\St$ shuffle algebra, denoted $\mathcal{A}^{\Psi}_{\St}$, is defined as the algebra with basis elements $[\pi]_{\St}^{\Psi}\in \Psi/\St$ and the product \[[\tau]_{\St}[\theta]_{\St} = \sum_{\pi \in \tau \mix \theta} [\pi]_{\St}.\]
\end{definition}

Shuffle algebras on set partitions relate closely to shifted shuffle algebras of set partitions and $\NCSym^*$.

\begin{proposition}\label{prop:alg_ncsym} Let $\St$ be a shuffle compatible statistic on set partitions.
\begin{enumerate}
    \item The statistic $\St$ is weakly shuffle-compatible on set partitions.
    \item The shuffle algebra $\mathcal{A}^{\Psi}_{\St}$ is isomorphic to the shifted shuffle algebra $\mathcal{A}^{\Pi}_{\St}$.
    \item The shuffle algebra $\mathcal{A}^{\Psi}_{\St}$ is a quotient of $\NCSym^*$.
\end{enumerate}
        
\end{proposition}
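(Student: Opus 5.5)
The plan mirrors Lemma~\ref{lem:shift} and Theorem~\ref{thm:weakFQSYM}, transported from permutations to set partitions. Everything rests on one compatibility identity: for disjoint $\tau \vdash I$ and $\theta \vdash J$ with $|I| = n$ and $|J| = m$, standardization intertwines the arc-shuffle with the shifted arc-shuffle. Concretely, for each $S \subseteq I \cup J$ with $|S| = n$, the shuffle $\shift_S(\tau) \cup \shift_{S^c}(\theta)$ has standardization $\shift_{S'}(\std(\tau)) \cup \shift_{S'^c}(\std(\theta))$. Here $S'$ is the image of $S$ under the order-preserving bijection $I \cup J \to [n+m]$. This holds because $\shift$ only relabels elements in an order-preserving way, and composing order-preserving bijections is order-preserving. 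As $S$ ranges over $n$-subsets of $I \cup J$, $S'$ ranges bijectively over $n$-subsets of $[n+m]$. So as multisets,
\[\{\{ \std(\pi) : \pi \in \tau \mix \theta \}\} = \std(\tau) \smix \std(\theta).\]
By condition (2) of shuffle-compatibility, $\St(\pi) = \St(\std(\pi))$, so $\{\{\St(\pi) : \pi \in \tau \mix \theta\}\} = \{\{\St(\rho) : \rho \in \std(\tau)\smix\std(\theta)\}\}$.

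For part (1), take $\tau, \theta \in \Pi$ with $|\tau| = n$ and $|\theta| = m$. Note that $\tau \smix \theta = \tau \mix \theta[n]$ by definition, and $\theta[n]$ is disjoint from $\tau$ with $\std(\theta[n]) = \theta$, hence $\St(\theta[n]) = \St(\theta)$. Condition (1) of shuffle-compatibility then says the multiset of $\St$ over $\tau \mix \theta[n]$ depends only on $\St(\tau)$, $\St(\theta)$, $n$ and $m$. This is exactly weak shuffle-compatibility.

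For part (2), define $\psi \colon \mathcal{A}^{\Psi}_{\St} \to \mathcal{A}^{\Pi}_{\St}$ by $[\pi]^{\Psi}_{\St} \mapsto [\std(\pi)]^{\Pi}_{\St}$.
\begin{itemize}
\item \emph{Well-definedness and injectivity.} Since $\St(\pi) = \St(\std(\pi))$, the class $[\std(\pi)]^{\Pi}_{\St}$ is exactly the set of elements of $[\pi]^{\Psi}_{\St}$ lying in $\Pi$. Each $\Psi$-class has the form $\{\pi' \in \Psi : \St(\pi') = s,\ |\pi'| = n\}$ and contains the standardization of any of its members, so $\Psi$-classes and $\Pi$-classes correspond by restriction to $\Pi$. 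This makes $\psi$ a well-defined bijection of bases.
\item \emph{Multiplicativity.} Pick disjoint representatives of the two classes, shifting one if necessary; this does not change the class, because $\St$ is invariant under standardization. Then $\psi([\tau][\theta]) = \sum_{\pi \in \tau\mix\theta} [\std(\pi)]^{\Pi}_{\St}$, which equals $\sum_{\rho \in \std(\tau)\smix\std(\theta)} [\rho]^{\Pi}_{\St} = \psi([\tau])\psi([\theta])$ by the multiset identity above.
\end{itemize}
A small side point needs mention: the product on $\mathcal{A}^{\Psi}_{\St}$ is stated for disjoint representatives, and its independence of that choice is precisely condition (1).

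For part (3), combine part (1) with Proposition~\ref{prop:setparquo}, which makes $\mathcal{A}^{\Pi}_{\St}$ a quotient of $\NCSym^*$, and compose with the isomorphism from part (2).

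The only step needing care is the standardization/arc-shuffle compatibility, specifically that the correspondence $S \leftrightarrow S'$ is a bijection that respects multiplicities. Since the arc-shuffle is indexed by subsets $S$ and standardization is an order isomorphism of the ground set, I expect this to be routine.
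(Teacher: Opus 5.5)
Your proposal is correct and follows essentially the same route as the paper: the same standardization map $\psi$, the same bijection of bases via restriction of classes to $\Pi$, and Proposition~\ref{prop:setparquo} for part (3). The differences are minor. You justify multiplicativity with the finer identity $\{\{\std(\pi) : \pi \in \tau \mix \theta\}\} = \std(\tau) \smix \std(\theta)$, whereas the paper compares the $\St$-multisets over $\tau \mix \theta$ and $\std(\tau) \mix \std(\theta)[|\tau|]$ using condition (1). You also prove part (1) explicitly, which the paper's proof leaves implicit.
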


\begin{proof}
  Consider the map $\psi: \mathcal{A}^{\Psi}_{\St} \rightarrow \mathcal{A}^{\Pi}_{\St}$ defined by $[\pi]^{\Psi}_{\St} \mapsto [\std(\pi)]^{\Pi}_{\St}$.  By the definition of shuffle-compatibility on set partitions, $\St(\pi) = \St(\std(\pi))$ for any $\pi \in \Psi$. As a result, $[\std(\pi)]^{\Psi}_{\St}$ is exactly the subset of $[\pi]^{\Pi}_{\St}$ made up of set partitions
  in $\Pi$. Thus, $\psi$ is a bijection. Observe that \[\psi([\tau]_{\St}^{\Psi}[\theta]^{\Psi}_{\St}) = \psi(\sum_{\pi \in \tau \mix \theta} [\pi]^{\Psi}_{\St}) = \sum_{\pi \in \tau \mix \theta} [\std(\pi)]^{\Pi}_{\St}.\] On the other hand, \[\psi([\tau]^{\Psi}_{\St})\psi([\theta]^{\Psi}_{\St}) = [\std(\tau)]^{\Pi}_{\St}[\std(\theta)]_{\St}^{\Pi} = \sum_{\rho \in \std(\tau) \smix \std(\theta)} [\rho]^{\Pi}_{\St}.\] These two expressions are equivalent given that $\std(\tau) \smix \std(\theta) = \std(\tau) \mix \std(\theta)[|\tau|]$ and the shuffle-compatibility of $\St$ implies that $\{\{ \St(\pi) : \pi \in \tau \mix \theta \}\} = \{\{\St(\rho) : \rho \in \std(\tau) \mix \std(\theta)[|\tau|] \}\}$. Therefore, $\psi$ is an isomorphism. This proves part (1) of the theorem, which, in combination with Proposition \ref{prop:setparquo}, proves part (2).
\end{proof}

An integer partition of $n$, denoted $\lambda \vdash n$, is a sequence of positive integers $\lambda = (\lambda_1, \lambda_2, \ldots, \lambda_{\ell})$ such that $\lambda_1 \geq \lambda_2 \geq \cdots \geq \lambda_{\ell}$ and $\sum_{i=1}^{\ell} \lambda_i = n$.
For a multiset $S \subseteq \N$, let $\lambda(S)$ be the integer partition made up of the entries in $S$.

\begin{theorem}\label{thm:bk} (Shuffle-compatibility of the block sizes)
\begin{enumerate}
    \item The block sizes statistic $\Bk$ is shuffle-compatible on set partitions.
    \item The linear map on $\mathcal{A}^{\Psi}_{\Bk}$ defined by \[[\pi]_{\Bk} \rightarrow \frac{1}{|\pi|!} h_{\lambda(\Bk(\pi))},\] is an algebra isomorphism between $\mathcal{A}^{\Psi}_{\Bk}$ and $\Sym$.
    \end{enumerate}
\end{theorem}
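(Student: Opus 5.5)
Part (1) follows from the fact that the arc-shuffle never changes block structure. Let $\tau \vdash I$ and $\theta \vdash J$ be disjoint, with $|I|=n$ and $|J|=m$. Every $\pi = \shift_S(\tau)\cup\shift_{S^c}(\theta)$ in $\tau \mix \theta$ has as its blocks the images of the blocks of $\tau$ under the order-preserving bijection $I\to S$, together with the images of the blocks of $\theta$ under $J\to S^c$. Hence $\Bk(\pi)=\Bk(\tau)\uplus\Bk(\theta)$ as multisets for every one of the $\binom{n+m}{n}$ choices of $S$. The multiset $\{\{\Bk(\pi):\pi\in\tau\mix\theta\}\}$ is therefore $\binom{n+m}{n}$ copies of $\Bk(\tau)\uplus\Bk(\theta)$, which depends only on $\Bk(\tau)$, $\Bk(\theta)$, $n$ and $m$. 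Condition (2) is immediate, since standardization is an order-preserving relabeling and so preserves block sizes.

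For part (2), I would first pin down the basis. $\Bk(\pi)$ determines $|\pi|$, and every integer partition $\lambda\vdash n$ arises: take consecutive intervals of the prescribed sizes. So the classes $[\pi]_{\Bk}$ with $|\pi|=n$ are in bijection with partitions of $n$. Let $\varsigma$ be the linear map $[\pi]_{\Bk}\mapsto \frac{1}{|\pi|!}h_{\lambda(\Bk(\pi))}$. Since $\{h_\lambda\}$ is a basis of $\Sym$ and the scalars are nonzero, $\varsigma$ sends a basis to a basis, so it is a linear isomorphism. The empty partition goes to $h_\emptyset = 1$, so $\varsigma$ is unital.

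Next I check multiplicativity. By part (1), $[\tau]_{\Bk}[\theta]_{\Bk}=\binom{n+m}{n}[\pi_0]_{\Bk}$, where $\pi_0$ is any partition with $\Bk(\pi_0)=\Bk(\tau)\uplus\Bk(\theta)$. Then
\[\varsigma([\tau]_{\Bk}[\theta]_{\Bk})=\binom{n+m}{n}\frac{1}{(n+m)!}h_{\lambda(\Bk(\tau)\uplus\Bk(\theta))}=\frac{1}{n!\,m!}h_{\lambda(\Bk(\tau))}h_{\lambda(\Bk(\theta))}=\varsigma([\tau]_{\Bk})\varsigma([\theta]_{\Bk}),\]
using $h_\lambda h_\mu=h_{\lambda\cup\mu}$, where $\lambda\cup\mu$ is the sorted union of parts.

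There is no real obstacle here. The only points needing care are that the normalization $\frac{1}{|\pi|!}$ exactly absorbs the multiplicity $\binom{n+m}{n}$, and that the map is surjective onto all $h_\lambda$. Alternatively, one could route the argument through Proposition \ref{prop:alg_ncsym} and work in $\mathcal{A}^{\Pi}_{\Bk}$, but the direct computation above is simpler.
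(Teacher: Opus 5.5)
Your proposal is correct and follows the paper's argument. Both observe that every arc-shuffle satisfies $\Bk(\pi)=\Bk(\tau)\cup\Bk(\theta)$ as multisets, so the product is $\binom{n+m}{n}$ copies of a single class, and the normalization $\frac{1}{|\pi|!}$ turns this multiplicity into $h_\lambda h_\mu = h_{\lambda\cup\mu}$. Bijectivity then comes from the correspondence between $\Bk$-classes and integer partitions; you are slightly more thorough than the paper in also checking the standardization condition, unitality, and that every partition is realized.
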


\begin{proof}
Observe that if $\pi \in \tau \mix \theta$, then $\Bk(\pi) = \Bk(\tau) \cup \Bk(\theta)$. Shuffle-compatibility of $\Bk$ follows.

   Let $\varsigma$ be the map defined above, and let $\tau \in \Psi_n, \theta \in \Psi_m$. Observe that
   \begin{align*}
        \varsigma([\tau]_{\Bk}[\theta]_{\Bk}) &= \varsigma(\sum_{\pi \in \tau \mix \theta} [\pi]_{\Bk})= \sum_{\pi \in \tau \mix \theta} \frac{1}{(n+m)!}h_{\lambda(\Bk(\pi))} = \frac{1}{(n+m)!}\binom{n+m}{n} h_{\lambda(\Bk(\tau) \cup \Bk(\theta))} \\ &= (\frac{1}{n!}h_{\lambda(\Bk(\tau))})(\frac{1}{m!}h_{\lambda(\Bk(\theta))} )  = \varsigma([\tau]_{\Bk})\varsigma([\theta]_{\Bk}). 
   \end{align*}
  Additionally, the $\Bk$-equivalence classes of $\Psi$ correspond exactly to integer partitions, which index bases of $\Sym$, and so $\varsigma$ is an isomorphism.
\end{proof}

The results on the next few statistics follow in a straightforward manner from the fact that the block sizes of any shuffle of two set partitions are exactly the union of the block sizes of those partitions, with only slight differences in the algebraic formulation based on the possible values of the specific statistics.

\begin{corollary} (Shuffle-compatibility of the number of blocks)
\begin{enumerate}
    \item The block number statistic $\bk$ is shuffle-compatible on set partitions.
    \item The linear map on $\mathcal{A}^{\Psi}_{\bk}$ defined by $[\pi]_{\bk} \mapsto \frac{1}{|\pi|!} q^{\bk(\pi)}x$ is an algebra isomorphism from $\mathcal{A}_{\bk}^{\Psi}$ to the span of $\{1\}\cup\{
    q^jx^n\}_{1 \leq n, 1 \leq j \leq n}$, a subalgebra of $\mathbb{Q}[q,x]$. 
\end{enumerate}
\end{corollary}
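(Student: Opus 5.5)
The plan is to derive both parts from the observation in the proof of Theorem~\ref{thm:bk}: if $\pi \in \tau \mix \theta$, then $\Bk(\pi) = \Bk(\tau)\cup\Bk(\theta)$ as multisets. Taking cardinalities gives $\bk(\pi) = \bk(\tau)+\bk(\theta)$ for every one of the $\binom{n+m}{n}$ arc-shuffles, where $\tau \in \Psi_n$ and $\theta\in\Psi_m$ are disjoint. So the multiset $\{\{\bk(\pi) : \pi \in \tau\mix\theta\}\}$ consists of the single value $\bk(\tau)+\bk(\theta)$ with multiplicity $\binom{n+m}{n}$. This depends only on $\bk(\tau)$, $\bk(\theta)$, $|\tau|$ and $|\theta|$.

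The standardization condition also holds. Indeed, $\std$ only relabels elements order-preservingly, so it does not change the number of blocks, and $\std(\tau)=\std(\theta)$ implies $\bk(\tau)=\bk(\theta)$. Alternatively, $\bk$ is a function of $\Bk$, and $\Bk$ is already shuffle-compatible by Theorem~\ref{thm:bk}. This settles part (1).

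For part (2), I read the map as $[\pi]_{\bk}\mapsto \frac{1}{|\pi|!}q^{\bk(\pi)}x^{|\pi|}$. The exponent of $x$ must be $|\pi|$ for the grading to be respected, and the empty partition is sent to $1$. The proof has three steps.

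\textbf{Well-definedness.} The map is well defined because a $\bk$-class is determined exactly by the pair $(|\pi|,\bk(\pi))$.

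\textbf{Bijectivity onto the stated span.} For $n\ge 1$, the block number of a partition of an $n$-set ranges over exactly $1\le j\le n$. For example, $j-1$ singletons together with one block of size $n-j+1$ realize $j$. Hence the classes biject with $\{1\}\cup\{q^jx^n\}_{1\le n,\,1\le j\le n}$ up to nonzero scalars. These monomials are linearly independent, so the map is a linear isomorphism onto their span.

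\textbf{Multiplicativity.} Using part (1), for $\tau\in\Psi_n$ and $\theta\in\Psi_m$ we get
\[\varsigma([\tau]_{\bk}[\theta]_{\bk}) = \binom{n+m}{n}\frac{1}{(n+m)!}q^{\bk(\tau)+\bk(\theta)}x^{n+m} = \Big(\frac{1}{n!}q^{\bk(\tau)}x^n\Big)\Big(\frac{1}{m!}q^{\bk(\theta)}x^m\Big).\]
The binomial coefficient cancels against the factorial normalizations exactly as in Theorem~\ref{thm:bk}. Finally, the span is closed under multiplication: if $1\le j\le n$ and $1\le k\le m$, then $2\le j+k\le n+m$. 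So it is indeed a subalgebra of $\mathbb{Q}[q,x]$.

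Nothing here is a genuine obstacle. The only points needing care are fixing the exponent of $x$ as $|\pi|$ and checking the range of attainable block numbers, so that the image is exactly the stated span.
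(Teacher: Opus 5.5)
Your proof is correct and takes the paper's route. The paper also gets this corollary straight from $\Bk(\pi)=\Bk(\tau)\cup\Bk(\theta)$ for every arc-shuffle, so that $\bk(\pi)=\bk(\tau)+\bk(\theta)$, and then reuses the factorial normalization of Theorem~\ref{thm:bk}, with the attainable range $1\le \bk(\pi)\le|\pi|$ fixing the image. You are also right that the exponent of $x$ must be $|\pi|$: with a bare $x$ the map would not even be injective, so the statement contains a typo.
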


\begin{corollary} (Shuffle-compatibility of the rank)
\begin{enumerate}
    \item The rank statistic $\rank$ is shuffle-compatible on set partitions.
    \item The linear map on $\mathcal{A}^{\Psi}_{\rank}$ defined by $[\pi]_{\rank} \mapsto \frac{1}{|\pi|!} q^{\rank(\pi)}x$ is an algebra isomorphism from $\mathcal{A}_{\rank}^{\Psi}$ to the span of $\{1\}\cup\{
    q^jx^n\}_{1 \leq n, 0 \leq j \leq n-1}$, a subalgebra of $\mathbb{Q}[q,x]$. 
\end{enumerate}
\end{corollary}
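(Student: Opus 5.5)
The rank corollary should follow from Theorem \ref{thm:bk} in the same way as the block number corollary. The basic observation is this: if $\pi \in \tau \mix \theta$, then $\Bk(\pi) = \Bk(\tau) \cup \Bk(\theta)$ as multisets. Consequently $\bk(\pi) = \bk(\tau) + \bk(\theta)$ and $|\pi| = |\tau| + |\theta|$, so
\[\rank(\pi) = |\pi| - \bk(\pi) = \rank(\tau) + \rank(\theta)\]
for every one of the $\binom{|\tau|+|\theta|}{|\tau|}$ elements of the arc-shuffle. The multiset $\{\{\rank(\pi) : \pi \in \tau \mix \theta\}\}$ therefore consists of $\binom{|\tau|+|\theta|}{|\tau|}$ copies of $\rank(\tau)+\rank(\theta)$, which establishes condition (1).

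Condition (2) comes from standardization. Replacing the labels of $\pi$ order-preservingly does not change the block sizes, so $\rank(\std(\pi)) = \rank(\pi)$. This proves part (1) of the corollary.

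For part (2), I will take the linear map $[\pi]_{\rank} \mapsto \frac{1}{|\pi|!} q^{\rank(\pi)} x^{|\pi|}$. The statement writes $x$, which I read as $x^{|\pi|}$, as in the corresponding lucky/displacement corollaries. The proof has two steps.

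\textbf{Step 1 (bijectivity).} The equivalence classes in $\Psi_n/\rank$ are indexed by the attainable ranks. The block number of $\pi \in \Psi_n$ ranges over $1,\dots,n$ for $n \ge 1$, and every value is attained, for example by $n-j$ singletons together with one block of size $j+1$. So the rank ranges exactly over $0 \le j \le n-1$. The empty partition corresponds to $1$. Hence the map sends the basis of classes bijectively onto the scaled monomials $q^j x^n$ in the stated span.

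\textbf{Step 2 (multiplicativity).} Using part (1),
\[\varsigma([\tau][\theta]) = \binom{n+m}{n}\frac{1}{(n+m)!} q^{\rank(\tau)+\rank(\theta)} x^{n+m} = \frac{1}{n!}q^{\rank(\tau)}x^n \cdot \frac{1}{m!}q^{\rank(\theta)}x^m,\]
which is $\varsigma([\tau])\varsigma([\theta])$. The same computation shows that the span is closed under multiplication, since $j_1+j_2 \le (n-1)+(m-1) \le n+m-1$, so it is indeed a subalgebra.

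None of these steps is a real obstacle. The only care needed is the range check $0 \le j \le n-1$ and the interpretation of the exponent of $x$.
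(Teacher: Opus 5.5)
Your proposal is correct and is essentially the paper's argument. The paper gives no separate proof: it derives the corollary from the observation $\Bk(\pi)=\Bk(\tau)\cup\Bk(\theta)$ for $\pi\in\tau\mix\theta$ from Theorem~\ref{thm:bk}, reusing the same $\frac{1}{(n+m)!}\binom{n+m}{n}$ normalization. Your reading of $x$ as $x^{|\pi|}$ is the intended one, and the only slip is your witness for rank $j$, which should have $n-j-1$ singletons plus one block of size $j+1$ (as written, yours has size $n+1$).
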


\begin{corollary} (Shuffle-compatibility of the number of singletons)
\begin{enumerate}
    \item The number of singletons statistic $\sg$ is shuffle-compatible on set partitions.
    \item The linear map on $\mathcal{A}^{\Psi}_{\sg}$ defined by $[\pi]_{\sg} \mapsto \frac{1}{|\pi|!} q^{\sg}x^n$ is an algebra isomorphism from $\mathcal{A}_{\sg}^{\Psi}$ to the span of $\{1\}\cup\{
    q^jx^n\}_{1 \leq n, 0 \leq j \leq n-2} \cup \{q^nx^n\}_{1\leq n}$, a subalgebra of $\mathbb{Q}[q,x]$. 
\end{enumerate}
\end{corollary}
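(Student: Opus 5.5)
The key observation is that the singleton count behaves additively under the arc-shuffle, just like the block sizes in Theorem~\ref{thm:bk}. Let $\tau\in\Psi_n$ and $\theta\in\Psi_m$ be disjoint, and let $\pi=\shift_S(\tau)\cup\shift_{S^c}(\theta)\in\tau\mix\theta$. Each block of $\pi$ is the image under an order-preserving shift of a block of $\tau$ or of a block of $\theta$, and shifting preserves cardinalities. Hence $\Bk(\pi)=\Bk(\tau)\cup\Bk(\theta)$ as multisets, and counting parts equal to $1$ gives $\sg(\pi)=\sg(\tau)+\sg(\theta)$.

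The multiset $\{\{\sg(\pi):\pi\in\tau\mix\theta\}\}$ therefore consists of the single value $\sg(\tau)+\sg(\theta)$ repeated $\binom{n+m}{n}$ times. This depends only on $\sg(\tau)$, $\sg(\theta)$, $n$ and $m$. Condition (2) of the definition is immediate, because standardization also preserves block sizes. So $\sg$ is shuffle-compatible, which proves part (1).

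For part (2), define $\varsigma([\pi]_{\sg})=\frac{1}{|\pi|!}q^{\sg(\pi)}x^{|\pi|}$; I read the statement's $q^{\sg}x^n$ with $n=|\pi|$, and send the empty class to $1$. The first step is to determine the attainable values of $\sg$ on $\Psi_n$.
\begin{itemize}
\item The value $\sg=n$ is attained by the all-singletons partition.
\item Any $j\le n-2$ is attained by taking $j$ singletons and one block of size $n-j\ge 2$.
\item The value $j=n-1$ is impossible, since the one remaining element would itself have to be a singleton.
\end{itemize}
This is exactly the argument used for $\Sg$ in Theorem~\ref{thm:sg_set}. It shows that $\varsigma$ is a bijection from $\Psi/\sg$ onto the basis $\{1\}\cup\{q^jx^n\}_{0\le j\le n-2}\cup\{q^nx^n\}$. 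Multiplicativity is then a direct computation:
\[
\varsigma([\tau]_{\sg}[\theta]_{\sg})=\binom{n+m}{n}\frac{1}{(n+m)!}q^{\sg(\tau)+\sg(\theta)}x^{n+m}=\Bigl(\frac{1}{n!}q^{\sg(\tau)}x^n\Bigr)\Bigl(\frac{1}{m!}q^{\sg(\theta)}x^m\Bigr).
\]

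The main point needing care is that the target span is closed under multiplication, since the excluded exponent $j=n-1$ might seem able to appear in a product. It cannot. If $j_1\le n-2$ or $j_1=n$, and $j_2\le m-2$ or $j_2=m$, then $j_1+j_2$ is either at most $n+m-2$ or equal to $n+m$. This closure also follows automatically from $\varsigma$ being a bijective linear map that respects products.
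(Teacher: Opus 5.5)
Your proof is correct and takes the paper's route: the paper also gets this corollary from the observation (used in Theorem~\ref{thm:bk}) that $\Bk(\pi)=\Bk(\tau)\cup\Bk(\theta)$ for every arc-shuffle, so $\sg$ is additive and $[\pi]_{\sg}\mapsto\frac{1}{|\pi|!}q^{\sg(\pi)}x^{|\pi|}$ is multiplicative. Your determination of the attainable values $\{0,\dots,n-2\}\cup\{n\}$ and your closure check of the target span fill in details the paper leaves implicit.
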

\begin{corollary}
    (Shuffle-compatibility of the number of nonsingeltons)
    \begin{enumerate}
        \item The number of nonsingleton blocks statistic $\mathsf{nsg}$ is shuffle-compatible on set partitions. 
        \item The linear map on $\mathcal{A}^{\Psi}_{\mathsf{nsg}}$ defined by $[\pi]_{\mathsf{nsg}} \mapsto \frac{1}{|\pi|!} q^{\mathsf{nsg}(\pi)}x^n$ is an algebra isomorphism from $\mathcal{A}_{\mathsf{nsg}}^{\Psi}$ to the span of $\{1\}\cup\{
    q^jx^n\}_{1 \leq n, 0 \leq j \leq \lfloor \frac{n}{2}\rfloor}$, a subalgebra of $\mathbb{Q}[q,x]$.
    \end{enumerate}
\end{corollary}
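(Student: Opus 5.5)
Both parts follow from the block-size observation in the proof of Theorem \ref{thm:bk}. For any disjoint $\tau\vdash I$, $\theta\vdash J$ and any $\pi = \shift_S(\tau)\cup\shift_{S^c}(\theta)\in\tau\mix\theta$, the blocks of $\pi$ are exactly the shifted blocks of $\tau$ together with the shifted blocks of $\theta$. Hence $\Bk(\pi)=\Bk(\tau)\cup\Bk(\theta)$ as multisets.

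A block is nonsingleton if and only if its size is at least $2$, so $\mathsf{nsg}(\pi)$ is the number of entries of $\Bk(\pi)$ that are at least $2$. It follows that
\[\mathsf{nsg}(\pi)=\mathsf{nsg}(\tau)+\mathsf{nsg}(\theta)\]
for every one of the $\binom{n+m}{n}$ arc-shuffles, where $n=|\tau|$ and $m=|\theta|$. The multiset $\{\{\mathsf{nsg}(\pi):\pi\in\tau\mix\theta\}\}$ is therefore $\binom{n+m}{n}$ copies of $\mathsf{nsg}(\tau)+\mathsf{nsg}(\theta)$, which gives condition (1). Condition (2) holds because standardization only relabels elements and preserves block sizes, so $\std(\tau)=\std(\theta)$ forces $\mathsf{nsg}(\tau)=\mathsf{nsg}(\theta)$.

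For part (2), I would let $\varsigma$ be the stated map; the statement's $x^n$ should be read as $x^{|\pi|}$. The product computation mirrors the proof of Theorem \ref{thm:bk}:
\[\varsigma([\tau][\theta])=\frac{\binom{n+m}{n}}{(n+m)!}q^{\mathsf{nsg}(\tau)+\mathsf{nsg}(\theta)}x^{n+m}=\Big(\frac{1}{n!}q^{\mathsf{nsg}(\tau)}x^n\Big)\Big(\frac{1}{m!}q^{\mathsf{nsg}(\theta)}x^m\Big).\]
The unit is handled by sending the class of the empty partition to $1$.

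Bijectivity comes from identifying the possible values of $\mathsf{nsg}$ on $\Psi_n$. A partition has at most $\lfloor n/2\rfloor$ nonsingleton blocks, since each such block uses at least $2$ elements. Every $j$ with $0\le j\le\lfloor n/2\rfloor$ is attained: take $j$ blocks of size $2$ and make the remaining $n-2j$ elements singletons. The $\mathsf{nsg}$-classes in size $n\ge1$ are therefore indexed by exactly these $j$. Distinct pairs $(j,n)$ give distinct monomials $q^jx^n$, so $\varsigma$ is a linear bijection onto the span of $\{1\}\cup\{q^jx^n\}_{1\le n,\,0\le j\le\lfloor n/2\rfloor}$.

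That span is closed under multiplication, because $\lfloor n/2\rfloor+\lfloor m/2\rfloor\le\lfloor (n+m)/2\rfloor$; it is also the image of a product-preserving map. I expect no real obstacle here. The only point needing care is this range and closure check, including the degenerate case $n=1$, where only $j=0$ occurs.
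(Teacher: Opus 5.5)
Your proposal is correct and follows essentially the same route as the paper, which justifies this corollary only by noting that $\Bk(\pi)=\Bk(\tau)\cup\Bk(\theta)$ for every arc-shuffle and then reusing the product computation from Theorem~\ref{thm:bk}. You have filled in the details the paper leaves implicit: the additivity $\mathsf{nsg}(\pi)=\mathsf{nsg}(\tau)+\mathsf{nsg}(\theta)$, the standardization condition, the range $0\le j\le\lfloor n/2\rfloor$, and reading $x^n$ as $x^{|\pi|}$.
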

\begin{corollary} \label{cor:maxBk}
(Shuffle-compatibility of the maximal and minimal block sizes)
\begin{enumerate}
\item The maximal block size statistic $\mathsf{maxBk}$ and minimal block size statistic $\mathsf{minBk}$ are shuffle-compatible on set partitions.
\item The linear map on $\mathcal{A}^{\Psi}_{\mathsf{maxBk}}$ defined by \[[\pi]^{\Psi}_{\mathsf{maxBk}} \mapsto [f(\pi)]^{\PF}_{\mathsf{maxDisp}},\] where $f(\pi) = (1)^{\mathsf{maxBk(\pi)}}\cdot(\mathsf{maxBk}(\pi)+1, \ldots, |\pi|-1, |\pi|) \in \PF_{|\pi|}$, is an algebra isomorphism from $\mathcal{A}^{\Psi}_{\mathsf{maxBk}}$ to $\mathcal{A}^{\PF}_{\mathsf{maxDisp}}$.
\end{enumerate}
\end{corollary}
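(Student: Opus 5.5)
The whole argument rests on the observation already used for Theorem \ref{thm:bk}: if $\pi \in \tau \mix \theta$, then $\Bk(\pi) = \Bk(\tau) \cup \Bk(\theta)$ as multisets. This holds because $\shift_S$ and $\shift_{S^c}$ only relabel elements and never merge or split blocks. Consequently
\[\mathsf{maxBk}(\pi) = \max(\mathsf{maxBk}(\tau),\mathsf{maxBk}(\theta)) \quad\text{and}\quad \mathsf{minBk}(\pi)=\min(\mathsf{minBk}(\tau),\mathsf{minBk}(\theta))\]
for every one of the $\binom{|\tau|+|\theta|}{|\tau|}$ arc-shuffles. The multiset $\{\{\mathsf{maxBk}(\pi) : \pi\in\tau\mix\theta\}\}$ therefore consists of this single value repeated $\binom{|\tau|+|\theta|}{|\tau|}$ times, so it depends only on the required data; the same holds for $\mathsf{minBk}$. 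Condition (2) of shuffle-compatibility is immediate, since $\std$ preserves block sizes. This gives part (1).

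For part (2), I would first pin down the basis on each side. On $\Psi_n$ (for $n\ge1$) the statistic $\mathsf{maxBk}$ takes exactly the values $1,\ldots,n$; for instance, a block of size $k$ together with singletons realises $k$. On $\PF_n$, the proof of Theorem \ref{thm:Disp} shows that car $i$ is displaced at most $i-1$, so $\mathsf{maxDisp}$ takes values in $\{0,\ldots,n-1\}$. Next I would compute $\mathsf{maxDisp}(f(\pi))$ for $f(\pi)=(1)^{k}\cdot(k+1,\ldots,n)$ with $k=\mathsf{maxBk}(\pi)$. The first $k$ cars park in spots $1,\ldots,k$ with displacements $0,\ldots,k-1$, and every later car parks in its preferred spot. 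Hence $\mathsf{maxDisp}(f(\pi))=k-1$.

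So the map sends the class where $\mathsf{maxBk}=k$ to the class where $\mathsf{maxDisp}=k-1$. As $k$ runs over $1,\ldots,n$ this hits every value $0,\ldots,n-1$, so the map is a well-defined bijection on basis elements in each degree. Degree $0$ (the empty object) is sent to the unit.

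For multiplicativity, take $\tau\in\Psi_n$ and $\theta\in\Psi_m$ with maximal block sizes $a$ and $b$. Then
\[[\tau][\theta]=\binom{n+m}{n}\,[\rho]_{\mathsf{maxBk}},\]
where $\rho$ is any partition of size $n+m$ with $\mathsf{maxBk}(\rho)=\max(a,b)$. On the parking function side, the remark after Theorem \ref{thm:Disp} gives $\mathsf{maxDisp}(\gamma)=\max(\mathsf{maxDisp}(\alpha),\mathsf{maxDisp}(\beta))$ for every $\gamma\in\alpha\shifted\beta$. Thus
\[[f(\tau)][f(\theta)]=\binom{n+m}{n}\,[\gamma]_{\mathsf{maxDisp}}\]
with $\mathsf{maxDisp}(\gamma)=\max(a-1,b-1)=\max(a,b)-1$, which is exactly the image of the left-hand side. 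The only delicate point I expect is this off-by-one shift between the two statistics. It is harmless precisely because $x\mapsto x-1$ commutes with $\max$, and because the two value ranges $\{1,\ldots,n\}$ and $\{0,\ldots,n-1\}$ match up bijectively under it.
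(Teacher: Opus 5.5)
Your proof is correct and takes the same route as the paper's. Both arguments use three facts: $\mathsf{maxBk}$ (resp.\ $\mathsf{minBk}$) of every arc-shuffle is the max (resp.\ min) of the two inputs, $\mathsf{maxDisp}$ behaves the same way under shifted shuffles, and $\mathsf{maxDisp}(f(\pi))=\mathsf{maxBk}(\pi)-1$ matches the value ranges $\{1,\ldots,n\}$ and $\{0,\ldots,n-1\}$. You simply make explicit what the paper leaves implicit, namely the computation of $\mathsf{maxDisp}(f(\pi))$ and the common factor $\binom{n+m}{n}$ on both sides of the product.
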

\begin{proof}
    The maximal and minimal block sizes of any shuffle of two set partitions will simply be the maximum or minimum of the maximal and minimal block sizes of those two set partitions.
    In the same way, the maximal displacement of any shuffle of two parking functions is simply the maximum of the values between the two. Thus, in constructing a map $f$ that sends a $\mathsf{maxBk}$-equivalence class of set partitions with a representative $\pi$ to the $\mathsf{maxDisp}$-equivalence class of parking functions with representative $f(\pi)$ where $\mathsf{maxDisp}(f(\pi))=\mathsf{maxBk}(\pi)-1$, we construct an isomorphism. To see the bijectivity of this map, observe that the possible $\mathsf{maxBk}$ values of a set partition of size $n$ range from $1$ to $n$ while the $\mathsf{maxDisp}$ values range from $0$ to $n-1$.
\end{proof}

\begin{proposition} 
The number of terminal closers statistic $\mathsf{tcl}$ is shuffle-compatible on set partitions.
\end{proposition}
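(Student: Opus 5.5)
The plan is to reduce $\mathsf{tcl}$ to a statistic on binary words that records closers, and then to observe that the terminal run of $1$'s in a shuffle of binary words is governed only by the terminal runs and lengths of the two factors. Condition (2) of shuffle-compatibility on set partitions is immediate: standardization replaces $i_j$ by $j$ and preserves block membership. So $i_j\in\Cl(\pi)$ if and only if $j\in\Cl(\std(\pi))$, and hence $\mathsf{tcl}(\pi)=\mathsf{tcl}(\std(\pi))$.

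For condition (1), let $\tau\vdash I$ with $|I|=n$ and $\theta\vdash J$ with $|J|=m$ be disjoint, and let $\pi=\shift_S(\tau)\cup\shift_{S^c}(\theta)\in\tau\mix\theta$. The first step is the analogue of what was done for $\Op$ in Proposition~\ref{prop:opener}. Since $\shift_S$ is order preserving and maps blocks to blocks, the maximum of each block is sent to the maximum of its image, so
\[\Cl(\pi)=\shift_S(\Cl(\tau))\cup\shift_{S^c}(\Cl(\theta)).\]
Encode $\tau$ by the binary word $c(\tau)$ of length $n$ whose $j$-th letter is $1$ exactly when $i_j\in\Cl(\tau)$, and define $c(\theta)$ and $c(\pi)$ in the same way. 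As $S$ ranges over all $n$-subsets, $c(\pi)$ ranges over the multiset $c(\tau)\shuffle c(\theta)$, with $S$ recording the positions of the letters of $c(\tau)$. Moreover, $\mathsf{tcl}$ is exactly the length of the maximal terminal run of $1$'s in this word.

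The second step is to show that the multiset of terminal-run lengths over $c(\tau)\shuffle c(\theta)$ depends only on $n$, $m$, $a=\mathsf{tcl}(\tau)$ and $b=\mathsf{tcl}(\theta)$. Write $c(\tau)=u\,0\,1^a$ when $a<n$, and $c(\tau)=1^n$ when $a=n$ (all blocks singletons); write $c(\theta)$ similarly. In a shuffle, the terminal run of $1$'s is read from the right. It consists of some suffix of $1^a$ interleaved with some suffix of $1^b$, and it stops at the first letter that is the distinguished $0$ of either word, or at the beginning of the word. Whether and where it stops is decided entirely by the positions of those two distinguished $0$'s in the shuffle, and the letters of $u$ never matter. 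Hence the number of shuffles with terminal run $a'+b'$ is a count of lattice paths: interleave the last $a+1$ letters of $c(\tau)$ with the last $b+1$ letters of $c(\theta)$, then multiply by the number of ways to place the remaining letters. This is a binomial-coefficient expression in $n,m,a,b$ alone, with the boundary cases $a=n$ or $b=m$ treated separately.

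The only real point of care is this bookkeeping in the last step, in particular the degenerate cases where one partition consists only of singletons and has no distinguished $0$. I expect no genuine obstacle. An alternative is to map $c(\tau)\mapsto 1^{n-a-1}0\,1^a$: this has the same length and terminal run, so it has the same $\mathsf{tcl}$ multiset under shuffling, and it reduces everything to a single canonical pair of words.
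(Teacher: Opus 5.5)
Your proposal is correct and is essentially the paper's argument: the paper also notes that $\mathsf{tcl}$ of the arc-shuffle indexed by $X$ is the number of elements of $I\cup J$ exceeding $\max(x_{n-k_\tau},y_{m-k_\theta})$. That maximum is the later of the two last non-closers (your distinguished $0$'s), and the convention $x_0=y_0=0$ covers your all-singletons case. Your binary-word encoding and explicit check of condition (2) are harmless additions, and the lattice-path count is unnecessary: $\mathsf{tcl}$ of each shuffle is already a function of $S,n,m,a,b$ alone, and in that count the number of ways to place the remaining letters depends on the chosen interleaving rather than being a uniform factor.
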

\begin{proof}
   Consider disjoint set partitions $\tau \vdash I$ with $|I|=n$ and $\theta \vdash J$ with $|J|=m$ where $\tcl(\tau)=k_{\tau}$ and $\tcl(\theta) = k_{\theta}$. Let $\pi$ be an arc-shuffle of $\tau$ and $\theta$ corresponding to the set $X = \{x_1 < \cdots < x_n\} \sqcup Y = \{y_1 < \cdots < y_m\} = I \sqcup J$ where $\pi = \shift_X(\tau) \cup \shift_Y(\theta)$. Let $x_0 =0$ and $y_0 = 0$. 
   Then, \[\tcl(\pi) = \begin{cases}
       |\{i \in I \cup J : i > x_{n-k_{\tau}})\}| & \text{ if }x_{n-k_{\tau}} \geq y_{m-k_{\theta}}\\
       |\{i \in I \cup J : i > y_{m-k_{\theta}}\}| & \text{ if }x_{n-k_{\tau}} < y_{m-k_{\theta}}
   \end{cases}.\]
It follows that we can determine $\{\{ \tcl(\pi) : \pi \in \tau \shuffle \theta \}\}$ completely from $\tcl(\tau), \tcl(\theta), |\tau|,$ and $|\theta|$.
\end{proof}
\begin{theorem} (Shuffle-compatibility of the number of occurrences of the pattern $12$)
\begin{enumerate}
    \item The number of occurrences of the pattern $12$ statistic $\occ_{12}$ is shuffle-compatible on set partitions.
    \item The linear map on $\mathcal{A}_{\occ_{12}}^{\Psi}$ defined by \[[\pi]_{\occ_{12}} \mapsto \frac{1}{|\pi|!}q^{\occ_{12}(\pi)}x^{|\pi|},\] is an algebra isomorphism from $\mathcal{A}_{\occ_{12}}^{\Psi}$ to the span of $\{1\} \cup \{q^{\binom{\lambda_1}{2}}q^{\binom{\lambda_2}{2}}\cdots q^{\binom{\lambda_{\ell}}{2}}x^n\}_{1 \leq n, \lambda = (\lambda_1, \ldots, \lambda_{\ell}) \vdash n, }$, a subalgebra of $\mathbb{Q}[q,x]$.
\end{enumerate}
\end{theorem}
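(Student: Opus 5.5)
The plan is to reduce everything to the block sizes statistic, as in the corollaries following Theorem~\ref{thm:bk}. The basic observation is that $\occ_{12}(\pi) = \sum_{B \in \pi} \binom{|B|}{2}$. Indeed, a pair $(i,j)$ with $i<j$ is counted exactly when $i$ and $j$ lie in a common block, and each block $B$ contributes $\binom{|B|}{2}$ such pairs. So $\occ_{12}$ is a function of $\Bk(\pi)$ alone, and hence invariant under standardization. This gives condition (2) of shuffle-compatibility.

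For condition (1), I will use the fact from the proof of Theorem~\ref{thm:bk}: for disjoint $\tau \vdash I$, $\theta \vdash J$ and every $\pi \in \tau \mix \theta$, we have $\Bk(\pi) = \Bk(\tau)\cup\Bk(\theta)$. Therefore $\occ_{12}(\pi) = \occ_{12}(\tau)+\occ_{12}(\theta)$ for every $\pi$ in the arc-shuffle. The multiset $\{\{\occ_{12}(\pi) : \pi \in \tau \mix \theta\}\}$ is then just $\binom{n+m}{n}$ copies of $\occ_{12}(\tau)+\occ_{12}(\theta)$, where $n=|\tau|$ and $m=|\theta|$. This depends only on the allowed data, which proves part (1).

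For part (2), define $\varsigma([\pi]_{\occ_{12}}) = \frac{1}{|\pi|!}q^{\occ_{12}(\pi)}x^{|\pi|}$. It is well defined and injective, since classes are determined by the pair $(|\pi|, \occ_{12}(\pi))$, and distinct pairs give distinct monomials. The multiplicativity computation mirrors Theorem~\ref{thm:bk}:
\[\varsigma([\tau][\theta]) = \binom{n+m}{n}\frac{1}{(n+m)!}q^{\occ_{12}(\tau)+\occ_{12}(\theta)}x^{n+m} = \Big(\frac{1}{n!}q^{\occ_{12}(\tau)}x^n\Big)\Big(\frac{1}{m!}q^{\occ_{12}(\theta)}x^m\Big).\]

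It remains to identify the image. The attainable values of $\occ_{12}$ on $\Psi_n$ are exactly $\sum_i \binom{\lambda_i}{2}$ for $\lambda \vdash n$, because every integer partition occurs as $\lambda(\Bk(\pi))$ for some $\pi$. So the image is spanned by the stated monomials (together with $1$ for the empty partition), and this span is closed under multiplication since it is the image of an algebra map. Part (2) follows.

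The only step needing care is the bookkeeping in the image. Different $\lambda\vdash n$ may give the same exponent, so the listed spanning set has repetitions, but this does not affect the span. The factor $\frac{1}{|\pi|!}$ is exactly what absorbs the binomial coefficient $\binom{n+m}{n}$ in the product. I expect no real obstacle beyond this.
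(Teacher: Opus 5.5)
Your proof is correct and follows essentially the same route as the paper: additivity $\occ_{12}(\pi)=\occ_{12}(\tau)+\occ_{12}(\theta)$ because the arc-shuffle never merges blocks, the factor $\frac{1}{|\pi|!}$ absorbing $\binom{n+m}{n}$, and the identity $\occ_{12}(\pi)=\sum_{B}\binom{|B|}{2}$ to describe the image via integer partitions. The differences are cosmetic: you derive additivity from $\Bk(\pi)=\Bk(\tau)\cup\Bk(\theta)$, and you explicitly check the standardization condition and the closure of the image under multiplication, which the paper leaves implicit.
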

\begin{proof}
    Since the relative order of the entries in $\tau$ and $\theta$ is perserved when shuffling $\tau, \theta \in \Psi$, and no blocks between the two are merged, the number of occurences of the pattern $12$ in any shuffle of $\tau$ and $\theta$ is $\occ_{12}(\tau)+\occ_{12}(\theta)$. Thus, the statistic is shuffle-compatible on set partitions.
    
    Let $\pi \in \Psi_n$ have blocks $B_1, B_2, \ldots, B_k$.  The number of instances of the pattern $12$ in $\pi$ is $\binom{|B_i|}{2}$ for each block $B_i$. Thus, the possible values of $\occ_{12}$ over all partitions in $\Psi_n$ are given by $\binom{\lambda_1}{2}+\binom{\lambda_2}{2} + \cdots + \binom{\lambda_{\ell}}{2}$ for all $\lambda \vdash n$.  This establishes that the proposed map is a bijection, and thus an isomorphism given its additive nature described above.
\end{proof}
\begin{theorem} (Shuffle-compatibility of the number of occurrences of the pattern $123$)
\begin{enumerate}
    \item The number of occurrences of the pattern $123$ statistic $\occ_{123}$ is shuffle-compatible on set partitions.
    \item The linear map on $\mathcal{A}_{\occ_{123}}^{\Psi}$ defined by \[[\pi]_{\occ_{123}} \mapsto \frac{1}{|\pi|!}q^{\occ_{123}(\pi)}x^{|\pi|},\] is an algebra isomorphism from $\mathcal{A}_{\occ_{123}}^{\Psi}$ to the span of $\{1\} \cup \{q^{\binom{\lambda_1}{3}}q^{\binom{\lambda_2}{3}}\cdots q^{\binom{\lambda_{\ell}}{3}}x^n\}_{1 \leq n, \lambda = (\lambda_1, \ldots, \lambda_{\ell}) \vdash n, }$, a subalgebra of $\mathbb{Q}[q,x]$.
\end{enumerate}
\end{theorem}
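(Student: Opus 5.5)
The plan mirrors the argument for $\occ_{12}$. First I would show that $\occ_{123}$ is additive under the arc-shuffle. Let $\tau\vdash I$ and $\theta\vdash J$ be disjoint, and let $\pi=\shift_S(\tau)\cup\shift_{S^c}(\theta)\in\tau\mix\theta$. The maps $\shift_S$ and $\shift_{S^c}$ are order-preserving bijections, and each block of $\pi$ is the image of a block of $\tau$ or of $\theta$; no blocks merge.

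A triple $i<j<k$ lying in a common block of $\pi$ therefore lies entirely in $S$ or entirely in $S^c$. Pulling it back gives an increasing triple in a common block of $\tau$, respectively $\theta$, and conversely. Hence
\[\occ_{123}(\pi)=\occ_{123}(\tau)+\occ_{123}(\theta)\]
for every $\pi\in\tau\mix\theta$.

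This settles condition (1): the multiset $\{\{\occ_{123}(\pi):\pi\in\tau\mix\theta\}\}$ consists of $\binom{|\tau|+|\theta|}{|\tau|}$ copies of $\occ_{123}(\tau)+\occ_{123}(\theta)$. Condition (2) holds because $\occ_{123}(\pi)=\sum_{B}\binom{|B|}{3}$ depends only on block sizes, which are preserved by $\std$.

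For part (2), I would identify the basis. Since $\occ_{123}(\pi)=\sum_B\binom{|B|}{3}$, the classes in $\Psi_n$ are indexed by the distinct values $\sum_i\binom{\lambda_i}{3}$ as $\lambda$ ranges over partitions of $n$. The map $[\pi]\mapsto \frac{1}{n!}q^{\occ_{123}(\pi)}x^n$ is then a bijection from $\Psi/\occ_{123}$ onto the monomial basis of the stated span. Different values of $\occ_{123}$, or of $n$, give different monomials, and a set of distinct monomials is linearly independent.

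Multiplicativity follows from the additivity above:
\[\varsigma([\tau][\theta])=\binom{n+m}{n}\frac{1}{(n+m)!}q^{\occ_{123}(\tau)+\occ_{123}(\theta)}x^{n+m}=\varsigma([\tau])\varsigma([\theta]).\]

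Finally, the image is closed under multiplication: concatenating partitions $\lambda\vdash n$ and $\mu\vdash m$ gives a partition of $n+m$ whose exponent is the sum of the two exponents. So the span is indeed a subalgebra of $\mathbb{Q}[q,x]$ and $\varsigma$ is an isomorphism onto it.

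I expect no real obstacle. The one delicate point is bookkeeping: several partitions $\lambda$ can give the same exponent (for instance, all $\lambda$ with parts $\le 2$ give exponent $0$). The spanning set must therefore be read as a set of monomials, not as a family indexed by $\lambda$, and bijectivity should be checked against the distinct values.
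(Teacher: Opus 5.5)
Your proposal is correct and follows essentially the same route as the paper. Both arguments first show additivity, $\occ_{123}(\pi)=\occ_{123}(\tau)+\occ_{123}(\theta)$ for every arc-shuffle, since relative order is preserved and no blocks merge. Both then use $\occ_{123}=\sum_B\binom{|B|}{3}$ to identify the attainable values and obtain bijectivity, with multiplicativity following from additivity. Your extra checks fill in details the paper leaves implicit: the standardization condition, closure of the span under multiplication, and reading the spanning family as a set of distinct monomials.
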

\begin{proof}
 Since the relative order of the entries in $\tau$ and $\theta$ is perserved when shuffling $\tau, \theta \in \Psi$, and no blocks between the two are merged, the number of occurences of the pattern $123$ in any shuffle of $\tau$ and $\theta$ is $\occ_{123}(\tau)+\occ_{123}(\theta)$. Thus, the statistic is shuffle-compatible on set partitions.
 
    Let $\pi \in \Psi_n$ have blocks $B_1, B_2, \ldots, B_k$.  The number of instances of the pattern $123$ in $\pi$ is $\binom{|B_i|}{3}$ for each block $B_i$. Thus, the possible values of $\occ_{123}$ over all partitions in $\Psi_n$ are given by $\binom{\lambda_1}{3}+\binom{\lambda_2}{3} + \cdots + \binom{\lambda_{\ell}}{3}$ for all $\lambda \vdash n$.  This establishes that the proposed map is a bijection, and thus an isomorphism given its additive nature described above.
\end{proof}
\begin{proposition}
The number of occurrences of the pattern $1/2$ statistic $\occ_{1/2}$ is shuffle-compatible on set partitions.
\end{proposition}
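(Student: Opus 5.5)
The key observation is that $\occ_{1/2}$ is a function of the block sizes and the size alone. For $\pi \in \Psi_n$ with blocks $B_1, \ldots, B_k$, the pairs $(i,j)$ with $i<j$ lying in different blocks are exactly the $\binom{n}{2}$ pairs of elements minus those lying in a common block. Hence
\[\occ_{1/2}(\pi) = \binom{n}{2} - \sum_{h=1}^{k} \binom{|B_h|}{2} = \binom{n}{2} - \occ_{12}(\pi).\]
So I will reduce the claim to the shuffle-compatibility of $\occ_{12}$ proved in the preceding theorem, rather than track blocks directly.

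First I check condition (2) of shuffle-compatibility. Standardization preserves block sizes and $|\pi|$, so $\std(\tau)=\std(\theta)$ implies $\occ_{1/2}(\tau)=\occ_{1/2}(\theta)$. This also follows from the formula above, since the right-hand side depends only on $|\pi|$ and $\Bk(\pi)$.

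For condition (1), take disjoint $\tau \in \Psi_n$ and $\theta \in \Psi_m$, and let $\pi \in \tau \mix \theta$. As observed in the proof of Theorem \ref{thm:bk}, $\Bk(\pi) = \Bk(\tau) \cup \Bk(\theta)$. Equivalently, by the argument in the $\occ_{12}$ theorem, $\occ_{12}(\pi) = \occ_{12}(\tau)+\occ_{12}(\theta)$. Therefore
\[\occ_{1/2}(\pi) = \binom{n+m}{2} - \occ_{12}(\tau) - \occ_{12}(\theta) = \occ_{1/2}(\tau) + \occ_{1/2}(\theta) + nm,\]
where the last step uses $\binom{n+m}{2} - \binom{n}{2} - \binom{m}{2} = nm$. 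Thus every one of the $\binom{n+m}{n}$ arc-shuffles has the same value of $\occ_{1/2}$. This value is determined by $\occ_{1/2}(\tau)$, $\occ_{1/2}(\theta)$, $n$ and $m$, so the multiset $\{\{\occ_{1/2}(\pi) : \pi \in \tau \mix \theta\}\}$ is as well.

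A direct check confirms the count $nm$: every pair consisting of one element from the $\tau$ part and one from the $\theta$ part lies in different blocks, and it contributes exactly once, oriented by its relative order. Pairs within $\tau$ or within $\theta$ keep their relative order and their block membership under $\shift_S$ and $\shift_{S^c}$. The only step needing care is this verification that the shift maps preserve both the order and the block structure within each component. This holds because $\shift_S$ is order-preserving and acts on elements only, not on blocks.
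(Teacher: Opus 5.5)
Your proof is correct and is essentially the paper's argument. The paper observes directly that pairs within $\tau$ or within $\theta$ keep their status under the shift, while all $nm$ cross pairs lie in different blocks, which gives $\occ_{1/2}(\pi)=\occ_{1/2}(\tau)+\occ_{1/2}(\theta)+nm$; this is exactly your final ``direct check,'' and routing it first through $\occ_{1/2}=\binom{n}{2}-\occ_{12}$ and the additivity of $\occ_{12}$ is just a repackaging of the same count (your explicit check of condition (2) fills in a point the paper leaves implicit).
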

\begin{proof}
    Consider $\tau \in \Psi_n$ and $\theta \in \Psi_m$ with $\tau \vdash I$ and $\theta \vdash J$.
    Consider the shuffle $\pi \in \tau \shuffle \theta$ with $\pi = \shift_S(\tau) \cup \shift_{(I \cup J) \setminus S}(\theta)$ for a set $S \subseteq (I \cup J)$ with $|S|=n$.
    Given that the relative order of the elements in $\tau$ and $\theta$ is maintained, $\pi$ will have $\occ_{1/2}(\tau)+\occ_{1/2}(\theta)$ instances of the pattern $1/2$ where both elements are coming either from $\tau$ or from $\theta$. It remains to count the instances of $1/2$ where the elements are in separate blocks. Each pair of elements $(i,j)$ with $i\in S$ and $j \not \in S$ will be in separate blocks and thus be an instance of $1/2$. There are $nm$ such pairs, so total $\occ_{1/2}(\pi) = \occ_{1/2}(\tau)+\occ_{1/2}(\theta)+nm$.  Given this determination, $\occ_{1/2}$ is shuffle-compatible on set partitions.
\end{proof}

\appendix
\section{Non-shuffle-compatible statistics}\label{sec:appendix}

\subsection{Parking Functions}\label{ap:pf}  Table \ref{tab:ncpfstats} lists examples of statistics that are not weakly shuffle-compatible on parking functions. Note that these statistics cannot be shuffle-compatible on words (if defined).

\begin{table}[h]
    \centering
    \begin{tabular}{|c|c|c|} \hline
    \textbf{Name}  & \textbf{Reference} & \textbf{Counterexample} \\ \hline
    Peak set &   & $(1)\shifted (1,1,1)$ and $(1) \shifted (3,2,1)$  \\ \hline
    Number of peaks  &  St000023 & $(1)\shifted (1,1,1)$ and $(1) \shifted (3,2,1)$ \\ \hline
        Left peak set & & $(1)\shifted (1,1,1)$ and $(1) \shifted (1,2,3)$ \\ \hline
        Right peak set & & $(1)\shifted (1,1,1)$ and $(1) \shifted (3,2,1)$  \\ \hline
        Exterior peak set & & $(1)\shifted (1,1,1)$ and $(1) \shifted (1,2,2)$ \\ \hline
         Number of fixed points & St001903  & $(1)\shifted (1,3,2)$ and $(1) \shifted (3,2,1)$  \\ \hline
        Number of excedances & St000155 & $(1) \shifted (1,3,2)$ and $(1)\shifted (3,1,2)$ \\ \hline
   dinv & St000136 & $(1,1) \shifted(1,1)$ and $(1,1)\shifted(2,1)$ \\ \hline
        Primary dinv &   St000194 & $(1,1) \shifted (1)$ and $(2,1)\shifted(1)$ \\  \hline
        Secondary dinv &  	
St000195 & $(1,1) \shifted (1)$ and $(2,1)\shifted(1)$ \\ \hline
Number of critical L-to-R maxima &  St000942 & $(1,3,1) \shifted(1)$ and $(1,2,2)\shifted(1)$ \\ \hline
pmaj & St001209 & $(3,1,2) \shifted (1)$ and $(1,3,2) \shifted (1)$ \\ \hline
Number of preferences less than indices &  St001905 & $(1,1,3) \shifted (1)$ and $(1,2,2) \shifted (1)$ \\ \hline
   Size of center & St001937 & $(1) \shifted (1,3,2)$ and $(1)\shifted (3,1,2)$  \\ \hline
   Mak & St000794 & $(1) \shifted (2,1,3)$ and $(1) \shifted (3,1,2)$ \\ \hline 
   Mad & St000795 &  $(1) \shifted (1,3,2)$ and $(1) \shifted (2,1,3)$ \\ \hline
   Denert's index & St000156 & $(1) \shifted (2,1,3)$ and $(1) \shifted (3,1,2)$ \\ \hline
   Number of up-down runs & St000638 & $(1) \shifted (2,3,1)$ and $(1) \shifted (1,1,2)$ \\ \hline
    \end{tabular}
    \caption{Non-weakly-shuffle-compatible statistics on parking functions}
    
    \label{tab:ncpfstats}
\end{table}

\subsection{Set Partitions} Table \ref{tab:ncsp} lists examples of statistics that are not shuffle-compatible (although they may be weakly shuffle-compatible) on set partitions. Tables \ref{tab:nwcsp} and \ref{tab:ncpo} list examples of statistics that are not weakly shuffle-compatible on set partitions, and thus also not shuffle-compatible on set partitions.

\begin{table}[h!]
    \centering
    \begin{tabular}{|c|c|} \hline
    \textbf{Name}  & \textbf{Counterexample} \\ \hline
    Succession set & $12 \mix 3/4$ and $12 \mix 5/6$ \\ \hline
    Succession number    & $13 \mix 24$ and $15 \mix 37$ \\ \hline
    Set of openers   &  $1 \mix 23 $ and $1 \mix 24$  \\ \hline
        Set of closers   & $1 \mix 34$ and $1 \mix 24$ \\ \hline
        Set of singletons   & $1 \mix 23$ and $1 \mix 24$ \\ \hline
        Set of transients  & $1 \mix 245$ and $1 \mix 345$ \\ \hline   
        Biggest entry in the block containing 1 & $2 \mix 14/3$ and $2 \mix 14/5$ \\ \hline
    \end{tabular}
    \caption{Non-shuffle-compatible statistics on set partitions}
    \label{tab:ncsp}
\end{table}

\newpage
\begin{table}[h!]
    \centering
    \begin{tabular}{|c|c|c|c|} \hline
    \textbf{Name} & \textbf{FindStat} & \textbf{Counterexample} \\ \hline
        Size of orbit &  St000163 & $1 \smix 1/2$ and $1 \smix 12$  \\ \hline
        Dimension index & St000229  & $1 \smix 1/2$ and $1 \smix 12$ \\ \hline
       Sum of minimal elements & St000230 & $1 \smix 1/2/345$ and $1 \smix 1234/5$  \\ \hline
       Sum of maximal elements & St000231 & $1 \smix 12/3/456$  and $1 \smix 12345/6$\\ \hline
        Number of crossings & St000232 & $12 \smix 1/2$ and $12 \smix 12 $\\ \hline
        Number of nestings  & St000233 & $12 \smix 1/2$ and $12 \smix 12 $ \\ \hline
       Number of antisingletons & St000248 & $1 \smix 12/3$ and $1 \smix 13/2$ \\ \hline
       Number of singletons plus antisingletons & St000249 & $1 \smix 12/3$ and $1 \smix 13/2$ \\ \hline 
       Number of blocks plus antisingletons & St000250 & $1 \smix 12/3$ and $1 \smix 13/2$ \\ \hline
     Crossing number & St000253 & $1/2 \smix 12$ and $12 \smix 12$ \\ \hline
     Nesting number & St000254 & $1/2 \smix 12$ and $12 \smix 12$ \\ \hline
    Intertwining number & St000490 & $1 \smix 13/24$ and $1 \smix 12/3/4$\\ \hline
        ros (right opener smaller), inversion number & St000491 & $1 \smix 1/2$ and $1 \smix 12$ \\ \hline
        rob (right opener bigger) &  St000492 & $1 \smix 123/4$ and $ 1 \smix 14/2/3$ \\ \hline
          
        los (left opener smaller) &  St000493 & $1 \smix 13/2/4 $ and $ 1 \smix 1/234$ \\ \hline
        rcs (right closer smaller) & St000496  & $1 \smix 1/2$ and $1 \smix 12$ \\ \hline
        lcb (left closer bigger) & St000497  & $1 \smix 1/2$ and $1 \smix 12$ \\ \hline
        lcs (left closer smaller) & St000498 & $1 \smix 1234$ and $1 \smix 124/3$ \\ \hline
        rcb (right closer bigger) & St000499  & $1 \smix 123/4$ and $ 1 \smix 14/2/3$ \\ \hline
        Maximal difference in one block & St000503 & $1 \smix 13/24$ and $1 \smix 123/4$   \\ \hline 
           Sagan's major index  & St000565 & $1 \smix 1/2$ and $1 \smix 12$ \\ \hline
           Number of internal points & St000562 & $1 \smix 123$ and $1 \smix 13/2$ \\ \hline
        Number of overlapping pairs of blocks  & St000563 & $12 \smix 1/2$ and $12 \smix 12$ \\ \hline
        Dimension exponents & St000572 & $1 \smix 12$ and $1 \smix 1/2$ \\ \hline
        Number of blocks in first part of atomic decomposition & St000695 & $1 \smix 12$ and $1 \smix 1/2$ \\ \hline
        Dimension & St000728 & $12 \smix 12/34 $ and $12 \smix 13/2/4$\\ \hline
        Minimal arc length & St000729 & $ 1 \smix 123$ and $1 \smix 12/3$ \\ \hline
         Maximal arc length & St000730 & $ 1 \smix 123$ and $1 \smix 12/3$ \\ \hline
         Variant of major index & St000747 & $1 \smix 12$ and $1 \smix 1/2$ \\ \hline
         Major index of permutation obtained by flattening & St000748 & $1 \smix 12$ and $1 \smix 1/2$ \\ \hline
         Length of the longest partition in vacillating tableau & St000793 & $1 \smix 13/2$ and $1 \smix 1/2/3$ \\ \hline  
        Number of unsplittable factors & St000823 & $1 \smix 123$ and $1 \smix 1/23$ \\ \hline
        Number of topologically connected components & St000925 & $12 \smix 12/3$ and $12 \smix 13/2$  \\ \hline  
        Depth of label 1 in associated tree & St001051 & $1 \smix 123$ and $1 \smix 13/2$ \\ \hline
        Depth index  & St001094 &  $12 \smix 12/34 $ and $ 12 \smix 123/4$\\ \hline
        Number of blocks with odd minimum & St001151 & $1 \smix 12$ and $1 \smix 1/2$ \\ \hline
        Number of blocks with even minimum & St001153 & $1 \smix 123$ and $1 \smix 12/3$ \\ \hline
        Number of ascent tops with all smaller elements before & St001641 & $1 \smix 12$ and $1 \smix 1/2$  \\ \hline
        Excess length of longest path in associated graph & St001693 & $1 \smix 123$ and $1 \smix 12/3$ \\ \hline
        Interlacing number & St001781 & $1 \smix 12 $ and $1 \smix 1/2$ \\ \hline
        Min of smallest closer, second element in block with 1 & 	
St001784  & $12 \smix 1/2/3$ and $12 \smix 1/23$ \\ \hline
        Number of Mahonian exceedances  & St001839 & $1 \smix 1/2$ and $1 \smix 12$ \\ \hline
        Number of Mahonian descents  & St001840 & $1 \smix 1/2$ and $1 \smix 12$ \\ \hline
        Number of Mahonian inversions  & St001841 & $1 \smix 1/2$ and $1 \smix 12$ \\ \hline
        Mahonian major index & St001842 & $1 \smix 123$ and $1 \smix 12/3$ \\ \hline
        Z-index & St001843 & $ 1 \smix 1/2$ and $1 \smix 12$ \\ \hline 
     
    \end{tabular}
    \caption{Non-weakly-shuffle-compatible statistics on set partitions}
    \label{tab:nwcsp}
\end{table}

\begin{table}[H]
    \centering
    \begin{tabular}{|c|c|c|} \hline
       \textbf{Number of occurrences of the pattern...}  & \textbf{Findstat} & \textbf{Counterexample}\\ \hline
        12/3 & St000554 & $12 \smix 1/2$ and $1/2 \smix 1/2$ \\ \hline
         13/2 & St000555 & $12 \smix 1/2$ and $1/2 \smix 1/2$ \\ \hline
        1/23 & St000556 & $12 \smix 1/2$ and $1/2 \smix 1/2$ \\ \hline
        1/2/3 & St000557 & $12 \smix 1/2$ and $1/2 \smix 1/2$ \\ \hline
         13/24 & St000559 &   $12 \smix 12$ and $1/2 \smix 1/2$  \\ \hline
         12/34 & St000560 & $12 \smix 12$ and $1/2 \smix 1/2$ \\ \hline
         1/2 such that 1 singleton, 2 closer & St000573 & $1 \smix 12/3$ and $1 \smix 123$\\ \hline
         1/2 such that 1 opener, 2 closer & St000574 & $1 \smix 15/23/4$ and $1 \smix 145/2/3$  \\ \hline
        1/2 such that 1 closer, 2 singleton & St000575 & $1 \smix 1/2/34$ and $1 \smix 123/4$  \\ \hline
       1/2 such that 1 closer, 2 opener  & St000576 & $ 1 \smix 1234$ and $1 \smix 134/2$ \\ \hline
        1/2 such that 1 closer & St000577 & $1 \smix 1/234$ and $1 \smix 14/2/3$ \\ \hline
        1/2 such that 1 singleton & St000578 & $1 \smix 12/3/4$ and $1 \smix 124/3$\\ \hline
       1/2 such that 2 closer & St000579 & $1 \smix 1/2/34$ and $1 \smix 123/4$ \\ \hline
        1/2/3 such that 2 opener, 3 closer & St000580 &  $12 \smix 12$ and $1/2 \smix 1/2$ \\ \hline
        13/2 such that 1 opener, 2 closer & St000581 & $12 \smix 12$ and $1/2 \smix 1/2$ \\ \hline
        13/2 with 1 opener, 3 closer, (1,3) consecutive in block  & St000582 & $12 \smix 12$ and $1/2 \smix 1/2$ \\ \hline
       1/2/3 with 3 opener, 1,2 closers & St000583 & $12 \smix 12$ and $1/2 \smix 1/2$ \\ \hline
        1/2/3 with 1 opener, 3 closer & St000584 & $12 \smix 12$ and $1/2 \smix 1/2$ \\ \hline
        13/2 with 2 closer, (1,3) consecutive in block  & St000585 & $12 \smix 12$ and $1/2 \smix 1/2$ \\ \hline
        1/23 such that 2 opener & St000586 & $12 \smix 12$ and $1/2 \smix 1/2$ \\ \hline
        1/2/3 such that 1 opener& St000587 & $12 \smix 12$ and $1/2 \smix 1/2$ \\ \hline
        1/2/3 such that 1,3 openers, 2 closer & St000588 & $12 \smix 12$ and $1/2 \smix 1/2$ \\ \hline
        1/23 with 1 closer, (2,3) consecutive in block& St000589 & $12 \smix 12$ and $1/2 \smix 1/2$ \\ \hline
       1/23 with 2 opener, (2,3) consecutive in block  & St000590 & $12 \smix 12$ and $1/2 \smix 1/2$ \\ \hline
        1/2/3 such that 2 closer & St000591 & $12 \smix 12$ and $1/2 \smix 1/2$ \\ \hline
        1/2/3  such that 1 closer & St000592 & $12 \smix 12$ and $1/2 \smix 1/2$ \\ \hline
       1/2/3 such that 1,2 openers & St000593 & $12 \smix 12$ and $1/2 \smix 1/2$ \\ \hline
         13/2 with 1,2 openers, (1,3) consecutive in block & St000594 & $12 \smix 12$ and $1/2 \smix 1/2$ \\ \hline
       1/23 such that 1 opener & St000595 & $12 \smix 12$ and $1/2 \smix 1/2$ \\ \hline
        1/2/3 such that 3 opener, 1 closer& St000596 & $12 \smix 12$ and $1/2 \smix 1/2$ \\ \hline
        1/23 with 2 opener, (2,3) consecutive in block & St000597 & $12 \smix 12$ and $1/2 \smix 1/2$ \\ \hline
        1/23 with 1,2 openers, 3 closer, (2,3) consecutive in block& St000598 & $12 \smix 12$ and $1/2 \smix 1/2$ \\ \hline
        1/23 with (2,3) consecutive in block & St000599 & $12 \smix 12$ and $1/2 \smix 1/2$ \\ \hline
        13/2 with 1 opener, (1,3) consecutive in block & St000600 & $12 \smix 12$ and $1/2 \smix 1/2$ \\ \hline
        1/23 with 1,2 openers, (2,3) consecutive & St000601 & $12 \smix 12$ and $1/2 \smix 1/2$ \\ \hline
        13/2 such that 1 opener & St000602 & $12 \smix 12$ and $1/2 \smix 1/2$ \\ \hline
        1/2/3 such that 2,3 openers & St000603 & $12 \smix 12$ and $1/2 \smix 1/2$ \\ \hline
       1/2/3 such that 3 opener, 2 closer & St000604 & $12 \smix 12$ and $1/2 \smix 1/2$ \\ \hline
       1/23 with 3 closer, (2,3) consecutive in block  & St000605 & $12 \smix 12$ and $1/2 \smix 1/2$ \\ \hline
        1/23 with 1,3 closers, (2,3) consecutive in block & St000606 & $12 \smix 12$ and $1/2 \smix 1/2$ \\ \hline
        1/23 with 2 opener, 3 closer, (2,3) consecutive in block & St000607 & $12 \smix 12$ and $1/2 \smix 1/2$ \\ \hline
       1/2/3 such that 1,2 openers, 3 closer & St000608 & $12 \smix 12$ and $1/2 \smix 1/2$ \\ \hline
        1/23 such that 1,2 openers & St000609 & $12 \smix 12$ and $1/2 \smix 1/2$ \\ \hline
        13/2 such that 2 closer & St000610 & $12 \smix 12$ and $1/2 \smix 1/2$ \\ \hline
        1/23 such that 1 closer & St000611 & $12 \smix 12$ and $1/2 \smix 1/2$ \\ \hline
        1/23 with 1 opener, (2,3) consecutive in block & St000612 & $12 \smix 12$ and $1/2 \smix 1/2$ \\ \hline
        13/2 with 2 opener, 3 closer, (1,3) consecutive in block & St000613 & $12 \smix 12$ and $1/2 \smix 1/2$ \\ \hline
        1/23 with 1 opener, 3 closer, (2,3) consecutive in block & St000614 & $12 \smix 12$ and $1/2 \smix 1/2$ \\ \hline
        1/2/3 such that 1,3 closers & St000615 & $12 \smix 12$ and $1/2 \smix 1/2$ \\ \hline      
    \end{tabular}
    \caption{Non-weakly-shuffle-compatible pattern-occurrence statistics on set partitions}
    \label{tab:ncpo}
\end{table}

\newpage
\printbibliography

@article {Ges18,
     AUTHOR = {Gessel, Ira M. and Zhuang, Yan},
     TITLE = {Shuffle-compatible permutation statistics},
   JOURNAL = {Adv. Math.},
  FJOURNAL = {Advances in Mathematics},
    VOLUME = {332},
      YEAR = {2018},
     PAGES = {85--141},
      ISSN = {0001-8708,1090-2082},
   MRCLASS = {05A05 (05A15 05E05 16T30)},
  MRNUMBER = {3810249},
MRREVIEWER = {Sam\ Hopkins},
       DOI = {10.1016/j.aim.2018.05.003},
       URL = {https://doi.org/10.1016/j.aim.2018.05.003},
}

@article{Mona,
    AUTHOR = {B\'{o}na, Mikl\'{o}s},
     TITLE = {Real zeros and normal distribution for statistics on
              {S}tirling permutations defined by {G}essel and {S}tanley},
   JOURNAL = {SIAM J. Discrete Math.},
  FJOURNAL = {SIAM Journal on Discrete Mathematics},
    VOLUME = {23},
      YEAR = {2008/09},
    NUMBER = {1},
     PAGES = {401--406},
      ISSN = {0895-4801,1095-7146},
   MRCLASS = {05A05 (05A15 05A16)},
  MRNUMBER = {2476838},
MRREVIEWER = {Daniel\ E.\ Warren},
       DOI = {10.1137/070702254},
       URL = {https://doi.org/10.1137/070702254},
}

@Inbook{Luoto2013,
author="Luoto, Kurt
and Mykytiuk, Stefan
and van Willigenburg, Stephanie",
title="Hopf algebras",
bookTitle="An Introduction to Quasisymmetric Schur Functions: Hopf Algebras, Quasisymmetric Functions, and Young Composition Tableaux",
year="2013",
publisher="Springer New York",
address="New York, NY",
pages="19--50",
isbn="978-1-4614-7300-8",
doi="10.1007/978-1-4614-7300-8_3",
url="https://doi.org/10.1007/978-1-4614-7300-8_3"
}

@article{cruz2024some,
    AUTHOR = {Cruz, Ari and Harris, Pamela E. and Harry, Kimberly J. and
              Kretschmann, Jan and McClinton, Matt and Moon, Alex and
              Museus, John O.},
     TITLE = {On some discrete statistics of parking functions},
   JOURNAL = {J. Integer Seq.},
  FJOURNAL = {Journal of Integer Sequences},
    VOLUME = {27},
      YEAR = {2024},
    NUMBER = {8},
     PAGES = {Art. 24.8.6, 34},
      ISSN = {1530-7638},
      ISBN = {},
   MRCLASS = {05A05 (05A15)},
  MRNUMBER = {4850096},
}

@article{kasraoui2006distribution,
  AUTHOR = {Kasraoui, Anisse and Zeng, Jiang},
     TITLE = {Distribution of crossings, nestings and alignments of two
              edges in matchings and partitions},
   JOURNAL = {Electron. J. Combin.},
  FJOURNAL = {Electronic Journal of Combinatorics},
    VOLUME = {13},
      YEAR = {2006},
    NUMBER = {1},
     PAGES = {Research Paper 33, 12},
      ISSN = {1077-8926},
   MRCLASS = {05A18},
  MRNUMBER = {2212506},
MRREVIEWER = {Martin\ Klazar},
       DOI = {10.37236/1059},
       URL = {https://doi.org/10.37236/1059},
}

@article{novelli2016binary,
  AUTHOR = {Novelli, Jean-Christophe and Thibon, Jean-Yves},
     TITLE = {Binary shuffle bases for quasi-symmetric functions},
   JOURNAL = {Ramanujan J.},
  FJOURNAL = {Ramanujan Journal. An International Journal Devoted to the
              Areas of Mathematics Influenced by Ramanujan},
    VOLUME = {40},
      YEAR = {2016},
    NUMBER = {1},
     PAGES = {207--225},
      ISSN = {1382-4090,1572-9303},
   MRCLASS = {05E05 (11M32 16T30)},
  MRNUMBER = {3486001},
MRREVIEWER = {Eric\ S.\ Egge},
       DOI = {10.1007/s11139-016-9777-1},
       URL = {https://doi.org/10.1007/s11139-016-9777-1},
}

@article{liu2023shufflebasesquasisymmetricpower,
    author = {Ricky Ini Liu and Michael Tang},
    title = {Shuffle Bases and Quasisymmetric Power Sums},
    journal = {Combinatorial Theory},
    year = {2026},
    volume ={6},
    number = {1},
    DOI = {10.5070/C66165696}
}

@article{gessel2004refinement,
   AUTHOR = {Gessel, Ira M. and Seo, Seunghyun},
     TITLE = {A refinement of {C}ayley's formula for trees},
   JOURNAL = {Electron. J. Combin.},
  FJOURNAL = {Electronic Journal of Combinatorics},
    VOLUME = {11},
      YEAR = {2004/06},
    NUMBER = {2},
     PAGES = {Research Paper 27, 23},
      ISSN = {1077-8926},
   MRCLASS = {05A15 (05C05)},
  MRNUMBER = {2224940},
MRREVIEWER = {Pavlo\ Pylyavskyy},
       DOI = {10.37236/1884},
       URL = {https://doi.org/10.37236/1884},
}

@article{COLARIC2021102129,
   AUTHOR = {Colaric, Emma and DeMuse, Ryan and Martin, Jeremy L. and Yin,
              Mei},
     TITLE = {Interval parking functions},
   JOURNAL = {Adv. in Appl. Math.},
  FJOURNAL = {Advances in Applied Mathematics},
    VOLUME = {123},
      YEAR = {2021},
     PAGES = {Paper No. 102129, 17},
      ISSN = {0196-8858,1090-2074},
   MRCLASS = {05E16 (05A05 06D99 20F55)},
  MRNUMBER = {4175420},
MRREVIEWER = {Christian\ Stump},
       DOI = {10.1016/j.aam.2020.102129},
       URL = {https://doi.org/10.1016/j.aam.2020.102129},
}

@misc{FindStat,
author        = {Martin Rubey and Christian Stump and others},
title         = {{FindStat} - {T}he combinatorial statistics database},
howpublished  = {\url{http://www.FindStat.org}},
url           = {http://www.FindStat.org},
note          = {Accessed: \today},
}

@article{durmic2023probabilistic,
     AUTHOR = {Durmi\'{c}, Irfan and Han, Alex and Harris, Pamela E. and
              Ribeiro, Rodrigo and Yin, Mei},
     TITLE = {Probabilistic parking functions},
   JOURNAL = {Electron. J. Combin.},
  FJOURNAL = {Electronic Journal of Combinatorics},
    VOLUME = {30},
      YEAR = {2023},
    NUMBER = {3},
     PAGES = {Paper No. 3.18, 25},
      ISSN = {1077-8926},
   MRCLASS = {05A19 (05A16 60C05)},
  MRNUMBER = {4626349},
MRREVIEWER = {Martin\ V.\ Hildebrand},
       DOI = {10.37236/11649},
       URL = {https://doi.org/10.37236/11649},
}

@article{stanleyPF,
author = {Stanley, Richard and Yin, Mei},
year = {2026},
month = {02},
pages = {},
title = {Some enumerative properties of parking functions},
volume = {5},
journal = {Combinatorial Theory},
doi = {10.5070/C65465679}
}

@inproceedings{Nov04,
  AUTHOR = {Novelli, Jean-Christophe and Thibon, Jean-Yves},
  TITLE = {A Hopf algebra of parking functions},
  BOOKTITLE = {Proceedings of the $16^{th}$ International Conference on Formal Power Series and Algebraic Combinatorics},
  YEAR = {2004},
  SERIES = {FPSAC '04, Vancouver, Canada},
}

@incollection {Yan15,
    AUTHOR = {Yan, Catherine H.},
     TITLE = {Parking functions},
 BOOKTITLE = {Handbook of enumerative combinatorics},
    SERIES = {Discrete Math. Appl. (Boca Raton)},
     PAGES = {835--893},
 PUBLISHER = {CRC Press, Boca Raton, FL},
      YEAR = {2015},
      ISBN = {978-1-4822-2085-8},
   MRCLASS = {05A15 (05C85)},
  MRNUMBER = {3409354},
}

@article {Ros06,
    AUTHOR = {Rosas, Mercedes H. and Sagan, Bruce E.},
     TITLE = {Symmetric functions in noncommuting variables},
   JOURNAL = {Trans. Amer. Math. Soc.},
  FJOURNAL = {Transactions of the American Mathematical Society},
    VOLUME = {358},
      YEAR = {2006},
    NUMBER = {1},
     PAGES = {215--232},
      ISSN = {0002-9947,1088-6850},
   MRCLASS = {05E05 (05A18 05E10)},
  MRNUMBER = {2171230},
MRREVIEWER = {Peter\ R. W. McNamara},
       DOI = {10.1090/S0002-9947-04-03623-2},
       URL = {https://doi.org/10.1090/S0002-9947-04-03623-2},
}

@article {Ber08,
   AUTHOR = {Bergeron, Nantel and Reutenauer, Christophe and Rosas,
              Mercedes and Zabrocki, Mike},
     TITLE = {Invariants and coinvariants of the symmetric groups in
              noncommuting variables},
   JOURNAL = {Canad. J. Math.},
  FJOURNAL = {Canadian Journal of Mathematics. Journal Canadien de
              Math\'{e}matiques},
    VOLUME = {60},
      YEAR = {2008},
    NUMBER = {2},
     PAGES = {266--296},
      ISSN = {0008-414X,1496-4279},
   MRCLASS = {16W30 (05A18 05E10)},
  MRNUMBER = {2398749},
MRREVIEWER = {Oleg\ V.\ Ogievetsky},
       DOI = {10.4153/CJM-2008-013-4},
       URL = {https://doi.org/10.4153/CJM-2008-013-4},
}

@article{bakerjarvis2019bijectiveproofsshufflecompatibility,
      AUTHOR = {Baker-Jarvis, Duff and Sagan, Bruce E.},
     TITLE = {Bijective proofs of shuffle compatibility results},
   JOURNAL = {Adv. in Appl. Math.},
  FJOURNAL = {Advances in Applied Mathematics},
    VOLUME = {113},
      YEAR = {2020},
     PAGES = {101973, 29},
      ISSN = {0196-8858,1090-2074},
   MRCLASS = {05A05 (05A19)},
  MRNUMBER = {4032316},
MRREVIEWER = {David\ Callan},
       DOI = {10.1016/j.aam.2019.101973},
       URL = {https://doi.org/10.1016/j.aam.2019.101973},
}

@article{liang2023cyclicshufflecompatibilitycyclicshuffle,
       AUTHOR = {Liang, Jinting and Sagan, Bruce E. and Zhuang, Yan},
     TITLE = {Cyclic shuffle-compatibility via cyclic shuffle algebras},
   JOURNAL = {Ann. Comb.},
  FJOURNAL = {Annals of Combinatorics},
    VOLUME = {28},
      YEAR = {2024},
    NUMBER = {2},
     PAGES = {615--654},
      ISSN = {0218-0006,0219-3094},
   MRCLASS = {05A05 (05E05)},
  MRNUMBER = {4747489},
MRREVIEWER = {Brendon\ Rhoades},
       DOI = {10.1007/s00026-023-00669-9},
       URL = {https://doi.org/10.1007/s00026-023-00669-9},
}

@misc{carnevale2025colouredshufflecompatibilityhadamard,
      AUTHOR = {Carnevale, Angela and Moustakas, Vassilis Dionyssis and
              Rossmann, Tobias},
     TITLE = {Coloured shuffle compatibility, {H}adamard products, and ask
              zeta functions},
   JOURNAL = {Bull. Lond. Math. Soc.},
  FJOURNAL = {Bulletin of the London Mathematical Society},
    VOLUME = {57},
      YEAR = {2025},
    NUMBER = {7},
     PAGES = {2132--2154},
      ISSN = {0024-6093,1469-2120},
   MRCLASS = {05A15 (05E05 11M41 15B33 20D15 20E45)},
  MRNUMBER = {4937962},
MRREVIEWER = {Wayne\ M.\ Dymacek},
       DOI = {10.1112/blms.70081},
       URL = {https://doi.org/10.1112/blms.70081},
}

@article {yang2022conjectureconcerningshufflecompatiblepermutation,
    AUTHOR = {Yang, Lihong and Yan, Sherry H. F.},
     TITLE = {On a conjecture concerning shuffle-compatible permutation
              statistics},
   JOURNAL = {Electron. J. Combin.},
  FJOURNAL = {Electronic Journal of Combinatorics},
    VOLUME = {29},
      YEAR = {2022},
    NUMBER = {3},
     PAGES = {Paper No. 3.3, 11},
      ISSN = {1077-8926},
   MRCLASS = {05A15 (05A05)},
  MRNUMBER = {4446713},
MRREVIEWER = {Yan\ Zhuang},
       DOI = {10.37236/10953},
       URL = {https://doi.org/10.37236/10953},
}

@article{aliniaeifard2021hopfstructuresrepresentationtheory,
      AUTHOR = {Aliniaeifard, Farid and Thiem, Nathaniel},
     TITLE = {Hopf structures in the representation theory of direct
              products},
   JOURNAL = {Electron. J. Combin.},
  FJOURNAL = {Electronic Journal of Combinatorics},
    VOLUME = {29},
      YEAR = {2022},
    NUMBER = {4},
     PAGES = {Paper No. 4.39, 33},
      ISSN = {1077-8926},
   MRCLASS = {05E10 (16T30)},
  MRNUMBER = {4516937},
MRREVIEWER = {Lucio\ Centrone},
       DOI = {10.37236/11259},
       URL = {https://doi.org/10.37236/11259},
}

@article{ehrenborg1996posets,
  title={On posets and Hopf algebras},
  author={Ehrenborg, Richard},
  journal={advances in mathematics},
  volume={119},
  number={1},
  pages={1--25},
  year={1996},
  publisher={Elsevier}
}

@misc{adams2025cyclicsievingpermutations,
      title={Cyclic sieving on permutations -- an analysis of maps and statistics in the FindStat database}, 
      author={Ashleigh Adams and Jennifer Elder and Nadia Lafrenière and Erin McNicholas and Jessica Striker and Amanda Welch},
      year={2025},
      eprint={2402.16251},
      archivePrefix={arXiv},
      primaryClass={math.CO},
      url={https://arxiv.org/abs/2402.16251}, 
}

@article{Elder_2023,
     AUTHOR = {Elder, Jennifer and Lafreni\`ere, Nadia and McNicholas, Erin
              and Striker, Jessica and Welch, Amanda},
     TITLE = {Homomesies on permutations: an analysis of maps and statistics
              in the {F}ind{S}tat database},
   JOURNAL = {Math. Comp.},
  FJOURNAL = {Mathematics of Computation},
    VOLUME = {93},
      YEAR = {2024},
    NUMBER = {346},
     PAGES = {921--976},
      ISSN = {0025-5718,1088-6842},
   MRCLASS = {05E18},
  MRNUMBER = {4678590},
MRREVIEWER = {Konrad\ P.\ Pi\'{o}ro},
       DOI = {10.1090/mcom/3866},
       URL = {https://doi.org/10.1090/mcom/3866},
}

@article{stembridge1997enriched,
    AUTHOR = {Stembridge, John R.},
     TITLE = {Enriched {$P$}-partitions},
   JOURNAL = {Trans. Amer. Math. Soc.},
  FJOURNAL = {Transactions of the American Mathematical Society},
    VOLUME = {349},
      YEAR = {1997},
    NUMBER = {2},
     PAGES = {763--788},
      ISSN = {0002-9947,1088-6850},
   MRCLASS = {06A07 (05E05 05E10)},
  MRNUMBER = {1389788},
MRREVIEWER = {Joseph\ Neggers},
       DOI = {10.1090/S0002-9947-97-01804-7},
       URL = {https://doi.org/10.1090/S0002-9947-97-01804-7},
}

@incollection {mason2018recenttrendsquasisymmetricfunctions,
    AUTHOR = {Mason, Sarah K.},
     TITLE = {Recent trends in quasisymmetric functions},
 BOOKTITLE = {Recent trends in algebraic combinatorics},
    SERIES = {Assoc. Women Math. Ser.},
    VOLUME = {16},
     PAGES = {239--279},
 PUBLISHER = {Springer, Cham},
      YEAR = {2019},
      ISBN = {978-3-030-05141-9; 978-3-030-05140-2},
   MRCLASS = {05E05 (05-02 05E10)},
  MRNUMBER = {3969576},
       DOI = {10.1007/978-3-030-05141-9\{_}7}

@article{Kantarc_O_uz_2022,
    AUTHOR = {O\u{g}uz, Ezgi Kantarc\i },
     TITLE = {A counterexample to the shuffle compatiblity conjecture},
   JOURNAL = {Electron. J. Combin.},
  FJOURNAL = {Electronic Journal of Combinatorics},
    VOLUME = {29},
      YEAR = {2022},
    NUMBER = {3},
     PAGES = {Paper No. 3.51, 5},
      ISSN = {1077-8926},
   MRCLASS = {05A05},
  MRNUMBER = {4477850},
MRREVIEWER = {Yan\ Zhuang},
       DOI = {10.37236/10957},
       URL = {https://doi.org/10.37236/10957},
}

@article{Grinberg_2018,
 AUTHOR = {Grinberg, Darij},
     TITLE = {Shuffle-compatible permutation statistics {II}: the exterior
              peak set},
   JOURNAL = {Electron. J. Combin.},
  FJOURNAL = {Electronic Journal of Combinatorics},
    VOLUME = {25},
      YEAR = {2018},
    NUMBER = {4},
     PAGES = {Paper No. 4.17, 61},
      ISSN = {1077-8926},
   MRCLASS = {05E05 (05A05 06A11)},
  MRNUMBER = {3874283},
MRREVIEWER = {Sam\ Hopkins},
       DOI = {10.37236/7946},
       URL = {https://doi.org/10.37236/7946},
}

@incollection {vong2013algebraic,
    AUTHOR = {Vong, Vincent},
     TITLE = {Algebraic properties for some permutation statistics},
 BOOKTITLE = {25th {I}nternational {C}onference on {F}ormal {P}ower {S}eries
              and {A}lgebraic {C}ombinatorics ({FPSAC} 2013)},
    SERIES = {Discrete Math. Theor. Comput. Sci. Proc., AS},
     PAGES = {813--824},
 PUBLISHER = {Assoc. Discrete Math. Theor. Comput. Sci., Nancy},
      YEAR = {2013},
   MRCLASS = {05E05},
  MRNUMBER = {3091043},
}

@misc{grinberg,
        doi = {10.48550/ARXIV.1409.8356},
  
        url = {https://arxiv.org/abs/1409.8356},
  
        author = {Grinberg, Darij and Reiner, Victor},
  
        title = {Hopf Algebras in Combinatorics},
  
        publisher = {arXiv},
  
        year = {2014},
  
        copyright = {Creative Commons Attribution 4.0 International}
    }

@inproceedings{novelli2006polynomialrealizationstrialgebras,
  AUTHOR = {Novelli, Jean-Christophe and Thibon, Jean-Yves},
  TITLE = {Polynomial realizations of some trialgebras},
  BOOKTITLE = {Proceedings of the $18^{th}$ International Conference on Formal Power Series and Algebraic Combinatorics},
  YEAR = {2006},
  SERIES = {FPSAC '06, San Diego, California},
}

@phdthesis{hivert199,
    author = {Florent Hivert},
    title = {Combinatoire des fonctions quasi-symétriques},
    year = {1999}
}

@article{BERGERON_2009,
   AUTHOR = {Bergeron, Nantel and Zabrocki, Mike},
     TITLE = {The {H}opf algebras of symmetric functions and quasi-symmetric
              functions in non-commutative variables are free and co-free},
   JOURNAL = {J. Algebra Appl.},
  FJOURNAL = {Journal of Algebra and its Applications},
    VOLUME = {8},
      YEAR = {2009},
    NUMBER = {4},
     PAGES = {581--600},
      ISSN = {0219-4988,1793-6829},
   MRCLASS = {05E05 (05A18 05E10 16T30)},
  MRNUMBER = {2555523},
MRREVIEWER = {Aaron\ Lauve},
       DOI = {10.1142/S0219498809003485},
       URL = {https://doi.org/10.1142/S0219498809003485},
}

@article{Chapoton2000,
   AUTHOR = {Chapoton, Fr\'{e}d\'{e}ric},
     TITLE = {Alg\`ebres de {H}opf des permutah\`edres, associah\`edres et
              hypercubes},
   JOURNAL = {Adv. Math.},
  FJOURNAL = {Advances in Mathematics},
    VOLUME = {150},
      YEAR = {2000},
    NUMBER = {2},
     PAGES = {264--275},
      ISSN = {0001-8708,1090-2082},
   MRCLASS = {16W30 (05E99)},
  MRNUMBER = {1749253},
MRREVIEWER = {E.\ J.\ Taft},
       DOI = {10.1006/aima.1999.1868},
       URL = {https://doi.org/10.1006/aima.1999.1868},
}

@book {stanley1972ordered,
    AUTHOR = {Stanley, Richard P.},
     TITLE = {Ordered structures and partitions},
    SERIES = {Memoirs of the American Mathematical Society, No. 119},
 PUBLISHER = {American Mathematical Society, Providence, RI},
      YEAR = {1972},
     PAGES = {iii+104},
   MRCLASS = {05A17},
  MRNUMBER = {332509},
MRREVIEWER = {L.\ K.\ Durst},
}

\end{document}